\DeclareMathOperator{\dist}{dist}
\newcommand{\bE}{\ensuremath{\mathbb{E}}}
\newcommand{\bN}{\ensuremath{\mathbb{N}}}
\newcommand{\bP}{\ensuremath{\mathbb{P}}}
\newcommand{\bR}{\ensuremath{\mathbb{R}}}
\newcommand{\bZ}{\ensuremath{\mathbb{Z}}}
\newcommand{\ind}[1]{\mathbbm{1}_{\{#1\}}} 
\newcommand{\indset}[1]{\mathbbm{1}_{#1}} 
\newcommand{\cC}{\ensuremath{\mathcal{C}}}
\newcommand{\cP}{\ensuremath{\mathcal{P}}}
\newcommand{\ann}{{\mathrm{ann}}}
\newcommand{\pre}{{\mathrm{pre}}}
\newcommand{\que}{{\mathrm{que}}}
\newcommand{\annpre}{{\mathrm{ann}\times\mathrm{pre}}}
\newcommand{\boxpre}{{\mathrm{box-que}\times\mathrm{pre}}}
\newcommand{\abs}[1]{\vert #1 \vert}
\newcommand{\Abs}[1]{\Bigl\vert #1 \Bigr\vert}
\newcommand{\norm}[1]{\left\Vert #1 \right\Vert}
\newcommand{\floor}[1]{\left\lfloor #1 \right\rfloor}
\newcommand{\ddx}[1][1]{\ifnum#1=1 \frac{d}{dx} \else \frac{d^{#1}}{dx^{#1}} \fi}
\newcommand{\ddy}[1][1]{\ifnum#1=1 \frac{d}{dy} \else \frac{d^{#1}}{dy^{#1}} \fi}
\newcommand{\ddt}[1][1]{\ifnum#1=1 \frac{d}{dt} \else \frac{d^{#1}}{dt^{#1}} \fi}
\newcommand{\compl}{\mathsf{C}}
\theoremstyle{plain}
\newtheorem{thm}{Theorem}[section]  
\newtheorem{prop}[thm]{Proposition}
\newtheorem{cor}[thm]{Corollary}
\newtheorem{lem}[thm]{Lemma}
\theoremstyle{definition}
\newtheorem{defn}[thm]{Definition}
\theoremstyle{remark}
\newtheorem{rem}[thm]{Remark}
\numberwithin{equation}{section}
\title{Local limit theorems for a directed random walk on the backbone of
  a supercritical oriented percolation cluster}
\author{Stein Andreas Bethuelsen\footnote{Matematisk institutt, Universitetet i Bergen, \texttt{Stein.Bethuelsen@uib.no}}, Matthias
  Birkner\footnote{Institut f\"{u}r Mathematik, Universit\"{a}t Mainz, \texttt{birkner@mathematik.uni-mainz.de}}, \\ Andrej Depperschmidt\footnote{Department
    of Mathematics, University of Erlangen-Nuremberg,
    \texttt{depperschmidt@math.fau.de}} {} and Timo Schl\"{u}ter\footnote{Institut f\"{u}r Mathematik, Universit\"{a}t Mainz, \texttt{tischlue@uni-mainz.de}}}
\date{\today}
\begin{document}
\maketitle

\begin{abstract}
  We consider a directed random walk on the backbone of the
  supercritical oriented percolation cluster in dimensions $d+1$ with
  $d \ge 3$ being the spatial dimension. For this random walk we prove
  an annealed local central limit theorem and a quenched local limit
  theorem. The latter shows that the quenched transition probabilities
  of the random walk converge to the annealed transition probabilities
  reweighted by a function of the medium centred at the target
  site. This function is the density of the unique measure which is
  invariant for the point of view of the particle, is absolutely
  continuous with respect to the annealed measure and satisfies
  certain concentration properties.
\end{abstract}

\textbf{Keywords:} Random walk in dynamical random environment;
oriented percolation; supercritical cluster; quenched local limit
theorem in random environment, environment viewed from the particle

\textbf{AMS Subject Classification:} 60K37; 60J10; 82B43; 60K35;

\section{Introduction}

Random walks in a static or dynamic random environment arise in
different models from physical and biological sciences. The
investigation of such random walks under different conditions on the
environment has been an active research area with a lot of recent
progress. In this paper, we analyse a \emph{directed} random walk on
the backbone of a supercritical oriented percolation cluster on
$\bZ^d \times \bZ$. This random walk was introduced and studied in
\cite{BirknerCernyDepperschmidtGantert2013}. There it was shown that
the random walk satisfies a law of large numbers and a quenched
central limit theorem for all spatial dimensions $d \ge 1$. The main
purpose of this work is to extend these results and derive a quenched
\emph{local} limit theorem. For this, we will have to restrict
ourselves to spatial dimensions $d \ge 3$. Analogous results for a
class of ballistic random walks in uniformly elliptic i.i.d.\ random
environments were recently obtained in
\cite{BergerCohenRosenthal2016}. This paper has been an inspiration
and a guide for the present study.

\subsection{The model and background}
\label{sec:model-background}

Consider a discrete space-time field
$\omega\coloneqq \{\omega(x,n): (x,n)\in \bZ^d \times \bZ\}$ of
i.i.d.\ Bernoulli random variables with parameter $p \in [0,1]$ taking
values in $\Omega \coloneqq \{0,1\}^{\bZ^d \times \bZ}$, which is
defined on some (large enough) probability space equipped with a
probability measure $\bP$.

As common in percolation theory, a space-time site
$(x,n) \in \bZ^d \times \bZ$ is said to be \emph{open} if
$\omega(x,n)=1$ and \emph{closed} if $\omega(x,n)=0$. A directed
\emph{open path} (with respect to $\omega$) from $(x,m)$ to $(y,n)$
for $m \le n$ is a space-time sequence $(x_m,m),\dots, (x_n,n)$ such
that $x_m=x$, $x_n=y$, $\norm{x_k-x_{k-1}} \le 1$ for
$k=m+1, \dots, n$ and $\omega(x_k,k)=1$ for all $k=m,\dots,n$. Here,
and in the following $\norm{\cdot}$ denotes the $\sup$-norm on
$\bR^d$. We will write $(x,m) \xrightarrow{\omega} (y,n)$ if such an
open path exists and $(x,m) \xrightarrow{\omega} \infty$ if there
exists at least one infinite directed open path starting at $(x,m)$,
i.e.\ if for each $n>m$ there is $y\in \bZ^d$ so that
$(x,m) \xrightarrow{\omega} (y,n)$.

\smallskip

It is well known that there is $p_c = p_c(d) \in (0,1)$ such that
$\bP\big((0,0) \xrightarrow{\omega} \infty \big) > 0$ if and only if
$p > p_c$; see e.g.\ Theorem~1 in \cite{GrimmettHiermerDPRW2000}. We
consider here only the case of a fixed $p \in (p_c,1]$. We define by
\begin{align}
  \label{def:cluster}
  \cC \coloneqq \big\{(x,n) \in \bZ^d \times \bZ : (x,n)
  \xrightarrow{\omega} \infty \big\}
\end{align}
the \emph{backbone of the space-time cluster of the oriented
  percolation}, i.e.\ the set of all space-time sites which are
connected to ``time $+\infty$'' by a directed open path. Note that
$\cC$ depends on $\omega$ and that we have $\bP(|\cC|=\infty)=1$ for
$p>p_c$. For future reference we define the process
$\xi\coloneqq (\xi_n)_{n\in \bZ}$ on $\{0,1\}^{\bZ^d}$ by
\begin{align}
  \label{eq:45}
  \xi_n(x) =\indset{\cC}\bigl((x,n)\bigr).
\end{align}
The process $\xi$ can be interpreted as the time reversal of the
stationary discrete time contact process. In particular, for any
$n\in \bZ$ the random field $\xi_n(\cdot)$ is distributed according to
the upper invariant measure of the discrete time contact process, which is
non-trivial in the case $p>p_c$. For more details we refer the reader
to Section~1 (around equation (1.2)) in
\cite{BirknerCernyDepperschmidtGantert2013}, see also
\cite{BirknerGantert2019}.

\medskip Our goal is to study the directed random walk on the cluster
$\cC$. This random walk was studied in
\cite{BirknerCernyDepperschmidtGantert2013} in the case that the
initial point of the random walk belongs to the cluster. Here we want
to compare the annealed and quenched laws for starting points without
checking whether they are on the cluster or not. Thus, we define the
random walk slightly differently: It behaves as a simple random walk
(which jumps uniformly to one of the sites in the unit ball around the
present site) as long as it is not on the cluster and once it hits the
cluster it will behave as the random walk from
\cite{BirknerCernyDepperschmidtGantert2013}. For a site
$(x,n) \in \bZ^d \times \bZ$ we define its \emph{neighbourhood} at
time $(n+1)$ by
\begin{align}
  \label{eq:13}
  U(x,n) \coloneqq \{ (y,n+1) : \norm{x-y} \le 1 \}.
\end{align}
Given $\omega$ and therefore the random cluster $\mathcal{C}$ and
$(y,m) \in \bZ^d\times \bZ$ we consider random walks
$(X_{n})_{n=m,m+1,\dots}$ with initial position $X_m=y$ and transition
probabilities for $n\ge m$ given by
\begin{align}
  \label{eq:defn_quenched_law}
  \bP(X_{n+1} = z \, \vert \, X_n = x,\omega) =
  \begin{cases}
    \abs{U(x,n) \cap \mathcal{C}}^{-1} &\text{if } (x,n) \in
    \mathcal{C} \text{ and } (z,n+1) \in U(x,n)\cap \mathcal{C},\\
    \abs{U(x,n)}^{-1} &\text{if } (x,n) \notin \mathcal{C} \text{ and
    } (z,n+1) \in U(x,n), \\
    0 &\text{otherwise.} 
  \end{cases}
\end{align}
We write $P_\omega$ for the conditional law of $\bP$ given
$\omega$ and $E_\omega$ for the corresponding expectation. In
particular, for the transition probabilities we have
\begin{align}
  \label{eq:14}
  P_\omega(X_{n+1} = z \, \vert \, X_n = x)
  = \bP(X_{n+1} = z \, \vert \, X_n = x,\omega).
\end{align}
For the above random walk starting from position $X_m= y \in \bZ^d$ at
time $m \in \bZ$ we denote by $P^{(y,m)}_\omega$ its \emph{quenched
  law} and by $E^{(y,m)}_\omega$ the corresponding expectation. The
\emph{annealed (or averaged) law} of that random walk is denoted by
$\bP^{(y,m)}$ and its expectation by $\bE^{(y,m)}$. Note that for any
$A \in \sigma(X_{n} : n=m,m+1,\dots)$ we have
\begin{align}
  \label{eq:15}
  \bP^{(y,m)} (A) = \int P_\omega^{(y,m)}(A)\, d\bP(d\omega).
\end{align}

\subsection{Main results: annealed and quenched local limit theorems}
\label{sec:main-results:-anne}
In Theorem~1.1 in \cite{BirknerCernyDepperschmidtGantert2013} it is
shown that the random walk $(X_n)$ starting in $0 \in \bZ^d$ at time
$0$ satisfies an annealed central limit theorem and the limiting law
is a non-trivial centred isotropic $d$-dimensional normal law. In
particular its covariance matrix $\Sigma$ is of the form
$\sigma^2 I_d$ for a positive constant $\sigma^2$ and the
$d$-dimensional identity matrix $I_d$. Recall that in
\cite{BirknerCernyDepperschmidtGantert2013} it is assumed that the
space-time origin is contained in $\mathcal C$ so that the random walk
starts and stays on $\mathcal C$. This is not a big constraint because
the time a random walk needs to hit the cluster $\mathcal C$ has
exponentially decaying tails; see Lemma~\ref{lem:quenched-rw-cluster}
in the appendix.

The annealed CLT from \cite{BirknerCernyDepperschmidtGantert2013} can
be strengthened to an annealed \emph{local} CLT. For a proof of the
following theorem we refer to
Section~\ref{sect:annealed_estimates}.

\begin{thm}[Annealed local CLT]
  \label{lem:annealed_local_CLT}
  For $d\ge 1$ and $\Sigma$ as above we have
  \begin{align}
    \label{eq:8}
    \lim_{n\to \infty}\sum_{x\in\bZ^d}
    \Abs{\bP^{(0,0)}(X_n=x) - \frac{1}{(2\pi
    n)^{d/2}\sqrt{\det\Sigma}}\exp\Bigl( -\frac{1}{2n}x^T\Sigma^{-1}x
    \Bigr)} = 0.
  \end{align}
\end{thm}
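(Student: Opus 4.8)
The plan is to derive the local CLT from the annealed regeneration structure of $(X_n)$ constructed in \cite{BirknerCernyDepperschmidtGantert2013}, combined with a classical lattice local limit theorem for sums of i.i.d.\ vectors and a local renewal argument. By Lemma~\ref{lem:quenched-rw-cluster} the walk started at $(0,0)$ enters $\cC$ after an a.s.\ finite time with exponentially decaying tails, so after a harmless initial segment we may invoke that construction: there are regeneration times $\tau_1<\tau_2<\cdots$ such that the space-time increments $\chi_k\coloneqq(\tau_k-\tau_{k-1},\,X_{\tau_k}-X_{\tau_{k-1}})\in\bZ_{>0}\times\bZ^d$ with $k\ge2$ are i.i.d.\ under $\bP^{(0,0)}$ and independent of $\chi_1\coloneqq(\tau_1,X_{\tau_1})$, and $\tau_2-\tau_1$ — hence, as the walk has speed at most $1$, also $\norm{X_{\tau_2}-X_{\tau_1}}$ — has exponential moments. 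By the isotropy from \cite{BirknerCernyDepperschmidtGantert2013}, $X_{\tau_2}-X_{\tau_1}$ is centred with non-degenerate covariance $\mu\sigma^2 I_d$, where $\mu\coloneqq\bE^{(0,0)}[\tau_2-\tau_1]$. Since $\norm{X_n}\le n$ and the regeneration structure yields sub-Gaussian tails for $X_n$ on scale $\sqrt n$, the contribution of $\norm{x}>M\sqrt{n\log n}$ to \eqref{eq:8} is negligible as $M\to\infty$ uniformly in $n$, so it suffices to prove the pointwise bound $\sup_x\Abs{\bP^{(0,0)}(X_n=x)-\varphi_n(x)}=o(n^{-d/2})$, where $\varphi_n$ is the $\mathcal N(0,n\Sigma)$ density from \eqref{eq:8}.

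First I would record the lattice non-degeneracy: the additive group generated by $\supp(\chi_2)$ is all of $\bZ\times\bZ^d$ and the walk is aperiodic. This is a small ad hoc point: since $\bP((0,0)\notin\cC)>0$, with positive probability the walk performs genuine lazy simple random walk on an initial interval (it may stay put and step in every coordinate direction), whence $\bP^{(0,0)}(X_n=x)>0$ for all $\norm{x}\le n$ and large $n$, ruling out periodicity or lower-dimensional support; the analogue for $\chi_2$ follows along the same lines using that regenerations occur at a positive density of times. Next I would apply the classical multidimensional lattice local limit theorem — for i.i.d.\ sums with finite covariance, in its $\ell^1$ (total-variation) form, which follows from the pointwise version by truncation and is immediate here from the exponential moments — to $\Xi_N\coloneqq\sum_{k=2}^{N+1}\chi_k$, which has mean $N(\mu,0)$ and non-degenerate covariance $N\,\mathrm{Cov}(\chi_2)$. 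Summing this estimate over $N$ against the weight $1$ and absorbing the exponentially localized shift $\chi_1$, the one-dimensional renewal theorem for the increments $\tau_k-\tau_{k-1}$ upgrades it to a local renewal theorem with marks: with $U(t,y)\coloneqq\bP^{(0,0)}(t\text{ is a regeneration time},\,X_t=y)$,
\[
  \lim_{t\to\infty}\ \sum_{y\in\bZ^d}\Abs{U(t,y)-\tfrac1\mu\,\varphi_{\sigma^2 t I_d}(y)}=0 ,
\]
with $\varphi_{\sigma^2 t I_d}$ the $\mathcal N(0,\sigma^2 t I_d)$ density (the prefactor $1/\mu$ is the renewal density, the covariance comes from $\approx t/\mu$ regeneration steps each of covariance $\mu\sigma^2 I_d$).

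To pass from regenerations to the fixed time $n$, decompose $\bP^{(0,0)}(X_n=x)$ according to the last regeneration time at or before $n$. With $\bar q(s,w)\coloneqq\bP(\text{a walk freshly started at a regeneration has displacement }w\text{ at time }s\text{ and no regeneration in }(0,s])$ this gives
\[
  \bP^{(0,0)}(X_n=x)=\bP^{(0,0)}(X_n=x,\ \tau_1>n)+\sum_{0\le t\le n}\ \sum_{y\in\bZ^d}U(t,y)\,\bar q(n-t,\,x-y).
\]
The first term is $O(e^{-cn})$, and $\bar q$ is supported on $\norm{w}\le s$ with exponentially small tails in $s$ and total mass $\sum_{s,w}\bar q(s,w)=\bE^{(0,0)}[\tau_2-\tau_1]=\mu$. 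Inserting the local renewal estimate for $U$ and using that, for $s$ and $\norm{w}$ of order $\log n$, $\varphi_{\sigma^2(n-s)I_d}(x-w)$ is uniformly close to $\varphi_{\sigma^2 n I_d}(x)=\varphi_n(x)$ in the $\ell^1$-in-$x$ sense, the double sum equals $\varphi_n(x)(1+o(1))$ in $\ell^1$, since $\tfrac1\mu\sum_{s,w}\bar q(s,w)=1$. This gives $\sum_x\Abs{\bP^{(0,0)}(X_n=x)-\varphi_n(x)}\to0$, which is \eqref{eq:8}.

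The main obstacle is the local renewal theorem with marks in the $\ell^1$ form above, together with the uniform $\ell^1$-in-$x$ control of the (exponentially small) error terms throughout the last two steps; the non-degeneracy and aperiodicity inputs are conceptually necessary but routine. An essentially equivalent route is Fourier-analytic — estimating $\Abs{\bE^{(0,0)}[e^{i\theta\cdot X_n}]-e^{-\frac n2\theta^T\Sigma\theta}}$ over $\theta\in[-\pi,\pi]^d$ — where the crux becomes the uniform decay $\sup_{\delta\le\norm\theta}\Abs{\bE^{(0,0)}[e^{i\theta\cdot X_n}]}=o(n^{-d/2})$ away from the origin, again supplied by the regeneration structure.
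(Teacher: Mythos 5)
Your proposal is correct in spirit and takes a genuinely different route from the paper's. The paper proves \eqref{eq:8} by (i) partitioning $\bZ^d$ into boxes of side $\lceil\varepsilon\sqrt n\rceil$, (ii) using the annealed derivative estimates of Lemma~\ref{lem:annealed_derivative_estimates} (quoted from \cite{SteibersPhD2017}) to replace $\bP^{(0,0)}(X_n=x)$ by its box-average, (iii) doing the same Taylor-type replacement for the Gaussian density, and (iv) invoking the \emph{weak} annealed CLT of \cite{BirknerCernyDepperschmidtGantert2013} to match box-masses with Gaussian box-integrals. You instead go directly through the regeneration structure: i.i.d.\ increments $\chi_k$ with exponential tails, a $(d+1)$-dimensional lattice LLT for $\Xi_N$, a local renewal theorem with marks, and the last-regeneration decomposition. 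Both are legitimate. What the paper's route buys is brevity and the possibility of treating the derivative estimates as a black box; what your route buys is self-containedness and a stronger intermediate statement (the local renewal theorem), and it is essentially the route via which the derivative estimates in \cite{SteibersPhD2017} are themselves obtained, so the two proofs meet under the hood — your Fourier alternative at the end makes this explicit.

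Two substantive points you should be aware of. First, the step you flag — the $\ell^1$ local renewal theorem with marks, i.e.\ the identification $\sum_y|U(t,y)-\mu^{-1}\varphi_{\sigma^2 t I_d}(y)|\to 0$ — is genuinely the heart of the matter and must be carried out carefully: one needs the $\ell^1$ lattice LLT for $(\tau_N-\tau_1,X_{\tau_N}-X_{\tau_1})$ uniformly over the relevant window of $N\approx t/\mu$, the vanishing of the cross-covariance $\mathrm{Cov}(\tau_2-\tau_1,\,X_{\tau_2}-X_{\tau_1})$ (which your isotropy argument does give), and a Laplace/saddle-point summation over $N$. This is standard in the regeneration literature but non-trivial, so flagging it as the main obstacle is right. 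Second, your aperiodicity argument for $\chi_2$ is not quite as stated: the lazy SRW behaviour off the cluster tells you nothing about the support of $\chi_2$, since $\chi_2$ is an increment between regeneration times and hence entirely an on-cluster quantity. The correct argument is to verify directly that $(1,0)$ and $(1,\pm e_j)$ lie in the support of $\chi_2$ (a regeneration can follow a regeneration after one time step, with the walk staying put or moving by a unit step); by isotropy and positive on-cluster ellipticity this holds for all $p>p_c$, and these generate $\bZ\times\bZ^d$. That replaces the off-cluster reasoning and closes the small gap.
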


Theorem~3.1 in \cite{BirknerCernyDepperschmidtGantert2013} extends the
annealed CLT therein to a quenched version with the same limiting law.
Thus, the quenched and annealed laws after $N$ steps are comparable on
the level of boxes of side length $N^{1/2}$. This result was later
refined in \cite[Chapter~3]{SteibersPhD2017}, where a comparison
result between the quenched and annealed laws after $N$ steps on the
level of boxes of side length $N^{\theta/2}$ for $\theta \in (0,1)$
was obtained. (We recall this result in
Theorem~\ref{thm:Steibers_Thm3.24} below.) 

The main goal of this paper is to strengthen this further and prove a
quenched local limit theorem which is an analogue of Theorem
\ref{lem:annealed_local_CLT}. In order to state the precise result, we
need to introduce some notation. First, for
$(y,m) \in \bZ^d\times \bZ$, we define the \emph{space-time shift
  operator} $\sigma$ on $\Omega$ by
\begin{align}
  \label{eq:16}
  \sigma_{(y,m)}\omega(x,n) \coloneqq  \omega(x+y,n+m)
\end{align}
and we write $\xi_m(y;\omega)$ for $\xi_m(y)$ read off from a given
realization $\omega$ as in \eqref{def:cluster} and \eqref{eq:45}.
We define the transition kernel for the
environment seen from the particle (compare this with
\eqref{eq:defn_quenched_law}) by
\begin{align}
  \label{eq:transitionKernel}
  \mathfrak{R}f(\omega) \coloneqq  \sum_{\norm{y}\le 1}  g(y;\omega)
 f(\sigma_{(y,1)}\omega)
\end{align}
acting on bounded measurable functions $f:\Omega \to \bR$, where
\begin{align}
  \label{eq:19}
  g(y;\omega) \coloneqq
  \ind{\sum_{\norm{z}\le 1} \xi_1(z;\omega) >0,\, \omega(0,0)=1}
  \frac{\xi_1(y;\omega)}{\sum_{\norm{z}\le 1} \xi_1(z;\omega)} +
  \ind{\sum_{\norm{z}\le 1} \xi_1(z;\omega) =0 \text{ or } \omega(0,0)=0 } \frac{1}{3^d}.
\end{align}
\begin{defn}
  \label{def:invariant}
  A measure $Q$ on $\Omega$ is called \emph{invariant with respect to the
    point of view of the particle} if for every bounded continuous
  function $f:\Omega \to \bR$
  \begin{align}
    \int_\Omega \mathfrak{R} f(\omega)\, dQ(\omega) = \int_\Omega
    f(\omega)\,dQ(\omega).
  \end{align}
\end{defn}

\begin{thm}
  \label{thm:RD}
  Let $d\ge 3$ and $p \in (p_c,1]$. Then there exists a unique measure
  $Q$ on $\Omega$ which is invariant with respect to the point of view
  of the particle satisfying $Q \ll \bP$ and the concentration
  property \eqref{eq:29a} below.
\end{thm}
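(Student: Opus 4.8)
The plan is to construct $Q$ as a Cesàro limit of the environment seen from the particle and to establish uniqueness via the concentration property, following the strategy of \cite{BergerCohenRosenthal2016} adapted to the directed, dynamic percolation setting. First I would observe that the operator $\mathfrak{R}$ from \eqref{eq:transitionKernel} is the transition operator of a Markov chain on $\Omega$ when the walk starts from a point in $\mathcal{C}$; writing $\bar\omega_n$ for the environment seen from the particle at time $n$ (so $\bar\omega_0 = \omega$ conditioned on $(0,0)\in\mathcal{C}$), one has $\bP$ as a reversible-in-law reference measure in the sense that $\bP$ is \emph{stationary} for the \emph{un-reweighted} dynamics restricted to the cluster. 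Concretely, the candidate invariant measure is obtained as a weak limit point of the Cesàro averages $\frac1N\sum_{n=1}^N \mathfrak{R}^n \delta_{\bar\omega_0}$ (integrated over the starting environment), whose existence follows from compactness of $\Omega = \{0,1\}^{\bZ^d\times\bZ}$ in the product topology together with Feller continuity of $\mathfrak{R}$ on bounded continuous functions — the latter being immediate since $g(\cdot;\omega)$ in \eqref{eq:19} depends on only finitely many coordinates of $\omega$ and the shift $\sigma_{(y,1)}$ is continuous. Any such limit point $Q$ is invariant in the sense of Definition~\ref{def:invariant} by the standard Krylov--Bogolyubov argument.

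Next I would prove $Q \ll \bP$ and extract the concentration property. Absolute continuity should come from the quenched comparison estimates already available in the literature — specifically the mesoscopic box comparison of quenched and annealed laws recorded as Theorem~\ref{thm:Steibers_Thm3.24} (from \cite{SteibersPhD2017}) — which allow one to bound, for any cylinder event $A$ depending on coordinates in a fixed finite window, $\mathfrak{R}^n \indset{A}(\omega) = \bP^{(0,0)}\big(\sigma_{(X_n,n)}\omega \in A \mid \omega\big)$ in terms of $\bP(A)$ plus an error that is summable (or at least Cesàro-negligible) in $n$; taking the Cesàro average and then $n\to\infty$ gives $Q(A) \le C\,\bP(A)$ on cylinders, hence $Q \ll \bP$ with a bounded-in-$L^1$ approximation of the density $\rho = dQ/d\bP$. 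The concentration property \eqref{eq:29a} — controlling how much $\rho$ can vary under local perturbations of $\omega$, or equivalently a tail/variance bound on $\rho$ restricted to finite windows — I would derive from the same local CLT inputs: Theorem~\ref{lem:annealed_local_CLT} controls the annealed transition probabilities at the lattice scale, and combining it with the mesoscopic quenched-to-annealed comparison yields the required decay of correlations of $\rho$, exploiting that the oriented percolation cluster has good mixing (exponential coupling) in the supercritical regime and that $d \ge 3$ makes the relevant Green's-function-type sums over the Gaussian heat kernel convergent.

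For uniqueness, suppose $Q'$ is another invariant measure with $Q' \ll \bP$ satisfying \eqref{eq:29a}. The argument is to show that the density $\rho' = dQ'/d\bP$ must coincide with $\rho$ by an ergodicity argument: the environment process under $\bP$ (the walk started on the cluster, viewed from the particle) is ergodic with respect to the time shift, because $\omega$ itself is i.i.d.\ hence mixing under space-time shifts and the walk ``spreads out'' by the CLT; therefore any $\mathfrak{R}$-invariant density in $L^1(\bP)$ that additionally satisfies the concentration bound is forced to be shift-invariant in an appropriate sense and hence $\bP$-a.s.\ constant along the relevant orbit — pinning it down uniquely once we normalise $\int \rho\, d\bP = 1$. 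More carefully, I would use that $\rho$ and $\rho'$ are both fixed points of $\mathfrak{R}^*$ (the adjoint acting on densities), form $h = \rho - \rho'$, note $\int h\, d\bP = 0$, and show using the concentration property together with the local CLT that $\mathfrak{R}^{*n} h \to 0$ in $L^1(\bP)$, which since $h$ is a fixed point forces $h = 0$.

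The main obstacle I expect is establishing the concentration property \eqref{eq:29a} with enough uniformity to drive the uniqueness argument: one needs quantitative control on the density $\rho$ at \emph{all} finite scales simultaneously, and this requires carefully combining the mesoscopic comparison of Theorem~\ref{thm:Steibers_Thm3.24} (valid on boxes of side $N^{\theta/2}$) with the lattice-scale annealed local CLT, bridging the gap between the two scales — this is precisely where the dimension restriction $d \ge 3$ enters, since the error terms in the comparison decay like a power of $N$ whose summability against the heat kernel needs $d \ge 3$. Everything else (existence via Krylov--Bogolyubov, Feller continuity, the ergodicity input for uniqueness) is relatively standard once the analytic estimates are in place.
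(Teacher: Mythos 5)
Your construction of $Q$ as a tight Cesàro limit of the environment seen from the particle, invariance via Krylov--Bogolyubov/Feller, and absolute continuity via uniform integrability of the approximating densities, matches the paper's route in broad outline. But the uniqueness argument you propose has a genuine gap, and it diverges from how the paper actually handles this. You want to run an ergodicity argument: claim the environment chain is ergodic (because $\omega$ is i.i.d.\ and the walk spreads out), form $h=\rho-\rho'$, and show $\mathfrak{R}^{*n}h\to 0$. The difficulty is that ergodicity of the environment seen from the particle under an invariant $Q$ is not available here: the model is not uniformly elliptic, $Q$ is \emph{not} equivalent to $\bP$ (it is concentrated on the set $\widetilde\Omega$ of environments with a doubly infinite open path through the origin, and $0<\bP(\widetilde\Omega)<1$; see Remark~\ref{rem:Q-estimate}), so Kozlov-type equivalence/ergodicity arguments do not apply, and the i.i.d.\ mixing of $\omega$ under space-time shifts is not the same thing as ergodicity of the $\mathfrak{R}$-chain under $Q$. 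Simply asserting "$\mathfrak{R}^{*n}h\to 0$ in $L^1(\bP)$" from a ``concentration plus local CLT'' heuristic does not close this. The paper instead proves uniqueness \emph{through} the quenched local limit theorem: it first shows that the density of any invariant $Q\ll\bP$ with the concentration property satisfies \eqref{eq:rd-density}, and then Lemma~\ref{lem:uniqueness_prefactor} shows that a prefactor satisfying \eqref{eq:rd-density} is $\bP$-a.s.\ unique via a soft tightness/limit argument along the annealed walk, without ever invoking ergodicity of the chain. So uniqueness in the paper is a corollary of the hard analytic content (the coupling estimates behind Corollary~\ref{cor:con_prop_Q} and the quenched LLT), not a separate ergodic-theoretic input; the restriction to measures satisfying \eqref{eq:29a} in the statement is precisely an acknowledgement that unconditional uniqueness is open.

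Two smaller points. First, you describe \eqref{eq:29a} as controlling ``how much $\rho$ can vary under local perturbations of $\omega$''; it actually asserts that the spatial average $\frac{1}{|\Delta_0(M)|}\sum_{x\in\Delta_0(M)}\varphi(\sigma_{(x,0)}\omega)$ concentrates around $1$ with probability $\ge 1-M^{-c\log M}$, which is a box-averaged normalization statement, not a local-perturbation or correlation-decay bound, and proving it requires the explicit coupling $\Psi_N$ of $\bP$ and $Q_N$ constructed via Lemma~\ref{claim:1} and Lemma~\ref{lem:coupling_Theta}, not just the mesoscopic comparison plus annealed LCLT. Second, saying $\bP$ is ``stationary for the un-reweighted dynamics restricted to the cluster'' conflates two things: what is true (and used) is that $P_N=\bP$ for the \emph{annealed} environment process, i.e.\ \eqref{eq:73}; $\bP$ is not $\mathfrak{R}$-invariant, which is precisely why the density $\varphi=dQ/d\bP$ is nontrivial.
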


The main result of this paper is a quenched local limit theorem which
is an analogue of Theorem~1.11 in \cite{BergerCohenRosenthal2016} in
the case of our model.

\begin{thm}[Quenched local limit theorem]
  \label{thm:qlclt}
  Let $d\ge 3$ and $p \in (p_c,1]$, let $Q$ be the measure from
  Theorem~\ref{thm:RD} and denote by $\varphi = dQ/d\bP \in L_1(\bP)$
  the Radon–Nikodym derivative of $Q$ with respect to $\bP$. Then for
  $\bP$ almost every $\omega$ we have
  \begin{align}
    \label{eq:rd-density}
    \lim_{n\to\infty} \sum_{x\in \bZ^d} \bigl|P_\omega^{(0,0)} (X_n=x) -
    \bP^{(0,0)}(X_n=x) \varphi(\sigma_{(x,n)}\omega)\bigr| =0.
  \end{align}
\end{thm}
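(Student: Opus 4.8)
Let me think about how I'd prove this quenched local limit theorem.

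The setup: We have a random walk $(X_n)$ on the oriented percolation cluster backbone. We want to show that the quenched transition probabilities $P_\omega^{(0,0)}(X_n = x)$ are well-approximated in $\ell^1$ by the annealed probabilities $\bP^{(0,0)}(X_n = x)$ reweighted by $\varphi(\sigma_{(x,n)}\omega)$.

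Key ingredients available:
- Theorem \ref{lem:annealed_local_CLT}: Annealed local CLT — the annealed transition probabilities are close in $\ell^1$ to the Gaussian.
- Theorem \ref{thm:RD}: Existence and uniqueness of the invariant measure $Q \ll \bP$ for environment viewed from the particle, with $\varphi = dQ/d\bP$.
- Theorem \ref{thm:Steibers_Thm3.24} (referenced): Comparison of quenched and annealed laws on mesoscopic boxes of side $N^{\theta/2}$.
- Lemma \ref{lem:quenched-rw-cluster}: The hitting time of the cluster has exponentially decaying tails.

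This is the Berger–Cohen–Rosenthal (2016) strategy adapted to oriented percolation. Let me sketch the approach.

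**Overall strategy.** The proof follows the scheme of Berger–Cohen–Rosenthal. The heuristic is: run the walk for $N$ steps to reach time $n = N + m$ (or split as $n = n_1 + n_2$), where $n_1$ is "most of the time" and $n_2 = n - n_1$ is a shorter "relaxation" window. After the first $n_1$ steps, the walk's position is governed by the annealed LCLT (on mesoscopic scale it's close to Gaussian, and by Theorem~\ref{thm:Steibers_Thm3.24} quenched is close to annealed on boxes of side $n_1^{\theta/2}$). Then during the last $n_2$ steps, conditional on the walk being at a site $z$ at time $n_1$, the environment "seen from the particle" equilibrates toward $Q$; this is what produces the factor $\varphi(\sigma_{(x,n)}\omega)$ at the target.

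**The plan, step by step.**

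\begin{enumerate}
\item \textbf{Reduction to a point-of-view-of-the-particle statement.} First I would reformulate the target. Write $p_n^\omega(y,z) := P_\omega^{(0,0)}(X_{n}=z)$ and similarly for the annealed version. The claim \eqref{eq:rd-density} says $\sum_x |p_n^\omega(0,x) - \overline{p}_n(0,x)\,\varphi(\sigma_{(x,n)}\omega)| \to 0$. By the annealed LCLT, $\overline{p}_n(0,x)$ is (up to $\ell^1$-negligible error) the Gaussian $G_n(x) := (2\pi n)^{-d/2}(\det\Sigma)^{-1/2}\exp(-\tfrac{1}{2n}x^T\Sigma^{-1}x)$. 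Since $\varphi \in L^1(\bP)$ and $x \mapsto \sigma_{(x,n)}\omega$ samples the environment, $\sum_x G_n(x)\varphi(\sigma_{(x,n)}\omega)$ is a Riesz-type average; I'd want $\int \varphi \, dQ$-type normalization, i.e. $\sum_x G_n(x) \varphi(\sigma_{(x,n)}\omega) \to 1$ for a.e. $\omega$ (an ergodic-theorem / law-of-large-numbers statement using that $\bP$ is mixing under spatial shifts and $G_n$ is a smooth, spread-out averaging kernel), which is the normalization consistency check.

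\item \textbf{Two-scale (or $k$-scale) decomposition.} Fix $\theta \in (0,1)$ close to $1$ and split $n = n_1 + n_2$ with $n_2 = \lfloor n^{\theta}\rfloor$ (or iterate over a dyadic sequence of scales as in BCR). Using the Markov property at time $n_1$,
\begin{align*}
  p_n^\omega(0,x) = \sum_{z} p_{n_1}^\omega(0,z)\, P_\omega^{(z,n_1)}(X_n = x).
\end{align*}
The first factor, on the mesoscopic scale $n_1^{\theta/2} \gg n_2^{1/2}$, is — by Theorem~\ref{thm:Steibers_Thm3.24} — well approximated by $\overline{p}_{n_1}(0,z)$, hence by $G_{n_1}(z) \approx G_n(z)$ (the Gaussian varies slowly on the coarser scale).

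\item \textbf{Environment-seen-from-particle equilibration over the last $n_2$ steps.} The core analytic step: show that for "typical" starting points $z$ near the diffusive scale,
\begin{align*}
  \sum_x P_\omega^{(z,n_1)}(X_n = x)\, h(\sigma_{(x,n)}\omega)
  \;\approx\; \bigl(\text{something}\bigr) \cdot \int h \, dQ
\end{align*}
for test functions $h$ — more precisely, that averaging the quenched endpoint distribution after $n_2$ steps against the *local* environment reproduces the $Q$-reweighting. This is where the invariant measure $Q$ and its concentration property \eqref{eq:29a} enter decisively: one uses that the transition kernel $\mathfrak{R}$ applied $n_2$ times to (roughly) the constant/Lebesgue initial distribution converges, in the appropriate $L^1(\bP)$ or coupling sense, to $\varphi \, d\bP = dQ$. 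The concentration property is precisely what upgrades weak/$L^1$ convergence of the point-of-view process to the pointwise (a.e.-$\omega$, summed over $x$) statement needed here, by controlling the contribution of atypical environments uniformly.

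\item \textbf{Assembling the pieces and a Borel–Cantelli argument.} Combine: for a.e. $\omega$, along $n \to \infty$,
\begin{align*}
  \sum_x \bigl| p_n^\omega(0,x) - G_n(x) \varphi(\sigma_{(x,n)}\omega)\bigr|
  \le (\text{error from Step 2}) + (\text{error from Step 3}) + (\text{annealed LCLT error}),
\end{align*}
each of which tends to $0$. To pass from convergence along a subsequence (or in probability over $\omega$) to a.e.-$\omega$ convergence, I'd establish a quantitative rate — e.g. the error is $O(n^{-\delta})$ in $L^1(d\bP)$ or in probability for some $\delta > 0$ — and then use summability along a polynomially-spaced subsequence $n_k$ plus a monotonicity/continuity-in-$n$ interpolation (the total-variation distance between $p_n^\omega$ and $p_{n+1}^\omega$, and between $G_n$ and $G_{n+1}$, is controlled) to fill the gaps. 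This is the standard Borel–Cantelli-plus-interpolation device used throughout this literature (and in BCR).
\end{enumerate}

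**Main obstacle.** The hard part is Step 3 — quantifying the equilibration of the environment-viewed-from-the-particle over the $n_2$-window and, crucially, doing so \emph{uniformly over the diffusively-spread starting points $z$} and in a way that survives the $\ell^1$-in-$x$ summation. One cannot simply invoke $L^1$-ergodicity of the point-of-view chain, since that gives control of integrals against *bounded continuous* $f$, whereas here we effectively integrate $\varphi$ (unbounded, only $L^1$) against a measure that is itself random. This is exactly why Theorem~\ref{thm:RD} is stated \emph{with} the concentration property \eqref{eq:29a}: that property provides the missing uniform integrability / tail control. Technically, I expect the proof to require: (i) a coupling of the quenched walk started from $z$ with a "stationary" version whose environment is distributed according to $Q$, run backward in time from the target — i.e. the time-reversal structure of $\xi$ as the stationary discrete-time contact process is used to make sense of "environment at the target site", exploiting the remark after \eqref{eq:45}; and (ii) a regeneration or mixing estimate showing the coupling succeeds with high probability in $n_2$ steps, with an error integrable against $\varphi$. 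The concentration property \eqref{eq:29a} should control precisely the event that $\varphi(\sigma_{(x,n)}\omega)$ is anomalously large along the trajectory.

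A secondary technical point is Step 1's normalization lemma $\sum_x G_n(x)\varphi(\sigma_{(x,n)}\omega) \to 1$ a.s.: since $\varphi \ge 0$, $\int \varphi\,d\bP = 1$, and $G_n$ is an approximate-identity-type smoothing on scale $\sqrt n$, this is a spatial ergodic theorem for the $\bP$-stationary field $x \mapsto \varphi(\sigma_{(x,0)}\omega)$ along with slow variation of $G_n$; the oriented-percolation field is mixing (indeed exponentially, via the exponential decay in Lemma~\ref{lem:quenched-rw-cluster}-type estimates), so a maximal-inequality / variance bound gives a.s. convergence along $n_k$ and interpolation fills in. This is routine but should be stated as a lemma.
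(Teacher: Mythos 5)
Your high-level shape (Berger--Cohen--Rosenthal scheme, two-scale decomposition, box-level quenched/annealed comparison, concentration property of $\varphi$) matches the paper. But there is a genuine gap in your Step~3, which you correctly identify as the core of the argument but then describe with a mechanism the paper deliberately avoids.

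You propose to show that running the quenched walk for the last $n_2$ steps "equilibrates" the environment-seen-from-the-particle toward $Q$, i.e.\ roughly that $Q_{n_2}\to Q$. The paper explicitly does \emph{not} know whether $Q_N\to Q$ (see Remark~\ref{rem:Q-estimate}: only the Ces\`aro averages $\widetilde Q_n$ are known to converge). So an argument that hinges on the point-of-view chain mixing to stationarity over a window of length $n^\theta$ is not available. The paper instead introduces the hybrid measures of Definition~\ref{defn:auxiliary_measures} and proves Proposition~\ref{lem:main} via three separate limits, each with its own mechanism: \eqref{eq:ann_to_ann-quenched} uses the \emph{exact} transport identity $\varphi(\sigma_{(x,n)}\omega)=\sum_y P^{(x+y,n-k)}_\omega(X_k=x)\,\varphi(\sigma_{(x+y,n-k)}\omega)$ from Proposition~\ref{prop:2} together with annealed derivative estimates — this is algebra, not equilibration; \eqref{eq:ann-quenched_to_box-quenched} is the box-level quenched/annealed comparison (Lemma~\ref{claim:1}) plus the concentration of $\varphi$; and \eqref{eq:box-quenched_to_quenched} is a coupling of \emph{two quenched walks in the same environment started from nearby sites} (Lemma~\ref{lem:HIG_lemma}), using "social boxes" and the connectivity of clusters. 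Your Step~3 conflates these into a single mixing statement that you cannot establish and that the authors do not have.

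Two smaller inaccuracies: you set $n_2=\lfloor n^\theta\rfloor$ with $\theta$ close to $1$, whereas the paper must take the relaxation window $k=\lceil n^\varepsilon\rceil$ with $\varepsilon<1/4$ (this is needed so that the annealed derivative estimate $Ck/(n-k)^{(d+1)/2}$ in the proof of \eqref{eq:ann_to_ann-quenched} decays); and your coupling idea — quenched walk vs.\ a "stationary version with environment $\sim Q$, run backward in time" — is not the coupling the paper uses; the paper couples two quenched walks forward in the same $\omega$, and the reweighting by $\varphi$ enters through the intermediate $\nu^{\boxpre}$ measure (choose the box by the quenched law, then a point in the box by the normalized prefactor), not through any time reversal.
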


\begin{rem}[Uniqueness of $Q$]
  It \label{rem:Q-Uniqueness} will be proven in
  Lemma~\ref{lem:uniqueness_prefactor} that the function $\varphi$ in
  \eqref{eq:rd-density} is $\bP$ almost surely unique. Furthermore, it
  will follow from the arguments in the proofs (cf.\ also
  Remark~\ref{rem:Q-estimate}) that if a measure $Q'$ on $\Omega$ is
  invariant with respect to the point of view of the particle and
  satisfies $Q'\ll \bP$ and \eqref{eq:rd-density} with
  $\varphi'= dQ'/d\bP$ then this measure $Q'$ satisfies the
  concentration property \eqref{eq:29a} as well and thus in particular
  agrees with $Q$.
\end{rem}

\paragraph{Related literature}

Random walks in static and dynamic random environments is a very
active research area. For a review of random walks in random
environments and basic concepts and objects we refer the reader to
\cite{SznitmanRWRE2004}; for a more recent review see
\cite{DrewitzRamirez2014}.

The random walk that we consider here can be seen as a random walk in
a dynamic random environment. Its relation to random walks in dynamic
random environments in the literature is discussed in some detail in
\cite[Remark~1.7]{BirknerCernyDepperschmidtGantert2013}. The main
differences are that the random environment is not uniformly elliptic
and is not i.i.d. In fact the environment that we consider here has
even infinite range dependencies, due to the fact that the steps of
the random walk are restricted to the backbone of the oriented
percolation cluster once it hits the cluster. The environment also
does not satisfy mixing conditions such as (conditional) cone-mixing
in contrast to the model considered in
\cite{HollanderSantosSidoraciciusRWDRELLN2013}. In
\cite{BlondelHilarioTeixeira2020} a much weaker mixing assumption than
cone-mixing is introduced (literally for a continuous time model) and
our model does satisfy their assumption. However, they only prove a
LLN for a nearest neighbour random walk in $d=1$. A comprehensive
overview of the recent literature on random walks in dynamic random
environments can be found in the introduction of
\cite{BlondelHilarioTeixeira2020}. See also
\cite[Remark~1.1]{BirknerGantertSteiber}. 

Results on quenched local limit theorems for random walks in random
environments are very recent. Our research is inspired by
\cite{BergerCohenRosenthal2016} where a quenched local limit theorem
was shown (in dimension $d \ge 4$) for the case of an i.i.d.\ random
environment and where the random walk satisfies a ballisticity
criterion and has uniformly elliptic transition probabilities. (Note
that ballisticity is trivial in our model. Uniform ellipticity and the
i.i.d.\ property are not satisfied.)

Other results on local limit theorems in random environments that we
are aware of are concerned with specific models. In
\cite{deuschel2019quenched} the quenched local CLT is proven for
random walks in a time-dependent balanced random environment. In
\cite{DolgopyatGoldsheid2020} and \cite{dolgopyat2020constructive}
quenched local limit theorems are obtained for random walks in random
environments on a strip. A different class of random walks in random
media for which quenched local CLTs have been obtained are the so
called random conductance models. For a recent work in this direction
and an overview of the literature see \cite{Andres_2021} and
references therein.

\paragraph{Outlook and open questions}

While we do exhibit a measure $Q$ which is invariant with respect to
the point of view of the particle and absolutely continuous with
respect to $\bP$, we can currently establish uniqueness only in the
class of such measures satisfying the additional property
\eqref{eq:29a}, see Remark~\ref{rem:Q-Uniqueness}. Furthermore,
because of non-ellipticity, $Q$ is not equivalent to $\bP$, see the
discussion in Remark~\ref{rem:Q-estimate} below. We leave open the
questions whether property \eqref{eq:29a} is necessary for uniqueness
and whether $Q$ is equivalent to $\bP$ when restricted to the set
$\widetilde{\Omega}$ from \eqref{eq:tildeOmega} in
Remark~\ref{rem:Q-estimate}.

We restrict our analysis to the case $d \ge 3$. This is essentially
owed to the fact that Theorem~\ref{thm:Steibers_Thm3.24}, which we
quote from \cite[Thm.~3.24]{SteibersPhD2017}, is presently only
available under this assumption. It was proved there using an
environment exposure technique from \cite{BolthausenSznitman2002},
which was also used by \cite{BergerCohenRosenthal2016}, and the proof
exploited the fact that in dimension at least $3$, two independent
random walks will almost surely meet only finitely often, irrespective
of the number $N$ of steps they take.  In spatial dimension $d=2$, two
independent walks will meet infinitely often, but the number of
intersections up to time $N$ grows quite slowly (of order $\log
N$). It is conceivable that with substantial technical effort, the
proof of \cite[Thm.~3.24]{SteibersPhD2017} and also the results of the
present investigation could be adapted to cover the case $d=2$.
We leave this question for future research.
For our model, simulations suggest that Theorem~\ref{thm:qlclt} should
hold even in spatial dimension $d=1$. However, it seems that a rigorous
analysis of the case $d=1$ would require a completely new approach.

We prove in Theorem~\ref{thm:qlclt} a quenched local limit theorem for
a very specific model of a non-elliptic random walk in a non-trivial
dynamic random environment, and our proofs do exploit specific
properties of this environment, namely the oriented percolation
cluster. However, we think that this environment is prototypical for a
large class of dynamic environments which can be ``mapped'' to it by
suitable coarse-graining procedures, see
\cite{BirknerCernyDepperschmidtRWDRE2016}, Section~3 and the concrete
example in Section~4 there. It seems quite possible that given
substantial technical effort, our approach to Theorem~\ref{thm:qlclt}
could be extended to the class of environments from
\cite{BirknerCernyDepperschmidtRWDRE2016}.  We leave this for future
work.

\paragraph{Outline of the paper}
The proofs of the main results 
are long and quite technical.
Let us describe the main ideas of the proofs and explain how the paper
is organised: In Section~\ref{sec:proofs-main-results} we first give
several auxiliary results which we then use for the proofs of
Theorem~\ref{thm:RD} and of Theorem~\ref{thm:qlclt}.

\medskip \noindent \emph{Annealed estimates:} In
Section~\ref{sect:annealed_estimates} we prove several annealed
derivative estimates which build on, and extend somewhat, previous
work by \cite{SteibersPhD2017}. These estimates will be used
for the proof of the annealed local CLT,
Theorem~\ref{lem:annealed_local_CLT}, also presented in
Section~\ref{sect:annealed_estimates}. Starting with
Section~\ref{sect:box_level_comparisons} the paper is devoted to the
proofs of the auxiliary results from
Section~\ref{sec:proofs-main-results}.

\medskip \noindent \emph{Comparison of the quenched and annealed
  laws:} Lemma~\ref{claim:1}, proven in
Section~\ref{sect:box_level_comparisons}, provides a comparison
between the \emph{quenched} and \emph{annealed} laws on the level of
large (but finite) boxes. In particular it shows that the total
variation distance between $\bP(X_N \in \cdot)$ and
$P_\omega(X_N \in \cdot)$ on the level of boxes of side length
$M \gg 1$ is small with very high probability as $N\to\infty$ in a
suitably quantified way; see equation~\eqref{eq:claim1}. The starting
point of the proof of Lemma~\ref{claim:1} is
\cite[Theorem~3.24]{SteibersPhD2017}, recalled in
Theorem~\ref{thm:Steibers_Thm3.24} below, which gives an analogous
result for boxes whose size grows like $N^{\theta/2}$ with
$0 < \theta < 1$ as $N\to\infty$, and therefore much slower than the
diffusive scale $N^{1/2}$. We augment this with an iteration scheme
that is guided by the proof of Theorem~5.1 in
\cite{BergerCohenRosenthal2016}. The main argument towards the proof
of Lemma~\ref{claim:1} is formulated as
Proposition~\ref{prop_main_thm_5.1} which provides the crucial
estimate for the iteration step. The proof of that proposition is long
and relies to a large extent on ideas from
\cite{BergerCohenRosenthal2016} and is postponed to
Section~\ref{sec:proof-prop-refpr-1}. It requires a suitable control
of the density of ``good'' boxes on which an estimate as in
equation~\eqref{eq:claim1} from Lemma~\ref{claim:1} holds locally
uniformly, see Definition~\ref{def:good boxes}. This deviates from the
set-up in \cite{BergerCohenRosenthal2016} because our environment is
not i.i.d.\ and in fact here the boxes are in principle correlated
over arbitrary lengths, albeit weakly.

\medskip
\noindent
\emph{Measure for the point of view of the particle:} The function
$\varphi =dQ/d\bP$ from \eqref{eq:rd-density} is the density of a
measure $Q$ which is invariant with respect to the point of view of
the particle and absolutely continuous with respect to $\bP$. For the
existence of such a measure $Q$ we consider the quenched laws $Q_N$ of
the environment seen from the particle after $N$ steps of the walk;
see \eqref{eq:defn_Q_N}. The measure $Q$ is constructed as a weak
limit of the Ces\`aro average of the measures $Q_N$ along a
subsequence; see \eqref{eq:80} and \eqref{eq:77}. In
Proposition~\ref{lem:box_average_estimate} and
Corollary~\ref{cor:con_prop_Q} we show that averages of $dQ_N/d\bP$
and $dQ/d\bP$ over large boxes are close to one with high probability
depending on the size of the boxes. It will turn out that the measure
$Q$ which we obtain as described above is unique, i.e.\ it does not
depend on the particular subsequence; see
Remark~\ref{rem:Q-estimate}.

Proposition~\ref{lem:box_average_estimate} and
Corollary~\ref{cor:con_prop_Q} are proven in
Section~\ref{sect:quenched-annealed_coupling}. To this end we
construct a coupling of $Q_N$ and $P_N$, the law of the environment
viewed relative to the annealed walk (note that $P_N = \bP$ for all
$N$). Lemma~\ref{claim:1} allows for a coupling which puts both walks
in the same $M$-box with very high probability. We strengthen this to
a coupling which puts both walks at exactly the same spatial position
with uniformly non-vanishing probability; see the proof of
Lemma~\ref{eq:MBoxTotalVariationDistance}.

Since we average over the environment in the definition of the
annealed law of the random walk in equation~\eqref{eq:15} it is clear
that the annealed random walk does not see any specific environment.
In contrast to that the quenched random walk knows the exact
environment it walks in. So, to compare the annealed and quenched laws
of the random walk, the annealed walk needs to see the environment of
the quenched random walk. This is done through reweighting by
$\varphi$.
In particular, a consequence of multiplying the annealed law with
$\varphi$ is that this product will be zero for all space-time points
$(x,n)\in\bZ^d\times\bZ$ in which the contact process $\xi$ is $0$ in
the environment $\omega$.

In Proposition~\ref{lem:limit_Z_omega} we show that the annealed law
of the random walk at time $n$ reweighted with the function $\varphi$
converges for almost all $\omega$ to a probability law on $\bZ^d$. It
is proven in Section~\ref{sec:proof-prop-refl}.

In Lemma~\ref{lem:uniqueness_prefactor} we will see that a prefactor
$\varphi$ satisfying \eqref{eq:rd-density} is unique. A quite general
proof of that result is given in Section~\ref{sect:prefactor_unique}.

\medskip \noindent \emph{Hybrid measures:} For the proof of
Theorem~\ref{thm:qlclt}, instead of comparing the quenched and
annealed laws directly, we use the triangle inequality, some
``hybrid''
measures and space-time convolutions of quenched-annealed
measures; see Definition~\ref{defn:auxiliary_measures}. In
Proposition~\ref{lem:main}, proven in
Section~\ref{sec:proof-prop-main}, we show that the total variation
distance of some of these measures converges to $0$ as $n$, the number
of steps of the random walk, goes to infinity. An essential tool of
the proof of Proposition~\ref{lem:main} is Lemma~\ref{lem:HIG_lemma}
in which we study the total variation distance of quenched laws of two
random walks starting at different positions. The idea is to use
couplings with the annealed measures on the level of large (growing)
boxes combined with annealed derivative estimates in order to first
ensure that the two walks are in the same box with probability bounded
away from $0$. Using connectivity properties of the oriented
percolation clusters (see below) the above described procedure can be
iterated to produce a literal coupling where the two walks coincide
with high probability after sufficiently many steps.
Lemma~\ref{lem:HIG_lemma} is proven in
Section~\ref{subsec:mixing_quenched}.

\medskip \noindent \emph{Oriented percolation results:} In the
appendix, Section~\ref{sec:inters-clust-points}, we show that two
infinite percolation clusters intersect with high probability within a
finite time. This result was pointed out in
\cite{GrimmettHiermerDPRW2000}, who proved that two infinite clusters
do intersect almost surely, but without the quantification of the time
of intersection. Finally, in Section~\ref{sec:quenched-random-walk},
we show that the probability that a random walk started off the
cluster does not hit the cluster within time $t$ decays exponentially
with $t$.

\section{Proofs of the main results}
\label{sec:proofs-main-results}

In this section we collect several important auxiliary results and
present towards the end of this section how to utilise them to prove
Theorem~\ref{thm:RD} and Theorem~\ref{thm:qlclt}. The proofs of the
auxiliary results are postponed to the subsequent sections.

Our starting point is a lemma which can be seen as an adaptation of
Theorem~5.1 in \cite{BergerCohenRosenthal2016} to our setting. Recall
between \eqref{eq:14} and \eqref{eq:15} the definitions of the
quenched measure $P_{\omega}^{(x,m)}$ and the annealed measure
$\bP^{(x,m)}$ for the random walk $(X_n)_{n=m,m+1,\dots}$ with
$X_m=x$. For any positive real number $L$ we denote by $\Pi_{L}$ a
partition of $\bZ^d$ into boxes of side length $\lfloor L \rfloor$.

\begin{lem}
  Let \label{claim:1} $d \ge 3$. For $N,M \in \bN$, $c, C>0$
  denote by $K(N) \coloneqq K(N,M,c,C)$ the set of environments
  $\omega \in \Omega$ such that for every $x\in \bZ^d$ satisfying
  $\norm{x}\le N$
  \begin{align}
    \label{eq:claim1}
    \sum_{\Delta \in \Pi_{M}} \bigl|P_{\omega}^{(x,0)} (X_{N} \in \Delta ) -
    \bP^{(x,0)}(X_{N} \in \Delta) \bigr| \le \frac{C}{M^{c}} +
    \frac{C}{N^{c}}.
  \end{align}
  If $c>0$ is small enough and $C<\infty$ large enough, there
  are universal positive constants $\tilde c$, $\widetilde C$, for
  which we have
  \begin{align}
    \label{eq:78}
    \bP\bigl(K(N)\bigr)\ge 1-\widetilde CN^{-\tilde c\log N}
    \quad \text{for all } N.
  \end{align}
\end{lem}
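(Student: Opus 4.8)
The plan is to bootstrap from the slower-than-diffusive comparison result of Steiber (Theorem~\ref{thm:Steibers_Thm3.24}, at box scale $N^{\theta/2}$) up to the diffusive scale $N^{1/2}$ via an iteration on dyadic (or geometrically increasing) time scales, following the blueprint of Theorem~5.1 in \cite{BergerCohenRosenthal2016}. Fix a base time $N_0$ and consider times $N_j$ growing geometrically, say $N_{j+1} = \lceil N_j^{1/\theta}\rceil$ or $N_{j+1}=N_j^{\kappa}$ for a suitable $\kappa>1$, so that after finitely many steps (of order $\log\log N$) one reaches scale $N$. The starting estimate at scale $N_0$ is provided by Theorem~\ref{thm:Steibers_Thm3.24}: on a set of environments of probability at least $1 - \widetilde C N_0^{-\tilde c \log N_0}$, the box-level total variation at scale $M_0 \sim N_0^{\theta/2}$ between quenched and annealed laws is at most $C/M_0^{c} + C/N_0^{c}$ for all starting points in a ball of radius $N_0$.

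The iteration step is the content of Proposition~\ref{prop_main_thm_5.1}. The idea: to control $P_\omega^{(x,0)}(X_{N_{j+1}}\in\Delta)$, decompose the walk by conditioning on its position $y=X_{N_j}$ at the intermediate time $N_j$; write $P_\omega^{(x,0)}(X_{N_{j+1}}\in\Delta) = \sum_y P_\omega^{(x,0)}(X_{N_j}=y)\, P_{\sigma_{(y,N_j)}\omega}^{(0,0)}(X_{N_{j+1}-N_j}\in \Delta - y)$ by the Markov property, and similarly for the annealed law, except that the annealed walk over the second time interval sees a fresh (but correlated!) environment. Comparing these two expressions requires: (i) an annealed local-CLT-type derivative/regularity estimate (from Section~\ref{sect:annealed_estimates}) controlling how much $\bP^{(\cdot,0)}(X_{N_j}\in\cdot)$ and the transition densities over the second interval spread over an $M$-box, so that small errors in position do not blow up; (ii) the inductive hypothesis at scale $N_j$ applied to both factors, valid for starting points in a ball of radius $N_j$ — here one must track that the walk does not leave a ball of polynomial radius, which holds with the same doubly-exponentially-good probability by standard tail bounds on the displacement; and (iii) a density argument: on the typical environment, the fraction of intermediate boxes $\Delta'$ that are ``good'' (Definition~\ref{def:good boxes}, i.e.\ boxes on which the scale-$N_j$ estimate holds locally uniformly) is close to $1$, so the bad boxes contribute negligibly. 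Summing the per-box errors over the $\sim (N_{j+1}/N_j^{?})^d$ relevant boxes and absorbing constants, one obtains the scale-$N_{j+1}$ estimate with a slightly worse constant; choosing the geometric growth rate and the exponent $c$ appropriately keeps the accumulated constant bounded after all $O(\log\log N)$ steps, and the accumulated exceptional probability stays of the form $\widetilde C N^{-\tilde c\log N}$ because each step's exceptional set has probability $N_j^{-\tilde c\log N_j}$ and these are summable with room to spare.

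The main obstacle — and the point where our setting genuinely departs from \cite{BergerCohenRosenthal2016} — is step (iii), controlling the density of good boxes. In the i.i.d.\ environment of \cite{BergerCohenRosenthal2016} the events ``box $\Delta'$ is good'' are essentially independent across well-separated boxes, so a large-deviation estimate for the count of bad boxes is immediate. In our model the oriented-percolation environment has infinite-range (though rapidly decaying) correlations, so one must instead invoke a quantitative weak-dependence or decoupling input for the percolation environment (of the type used to establish exponential decay of correlations for the upper invariant measure) to show that the number of bad $M$-boxes in a ball of radius $N$ exceeds $\varepsilon (N/M)^d$ only with probability at most $\widetilde C N^{-\tilde c\log N}$. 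Making this compatible with the doubly-exponential error bound — rather than merely stretched-exponential — is the delicate part, and it is why the probability bound in \eqref{eq:78} takes the peculiar form $\widetilde C N^{-\tilde c\log N}$ (i.e.\ $\exp(-\tilde c(\log N)^2)$): this is exactly the bound that is preserved, up to constants, under the logarithmically many iteration steps while still being strong enough for the later Borel–Cantelli argument used to deduce Theorem~\ref{thm:qlclt}. A secondary technical nuisance is handling the residual times ($N_{j+1}-N_j$ is not exactly of the form needed for the inductive hypothesis) and ensuring starting points stay in the correct balls; these are dealt with by the exponential displacement tail bounds and by allowing the radius of validity to be a large polynomial in the current time scale.
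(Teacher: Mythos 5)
Your proposal correctly identifies the skeleton the paper uses: start from Steiber's comparison (Theorem~\ref{thm:Steibers_Thm3.24}) at box scale $N^{\theta/2}$, iterate roughly $\log\log N$ times via a Markov--property decomposition at intermediate times, track a notion of ``good boxes'' at each intermediate time, and argue that the density of bad boxes is negligible. You also correctly explain why the final probability bound has the shape $N^{-\tilde c\log N}$. However, the mechanics of your iteration are backwards, and this is not merely a bookkeeping slip. You propose times growing geometrically, $N_{j+1}\approx N_j^{1/\theta}$, while comparing at box scale $M$ throughout. But the only tool you have for the second factor in the Markov decomposition (the last $N_{j+1}-N_j$ steps from a point $y$) is Steiber at the corresponding time length, which gives box scale $(N_{j+1}-N_j)^{\theta/2}$ --- a quantity that \emph{grows} with $j$, never reaching $M$. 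The paper instead keeps all intermediate times $N_k$ within distance $\sum_j n_j\approx\sqrt N$ of the target time $N$, sets $n_k=\lfloor N^{1/2^k}\rfloor$ so the time increments \emph{shrink}, and compares at box scale $n_k^\theta$, which shrinks from $N^{\theta/2}$ to $M$ over $r(N)\sim\log\log N$ steps; the error accumulates as $\lambda_{r(N)}\le\lambda_1+C\sum_k n_k^{-\alpha}\lesssim N^{-c}+M^{-c}$, which is exactly \eqref{eq:claim1}. Your iterated application of the inductive hypothesis to ``both factors'' doesn't close, since the second factor lives at the larger time scale $N_{j+1}-N_j$ where the hypothesis hasn't been established.

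On point (iii), you correctly identify the genuine novelty --- the environment is not i.i.d.\ so the good/bad status of well-separated boxes is not independent --- but your proposed fix (a generic decoupling input of the type used for exponential decay of correlations of the upper invariant measure) is too coarse. The goodness of a box is a property of the \emph{quenched walk}, a nonlocal functional of $\omega$ via $\xi$ ("connected to $+\infty$"), not a static observable. What the paper actually does (Proposition~\ref{prop_key} and Corollary~\ref{cor:coupling_xi_and_tilde_xi}) is construct a finite-range surrogate field $\tilde\xi^k$ in which ``connected to $+\infty$'' is replaced by ``connected to time $n_k+\log^2 n_k$,'' so that $(k,\theta,\varepsilon,\tilde\xi^k)$-goodness of a box depends on $\omega$ in a bounded window, yielding genuine independence at distance $>5n_k$ and a Hoeffding estimate for the count of bad boxes; one then pays an exponentially small price (Lemma~\ref{lem:lower_bound_on_survival_prob}) for coupling $\xi$ and $\tilde\xi^k$ on that window. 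Without this surrogate construction, the large-deviation estimate on the bad-box density --- the heart of the iteration step --- is not available at the doubly-exponential strength required.
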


In words, Lemma~\ref{claim:1} shows that the total variation distance
between the annealed measure $\bP^{(x,0)}(X_N \in \cdot)$ and the
quenched measure $P_\omega^{(x,0)}(X_N \in \cdot)$ on the level of
boxes of side length $M \gg 1$ is small with very high probability as
$N\to\infty$. The proof of Lemma~\ref{claim:1} is given in
Section~\ref{sect:box_level_comparisons}. It builds on a preliminary
result by Steiber \cite[Theorem~3.24]{SteibersPhD2017} which we recall
in Theorem~\ref{thm:Steibers_Thm3.24} below. The latter gives an
analogous result to Lemma~\ref{claim:1} for boxes of side length
$N^{\theta/2}$ with $0 < \theta < 1$ for large $N$. In particular, for
$N \to \infty$ the side length of these boxes grows much more slowly
than the diffusive scale $N^{1/2}$.

Lemma~\ref{claim:1} allows to construct a coupling of the quenched
walk under $P^{(x,0)}_\omega$ and the annealed walk under
$\bP^{(x,0)}$ which puts both walks in the same $M$-box with very high
probability. We strengthen this coupling to a coupling which puts both
walks at exactly the same spatial position with uniformly
non-vanishing probability; see
Lemma~\ref{eq:MBoxTotalVariationDistance} below. This, in turn, is
essential for the next statement which concerns the difference between
the annealed and quenched law of the environment viewed relative to
the walk after $N$ steps, which we denote by $P_N$ and $Q_N$
respectively. More precisely, for $N\in \bN$, we define $Q_N$ and
$P_N$ by
\begin{align}
  \label{eq:defn_P_N}
  P_N(A) \coloneqq \bE\Big[\sum_{x\in \bZ^d}
  \bP^{(0,0)}(X_N = x)\ind{\sigma_{(x,N)} \omega \in A}\Big]
\end{align}
and
\begin{align}
  \label{eq:defn_Q_N}
  Q_N(A) \coloneqq  \bE\Big[\sum_{x \in \bZ^d}
  P^{(0,0)}_\omega(X_N =x)\ind{\sigma_{(x,N)} \omega \in A} \Big].
\end{align}
Note that, in fact we have $P_N = \bP$ for all $N$; see \eqref{eq:73}.

\medskip
The following proposition is proven in
Section~\ref{sect:quenched-annealed_coupling}.
\begin{prop}
  \label{lem:box_average_estimate}
  For $M \in \bN$ let $\Delta_0(M)$ denote a $d$-dimensional cube of
  side length $M$ in $\bZ^d$ centred at the origin. There exists a
  universal constant $c>0$ so that for every $\varepsilon >0$ there is
  $M_0=M_0(\varepsilon) \in \bN$ so that for $M \ge M_0$ and all
  $N \in \bN$
  \begin{align}
    \label{eq:29}
    \bP\Bigl(\Big\lvert\frac{1}{\abs{\Delta_0(M)}}
    \sum_{x\in \Delta_0(M)}\frac{dQ_N}{d\bP}(\sigma_{(x,0)}\omega)-1
    \Big\rvert>\varepsilon \Bigr) \le M^{-c\log M}.
  \end{align}
\end{prop}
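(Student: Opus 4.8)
\emph{Step 0: identify the density.}
The first task is to write down $\varphi_N:=dQ_N/d\bP$ explicitly. Substituting $\omega\mapsto\sigma_{(-x,-N)}\omega$ in \eqref{eq:defn_Q_N} and using that $\bP$ is invariant under space–time shifts, one finds that for $\bP$-a.e.\ $\omega$
\[
\varphi_N(\omega)=\sum_{z\in\bZ^d}P_\omega^{(z,-N)}(X_0=0),\qquad\text{hence}\qquad \varphi_N(\sigma_{(x,0)}\omega)=\sum_{z\in\bZ^d}P_\omega^{(z,-N)}(X_0=x).
\]
Thus the quantity to control is $F_{M,N}(\omega):=\tfrac1{M^d}\sum_{z}P_\omega^{(z,-N)}(X_0\in\Delta_0(M))$, and since $\bP^{(z,-N)}(X_0=x)=\bP^{(0,0)}(X_N=x-z)$ sums to $1$ over $z$ we get $\bE[F_{M,N}]=1$; the Proposition is a concentration statement for $F_{M,N}$. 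Using time-stationarity of $\bP$ we may equivalently study $\widetilde F_{M,N}(\omega):=\tfrac1{M^d}\sum_{z}P_\omega^{(z,0)}(X_N\in\Delta_0(M))=\tfrac1{M^d}\sum_{x\in\Delta_0(M)}h_N(x;\omega)$, where $h_N(\,\cdot\,;\omega)$ is the counting measure $\mathbf 1$ transported by $N$ steps of the quenched walk; then $\widetilde F_{M,N}\overset{d}{=}F_{M,N}$.

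\emph{Step 1: telescope into mesoscopic averages of a mean-zero field.}
Let $\mathcal{A}^{(n)}$ be the quenched forward (Kolmogorov) operator, $h_{n+1}=\mathcal{A}^{(n)}h_n$. Then $\mathcal{A}^{(n)}\mathbf 1=\mathbf 1+e_n$, where $e_n(y):=\sum_xP_\omega^{(x,n)}(X_{n+1}=y)-1$ is the ``in-flow excess'' at $(y,n+1)$: it is bounded, measurable with respect to the environment at times $n,n+1$ together with the cluster $\cC$, and has $\bE[e_n(y)]=0$. Telescoping gives $h_N-\mathbf 1=\sum_{k=0}^{N-1}\mathcal{A}^{(N-1)}\cdots\mathcal{A}^{(k+1)}e_k$, hence
\[
\widetilde F_{M,N}(\omega)-1=\sum_{k=0}^{N-1}S_k,\qquad S_k:=\sum_{y\in\bZ^d}e_k(y)\,w_\omega^{(k+1)}(y),\quad w_\omega^{(k+1)}(y):=\tfrac1{M^d}P_\omega^{(y,k+1)}(X_N\in\Delta_0(M)).
\]
The weights are nonnegative with $\sum_y w_\omega^{(k+1)}(y)=\widetilde F_{M,N-k-1}(\sigma_{(0,k+1)}\omega)$, which is close to $1$ with high probability by induction on $N$. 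Combining the annealed local CLT (Theorem~\ref{lem:annealed_local_CLT}) with Lemma~\ref{claim:1} applied to the starting points $y$ in the relevant ball, one shows that outside an event of probability $\le M^{-c\log M}$ the weight $w_\omega^{(k+1)}$ is genuinely diffusive: essentially all of its mass sits on $\norm{y}\lesssim M\vee\sqrt{N-k}$ and $\sup_yw_\omega^{(k+1)}(y)\lesssim(M\vee\sqrt{N-k})^{-d}$.

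\emph{Step 2: the estimate and the rate.}
By the exponential mixing of the upper invariant measure of the supercritical oriented percolation, the field $e_k(\,\cdot\,)$ has exponentially decaying correlations, so conditionally on the environment above time $k+1$ the increment $S_k$ is (up to a negligible error) a weighted average of a bounded, weakly dependent, mean-zero field against a weight spread over $\gtrsim(M\vee\sqrt{N-k})^d$ sites. Standard concentration for weakly dependent fields then gives $\bE[S_k^2\mid\text{env.\ above }k+1]\lesssim(M\vee\sqrt{N-k})^{-d}$, and — after peeling off conditional means scale by scale so as to turn $(S_k)$ into an almost-orthogonal family — summing yields
\[
\bE\bigl[(\widetilde F_{M,N}-1)^2\bigr]\ \lesssim\ \sum_{k=0}^{N-1}\bigl(M\vee\sqrt{N-k}\bigr)^{-d}\ \lesssim\ \sum_{T\ge1}(M\vee\sqrt T)^{-d}\ \asymp\ M^{2-d}
\]
uniformly in $N$; the last sum is finite precisely because $d\ge3$ (for $d=2$ it diverges like $\log N$, matching the remarks in the introduction about how often the two walks meet). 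Iterating the same block scheme for higher even moments — equivalently, running the $Q_N$-versus-$\bP$ coupling built from Lemma~\ref{claim:1} and bootstrapping as in its proof — produces bounds $\bE[(\widetilde F_{M,N}-1)^{2j}]\le(C_0\,j\,M^{-(d-2)})^{j}$ for all $j\lesssim\log M$; taking $j\asymp\log M$ and applying Chebyshev's inequality gives $\bP(|\widetilde F_{M,N}-1|>\varepsilon)\le\varepsilon^{-2j}\bE[(\widetilde F_{M,N}-1)^{2j}]\le M^{-c\log M}$ for $M\ge M_0(\varepsilon)$ and all $N$, which is the assertion. (For bounded $N$ one argues directly: $\varphi_N$ is then a bounded functional of $\omega$ with exponentially decaying spatial correlations, so decomposing $\Delta_0(M)$ into $\asymp(\log M)$-boxes separated by buffer regions gives the same tail.)

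\emph{Main obstacle.}
The heart of the matter is Step~2: the increments $S_k$ are \emph{not} independent — they are coupled both through the single object $\cC$, which lives at all time coordinates, and through the weights $w_\omega^{(k+1)}$, which are built from the walk in the very same environment — so making the ``almost orthogonality'' quantitative, and then upgrading the second-moment estimate to controlled high (or exponential) moments \emph{uniformly in $N$}, is where essentially all the work lies. This is exactly what the coupling construction of $Q_N$ and $\bP$ in Section~\ref{sect:quenched-annealed_coupling}, together with the super-polynomial bound \eqref{eq:78} of Lemma~\ref{claim:1}, is designed to provide.
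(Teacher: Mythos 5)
Your Step~0 and Step~1 are sound, and they set up a genuinely different strategy from the paper's. The formula $\varphi_N(\omega)=\sum_z P^{(z,-N)}_\omega(X_0=0)$ is correct (it is \eqref{eq:phi_N}), the telescoping $h_N-\mathbf 1=\sum_{k=0}^{N-1}\mathcal A^{(N-1)}\cdots\mathcal A^{(k+1)}e_k$ with $\bE[e_k(y)]=0$ is valid, and the identity $\widetilde F_{M,N}-1=\sum_k S_k$ with $S_k=\sum_y e_k(y)w_\omega^{(k+1)}(y)$ checks out. That is an attractive decomposition and, if the moment program in Step~2 could be carried through, you would get a cleaner statement than the paper's proof produces. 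By contrast, the paper does not use any telescoping or moment computation: it constructs a coupling $\Psi_N$ of $\bP$ and $Q_N$ on $\Omega\times\Omega$ via Lemma~\ref{lem:coupling_Theta}, controls the conditional escape probabilities $D^{(i)}_{M,N}$ in Lemma~\ref{lem:upper_bound_D(1)M_and_D(2)M}, and then runs a packing/contradiction argument on well-separated translates $K^\pm$ of the ``bad'' events $B^\pm_\varepsilon$, with the super-polynomial rate coming directly from Lemma~\ref{claim:1}. That is essentially BCR's Lemma~6.5--6.6, adapted.

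The gap in your proposal is Step~2, and it is genuine. First, the asserted conditional bound $\bE[S_k^2\mid\text{env.\ above }k+1]\lesssim(M\vee\sqrt{N-k})^{-d}$ has no derivation, and the ``almost orthogonality'' of the $S_k$ is not established: since $e_k$ depends on $\xi_k,\xi_{k+1}$, which are non-local functionals of $\omega$ at \emph{all} times $\ge k$, and since the weights $w^{(k+1)}$ depend on $\xi$ at all times $\ge k+1$, the natural (decreasing) filtration $\mathcal G_{k+1}$ does not annihilate $\bE[S_k\mid\mathcal G_{k+1}]$, so there is no clean martingale structure to invoke. Second, and decisively for the stated claim: the second moment bound $\bE[(\widetilde F_{M,N}-1)^2]\lesssim M^{2-d}$ would only yield a \emph{polynomial} Chebyshev tail, not the super-polynomial $M^{-c\log M}$ required. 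Your formula for the $2j$-th moments with $j\asymp\log M$ is exactly what would be needed, but you give no argument for it --- and in your ``Main obstacle'' paragraph you in fact concede that producing it is ``where essentially all the work lies'' and amounts to re-running the paper's coupling machinery. That concession confirms the proposal stops short of a proof. (The closing parenthetical about bounded $N$ is also not quite right as stated: $\varphi_N$ is a local functional of $\xi$, not of $\omega$, and $\xi$ itself depends on the environment at all later times, so ``bounded functional of $\omega$ with exponentially decaying spatial correlations'' requires justification of its own.)
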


\begin{cor}
  \label{cor:finite_moments_prefactor}
  Let $d\ge3$ and $p>p_c$. Then, for every $k \in \bN$,
  $\sup_{N}\bE[(\frac{dQ_N}{d\bP})^k]< \infty$.
\end{cor}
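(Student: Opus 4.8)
The plan is to deduce the uniform bound on moments of $dQ_N/d\bP$ directly from the concentration estimate in Proposition~\ref{lem:box_average_estimate}, by exploiting translation invariance of $\bP$ and the fact that $Q_N \ll \bP$. First I would fix $k \in \bN$ and introduce the shorthand $\psi_N \coloneqq dQ_N/d\bP$. Because $\bP$ is invariant under the space-time shifts $\sigma_{(x,0)}$, for every $x \in \bZ^d$ we have $\bE[\psi_N^k] = \bE[\psi_N(\sigma_{(x,0)}\omega)^k]$, and hence by averaging over a centred cube $\Delta_0(M)$,
\begin{align}
  \label{eq:cor-moment-avg}
  \bE[\psi_N^k] = \bE\Bigl[\frac{1}{\abs{\Delta_0(M)}}
    \sum_{x\in\Delta_0(M)} \psi_N(\sigma_{(x,0)}\omega)^k\Bigr].
\end{align}
The key point is that averaging the $k$-th powers over the box is comparable to the $k$-th power of the box average precisely on the event where the density is spread out rather than concentrated — which is exactly what Proposition~\ref{lem:box_average_estimate} controls.

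The main idea is a two-regime split. Fix $\varepsilon \in (0,1)$ and let $M \ge M_0(\varepsilon)$ be as in Proposition~\ref{lem:box_average_estimate}; write $G_M = G_M(N)$ for the good event $\bigl\{\bigl|\abs{\Delta_0(M)}^{-1}\sum_{x\in\Delta_0(M)}\psi_N(\sigma_{(x,0)}\omega) - 1\bigr| \le \varepsilon\bigr\}$, so $\bP(G_M^c) \le M^{-c\log M}$. On $G_M$ the box average of $\psi_N$ is at most $1+\varepsilon \le 2$; however this does not immediately bound the box average of $\psi_N^k$, since the $\ell^1$ control of a finite family does not control its $\ell^k$ norm. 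To get around this I would iterate the box estimate at a hierarchy of scales $M, M^2, M^3, \dots$ (or dyadic scales), observing that $\psi_N(\sigma_{(x,0)}\omega)$ itself is, up to the change of base point, a Radon–Nikodym derivative of the same form relative to the shifted environment, so Proposition~\ref{lem:box_average_estimate} applies at every scale. This yields that, outside an event of probability $\sum_j (M^j)^{-c\log M^j}$ — which is summable and small — the average of $\psi_N$ over the box of side $M^{j}$ is close to $1$ for all $j$ simultaneously; a martingale/maximal-type argument over these nested averages then forces the individual values $\psi_N(\sigma_{(x,0)}\omega)$ to lie in a bounded range on a set of full measure within the good event, uniformly in $N$. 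Plugging this back into \eqref{eq:cor-moment-avg} and using $\psi_N \ge 0$ together with $\bE[\psi_N] = 1$ (so $\psi_N$ cannot be large on a large set) bounds the contribution of $G_M$, while on $G_M^c$ one uses $\bE[\psi_N^k \ind{G_M^c}] \le \bE[\psi_N \ind{G_M^c}]^{1/k'} \cdot (\text{something})$ — more precisely, since only an $\ell^1$ bound on $\psi_N$ is available a priori, the cleanest route is to first establish an a.s.\ bound $\psi_N \le C_0$ (uniform in $N$) on a set whose complement has $\bP$-measure zero, by taking $\varepsilon \to 0$ and $M \to \infty$ along the nested scales, after which all moments are trivially bounded by $C_0^{k-1}\bE[\psi_N] = C_0^{k-1}$.

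The step I expect to be the main obstacle is precisely the passage from an $\ell^1$-type box-average control (Proposition~\ref{lem:box_average_estimate} only says the average is near $1$, i.e.\ controls $\ell^1$) to a genuine pointwise or $\ell^k$ bound on $\psi_N$. One must leverage that the estimate holds simultaneously at a whole range of scales with super-polynomially small exceptional probabilities, and that $\psi_N$ at a shifted base point is again governed by the same proposition, so that a chaining argument across scales pins down the density. A clean way to organise this is to note that $\bigl(\abs{\Delta_0(M^j)}^{-1}\sum_{x \in \Delta_0(M^j)} \psi_N(\sigma_{(x,0)}\omega)\bigr)_{j}$ is, for fixed $N$, a reverse martingale under $\bP$ with respect to the decreasing filtration of box averages, converging $\bP$-a.s.\ to $\bE[\psi_N \mid \mathcal{I}] = 1$ by ergodicity of $\bP$ under spatial shifts (and $\bE[\psi_N]=1$); combining this almost-sure convergence with the uniform-in-$N$ concentration rate from Proposition~\ref{lem:box_average_estimate} gives a uniform-in-$N$ essential bound on $\psi_N$, hence $\sup_N \bE[\psi_N^k] < \infty$ for every $k$. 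The remaining details — checking the reverse-martingale structure, summing the exceptional probabilities, and handling the base-point shift in the self-similarity of $\psi_N$ — are routine once this architecture is in place.
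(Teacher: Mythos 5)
Your opening step, averaging $\psi_N^k$ over the shifts in a box, is fine, but you then diagnose the wrong obstacle and go down a blind alley. You write that ``the $\ell^1$ control of a finite family does not control its $\ell^k$ norm'', and you set out to remedy this with a multi-scale chaining / reverse-martingale argument aiming at an almost sure uniform bound $\psi_N\le C_0$. Two problems. First, such an a.s.\ uniform bound is much more than one can expect or needs: Proposition~\ref{lem:box_average_estimate} only gives super-polynomial tail decay for the box average, which translates into super-polynomial tail decay for $\psi_N$, not into an essential supremum bound; nothing in the paper establishes $\psi_N\in L^\infty(\bP)$ uniformly in $N$, and the reverse-martingale / ergodicity observation only yields a.s.\ convergence of the averages, not a uniform-in-$N$ pointwise ceiling. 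Second, and more decisively, the ``obstacle'' dissolves once you use the one piece of structure you have but don't exploit: $\psi_N\ge 0$. If nonnegative numbers $a_x$, $x\in\Delta_0(M)$, have average at most $2$, then \emph{each} $a_x\le 2\abs{\Delta_0(M)}$; in particular, taking $x=0$, $\psi_N(\omega)\le 2\abs{\Delta_0(M)}$ on the good event. Equivalently, $\{\psi_N(\omega)>2\abs{\Delta_0(M)}\}$ is contained in the event that the box average exceeds $2$, so
\begin{align*}
  \bP\bigl(\psi_N(\omega)>2(2M+1)^d\bigr)\le
  \bP\Bigl(\frac{1}{(2M+1)^d}\sum_{x\in\{-M,\dots,M\}^d}\psi_N(\sigma_{(x,0)}\omega)>2\Bigr)\le M^{-c\log M}
\end{align*}
for $M\ge M_0(1)$, uniformly in $N$. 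This is the entire content of the paper's proof: a tail bound of order $\exp\bigl(-c'(\log t)^2\bigr)$ for $\bP(\psi_N>t)$ (by choosing $M\asymp t^{1/d}$), which dominates every polynomial and hence makes $\sup_N\bE[\psi_N^k]<\infty$ for every $k$ by the layer-cake formula. There is no need for scales $M,M^2,\dots$, no martingale argument, and no attempt to localize $\psi_N$ in $L^\infty$; the nonnegativity observation is the single missing ingredient in your proposal, and your multi-scale architecture is both unnecessary and, as an attempt to prove a.s.\ boundedness, unsound.
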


\begin{proof}
  For $M \in \bN$ large enough,
  Proposition~\ref{lem:box_average_estimate} implies
  \begin{align*}
    \bP \Bigl(\frac{dQ_N}{d\bP}(\omega)>2(2M+1)^d\Bigr)
    \le \bP\Bigl(\frac{1}{(2M+1)^d}\sum_{x\in
    \{-M,\dots,M\}^d}\frac{dQ_N}{d\bP}(\sigma_{(x,n)}\omega)>2\Bigr)
    \le M^{-c\log M},
  \end{align*}
  which implies the assertion.
\end{proof}

We equip $\Omega$ with the product topology and consider the Ces\`aro
sequence
\begin{align}
  \label{eq:80}
  \widetilde{Q}_n \coloneqq \frac{1}{n}\sum_{N=0}^{n-1}Q_N, \quad n =1,2,\dots.
\end{align}
Using Corollary~\ref{cor:finite_moments_prefactor} and the Cauchy-Schwarz
inequality for some finite positive constant $\tilde c$ we have
\begin{align}
  \label{eq:82}
  \begin{split}
    \bE\Bigl[ \Bigl( \frac{1}{n}\sum_{N=0}^{n-1}\frac{dQ_N}{d\bP}
    \Bigr)^2 \Bigr]
    & = \frac{1}{n^2} \sum_{N,N'=0}^{n-1} \bE\Big[
    \frac{dQ_N}{d\bP}\frac{dQ_{N'}}{d\bP} \Big]
    \le 
    \tilde c.
  \end{split}
\end{align}
For $\varepsilon>0$ let $K\subset \Omega$ be a compact subset such
that $\bP(K^{\mathsf{c}})<\varepsilon$. Then by the Cauchy-Schwarz
inequality we obtain
\begin{align*}
  \widetilde{Q}_n(K^{\mathsf{c}}) = \int_\Omega \indset{K^{\mathsf{c}}}
  \frac{d\widetilde{Q}_n}{d\bP} \,d\bP \le \sqrt{\tilde c}\bP(K^{\mathsf{c}})^{1/2} =
  \sqrt{\tilde c\varepsilon}.
\end{align*}
Thus, the sequence $(\widetilde{Q}_n)_n$ is tight. In particular, there is
a weakly converging subsequence, say $(\widetilde{Q}_{n_k})_k$, and we set
\begin{align}
  \label{eq:77}
  Q \coloneqq  \lim_{k\to \infty}\widetilde{Q}_{n_k}.
\end{align}
A standard argument shows that $Q$ is invariant with respect to the
point of view of the particle; see Proposition~1.8 in
\cite{LiggettIPS1985} for an abstract argument or the proof of Lemma~1
in \cite{DrewitzRamirez2014} for the argument in the case of random
walks in random environments.
\smallskip

The proof of the following analogue of
Proposition~\ref{lem:box_average_estimate} for $Q$ instead of $Q_n$ is
given in Section~\ref{sect:quenched-annealed_coupling}.

\begin{cor}
  Recall \label{cor:con_prop_Q} the notation of
  Proposition~\ref{lem:box_average_estimate} and let $Q$ be the
  measure obtained as a limit in \eqref{eq:77}. There exists a
  universal constant $c>0$ so that for every $\varepsilon >0$ there is
  $M_0=M_0(\varepsilon) \in \bN$ and for every $M \ge M_0$ we have
  \begin{align}
    \label{eq:29a}
    \bP\Bigl(\Big\lvert\frac{1}{\abs{\Delta_0(M)}}
    \sum_{x\in \Delta_0(M)}\frac{dQ}{d\bP}(\sigma_{(x,0)}\omega)-1
    \Big\rvert>\varepsilon \Bigr) \le M^{-c\log M}.
    \end{align}
\end{cor}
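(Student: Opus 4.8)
The plan is to derive the concentration estimate \eqref{eq:29a} for $Q$ from the corresponding estimate \eqref{eq:29} for the $Q_N$, passing through the Ces\`aro averages $\widetilde{Q}_n$ and then to the weak limit $Q$ along the subsequence $(n_k)$. First I would observe that for each fixed $n$ and each fixed box size $M$, the event
\begin{align*}
  A_{M,\varepsilon} \coloneqq \Bigl\{\omega : \Big\lvert\tfrac{1}{\abs{\Delta_0(M)}}
  \sum_{x\in \Delta_0(M)}\tfrac{dQ_N}{d\bP}(\sigma_{(x,0)}\omega)-1 \Big\rvert>\varepsilon \Bigr\}
\end{align*}
is not directly available for $\widetilde{Q}_n$ because $d\widetilde{Q}_n/d\bP = \tfrac1n\sum_{N=0}^{n-1} dQ_N/d\bP$ is an average of the densities, not a single one; by convexity of $|\cdot|$ (triangle inequality) we still get
\begin{align*}
  \Big\lvert\tfrac{1}{\abs{\Delta_0(M)}}\sum_{x\in \Delta_0(M)}\tfrac{d\widetilde{Q}_n}{d\bP}(\sigma_{(x,0)}\omega)-1\Big\rvert
  \le \tfrac1n\sum_{N=0}^{n-1}\Big\lvert\tfrac{1}{\abs{\Delta_0(M)}}\sum_{x\in \Delta_0(M)}\tfrac{dQ_N}{d\bP}(\sigma_{(x,0)}\omega)-1\Big\rvert,
\end{align*}
so by a union bound (after splitting the right-hand side by whether each summand exceeds $\varepsilon/2$, say, together with a Markov-type control using the uniform $L^2$ bound from \eqref{eq:82} to handle the complementary contribution) one obtains an estimate of the form $\bP(\text{analogous event for }\widetilde{Q}_n) \le M^{-c'\log M}$ uniformly in $n$, for $M$ large depending only on $\varepsilon$.

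The second and more delicate step is to pass to the limit $Q = \lim_k \widetilde{Q}_{n_k}$. Here I would fix $M$ and $\varepsilon$ and note that $F(\omega) \coloneqq \tfrac{1}{\abs{\Delta_0(M)}}\sum_{x\in \Delta_0(M)}\mathbbm{1}_{\{\cdot\}}(\sigma_{(x,0)}\omega)$-type quantities: more precisely the map $\omega \mapsto \tfrac{1}{\abs{\Delta_0(M)}}\sum_{x\in \Delta_0(M)}\varphi(\sigma_{(x,0)}\omega)$ is a \emph{local} function (depending only on coordinates of $\omega$ in a bounded space-time window) up to the unboundedness of $\varphi = dQ/d\bP$. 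The subtlety is that weak convergence $\widetilde{Q}_{n_k}\to Q$ only controls integrals of bounded continuous functions against these measures, whereas we need a statement about the $\bP$-a.s.\ behaviour of the \emph{density} $dQ/d\bP$. I would resolve this as in the construction of $Q$: for any bounded continuous local $f\ge 0$, $\int f\,dQ = \lim_k \int f\,d\widetilde{Q}_{n_k} = \lim_k \int f\cdot\tfrac{d\widetilde{Q}_{n_k}}{d\bP}\,d\bP$, and the uniform $L^2$ bound \eqref{eq:82} gives that the densities $d\widetilde{Q}_{n_k}/d\bP$ are uniformly integrable, hence (along a further subsequence if needed, but the limit $Q$ is fixed) $dQ/d\bP$ is a weak-$L^1$ (indeed weak-$L^2$) limit of $d\widetilde{Q}_{n_k}/d\bP$. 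Testing against indicator functions of cylinder events shows $dQ/d\bP$ inherits the bound: for any cylinder event $B$ depending only on finitely many coordinates,
\begin{align*}
  \int_B \frac{dQ}{d\bP}\,d\bP = \lim_k \int_B \frac{d\widetilde{Q}_{n_k}}{d\bP}\,d\bP,
\end{align*}
and applying this with $B$ an event measurable with respect to the relevant window lets one transfer the concentration bound. Concretely, the event in \eqref{eq:29a} is (up to approximating the density $\varphi$ by bounded truncations, controlled again by \eqref{eq:82} and uniform integrability) a limit of the corresponding events for $\widetilde{Q}_{n_k}$, and since the $\bP$-probability of those is at most $M^{-c'\log M}$ uniformly, the same bound survives in the limit; finally one relabels $c'$ as $c$ and adjusts $M_0(\varepsilon)$.

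The main obstacle I anticipate is precisely this interchange of the weak limit with the (unbounded) Radon--Nikodym density and with the indicator of the concentration event: weak convergence of measures does not a priori say anything about almost-sure behaviour of densities, so the argument must go through uniform integrability (supplied by Corollary~\ref{cor:finite_moments_prefactor} / \eqref{eq:82}) to upgrade weak-$*$ convergence of measures to weak-$L^1$ convergence of densities, and then through a truncation/cylinder-approximation argument to handle both the unboundedness of $\varphi$ and the fact that $\sigma_{(x,0)}\omega$ for $x\in\Delta_0(M)$ involves a genuinely growing (with $M$) but finite window of coordinates. Once the densities are known to converge in the appropriate weak sense and the test functions are reduced to cylinder functions, the transfer of the estimate is routine; the bookkeeping of constants (the $c$ in \eqref{eq:29a} may be smaller than the one in \eqref{eq:29}, and $M_0$ larger) is harmless since all we claim is the existence of \emph{some} universal $c>0$.
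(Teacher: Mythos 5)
Your overall plan is plausible, but as written the crucial transfer step contains a genuine gap: the claim that ``the event in \eqref{eq:29a} is a limit of the corresponding events for $\widetilde{Q}_{n_k}$, and since the $\bP$-probability of those is at most $M^{-c'\log M}$ uniformly, the same bound survives in the limit'' is false. Weak convergence of the measures $\widetilde{Q}_{n_k}$ (or even weak-$L^2$ convergence of the densities $d\widetilde{Q}_{n_k}/d\bP \rightharpoonup dQ/d\bP$, which does follow from \eqref{eq:82} plus Banach--Alaoglu) does not imply convergence of the probabilities $\bP\big(|\frac{1}{|\Delta_0|}\sum_x \varphi_n(\sigma_{(x,0)}\cdot)-1|>\varepsilon\big)$ of these non-cylinder, density-dependent events, and cylinder approximation alone does not repair this because $\varphi$ is not local and the map $\varphi\mapsto\mathbbm{1}_{\{|\cdots|>\varepsilon\}}$ is discontinuous. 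Something \emph{can} be salvaged: writing $\widetilde W_n-1=(\widetilde W_n-1)\mathbbm{1}_{|\widetilde W_n-1|\le\varepsilon}+(\widetilde W_n-1)\mathbbm{1}_{|\widetilde W_n-1|>\varepsilon}$ where $\widetilde W_n$ is the box average of $d\widetilde Q_n/d\bP$, the first part has a weak limit bounded by $\varepsilon$ (closed balls in $L^\infty$ are weakly closed in $L^2$), while the $L^1$-norm of the second part is $\lesssim\sqrt{\delta}$ with $\delta=M^{-c\log M}$ (Cauchy--Schwarz with the uniform moment bound from Corollary~\ref{cor:finite_moments_prefactor}); weak lower semicontinuity of $\|\cdot\|_1$ and Markov then give $\bP(|W-1|>C\varepsilon)\lesssim\varepsilon^{-3/2}\sqrt{\delta}$, which for $M\ge M_0(\varepsilon)$ is $\le M^{-c'\log M}$ with a smaller universal $c'$. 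This truncation--LSC--Markov chain (and an analogous one already at the Ces\`aro stage, since a naive union bound over $N=0,\dots,n-1$ is not uniform in $n$) is what is actually needed, and none of it is in your sketch; ``routine once reduced to cylinder functions'' understates the issue.

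The paper's actual proof takes a different and cleaner route that entirely avoids transferring the concentration estimate across a weak limit. It builds the coupling $\Psi$ as a weak limit of Ces\`aro averages of the couplings $\Psi_N$, defines $D^{(i)}_M$ in terms of $\Psi$ in analogy with $D^{(i)}_{M,N}$, proves that $D^{(i)}_M$ is a $\bP$-a.s.\ (resp.\ $Q$-a.s.) limit along a subsequence of the Ces\`aro averages of $D^{(i)}_{M,N}$ — these are $[0,1]$-valued, so passing the bound \eqref{eq:28} to the limit is immediate — and then simply re-runs the contradiction argument of Proposition~\ref{lem:box_average_estimate} verbatim with $Q$, $\Psi$, $D^{(i)}_M$ in place of $Q_N$, $\Psi_N$, $D^{(i)}_{M,N}$. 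The advantage is that all limits are taken at the level of bounded auxiliary objects, not at the level of the (unbounded, density-dependent) concentration event itself; your approach, even after being repaired, requires tracking a quantitative degradation of the exponent $c$ and the threshold $M_0(\varepsilon)$ through two layers of Cauchy--Schwarz and Markov, which the paper's structure sidesteps.
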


\begin{proof}[Proof of Theorem~\ref{thm:RD}]
  By construction and shift invariance of $\bP$ we have $Q_N\ll \bP$
  for every $N$ and therefore $\tilde Q_n \ll \bP$ for every $n$.
  Furthermore, by \eqref{eq:82} the family of Radon-Nikodym
  derivatives $(d\tilde Q_n/d\bP)_{n=1,2,\dots}$ is uniformly
  integrable. These facts together imply that we also have $Q \ll \bP$
  for any $Q$ obtained as in \eqref{eq:77}. The concentration property
  is the assertion of Corollary~\ref{cor:con_prop_Q}. For the question
  of uniqueness of $Q$ see Remark~\ref{rem:Q-estimate} below.
\end{proof}

\begin{rem}
  \label{rem:phi_N}
  Using shift-invariance of $\bP$, it is easy to see that for $Q_N$
  from \eqref{eq:defn_Q_N} a version of ${d Q_N}/{d\bP}$ is given by
  \begin{align}
    \label{eq:phi_N}
    \varphi_N(\omega) = \sum_{x \in \bZ^d} P^{(-N,x)}_\omega(X_0=0)
  \end{align}
  (we have
  $P_{\sigma_{(-x,-N)}\omega}^{(0,0)}(X_N=x) =
  P^{(-N,-x)}_\omega(X_0=0)$, recall the notation introduced below
  \eqref{eq:14}). This formula is the analogue of
  \cite[Proposition~1.2]{BolthausenSznitman2002} in our context. In
  particular, $\varphi_N$ is a local function of the space-time values
  of $\xi$ which themselves can be obtained as limits of local
  functions of $\omega$. Thus, $dQ/d\bP$ can be considered as an almost
  sure limit of local functions of $\omega$.
\end{rem}

\medskip
\begin{rem}[Uniqueness of invariant $Q \ll \bP$ with concentration
  properties of the density]
  \leavevmode \label{rem:Q-estimate}\\ A measure $Q$ obtained as in
  \eqref{eq:77} may in principle depend on a particular subsequence.
  In the proof of Theorem~\ref{thm:qlclt} we will show that the
  density $\varphi =dQ/d\bP$ of any measure $Q$ satisfying the
  concentration property \eqref{eq:29a} also satisfies
  \eqref{eq:rd-density}. By Lemma~\ref{lem:uniqueness_prefactor}
  below, such a measure is unique. In particular, in \eqref{eq:77} we
  have weak convergence towards the unique $Q$ along any subsequence
  and therefore we have weak convergence of the Ces\`aro sequence
  $(\widetilde{Q}_n)_{n\in \bN}$ from \eqref{eq:80} towards $Q$.
  However, we currently do not know whether the sequence
  $(Q_N)_{N\in \bN}$ from \eqref{eq:defn_Q_N} converges itself.

  Using Lemma~\ref{lem:quenched-rw-cluster} and \eqref{eq:phi_N} from
  Remark~\ref{rem:phi_N} one can show that $Q$ is concentrated on
  \begin{align}
    \label{eq:tildeOmega}
    \widetilde{\Omega} = \big\{ \omega \in \Omega : \omega \text{
    contains a doubly infinite directed open path through } (0,0) \big\}
  \end{align}
  and thus $Q$ is not equivalent to $\bP$ because
  $0 < \bP(\widetilde{\Omega})<1$. Note that Kozlov's classical
  argument concerning equivalence, see e.g.\
  \cite[Thm.~2.12]{DrewitzRamirez2014}, does not apply because our
  walks are not elliptic. We do not know whether $Q$ is equivalent to
  $\bP(\,\cdot\,|\widetilde{\Omega})$.
\end{rem}

To prove Theorem~\ref{thm:qlclt} we want to make use of the good control
of the difference between the quenched and annealed law on the level
of boxes and various properties of the prefactor $\varphi$ that we
have formulated above in Lemma~\ref{claim:1} and
Corollary~\ref{cor:con_prop_Q}. Furthermore, instead of comparing
$\bP^{(0,0)}(X_N \in \cdot)$ and $P_\omega^{(0,0)}(X_N \in \cdot)$
directly, we compare both of these two measures with auxiliary
``hybrid'' measures which are introduced in the following definition.

\begin{defn}
  Let \label{defn:auxiliary_measures} $Q$ be the measure on $\Omega$
  defined in \eqref{eq:77}, which by Theorem~\ref{thm:RD} and its
  proof is invariant with respect to the point of view of the particle
  with $Q \ll \bP$. Let $\varphi = dQ/d\bP$ be the corresponding
  Radon-Nikodym derivative.
  For $\omega \in \Omega$ and a given partition $\Pi$ of $\bZ^d$ into
  boxes of a fixed side length we define the following measures on
  $\bZ^{d+1}$:
  \begin{align}
    \label{eq:measure_ann_by_prefactor}
    \nu^{\annpre}_\omega(x,n)
    & \coloneqq  \nu^{\annpre}_\omega(\{(x,n)\})
      \coloneqq \frac{1}{Z_{\omega,n}}\bP^{(0,0)}(X_n=x)\varphi(\sigma_{(x,n)}\omega),\\
    \label{eq:measure_quenched}
    \nu^{\que}_\omega(x,n)
    & \coloneqq \nu^{\que}_\omega(\{(x,n)\}) \coloneqq  P_\omega^{(0,0)}(X_n=x),\\
    \label{eq:measure_box-quenched_by_pre}
    \nu^{\boxpre}_{\omega}(x,n)
    & \coloneqq \nu^{\boxpre}_{\omega}(\{(x,n)\})
      \coloneqq P_\omega^{(0,0)}(X_n \in \Delta_x)
      \frac{\varphi(\sigma_{(x,n)}\omega)}{\sum_{y\in\Delta_x}\varphi(\sigma_{(y,n)}\omega)}.
  \end{align}
  Here,
  $Z_{\omega,n}=\sum_{x\in\bZ^d}\bP^{(0,0)}(X_n=x)\varphi(\sigma_{(x,n)}\omega)$
  is the normalizing constant in \eqref{eq:measure_ann_by_prefactor}
  and $\Delta_x$ in \eqref{eq:measure_box-quenched_by_pre} is the
  unique $d$-dimensional box that contains $x$ in the partition $\Pi$.
\end{defn}

All of the measures introduced in the above definition are different
measures of the random walk after $n$ steps:
$\nu^{\annpre}_\omega(\cdot,n)$ is the annealed measure with a
prefactor, $\nu_\omega^\que(\cdot,n)$ is the quenched measure and
$\nu_\omega^\boxpre(\cdot,n)$ is a ``hybrid'' measure, where the box
is chosen according to the quenched measure but then the point inside
the box is chosen according to the (annealed) normalised prefactor. Of
course the measure $\nu_\omega^\boxpre(\cdot,n)$ does depend on the
particular partition $\Pi$ but it will be clear from the context which
partition is used.

First we study the behaviour of the normalizing constant in
\eqref{eq:measure_ann_by_prefactor}; see
Section~\ref{sec:proof-prop-refl} for a proof of the following result.
\begin{prop}
  \label{lem:limit_Z_omega}
  For $\bP$-almost all $\omega\in\Omega$ the
  normalizing constant $Z_{\omega,n}$ satisfies
  \begin{align}
    \label{eq:81}
    \lim_{n\to\infty} Z_{\omega,n} =1.
  \end{align}
\end{prop}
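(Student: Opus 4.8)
The plan is to prove that $Z_{\omega,n}$ concentrates around its mean, which is identically $1$: since only $x$ with $\norm{x}\le n$ contribute, $Z_{\omega,n}$ is a finite sum, and by shift invariance of $\bP$ together with $\int\varphi\,d\bP=Q(\Omega)=1$ we get $\bE[Z_{\omega,n}]=\sum_x\bP^{(0,0)}(X_n=x)\int\varphi(\sigma_{(x,n)}\omega)\,d\bP=1$ for every $n$. To upgrade this to $\bP$-a.s.\ convergence I will compare $Z_{\omega,n}$ with a $\bP^{(0,0)}(X_n\in\cdot)$-weighted average of box averages of $\varphi$, each of which is pinned near $1$ by the concentration property \eqref{eq:29a}, and then conclude via Borel--Cantelli along the full sequence. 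I will use two preliminary inputs. First, exactly as in the proof of Corollary~\ref{cor:finite_moments_prefactor} but with Corollary~\ref{cor:con_prop_Q} replacing Proposition~\ref{lem:box_average_estimate} (and the bound $\varphi(\omega)\le\abs{\Delta_0(M)}\cdot\tfrac1{\abs{\Delta_0(M)}}\sum_{x\in\Delta_0(M)}\varphi(\sigma_{(x,0)}\omega)$, valid since $\varphi\ge0$ and $0\in\Delta_0(M)$), one obtains $\bE[\varphi^{k}]<\infty$ for all $k\in\bN$, hence also $\bE[\varphi(\sigma_{(x,n)}\omega)^{k}]=\bE[\varphi^{k}]$ for every $(x,n)$. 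Second, from the annealed estimates of Section~\ref{sect:annealed_estimates} (which underlie Theorem~\ref{lem:annealed_local_CLT}) I will use the Gaussian upper bound $\bP^{(0,0)}(X_n=x)\le Cn^{-d/2}e^{-c\norm{x}^2/n}$ and the gradient bound $\abs{\bP^{(0,0)}(X_n=x)-\bP^{(0,0)}(X_n=x')}\le Cn^{-(d+1)/2}$ for $\norm{x-x'}\le1$; the former gives $\bP^{(0,0)}(\norm{X_n}>R_n)\le Ce^{-cn^{2\eta}}$ when $R_n:=n^{1/2+\eta}$ with $\eta:=1/(4d)$.

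Fix the slowly growing scale $M(n):=\lceil e^{A\sqrt{\log n}}\rceil$ (so $M(n)\to\infty$ but $M(n)=n^{o(1)}$), with $A$ a large constant chosen below; let $\Pi_{M(n)}$ be the associated partition of $\bZ^d$, let $\cD_n\subset\Pi_{M(n)}$ be the at most $Cn^{d/2+1/4}$ many boxes meeting $\{\norm{x}\le R_n\}$, and write $\overline{\varphi}_\Delta:=\abs{\Delta}^{-1}\sum_{x\in\Delta}\varphi(\sigma_{(x,n)}\omega)$ and $\bigcup\cD_n:=\bigcup_{\Delta\in\cD_n}\Delta$. I decompose
\begin{align*}
  Z_{\omega,n} &= \sum_{x\notin\bigcup\cD_n}\bP^{(0,0)}(X_n=x)\,\varphi(\sigma_{(x,n)}\omega)
    + \sum_{\Delta\in\cD_n}\sum_{x\in\Delta}\Bigl(\bP^{(0,0)}(X_n=x)-\tfrac{\bP^{(0,0)}(X_n\in\Delta)}{\abs{\Delta}}\Bigr)\varphi(\sigma_{(x,n)}\omega) \\
  &\qquad + \sum_{\Delta\in\cD_n}\bP^{(0,0)}(X_n\in\Delta)\,\overline{\varphi}_\Delta .
\end{align*}
For the first two terms I bound a high moment. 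For any finite $S\subset\bZ^d$ and reals $(a_x)$, applying Jensen's inequality to the probability weights $\abs{a_x}/\sum_y\abs{a_y}$ and the first preliminary input gives $\bE\bigl[(\sum_{x\in S}a_x\varphi(\sigma_{(x,n)}\omega))^{2k}\bigr]\le\bE[\varphi^{2k}]\,(\sum_{x\in S}\abs{a_x})^{2k}$. For the first term, $\sum_x\abs{a_x}=\bP^{(0,0)}(X_n\notin\bigcup\cD_n)\le Ce^{-cn^{2\eta}}$; for the second term, $\sum_x\abs{a_x}\le\sum_{\Delta\in\cD_n}\abs{\Delta}\cdot dM(n)\,Cn^{-(d+1)/2}\le C'M(n)n^{-1/4}$ (a box of side $\lfloor M(n)\rfloor$ is crossed in $\le dM(n)$ unit steps, each changing the annealed weight by $\le Cn^{-(d+1)/2}$). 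Since $M(n)^{2k}=n^{o(1)}$, taking $k$ large makes the $2k$-th moments of both terms summable in $n$, and Borel--Cantelli shows that both error terms tend to $0$ $\bP$-a.s.

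For the third term, fix $\varepsilon>0$. Once $n$ is large enough that $M(n)\ge M_0(\varepsilon)$, Corollary~\ref{cor:con_prop_Q} and shift invariance of $\bP$ give $\bP(\abs{\overline{\varphi}_\Delta-1}>\varepsilon)\le M(n)^{-c\log M(n)}=n^{-cA^2}$ for every $\Delta$, hence by a union bound $\bP(\exists\,\Delta\in\cD_n:\abs{\overline{\varphi}_\Delta-1}>\varepsilon)\le\abs{\cD_n}\,n^{-cA^2}\le Cn^{d/2+1/4-cA^2}$, which is summable once $A$ is chosen with $cA^2>d/2+2$. By Borel--Cantelli, $\bP$-a.s.\ for all large $n$ one has $\abs{\overline{\varphi}_\Delta-1}\le\varepsilon$ for every $\Delta\in\cD_n$, so the third term lies in $[1-\varepsilon,1+\varepsilon]\cdot\bP^{(0,0)}(X_n\in\bigcup\cD_n)=[1-\varepsilon,1+\varepsilon]\cdot(1-o(1))$, using $\bP^{(0,0)}(X_n\notin\bigcup\cD_n)\le\bP^{(0,0)}(\norm{X_n}>R_n)=o(1)$. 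Combining the three terms gives $\limsup_{n\to\infty}\abs{Z_{\omega,n}-1}\le\varepsilon$ $\bP$-a.s.; intersecting over $\varepsilon=1/j$, $j\in\bN$, yields $\lim_{n\to\infty}Z_{\omega,n}=1$ $\bP$-a.s.

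The main obstacle is the middle term of the decomposition --- replacing, inside each box, the annealed weight $\bP^{(0,0)}(X_n=x)$ by its box average with negligible \emph{total} error --- for which the quantitative annealed gradient estimate (and, for the truncation, the Gaussian upper bound) from Section~\ref{sect:annealed_estimates} is essential; the $\ell^1$ local CLT of Theorem~\ref{lem:annealed_local_CLT} alone, lacking a rate, would only give convergence of $Z_{\omega,n}$ in probability along a subsequence. A secondary point requiring care is the calibration of the three scales $M(n)\ll R_n\ll n^{1/2+1/(2d)}$: $M(n)$ must diverge so that Corollary~\ref{cor:con_prop_Q} becomes usable for every fixed $\varepsilon$, yet stay sub-polynomial so the union bound over the $\approx(R_n/M(n))^d$ boxes of $\cD_n$ survives, while $R_n$ must be large enough to make the truncation tail super-polynomially small but small enough that $\abs{\bigcup\cD_n}\cdot M(n)n^{-(d+1)/2}\to0$.
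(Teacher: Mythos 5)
Your proof is correct in substance and follows the same three-term decomposition as the paper (tail, point-vs-box-average of annealed weights, box-average of $\varphi$), but the execution is genuinely different in two respects. First, you run the truncation at the super-diffusive scale $R_n = n^{1/2+\eta}$ rather than at $C_\varepsilon\sqrt{n}$. This makes the tail probability super-polynomially small \emph{for each fixed $n$} (no $\varepsilon$-dependence), which lets you dispense entirely with Lemma~\ref{lem:additional_annealed_estimate}; the paper, working at scale $C_\varepsilon\sqrt{n}$, must invoke that lemma and settles for $\limsup|Z_{\omega,n}-1|\le \widehat C\varepsilon$ before sending $\varepsilon\to 0$. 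Second, you control the first two error terms via a Jensen-based $2k$-th moment bound $\bE\big[(\sum_x a_x\varphi(\sigma_{(x,n)}\omega))^{2k}\big]\le\bE[\varphi^{2k}](\sum_x|a_x|)^{2k}$ (which is correct, using $\varphi\ge0$ and shift-invariance) followed by Markov and Borel--Cantelli, whereas the paper conditions directly on high-probability events derived from Corollary~\ref{cor:con_prop_Q} (of the form $\sum_{y\in\Delta}[\varphi(\sigma_{(y,n)}\omega)+1]\le C|\Delta|$). Your choice of box scale $M(n)=\lceil e^{A\sqrt{\log n}}\rceil$ is also different from the paper's $n^\delta$, and is tailored precisely so that $M(n)^{-c\log M(n)}=n^{-cA^2}$ beats the polynomial number of boxes in the union bound; the paper's $n^\delta$ works just as well there since $n^{-c\delta^2\log n}$ is super-polynomial. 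Both approaches buy the same a.s.\ conclusion; yours packages the probability estimates more uniformly at the cost of needing all moments of $\varphi$ explicitly.

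One calibration deserves flagging. You cite a pointwise Gaussian upper bound $\bP^{(0,0)}(X_n=x)\le Cn^{-d/2}e^{-c\|x\|^2/n}$ as coming from Section~\ref{sect:annealed_estimates} and use it to claim $\bP^{(0,0)}(\norm{X_n}>R_n)\le Ce^{-cn^{2\eta}}$. No such Gaussian bound is established in the paper; what is available (Lemma~3.6 of \cite{SteibersPhD2017}, used repeatedly in the text) is $\bP^{(0,0)}(\norm{X_n}>\sqrt n\log^3 n)\le Cn^{-c\log n}$, which for $R_n=n^{1/2+\eta}\gg\sqrt{n}\log^3 n$ still gives a super-polynomial tail. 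This weaker bound is entirely sufficient for your moment argument (the $2k$-th moment of the first term becomes $\le\bE[\varphi^{2k}](Cn^{-c\log n})^{2k}$, still summable), so there is no real gap --- but the proof should cite the tail estimate that the paper actually provides rather than a Gaussian bound that would require an independent derivation.
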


The following proposition is the key result for the proof of
Theorem~\ref{thm:qlclt}. It states that for large $n$ the above
introduced measures are close to each other in a suitable norm. To
state this precisely, for $\omega \in \Omega$ and any two probability
measures $\nu^1_\omega$ and $\nu^2_\omega$ on $\bZ^d \times \bZ$ (more
precisely these are transition kernels from $\Omega$ to
$\bZ^d \times \bZ$) let the \emph{$L^1$ distance of $\nu^1_\omega$ and
  $\nu^2_\omega$} at time $n\in\bZ$ be defined by
\begin{align}
  \label{eq:40}
  \norm{\nu^1_\omega-\nu^2_\omega}_{1,n} \coloneqq \sum_{x\in\bZ^d}
  \abs{\nu^1_\omega(x,n)-\nu^2_\omega(x,n)}.
\end{align}
Furthermore, for $k\le n$ the \emph{space-time convolution of
  $\nu^1_\omega$ and $\nu^2_\omega$} is defined by
\begin{align}
  \label{eq:41}
  (\nu^1\ast \nu^2)_{\omega,k}(x,n) \coloneqq
  \sum_{y\in\bZ^{d}}\nu^1_\omega(y,n-k)\nu^2_{\sigma_{(y,n-k)}\omega}(x-y,k).
\end{align}
We can interpret \eqref{eq:41} as follows: A random walk takes $n-k$
steps in the random medium $\omega$ according to $\nu^1_\omega$, then
re-centers the medium at its current position in space-time and takes
the remaining $k$ steps according to $\nu^2_\omega$.

\begin{prop}
  Fix \label{lem:main} $0<2\delta<\varepsilon<\frac{1}{4}$, and for
  $n\in\bN$ set $k=\lceil n^\varepsilon\rceil$ and
  $\ell=\lceil n^\delta \rceil$. Let $\Pi=\Pi(\ell)$ be a partition of
  $\bZ^d$ into boxes of side length $\ell$. For $\bP$-almost every
  $\omega\in \Omega$ the measures from
  Definition~\ref{defn:auxiliary_measures} satisfy
  \begin{align}
    \label{eq:ann_to_ann-quenched}
    \tag{L1}
     \lim_{n\to\infty}
      \norm{\nu^{\ann\times\pre}_\omega-(\nu^{\ann\times\pre}\ast
      \nu^{\que})_{\omega,k}}_{1,n} & =0,\\
    \label{eq:ann-quenched_to_box-quenched}
    \tag{L2}
    \lim_{n\to \infty}\norm{(\nu^{\ann\times\pre}\ast
      \nu^{\que})_{\omega,k}-(\nu^{\mathrm{box-que}\times\pre}\ast\nu^{\que})_{\omega,k}}_{1,n}
    &  =0,\\
    \label{eq:box-quenched_to_quenched}
    \tag{L3}
    \lim_{n \to \infty}
      \norm{(\nu^{\mathrm{box-que}\times\pre}\ast\nu^{\que})_{\omega,k}
      -(\nu^{\que}\ast\nu^{\que})_{\omega,k}}_{1,n} & =0.
  \end{align}
\end{prop}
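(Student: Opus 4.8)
The plan is to prove the three statements \eqref{eq:ann_to_ann-quenched}, \eqref{eq:ann-quenched_to_box-quenched} and \eqref{eq:box-quenched_to_quenched} by exploiting, in each case, the decomposition of the space-time convolution at the intermediate time $n-k$ (so that the first $n-k$ steps are taken according to one measure and the last $k=\lceil n^\varepsilon\rceil$ steps according to the quenched measure), and then comparing the integrands site by site over the spatial positions $y$ reached at time $n-k$. The key technical inputs are: Lemma~\ref{claim:1} together with the coupling it yields (the coupling putting quenched and annealed walks at the same spatial position with uniformly positive probability, from Lemma~\ref{eq:MBoxTotalVariationDistance}), Lemma~\ref{lem:HIG_lemma} on the total variation distance of two quenched walks started at nearby positions, the concentration estimate \eqref{eq:29a} for box averages of $\varphi$, Proposition~\ref{lem:limit_Z_omega} for the normalization $Z_{\omega,n}\to1$, and the annealed local CLT (Theorem~\ref{lem:annealed_local_CLT}) to control $\bP^{(0,0)}(X_{n-k}=y)$ and to ensure that the mass of $\nu^{\ann\times\pre}_\omega(\cdot,n-k)$ is, with high probability, concentrated on $y$ with $\norm{y}$ of order $\sqrt{n}$, where all the high-probability estimates apply.

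For \eqref{eq:ann_to_ann-quenched}: writing $\nu^{\ann\times\pre}_\omega(x,n)=\frac{1}{Z_{\omega,n}}\bP^{(0,0)}(X_n=x)\varphi(\sigma_{(x,n)}\omega)$ and using the Chapman–Kolmogorov decomposition of the annealed walk at time $n-k$, one sees that $\nu^{\ann\times\pre}_\omega(\cdot,n)$ and $(\nu^{\ann\times\pre}\ast\nu^{\que})_{\omega,k}(\cdot,n)$ differ only in that the last $k$ steps are taken annealed vs.\ quenched, each run in the shifted medium $\sigma_{(y,n-k)}\omega$; the reweighting by $\varphi$ and the normalization are what make these compatible because $Q$ is invariant for the point of view of the particle. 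The difference is bounded by $\sum_y \nu^{\ann\times\pre}_\omega(y,n-k)\cdot\norm{\bP^{(y\text{-centered})}(X_k\in\cdot)\varphi(\cdot)-P^{(y\text{-centered})}_\omega(X_k\in\cdot)}_{1}$-type terms; since $k=\lceil n^\varepsilon\rceil\to\infty$ and $y$ ranges over $\norm{y}\lesssim\sqrt n\ll N$ for $N$ comparable to $n$, Lemma~\ref{claim:1} applied at scale $N\asymp n$ with $M\asymp k^{1/2}$ gives that the inner norm is small with overwhelming probability, and a Borel–Cantelli argument over $n$ upgrades this to $\bP$-a.s.\ convergence. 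One must also absorb the correction $|Z_{\omega,n}^{-1}-1|$, which vanishes by Proposition~\ref{lem:limit_Z_omega}. For \eqref{eq:ann-quenched_to_box-quenched}: both convolutions now have the same last factor $\nu^{\que}$ run from time $n-k$, so the difference reduces to $\norm{\nu^{\ann\times\pre}_\omega(\cdot,n-k)-\nu^{\mathrm{box-que}\times\pre}_\omega(\cdot,n-k)}_{1}$ smoothed against the quenched kernel; by the triangle inequality it suffices to control $\sum_{\Delta\in\Pi}|\sum_{y\in\Delta}\bP^{(0,0)}(X_{n-k}=y)\varphi(\sigma_{(y,n-k)}\omega)-P^{(0,0)}_\omega(X_{n-k}\in\Delta)\frac{\sum_{y\in\Delta}\varphi}{\#\Delta}\cdot\frac{\#\Delta}{\#\Delta}|$, which splits into (i) replacing $\bP^{(0,0)}(X_{n-k}\in\Delta)$ by $P^{(0,0)}_\omega(X_{n-k}\in\Delta)$ inside each box — small by Lemma~\ref{claim:1} at scale $N\asymp n$, $M=\ell=\lceil n^\delta\rceil$ — and (ii) replacing, within each box $\Delta$, the annealed profile of $\varphi$-weighted masses by the flat profile times the box average of $\varphi$ — small because the annealed local CLT makes $\bP^{(0,0)}(X_{n-k}=\cdot)$ nearly constant across a box of side $\ell=o(\sqrt n)$, and \eqref{eq:29a} controls the box-averaged $\varphi$.

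For \eqref{eq:box-quenched_to_quenched}: here the two convolutions again share the last factor and differ in the first $n-k$ steps, where $\nu^{\mathrm{box-que}\times\pre}_\omega$ chooses the box $\Delta$ according to the quenched law and then redistributes the mass within $\Delta$ according to the normalized $\varphi$, whereas $\nu^{\que}_\omega$ keeps the exact quenched position; after convolving with the last $k$ quenched steps this amounts to comparing, for each box $\Delta$, the quenched walk started from the true position $y\in\Delta$ at time $n-k$ against the quenched walk started from a $\varphi$-reweighted random position $y'\in\Delta$, both run for $k$ further steps — and this is exactly what Lemma~\ref{lem:HIG_lemma} controls: two quenched walks started at positions within the same $\ell$-box (hence at distance $\lesssim\ell=\lceil n^\delta\rceil\ll k^{1/2}=n^{\varepsilon/2}$, using $2\delta<\varepsilon$) can be coupled so that their laws after $k$ steps are close in total variation. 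Summing the per-box estimates against the quenched box masses (which sum to one) and applying Borel–Cantelli gives the a.s.\ statement. The main obstacle, and the place where the bulk of the work sits, is bookkeeping the interplay of the three scales $\ell=n^\delta$, $k=n^\varepsilon$, $\sqrt n$: one must choose the auxiliary parameters $M,N,c,C$ in Lemma~\ref{claim:1} and the box side in Lemma~\ref{lem:HIG_lemma} so that all the failure probabilities are summable in $n$ simultaneously while the resulting error bounds (of the form $C M^{-c}+C N^{-c}$ and $C\ell^{-c}$ and the local-CLT smoothness error) all tend to zero; in particular one needs $\delta$ small enough that the within-box variation of the annealed density is negligible and $\varepsilon$ large enough that $k$ steps suffice for the quenched-annealed and quenched-quenched couplings to take effect, which is precisely the constraint $0<2\delta<\varepsilon<\tfrac14$.
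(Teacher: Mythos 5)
Your sketches for \eqref{eq:ann-quenched_to_box-quenched} and \eqref{eq:box-quenched_to_quenched} follow essentially the same route as the paper: for L2 the four-term triangle-inequality decomposition (annealed derivative estimate within each $\ell$-box, concentration of the $\varphi$-box-average, box-level quenched/annealed comparison) and for L3 the reduction, within each $\ell$-box at time $n-k$, to the total-variation coupling of two quenched walks at distance $\lesssim n^\delta$ run for $k=\lceil n^\varepsilon\rceil$ steps, which is exactly what Lemma~\ref{lem:HIG_lemma} controls under $2\delta<\varepsilon$.

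For \eqref{eq:ann_to_ann-quenched}, however, your plan has a genuine gap. Two problems. First, you invoke a Chapman--Kolmogorov decomposition for the annealed walk at time $n-k$, but the annealed walk is not a Markov chain (it is an environment average of a Markov chain), so $\bP^{(0,0)}(X_n=x)\ne\sum_y\bP^{(0,0)}(X_{n-k}=y)\bP^{(y,n-k)}(X_n=x)$ in general. The paper circumvents this by not decomposing at all: it compares $\bP^{(0,0)}(X_n=x)$ and $\bP^{(0,0)}(X_{n-k}=y)$ directly for $\norm{x-y}\le k$ via the annealed derivative estimates (Lemma~\ref{lem:annealed_derivative_estimates}), which is precisely the term \eqref{eq:L1_1st}. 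Second and more seriously, the "inner norm" you propose to control, $\norm{\bP^{(y)}(X_k\in\cdot)\varphi(\cdot)-P^{(y)}_\omega(X_k\in\cdot)}_1$, is not what Lemma~\ref{claim:1} gives you: Lemma~\ref{claim:1} bounds the quenched--annealed TV distance summed over \emph{boxes} of side $M$, with no $\varphi$-weighting, whereas your inner norm is a sum over \emph{points} with a $\varphi$-prefactor — that is essentially the statement of the quenched local CLT at scale $k$, so the argument as written is circular. The ingredient your sketch is missing is Proposition~\ref{prop:2}, which gives the \emph{exact} identity $\varphi(\sigma_{(x,n)}\omega)=\sum_{y}P^{(x+y,n-k)}_\omega(X_k=x)\,\varphi(\sigma_{(x+y,n-k)}\omega)$. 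This is what realizes, at the level of densities, the invariance of $Q$ that you correctly identify as heuristically responsible for compatibility. Once this identity is used, the dangerous term \eqref{eq:L1_2nd} in the paper's three-way split vanishes \emph{identically}, and the remaining two terms are small by the annealed derivative estimate and the concentration of $\varphi$-box-averages (Corollary~\ref{cor:con_prop_Q}), both nonperturbative inputs that do not presuppose the target theorem. Without invoking Proposition~\ref{prop:2}, the $L^1$ distance in L1 cannot be closed by the lemmas you list.
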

The proof of the above proposition is given in
Section~\ref{sec:proof-prop-main}. With the results stated in the
present section we can give a proof of the quenched local limit
theorem.

\begin{proof}[Proof of Theorem~\ref{thm:qlclt}]
  Using the triangle inequality we have
  \begin{align}
    \notag
    \sum_{x\in\bZ^d}
    & \abs{P^{(0,0)}_\omega(X_n=x)  -
      \bP^{(0,0)}(X_n=x)\varphi(\sigma_{(x,n)}\omega) }\\
    \label{eq:I}
    & \leq \sum_{x\in\bZ^d} \abs{ P^{(0,0)}_\omega(X_n=x) -
      (\nu^{\mathrm{box-que}\times \pre}\ast
      \nu^\que)_{\omega,k}(x,n)} \\
    \label{eq:II}
    & \quad +\sum_{x\in\bZ^d}\abs{ (\nu^{\mathrm{box-que}\times
      \pre}\ast \nu^\que)_{\omega,k}(x,n) -
      (\nu^{\ann\times\pre}\ast\nu^\que )_{\omega,k}(x,n) }\\
    \label{eq:III}
    & \quad +\sum_{x\in\bZ^d}\abs{
      (\nu^{\ann\times\pre}\ast\nu^\que )_{\omega,k}(x,n) -
      \nu^{\ann\times\pre}_\omega(x,n) }\\
    \label{eq:IV}
    & \quad + \sum_{x\in\bZ^d}\abs{
      \nu^{\ann\times\pre}_\omega(x,n)
      -\bP^{(0,0)}(X_n=x)\varphi(\sigma_{(x,n)}\omega) }.
  \end{align}
  By Proposition~\ref{lem:main} the terms in \eqref{eq:I},
  \eqref{eq:II} and \eqref{eq:III} tend to $0$ as $n$ goes to
  infinity. In order to compare \eqref{eq:I} with
  \eqref{eq:box-quenched_to_quenched} literally note that we have
  $P^{(0,0)}_\omega(X_n=x) =
  \nu^{\que}\ast\nu^{\que})_{\omega,k}(x,n)$ by construction. Finally,
  by definition of $\nu^{\ann\times\pre}_\omega(x,n)$ the term in
  \eqref{eq:IV} can be written as
  \begin{align}
    \label{eq:83}
    \Abs{
    \frac{1}{Z_{\omega,n}} -1} \sum_{x\in\bZ^d}
    \bP^{(0,0)}(X_n=x)\varphi(\sigma_{(x,n)}\omega) = \Abs{
    \frac{1}{Z_{\omega,n}} -1}Z_{\omega,n}.
  \end{align}
  By Proposition~\ref{lem:limit_Z_omega} it follows that the
  expression in \eqref{eq:83} converges to $0$ as $n$ tends to
  infinity.
\end{proof}

\section{Annealed estimates and the proof of
  Theorem~\ref{lem:annealed_local_CLT}}
\label{sect:annealed_estimates}

In this section we collect estimates for the annealed walk that will
be needed later in the proofs, and present a proof of
Theorem~\ref{lem:annealed_local_CLT}.

\begin{lem}[Annealed derivative estimates]
  \label{lem:annealed_derivative_estimates}
  For $d \ge 3$, $j=1,\dots,d$, $x,y\in \bZ^d$, $m,n \in \bZ$,
  $m \in \bZ$, $n \in \bN$ denoting by $e_j$ the $j$-th (canonical)
  unit vector we have
  \begin{align}
    \label{eq:1}
    \abs{\bP^{(y,m)}(X_{n+m} = x) -\bP^{(y+e_j,m)}(X_{n+m} = x)}
    & \le Cn^{-(d+1)/2},\\
    \label{eq:3}
    \abs{\bP^{(y,m)}(X_{n+m} = x) -\bP^{(y,m+1)}(X_{n+m} = x)}
    & \le Cn^{-(d+1)/2},\\
    \label{eq:4}
    \abs{\bP^{(y,m)}(X_{n+m} = x) -\bP^{(y,m)}(X_{n+m} = x+e_j)}
    & \le Cn^{-(d+1)/2},\\
    \label{eq:5}
    \abs{\bP^{(y,m)}(X_{n+m} = x) -\bP^{(y,m)}(X_{n-1+m} = x)}
    &\le Cn^{-(d+1)/2}.
  \end{align}
\end{lem}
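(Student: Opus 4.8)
The four estimates reduce to two. By the space--time shift-invariance of $\bP$ one has $\bP^{(y,m)}(X_{n+m}=x)=\bP^{(0,0)}(X_n=x-y)$ and $\bP^{(y,m+1)}(X_{n+m}=x)=\bP^{(0,0)}(X_{n-1}=x-y)$; hence \eqref{eq:1} is the special case of \eqref{eq:4} with $x$ replaced by $x-y$, and \eqref{eq:3} coincides with \eqref{eq:5} after the same substitution. It therefore suffices to bound the discrete spatial derivative $\bP^{(0,0)}(X_n=x)-\bP^{(0,0)}(X_n=x+e_j)$ and the discrete time derivative $\bP^{(0,0)}(X_n=x)-\bP^{(0,0)}(X_{n-1}=x)$ by $Cn^{-(d+1)/2}$ uniformly in $x$.

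Both are treated by Fourier inversion. Set $\phi_n(t)\coloneqq\bE^{(0,0)}\bigl[e^{it\cdot X_n}\bigr]$ for $t\in[-\pi,\pi]^d$, so that $\bP^{(0,0)}(X_n=x)=(2\pi)^{-d}\int_{[-\pi,\pi]^d}e^{-it\cdot x}\phi_n(t)\,dt$ and consequently
\begin{align*}
  \bP^{(0,0)}(X_n=x)-\bP^{(0,0)}(X_n=x+e_j)
  &= (2\pi)^{-d}\int_{[-\pi,\pi]^d} e^{-it\cdot x}\bigl(1-e^{-it_j}\bigr)\phi_n(t)\,dt,\\
  \bP^{(0,0)}(X_n=x)-\bP^{(0,0)}(X_{n-1}=x)
  &= (2\pi)^{-d}\int_{[-\pi,\pi]^d} e^{-it\cdot x}\bigl(\phi_n(t)-\phi_{n-1}(t)\bigr)\,dt.
\end{align*}
Since $|1-e^{-it_j}|\le|t|$, the desired bounds follow once we establish, with constants uniform in $n$: (i) a Gaussian bound $|\phi_n(t)|\le Ce^{-cn|t|^2}$ for $|t|\le t_0$; (ii) a bulk bound $|\phi_n(t)|\le C\rho^{\,n}$ with $\rho<1$ on $\{t_0\le|t|\}\cap[-\pi,\pi]^d$; and (iii) a gradient bound $|\phi_n(t)-\phi_{n-1}(t)|\le C|t|\,e^{-cn|t|^2}$ for $|t|\le t_0$ (on the bulk one simply uses $|\phi_n-\phi_{n-1}|\le 2C\rho^{\,n-1}$). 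Indeed $\int_{\bR^d}|t|\,e^{-cn|t|^2}\,dt$ is of order $n^{-(d+1)/2}$, and the bulk contributions are exponentially small.

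The structural input behind (i) and (ii) is the annealed regeneration structure of $(X_n)$ established in \cite{BirknerCernyDepperschmidtGantert2013} (it applies after an almost surely finite, exponentially-tailed initial segment during which the walk reaches $\cC$, cf.\ Lemma~\ref{lem:quenched-rw-cluster}): there is a sequence of cone-type regeneration times $\tau_1<\tau_2<\cdots$ such that the increments $(X_{\tau_{k+1}}-X_{\tau_k},\,\tau_{k+1}-\tau_k)_{k\ge1}$ are i.i.d.\ with exponential tails, the post-regeneration path is independent of the past, the spatial increment has non-degenerate isotropic covariance, and---because a single step can be $0$ or any $\pm e_j$---the increment law generates $\bZ^d$ strongly aperiodically. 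Hence the characteristic function $\psi$ of one regeneration increment satisfies $\psi(t)=1-\Theta(|t|^2)$ near the origin and $|\psi(t)|<1$ on $[-\pi,\pi]^d\setminus\{0\}$. Since the overshoot $(X_n-X_{\tau_{K_n}},\,n-\tau_{K_n})$, with $K_n$ the number of regenerations up to time $n$, is tight with exponential tails, standard renewal-theoretic expansions---this is where we build on, and slightly extend, \cite[Ch.~3]{SteibersPhD2017}---transfer these properties of $\psi$ to (i) and (ii) for $\phi_n$. We also record that $\bE^{(0,0)}[X_n]=0$ for every $n$: the law of $(\omega,(X_n))$ under $\bP^{(0,0)}$ is invariant under the coordinate permutations and reflections of $\bZ^d$, so $X_n$ is a symmetric random vector. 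This will be used in (iii).

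The main obstacle is (iii). A crude estimate $|\phi_n(t)-\phi_{n-1}(t)|\le\bE^{(0,0)}\bigl[|e^{it\cdot(X_n-X_{n-1})}-1|\bigr]\le C|t|$, combined with (i), only yields an error of order $n^{-d/2}$ up to logarithmic factors; the missing power of $n^{-1/2}$ must come from cancellation in the last step. Writing $e^{it\cdot(X_n-X_{n-1})}-1=it\cdot(X_n-X_{n-1})+O(|t|^2)$ and using $\bE^{(0,0)}[X_n-X_{n-1}]=0$, the task is to show that the correlation $\bE^{(0,0)}\bigl[e^{it\cdot X_{n-1}}\,(X_n-X_{n-1})\bigr]$ has size $O(|t|\,e^{-cn|t|^2})$; here the memory of the environment enters, since the annealed walk is not itself Markov. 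The plan is to decompose $X_{n-1}=X_{\tau_{K_{n-1}}}+(X_{n-1}-X_{\tau_{K_{n-1}}})$ at the last regeneration time before $n-1$: the block containing the final $O(\log n)$ steps depends only on post-$\tau_{K_{n-1}}$ data, the prefix $X_{\tau_{K_{n-1}}}$ is a sum of order $n$ i.i.d.\ regeneration increments, and the weak residual dependence between the two (through the age $n-1-\tau_{K_{n-1}}$) is controlled by the exponential decay of the regeneration structure. This makes $\phi_n(t)-\phi_{n-1}(t)$ approximately factorize into $\phi_{\tau_{K_{n-1}}}(t)$---bounded by $Ce^{-cn|t|^2}$ via (i), since $\tau_{K_{n-1}}\ge n/2$ with overwhelming probability---times a factor of size $O(|t|)$ coming from the last block; the $O(|t|^2)$ remainder is handled by the same decomposition and contributes at order $n^{-(d+2)/2}$. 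Combining (i)--(iii) with the reduction of the first paragraph proves Lemma~\ref{lem:annealed_derivative_estimates}.
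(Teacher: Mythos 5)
Your proposal and the paper part ways at the very first step. The paper's ``proof'' of this lemma is essentially a citation: estimates \eqref{eq:1} and \eqref{eq:3} are taken directly from Lemma~3.9 of \cite{SteibersPhD2017}, and \eqref{eq:4}, \eqref{eq:5} are obtained from them by the same shift-invariance observation you open with (the paper deduces \eqref{eq:4}, \eqref{eq:5} from \eqref{eq:1}, \eqref{eq:3}; you phrase the reduction in the opposite but equivalent direction). By contrast, you attempt a self-contained Fourier-analytic proof of the two base estimates, citing \cite{SteibersPhD2017} only for the Gaussian and bulk bounds (i)--(ii) on $\phi_n$. That is a genuinely different route, and it is the standard way one would actually prove such derivative bounds from a regeneration structure, so it usefully explains \emph{why} the lemma is true rather than merely recording that it is.

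The skeleton of your Fourier argument is sound: the reduction to a spatial and a temporal discrete derivative, the inversion formula, the $|1-e^{-it_j}|\le|t|$ bound feeding the Gaussian estimate for the spatial case, the observation $\bE^{(0,0)}[X_n]=0$ from coordinate reflection/permutation invariance, and the correct diagnosis that the temporal derivative needs cancellation beyond the crude $O(|t|)$ bound (which, intersected with the Gaussian bound, only gives $n^{-d/2}$). The decomposition at the last regeneration time before $n-1$ is the right device. One small imprecision: in the factorization $\phi_n-\phi_{n-1}\approx\phi_{\tau_{K_{n-1}}}(t)\cdot(\text{last-block factor})$, the last-block factor $\bE\bigl[e^{it\cdot R}(e^{it\cdot\xi}-1)\bigr]$ is actually $O(|t|^2)$, not merely $O(|t|)$: by reflection symmetry this function of $t$ is real, even, and vanishes with zero gradient at $0$, so its Taylor expansion starts at second order. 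This makes your bound conservative rather than wrong, and would in fact yield the stronger order $n^{-(d+2)/2}$ for \eqref{eq:5}, but it indicates a loose end in the sketch. The step that would need real work to make rigorous is the claimed factorization itself: conditioning on $K_{n-1}=k$ and $\tau_k=m$ does give independence of $S=X_{\tau_k}$ from the post-$\tau_k$ block, but one must then sum over $(k,m)$, control the moments of $|R|$ uniformly over the age $n-1-m$ (using the exponential tails), and handle the atypical event $m<n/2$ separately. None of this is hard, but it is not carried out, and your proposal is best read as a plausible proof outline rather than a complete proof. Since the paper defers entirely to \cite{SteibersPhD2017} at exactly this point, your account supplies more content than the paper does, at the cost of leaving the same technical details implicit.
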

\begin{proof}
  The estimates \eqref{eq:1} and \eqref{eq:3} are from
  \cite{SteibersPhD2017}; see Lemma~3.9 and its proof in Appendix~A.2
  there. By translation invariance we have
  \begin{align*}
    \bP^{(y+e_j,m)}(X_{n+m} =x) & = \bP^{(y,m)}(X_{n+m}= x-e_j)\\
    \intertext{and}
    \bP^{(y,m+1)}(X_{n+m}=x) & = \bP^{(y,m)}(X_{n-1+m}=x).
  \end{align*}
  Thus, the estimates \eqref{eq:4} and \eqref{eq:5} follow from
  \eqref{eq:1} and \eqref{eq:3}.
\end{proof}

We will also need the following generalization of  the annealed
derivate estimates in the previous lemma.
\begin{lem}
  \label{lem:additional_annealed_estimate}
  Let $\varepsilon>0$. For $n\in\bN$ large enough and every partition
  $\Pi^{(\varepsilon)}_n$ of $\bZ^d$ into boxes of side length
  $\lfloor n^\varepsilon \rfloor$, we have
  \begin{align}
    \label{eq:6}
    \sum_{\Delta\in \Pi^{(\varepsilon)}_n}\sum_{x\in\Delta}
    \max_{y\in\Delta}\bigl[\bP^{(0,0)}(X_n=y)-\bP^{(0,0)}(X_n=x)\bigr]
    \le Cn^{-\frac{1}{2}+ 3d\varepsilon}.
  \end{align}
\end{lem}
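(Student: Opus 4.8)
The plan is to reduce the left-hand side of \eqref{eq:6} to a sum of nearest-neighbour increments and then apply the annealed derivative estimate \eqref{eq:4}. First I would observe that for $x\in\Delta$ and $y\in\Delta$ the two points differ by a lattice vector whose $\ell^1$-length is at most $d\lfloor n^\varepsilon\rfloor$ (since the box has side length $\lfloor n^\varepsilon\rfloor$, any two points inside it differ by at most $\lfloor n^\varepsilon\rfloor$ in each of the $d$ coordinates). Writing a path from $x$ to $y$ inside $\bZ^d$ along the axes and telescoping, we get
\begin{align}
  \label{eq:telescope_bound}
  \bP^{(0,0)}(X_n=y)-\bP^{(0,0)}(X_n=x) \le \sum_{i=1}^{d\lfloor n^\varepsilon\rfloor} \bigl|\bP^{(0,0)}(X_n=z_{i})-\bP^{(0,0)}(X_n=z_{i-1})\bigr|,
\end{align}
where $z_0=x,\dots,z_{d\lfloor n^\varepsilon\rfloor}=y$ is such a nearest-neighbour path. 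By \eqref{eq:4} each summand is at most $Cn^{-(d+1)/2}$, hence $\max_{y\in\Delta}[\bP^{(0,0)}(X_n=y)-\bP^{(0,0)}(X_n=x)]\le C d\,n^\varepsilon n^{-(d+1)/2}$.

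Next I would sum over $x\in\Delta$ and over $\Delta\in\Pi^{(\varepsilon)}_n$. Only boxes $\Delta$ that carry nonnegligible mass matter, but more simply: the summand above is nonzero only when $\Delta$ intersects the support of $X_n$ under $\bP^{(0,0)}$, which is contained in the ball of radius $n$ in $\bZ^d$ (the walk moves by steps of $\sup$-norm at most $1$). That ball contains $O(n^d)$ points, hence $O(n^d/n^{d\varepsilon})$ boxes of side length $\lfloor n^\varepsilon\rfloor$, each containing at most $(n^\varepsilon+1)^d = O(n^{d\varepsilon})$ points $x$; so the total number of pairs $(\Delta,x)$ contributing is $O(n^d)$. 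Multiplying by the per-term bound $Cd\,n^\varepsilon n^{-(d+1)/2}$ gives a crude bound of order $n^{d+\varepsilon-(d+1)/2}$, which is far too weak. The fix is to keep the mass weighting implicit in the argument: replace the uniform bound on the per-term difference by the sharper estimate that, after summing over $x\in\Delta$, one obtains $\sum_{x\in\Delta}\max_{y\in\Delta}[\cdots]\le |\Delta|\cdot Cd\,n^\varepsilon n^{-(d+1)/2} \le Cd\,n^{2d\varepsilon}n^{-(d+1)/2}$ \emph{per box}, and then note that in fact only $O(n^{d(1-\varepsilon)})$ boxes lie within distance $n$ of the origin and thus contribute; so the total is bounded by $Cd\,n^{2d\varepsilon}n^{-(d+1)/2}n^{d(1-\varepsilon)} = Cd\,n^{d\varepsilon+d-(d+1)/2}=Cd\,n^{d\varepsilon+(d-1)/2}$. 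This is still the wrong order, so the naive counting must be replaced.

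The correct approach uses the annealed local CLT (Theorem~\ref{lem:annealed_local_CLT}) to control the total mass away from a diffusive window: outside the ball of radius $n^{1/2+\varepsilon'}$ (for a small $\varepsilon'$), the total mass $\sum \bP^{(0,0)}(X_n=x)$ is superpolynomially small, and on this ball there are only $O(n^{d(1/2+\varepsilon')}/n^{d\varepsilon})$ relevant boxes, each contributing $\le |\Delta|\cdot Cd\,n^\varepsilon n^{-(d+1)/2}\le Cd\,n^{2d\varepsilon}n^{-(d+1)/2}$; multiplying gives order $n^{d\varepsilon+d/2+d\varepsilon'-(d+1)/2}=n^{d\varepsilon+d\varepsilon'-1/2}$, and choosing $\varepsilon'$ small (e.g.\ $\varepsilon'\le 2\varepsilon$) yields the claimed bound $Cn^{-1/2+3d\varepsilon}$ once the superpolynomially small tail is absorbed. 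I expect the main obstacle to be making the tail bound precise: one needs, beyond the $L^1$ statement of Theorem~\ref{lem:annealed_local_CLT}, a polynomial (in fact stretched-exponential) tail estimate on $\bP^{(0,0)}(\|X_n\|\ge n^{1/2+\varepsilon'})$, which follows from the exponential tails of the hitting time of the cluster (Lemma~\ref{lem:quenched-rw-cluster}) combined with standard large-deviation bounds for the annealed walk on the cluster; alternatively one can cite the moment bounds underlying the annealed CLT in \cite{BirknerCernyDepperschmidtGantert2013}. With that tail estimate in hand, the telescoping-plus-counting argument above closes.
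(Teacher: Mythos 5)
Your plan is essentially the same as the paper's proof: split the sum into boxes near the diffusive window versus far-field boxes, use the telescoping/derivative estimate \eqref{eq:4} inside the window, and argue the far-field contribution is negligible. The paper uses the cutoff $\sqrt{n}\log^3 n$ (rather than $n^{1/2+\varepsilon'}$) and cites Lemma~3.6 from \cite{SteibersPhD2017}, which states precisely that $\sum_{\Delta\notin\widetilde\Pi^{(\varepsilon)}_n}\bP^{(0,0)}(X_n\in\Delta)\le Cn^{-c\log n}$, so the tail bound you flag as ``the main obstacle'' is in fact already available and used directly. Your counting in the near region is correct and yields the same exponent.

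One step you do not spell out, and should, is how to absorb the far-field contribution: the derivative estimate is useless out there (a per-box bound of $n^{d\varepsilon}\cdot dn^\varepsilon n^{-(d+1)/2}$ summed over $O(n^{d}/n^{d\varepsilon})$ boxes is far too large, as your own first two attempts show). The paper instead uses the trivial observation that $\max_{y\in\Delta}\bP^{(0,0)}(X_n=y)\le\bP^{(0,0)}(X_n\in\Delta)$, so that
\begin{align*}
\sum_{x\in\Delta}\max_{y\in\Delta}\bigl[\bP^{(0,0)}(X_n=y)-\bP^{(0,0)}(X_n=x)\bigr]\le(\abs{\Delta}-1)\,\bP^{(0,0)}(X_n\in\Delta),
\end{align*}
and summing this over far boxes gives $n^{d\varepsilon}$ times the super-algebraically small tail mass. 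Without this mechanism, ``the tail is absorbed'' is not justified. Once you add that one line and replace your proposed ad-hoc tail bound by the citation to Lemma~3.6 in \cite{SteibersPhD2017}, your argument matches the paper's.
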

\begin{proof}
  We consider the following set of boxes around the origin of $\bZ^d$
  \begin{align}
    \label{eq:7}
    \widetilde{\Pi}^{(\varepsilon)}_n\coloneqq \{
    \Delta\in\Pi^{(\varepsilon)}_n:\Delta\cap[-\sqrt{n}\log^3
    n,\sqrt{n}\log^3n]^d\neq \emptyset \}.
  \end{align}
  With this notation we can write the sum on the left hand side of
  \eqref{eq:6} as
  \begin{align}
    \label{eq:proof_extra_annealed_estimations_1}
      & \sum_{\Delta\in \widetilde{\Pi}^{(\varepsilon)}_n}
        \sum_{x\in\Delta} \max_{y\in\Delta}\bigl[\bP^{(0,0)}(X_n=y)-\bP^{(0,0)}(X_n=x)\bigr]\\
    \label{eq:proof_extra_annealed_estimations_2}
      & \qquad
        + \sum_{\Delta\in \Pi^{(\varepsilon)}_n\setminus\widetilde{\Pi}^{(\varepsilon)}_n}
        \sum_{x\in\Delta}
        \max_{y\in\Delta}\bigl[\bP^{(0,0)}(X_n=y)-\bP^{(0,0)}(X_n=x)\bigr].
  \end{align}
  So, it is enough to prove suitable upper bounds for these two sums.
  By Lemma~3.6 from \cite{SteibersPhD2017} we have
  \begin{align}
    \label{eq:proof_extra_annealed_estimations_3}
    \sum_{\Delta \in \Pi^{(\varepsilon)}_n\setminus\widetilde{\Pi}^{(\varepsilon)}_n}
    \bP^{(0,0)}(X_n\in \Delta) \le Cn^{-c\log n}
  \end{align}
  for some positive constants $C$ and $c$. Thus, the double sum
  \eqref{eq:proof_extra_annealed_estimations_2} is bounded from above
  by
  \begin{multline*}
    \sum_{\Delta\in \Pi^{(\varepsilon)}_n\setminus\widetilde{\Pi}^{(\varepsilon)}_n}
    \sum_{x\in\Delta} \bigl[\bP^{(0,0)}(X_n\in\Delta) - \bP^{(0,0)}(X_n=x)\bigr] \\
    = \sum_{\Delta\in \Pi^{(\varepsilon)}_n\setminus\widetilde{\Pi}^{(\varepsilon)}_n}
    (\abs{\Delta}-1)\bP^{(0,0)}(X_n\in\Delta)
     \le Cn^{d\varepsilon}n^{-c\log n} \le \widetilde{C}n^{-\tilde{c}\log n}
  \end{multline*}
  for suitably chosen constants $\tilde c$ and $\widetilde C$. Using
  annealed derivative estimates from
  Lemma~\ref{lem:annealed_derivative_estimates} the double sum
  \eqref{eq:proof_extra_annealed_estimations_1} is bounded above by
  \begin{align*}
    \sum_{\Delta\in \widetilde{\Pi}^{(\varepsilon)}_n}\sum_{x\in\Delta} Cn^\varepsilon
    n^{-\frac{d+1}{2}} \le C(n^\varepsilon+\sqrt{n}\log^3 n)^d
    n^\varepsilon n^{-\frac{d+1}{2}} \le Cn^{3d\varepsilon}n^{-1/2}.
  \end{align*}
  Combination of the last two displays completes the proof.
\end{proof}

\begin{proof}[Proof of Theorem~\ref{lem:annealed_local_CLT}]
  Let $\varepsilon, \delta>0$ be small (they will later be tuned
  appropriately). Let $\Pi^{(\varepsilon)}_n$ be a partition of $\bZ^d$
  in boxes of side length $\lceil \varepsilon \sqrt{n}\, \rceil$. Let
  $C_\delta>0$ be a constant such that
  $\bP^{(0,0)}(\norm{X_n}>C_\delta \sqrt{n})<\delta$; such a constant
  exists by Lemma~3.6 from \cite{SteibersPhD2017}. Furthermore denote
  by $\Pi^{(\varepsilon,\delta)}_n$ the subset of boxes in
  $\Pi^{(\varepsilon)}_n$ intersecting
  $\{x\in \bZ^d: \norm{x} \le C_\delta \sqrt{n} \}$. Then
  \begin{align}
    &\sum_{x\in\bZ^d} \Abs{\bP^{(0,0)}(X_n=x) - \frac{1}{(2\pi
      n)^{d/2}\sqrt{\det\Sigma}}
      \exp\Bigl(-\frac{1}{2n}x^T\Sigma^{-1}x \Bigr)} \notag\\
    \label{eq:annealed_lCLT_proof_1}
    & \quad =\sum_{\Delta \in \Pi^{(\varepsilon)}_n\setminus\Pi^{(\varepsilon,\delta)}_n}
      \sum_{x  \in \Delta}\Abs{\bP^{(0,0)}(X_n=x) - \frac{1}{(2\pi
      n)^{d/2}\sqrt{\det\Sigma}}\exp\Bigl(
      -\frac{1}{2n}x^T\Sigma^{-1}x \Bigr)} \\
    \label{eq:annealed_lCLT_proof_2}
    & \qquad \quad +\sum_{\Delta \in \Pi^{(\varepsilon,\delta)}_n}
      \sum_{x \in \Delta}\Abs{\bP^{(0,0)}(X_n=x) - \frac{1}{(2\pi
      n)^{d/2}\sqrt{\det\Sigma}}
      \exp\Bigl(-\frac{1}{2n}x^T\Sigma^{-1}x \Bigr)}.
  \end{align}
  We will show that $\varepsilon$ can be chosen so small that the
  above sum is bounded by $4\delta$ for large enough $n$. We first
  find an upper bound for \eqref{eq:annealed_lCLT_proof_1}. By
  definition of $\Pi^{(\varepsilon,\delta)}_n$ if
  $\Delta \in
  \Pi^{(\varepsilon)}_n\setminus\Pi^{(\varepsilon,\delta)}_n$ then we
  have $\norm{x} > C_\delta\sqrt{n}$ for all $x\in \Delta$.
  Thus, \eqref{eq:annealed_lCLT_proof_1} is bounded from above by
  \begin{align*}
    \sum_{\substack{x\in \bZ^d\\\norm{x}>C_\delta \sqrt{n}}}
    \Bigl(\bP^{(0,0)}(X_n=x) + \frac{1}{(2\pi
    n)^{d/2}\sqrt{\det\Sigma}}\exp\Bigl( -\frac{1}{2n}x^T\Sigma^{-1}x
    \Bigr)\Bigr)
    \le \delta + C\exp\Bigl( -\frac{c}{2}C_\delta^2 \Bigr).
  \end{align*}
  By choosing $C_\delta$ large enough we can ensure that
  \eqref{eq:annealed_lCLT_proof_1} is bounded by $2\delta$.

  Turning to \eqref{eq:annealed_lCLT_proof_2} we first compare the two
  terms in $\abs{\cdot}$ with the averages over appropriate boxes.
    First, let $x \in \bZ^d$ be fixed and let
    $\Delta \in \Pi^{(\varepsilon)}_n$ be the box containing $x$.
    Using annealed derivative estimates from Lemma~3.9 in
    \cite{SteibersPhD2017} we obtain
  \begin{align*}
    \abs{\bP^{(0,0)}(X_n=x)
    & - \frac{1}{\lceil \varepsilon \sqrt{n}\,\rceil^d}
      \bP^{(0,0)}(X_n \in \Delta)}  \\
    & = \frac{1}{\lceil \varepsilon \sqrt{n} \,\rceil^d}
      \Big\lvert\sum_{y \in \Delta} \bP^{(0,0)}(X_n=x)-\bP^{(0,0)}(X_n=y)\Big\rvert\\
    & \le \frac{1}{\lceil \varepsilon \sqrt{n} \,\rceil^d}
      \sum_{y \in \Delta} \norm{x-y} n^{-(d+1)/2}
      \le \lceil \varepsilon\sqrt{n}\,\rceil \cdot n^{-(d+1)/2} \le
      \frac{\varepsilon}{n^{d/2}}.
  \end{align*}

  Now consider $\Delta \in \Pi^{(\varepsilon,\delta)}_n$. For every
  $x\in \Delta$ we have
  \begin{align*}
    \Abs{\exp & \Bigl( -\frac{1}{2n}x^T\Sigma^{-1}x \Bigr) -
      \frac{1}{\lceil \varepsilon
      \sqrt{n}\,\rceil^d}\int_{\Delta}\exp\Bigl(
      -\frac{1}{2n}y^T\Sigma^{-1}y \Bigr)\,dy }\\
    & \quad =\exp\Bigl( -\frac{1}{2n}x^T\Sigma^{-1}x \Bigr)
      \Abs{1 - \frac{1}{\lceil \varepsilon \sqrt{n}\,\rceil^d}
      \int_{\Delta}\exp\Bigl(-\frac{1}{2n}(y^T\Sigma^{-1}y
      - x^T\Sigma^{-1}x) \Bigr)\,dy} \\
    & \quad \le\exp\Bigl( -\frac{1}{2n}x^T\Sigma^{-1}x
      \Bigr)\frac{1}{\lceil \varepsilon \sqrt{n}\,\rceil^d}\\
    &\hspace{2cm}\times \int_{\Delta}\Abs{1-\exp\Bigl(
      -\frac{1}{2n}((y-x)^T\Sigma^{-1}(y-x) +
      2x^T\Sigma^{-1}(y-x))\Bigr)} \,dy\\
    & \quad \le \exp\Bigl( -\frac{1}{2n}x^T\Sigma^{-1}x
      \Bigr)\frac{1}{\lceil \varepsilon \sqrt{n}\,\rceil^d}
      \int_{\Delta}\Abs{1-\exp\Bigl( -\frac{1}{2n}(C\varepsilon^2n + C
      C_\delta\varepsilon n)\Bigr)}\,dy\\
    & \quad \le\exp\Bigl( -\frac{1}{2n}x^T\Sigma^{-1}x \Bigr) \cdot
      C\varepsilon \le C\varepsilon,
  \end{align*}
  where we have used $\norm{x-y} \le \varepsilon\sqrt{n}$ and
  $\norm{x} \le C_\delta \sqrt{n}$. Using first the triangle
  inequality and then combining the last two estimates we see that
  each summand in \eqref{eq:annealed_lCLT_proof_2} is bounded from
  above by
  \begin{align}
    \label{eq:10}
    \begin{split}
      \big\lvert \bP^{(0,0)}  & (X_n=x) - \frac{1}{ \lceil \varepsilon
        \sqrt{n}\,\rceil^d} \bP^{(0,0)}(X_n \in \Delta) \big\rvert\\
      & \quad
      + 
      \frac{1}{(2\pi n)^{d/2}\sqrt{\det \Sigma}} 
      \Big\lvert \exp\Bigl( -\frac{1}{2n}x^T\Sigma^{-1}x \Bigr) -
      \frac{1}{\lceil \varepsilon \sqrt{n}\,\rceil^d}
      \int_{\Delta}\exp\Bigl(-\frac{1}{2n}y^T\Sigma^{-1}y \Bigr)\,dy \Big\rvert\\
      & \quad
      + 
      \frac{1}{\lceil \varepsilon \sqrt{n}\,\rceil^d}
      \Big\lvert \bP^{(0,0)}(X_n \in \Delta) 
      - \frac{1}{(2\pi n)^{d/2}\sqrt{\det \Sigma}}\int_{\Delta}
      \exp\Bigl(-\frac{1}{2n}y^T\Sigma^{-1}y \Bigr)\,dy\Big\rvert\\
      &
      \le 
      \frac{C\varepsilon}{n^{d/2}} +
      \frac{C\varepsilon}{(2\pi n)^{d/2}\sqrt{\det \Sigma}}\\
      & \quad +
      \frac{1}{\lceil \varepsilon \sqrt{n}\,\rceil^d} \Big\lvert
      \bP^{(0,0)}(X_n \in \Delta) - \frac{1}{(2\pi n)^{d/2}\sqrt{\det
          \Sigma}}
      \int_{\Delta} \exp\Bigl(-\frac{1}{2n}y^T\Sigma^{-1}y \Bigr)\,dy\Big\rvert.
    \end{split}
  \end{align}
  The number of vertices summed over all $\Delta \in
  \Pi^{(\varepsilon,\delta)}_n$ is bounded
  by $((C_\delta+\varepsilon)\sqrt{n})^d\le C(C_\delta\sqrt{n})^d$.
  Thus,
  \begin{align}
    \label{eq:9}
    \sum_{\Delta \in \Pi^{(\varepsilon,\delta)}_n} \sum_{x \in \Delta}
     \Bigl(\frac{C\varepsilon}{n^{d/2}} +
          \frac{C\varepsilon}{(2\pi n)^{d/2}\sqrt{\det \Sigma}}\Bigr)
    \le C\cdot C_\delta^d \varepsilon.
  \end{align}
  Summing the last line in \eqref{eq:10} with the double sum
  $\sum_{\Delta \in \Pi^{(\varepsilon,\delta)}_n} \sum_{x \in \Delta}$
  gives
  \begin{align}
    \label{eq:11}
   \sum_{\Delta \in \Pi^{(\varepsilon,\delta)}_n} \Big\lvert \bP^{(0,0)}(X_n \in \Delta)
    - \frac{1}{(2\pi n)^{d/2}\sqrt{\det \Sigma}}\int_{\Delta}
    \exp\Bigl(-\frac{1}{2n}y^T\Sigma^{-1}y \Bigr)\,dy\Big\rvert.
  \end{align}
  By applying the annealed CLT from
  \cite{BirknerCernyDepperschmidtGantert2013} (and approximating the
  indicator $\indset{\Delta}$ appropriately by continuous and bounded
  functions) and noting that for fixed $\varepsilon$ and $\delta$ the
  set $\Pi^{(\varepsilon,\delta)}_n$ is finite implies that
  \eqref{eq:11} goes to zero as $n$ tends to infinity. In particular
  it is smaller than $\delta$ for large enough $n$.

  Combining the estimates above we obtain
  \begin{align*}
    \sum_{x\in\bZ^d} \Big\lvert\bP^{(0,0)}(X_n=x) - \frac{1}{(2\pi
    n)^{d/2}\sqrt{\det\Sigma}}\exp\Bigl( -\frac{1}{2n}x^T\Sigma^{-1}x
    \Bigr)\Big\rvert \le 2\delta + C\cdot C_\delta^d\varepsilon +\delta
    <4\delta
  \end{align*}
  for large enough $n$ and choosing $\varepsilon>0$ so that
  $C\cdot C_\delta^d \varepsilon<\delta$. This concludes the proof.
\end{proof}

\section{Proof of Lemma~\ref{claim:1}}
\label{sect:box_level_comparisons}

For the proof of of Lemma~\ref{claim:1} we follow closely the proof of
Theorem~5.1 in \cite{BergerCohenRosenthal2016} and adapt their
arguments to our model. The general idea is to implement an iteration
scheme that carries the annealed-quenched comparison from
Theorem~\ref{thm:Steibers_Thm3.24} below along a sequence of more and
more slowly growing box scales.

Let us introduce some notation first. Let $\theta>0$ be a (small)
constant to be determined in the proof. For $j\in \bN$, we set
$n_j \coloneqq \lfloor N^{\frac{1}{2^j}} \rfloor$ and
$r(N) \coloneqq \lceil \log_2(\frac{\log N)}{\theta \log M}) \rceil$.
Note that $r(N)$ is the smallest integer satisfying
$n_{r(N)}^{\theta} \le M$. Furthermore we set
\begin{align}
  \label{eq:12}
  N_0 \coloneqq N - \sum_{j=1}^{r(N)} n_j \quad \text{and} \quad
  N_k \coloneqq \sum_{j=1}^k n_j + N_0 = N_{k-1}+n_k,
  \text{ for all $1 \le k \le r(N)$.}
\end{align}
Finally, for $0 \le k \le r(N)$, abusing the notation and suppressing
the dependence on $\theta$ and $n$ we write for the rest of this
section $\Pi_{k} \coloneqq \Pi_{n_k^{\theta}}$ and define
\begin{align}
  \label{eq:42}
  \lambda_k (\omega) \coloneqq \sum_{\Delta \in \Pi_{k}}
  \big|P_{\omega}^{(0,0)}(X_{N_k} \in \Delta)- \bP^{(0,0)} (X_{N_k} \in \Delta)\big|.
\end{align}
Note in particular that $\lambda_{r(N)}$ is twice the total variation
distance between the quenched and the annealed measures on boxes of
side length $\le M$, which is the term we wish to bound from above to
show \eqref{eq:claim1}. If one wishes to be slightly more precise,
then one should replace $N_{r(N)}$ by $M$, thus obtaining the total
variation for boxes of side length $M$ exactly. This, however, does
not influence the estimates to follow.

\medskip

The proof of the following proposition is long and technical and will
be given in Section~\ref{sec:proof-prop-refpr-1}.

\begin{prop}
  There \label{prop_main_thm_5.1} exists constants $C,c,\alpha>0$ and
  events $G(N),N\in\bN,$ with $\bP(G(N))\ge 1-CN^{-c\log N}$ such that
  for all $\omega\in G(N)$ we have
  \begin{align}
        \label{eq:prop_main_thm_5.1}
    \lambda_k \le \lambda_{k-1} +Cn_k^{-\alpha},\;
    \quad \forall \: 1\le k \le r(N).
  \end{align}
  In particular,
  $\lambda_{r(N)} \le \lambda_1 + C \sum_{k=1}^{r(N)} n_k^{-\alpha}$
  for $\omega\in G(N)$.
\end{prop}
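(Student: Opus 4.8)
The plan is to verify \eqref{eq:prop_main_thm_5.1} as one step of the renormalisation scheme of \cite{BolthausenSznitman2002,BergerCohenRosenthal2016}, descending from the box scale $n_{k-1}^\theta$ to the box scale $n_k^\theta$. Fix $1\le k\le r(N)$ and split the walk at the intermediate time $N_{k-1}=N_k-n_k$. Conditioning on the position at time $N_{k-1}$ and grouping it into the boxes $B\in\Pi_{k-1}$ writes, for each target box $\Delta\in\Pi_k$, both $P^{(0,0)}_\omega(X_{N_k}\in\Delta)$ and $\bP^{(0,0)}(X_{N_k}\in\Delta)$ as mixtures $\sum_{B\in\Pi_{k-1}}(\text{mass in }B)\cdot(\text{$B$-averaged $n_k$-step law})(\Delta)$, where in the quenched picture the last $n_k$ steps from a point $(y,N_{k-1})$ run in the recentred medium $\sigma_{(y,N_{k-1})}\omega$. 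Taking $\sum_{\Delta\in\Pi_k}|\cdot|$ of the difference of the two mixtures and splitting with the triangle inequality produces two contributions: first, the mass discrepancy $\sum_{B\in\Pi_{k-1}}|P^{(0,0)}_\omega(X_{N_{k-1}}\in B)-\bP^{(0,0)}(X_{N_{k-1}}\in B)|$, which is exactly $\lambda_{k-1}$ because each $B$-averaged $n_k$-step law is a probability measure; and second, the annealed-mass-weighted sum over $B$ of $\sum_{\Delta\in\Pi_k}|(\text{quenched $B$-average})(\Delta)-(\text{annealed $B$-average})(\Delta)|$, which is what must be shown to be $O(n_k^{-\alpha})$.

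It is essential that the replacement producing the $\lambda_{k-1}$ term is performed at the level of $\Pi_{k-1}$-boxes and not of single sites, so that its cost is the box-level quantity $\lambda_{k-1}$ rather than the (much larger) total-variation distance of the two laws of $X_{N_{k-1}}$ on $\bZ^d$. The price is that the $n_k$-step annealed laws in the second contribution have a starting point smeared over a box of side $n_{k-1}^\theta$. Since $n_{k-1}\asymp n_k^2$, this side length is $\asymp n_k^{2\theta}$, negligible against the diffusive scale $\sqrt{n_k}$ once $\theta<1/4$; the annealed derivative estimates of Lemma~\ref{lem:annealed_derivative_estimates} then let us replace the smeared start by a fixed representative $y_B\in B$ at a cost of $O(n_k^{2\theta}n_k^{-(d+1)/2})$ per box $\Delta$, hence $O(n_k^{-1/2+(2-d)\theta})$ after summing over the $O(n_k^{d/2-d\theta})$ boxes $\Delta$ carrying non-negligible mass (the remaining mass being controlled by standard tail bounds for the annealed walk). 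For $d\ge3$ and $\theta$ small this is $\ll n_k^{-\alpha}$, so the second contribution reduces to bounding, uniformly over $B\in\Pi_{k-1}$ and over all starting sites $y\in B$, the $\Pi_k$-box total-variation distance between $P^{(y,N_{k-1})}_\omega(X_{N_k}\in\cdot)$ and $\bP^{(y,N_{k-1})}(X_{N_k}\in\cdot)$.

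This is where Theorem~\ref{thm:Steibers_Thm3.24} and the notion of good box (Definition~\ref{def:good boxes}) enter. Call $B\in\Pi_{k-1}$ \emph{$k$-good} if for every starting site in $B$ the $n_k$-step quenched law lies within $Cn_k^{-\alpha}$ of the annealed law in $\Pi_k$-box total variation; since a coarser partition only decreases the total variation, Theorem~\ref{thm:Steibers_Thm3.24}, applied with parameter $2\theta$ to an $n_k$-step walk (its exceptional set being uniform over starting sites), gives $\bP(B\text{ is not $k$-good})\le Cn_k^{-c\log n_k}$. Because $N_0\asymp N$ and hence $N_{k-1}\asymp N$ for all $k\le r(N)$, the annealed law of $X_{N_{k-1}}$ lives, up to mass $N^{-c\log N}$, on a ball of radius $\asymp\sqrt N$ on which it has density $\le CN^{-d/2}$ (by the annealed local CLT, Theorem~\ref{lem:annealed_local_CLT}, and the pointwise bounds in its proof). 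Letting $G_k$ be the event that at most an $n_k^{-\alpha}$-fraction of the $\Pi_{k-1}$-boxes meeting that ball fail to be $k$-good, on $G_k$ the annealed mass on the non-good boxes is $\le CN^{-d/2}\cdot n_k^{-\alpha}\cdot N^{d/2}+N^{-c\log N}=O(n_k^{-\alpha})$, while the $k$-good boxes contribute $O(n_k^{-\alpha})$ by definition; combined with the first contribution this gives $\lambda_k\le\lambda_{k-1}+Cn_k^{-\alpha}$ on $G(N):=\bigcap_{k=1}^{r(N)}G_k$. Since $r(N)=O(\log\log N)$, a union bound reduces everything to the claim $\bP(G_k^c)\le CN^{-c\log N}$ for each $k$.

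Establishing $\bP(G_k^c)\le CN^{-c\log N}$ is the main obstacle, and the point where genuinely new work beyond \cite{BergerCohenRosenthal2016} is required. With per-box failure probability $p=Cn_k^{-c\log n_k}$ and a number $B\asymp N^{d/2}n_k^{-2d\theta}$ of boxes involved, one wants the empirical fraction of non-good boxes to stay below the far larger level $n_k^{-\alpha}$ except on an event of probability $N^{-c\log N}$. In the i.i.d.\ environment of \cite{BergerCohenRosenthal2016} this is immediate from independence — Markov's inequality when $pB\lesssim1$, a Chernoff bound when $pB\gtrsim1$ — and indeed $pB$ is either super-polynomially small in $N$ (first moment) or, if not, $\exp(-cpB)$ is already far below $N^{-c\log N}$. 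The difficulty here is that the event ``$B$ is $k$-good'' depends on the backbone $\cC$ in the future of time $N_{k-1}$ near $B$, and since membership in $\cC$ reads off $\omega$ at arbitrarily large times, these events are correlated over all spatial scales, albeit weakly. The plan is to use the exponential decay of connectivities in supercritical oriented percolation (the ingredient also underlying the cluster results in the appendix) to couple $\cC$, with an error that is super-polynomially small over the relevant $\asymp\sqrt N$-ball, to a field of finite dependence range, for which a block/Chernoff-type concentration estimate yields the required super-polynomial control; the coupling errors are absorbed into the $N^{-c\log N}$ bound. Once this is in place, the union bound over $k\le r(N)$ completes the proof of \eqref{eq:prop_main_thm_5.1} and of its telescoped consequence.
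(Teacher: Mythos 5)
Your overall plan tracks the paper's own proof closely: split the walk at the intermediate time $N_{k-1}$, peel off a $\lambda_{k-1}$ term by grouping the position at time $N_{k-1}$ into $\Pi_{k-1}$-boxes, reduce the rest to controlling the density of good boxes via Theorem~\ref{thm:Steibers_Thm3.24} and annealed derivative estimates, and — crucially, and this you do identify — handle the long-range dependence of $\xi$ by coupling to a finite-range field using the exponential decay of the ``alive at time $n$ but eventually dies out'' probability. This last idea is exactly the construction of $\tilde\xi^k$ in the paper's Proposition~\ref{prop_key}. So for the analogues of (5.2), (5.3), (5.4) of \cite{BergerCohenRosenthal2016} and for the concentration step, your sketch and the paper's proof are essentially the same.

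However, there is a genuine gap at the very start of your decomposition. You write $\bP^{(0,0)}(X_{N_k}\in\Delta)$ as a mixture over $B\in\Pi_{k-1}$ of (annealed mass in $B$) times a ``$B$-averaged $n_k$-step law'' built from the fresh-start kernels $\bP^{(y,N_{k-1})}(X_{N_k}\in\Delta)$ for $y\in B$, in parallel with the quenched side. This identity is false, because the annealed walk $X$ under $\bP^{(0,0)}$ is \emph{not} a Markov chain: $\bP^{(0,0)}(X_{N_k}\in\Delta\mid X_{N_{k-1}}=y)\ne\bP^{(y,N_{k-1})}(X_{N_k}\in\Delta)$, since the conditioning on the path to $(y,N_{k-1})$ carries information about $\omega$ that is absent after a fresh start. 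Your two-term split therefore leaves an uncontrolled residue, which is precisely the paper's fourth term \eqref{eq:fourthInequality}, namely $\sum_{\Delta\in\Pi_k}\sum_{\Delta'\in\Pi_{k-1}}\bigl|\sum_{u\in\Delta'}\bP^{(0,0)}(X_{N_{k-1}}=u)\bP^{(u,N_{k-1})}(X_{N_k}\in\Delta)-\bP^{(0,0)}(X_{N_k}\in\Delta,\,X_{N_{k-1}}\in\Delta')\bigr|$. The paper controls it using the regeneration structure of \cite{BirknerCernyDepperschmidtGantert2013}: inserting the first regeneration time after $N_{k-1}$ decouples the two halves exactly, the time to that regeneration has exponential tails (Lemma~2.5 there), and the resulting $O(n_k^\beta)$ space-time smearing of the regeneration point is absorbed by the annealed derivative estimates. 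Without some such step the inequality $\lambda_k\le\lambda_{k-1}+Cn_k^{-\alpha}$ does not follow from what you wrote. A secondary point: your aside that in the i.i.d.\ case Markov's inequality already suffices when $pB\lesssim 1$ would not give a bound of order $N^{-c\log N}$ that is uniform over $k\le r(N)$, because $p=n_k^{-c\log n_k}=N^{-(c/4^k)\log N}$ degrades as $k$ grows; the Hoeffding/Chernoff route on sub-collections of independent boxes, which you do invoke at the end, is what is actually needed at every scale.
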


\begin{proof}[Proof of Lemma~\ref{claim:1}]
  The assertion is a consequence of
  Proposition~\ref{prop_main_thm_5.1} and can be proven analogously to
  the argument in the last part of the proof of Theorem~5.1 in
  \cite{BergerCohenRosenthal2016}, page 35.
\end{proof}

\section{Concentration from coupling: Proofs of
  Proposition~\ref{lem:box_average_estimate} and
  Corollary~\ref{cor:con_prop_Q}}
\label{sect:quenched-annealed_coupling}

In this section we prove some analogues of the results of Section~6
in \cite{BergerCohenRosenthal2016} and present proofs of
Proposition~\ref{lem:box_average_estimate} and Corollary~\ref{cor:con_prop_Q}.

\begin{lem}
  \label{lem:coupling_Theta}
  There exists a constant $c>0$ and set of environments $K(N,c)$
  satisfying
  \begin{align}
    \label{eq:27}
    \bP(K(N,c))\ge 1 -N^{-c\log N}
  \end{align}
  such that for every $\omega$ there exists a coupling
  $\Theta_{\omega,N}$ of $\bP^{(0,0)}(X_N=\cdot)$ and
  $P_\omega^{(0,0)}(X_N=\cdot)$ with the property
  \begin{align}
    \label{eq:74}
    \Theta_{\omega,N}(\Lambda) > c \; \text{ for every $\omega \in
    K(N,c)$},
  \end{align}
  where $\Lambda \coloneqq \{(x,x): x\in\bZ^d \}$.
\end{lem}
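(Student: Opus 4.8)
The plan is to build the coupling $\Theta_{\omega,N}$ in two stages, first matching the two walks at the level of $M$-boxes (for a suitable fixed scale $M$) and then, conditionally on being in the same box, matching their exact positions. First I would invoke Lemma~\ref{claim:1}: choosing $M$ a large fixed constant and applying \eqref{eq:78} with $x=0$, we obtain a set $K_1(N)$ with $\bP(K_1(N)) \ge 1 - \widetilde C N^{-\tilde c \log N}$ on which
\begin{align*}
  \sum_{\Delta \in \Pi_M} \bigl| P_\omega^{(0,0)}(X_N \in \Delta) - \bP^{(0,0)}(X_N \in \Delta)\bigr| \le \frac{C}{M^c} + \frac{C}{N^c}.
\end{align*}
For $M$ and $N$ large this total variation distance is at most (say) $1/4$, so the optimal coupling of the two box-laws $P_\omega^{(0,0)}(X_N \in \cdot)$ and $\bP^{(0,0)}(X_N \in \cdot)$ on $\Pi_M$ places both walks in the \emph{same} box $\Delta$ with probability at least $3/4$.

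The second stage is the heart of the argument: conditionally on both walks landing in a common box $\Delta$, I want to couple them to the same site with probability bounded below uniformly. The key input here is the annealed local CLT (Theorem~\ref{lem:annealed_local_CLT}) together with the annealed derivative estimates (Lemma~\ref{lem:annealed_derivative_estimates}), which give that within a diffusively-sized region, $\bP^{(0,0)}(X_N = x)$ is comparable to $n^{-d/2}$ and varies slowly across an $M$-box; hence the conditional annealed law $\bP^{(0,0)}(X_N = \cdot \mid X_N \in \Delta)$ is, uniformly over the relevant boxes $\Delta$ (those with $\bP^{(0,0)}(X_N \in \Delta)$ not too small, which carry most of the mass by Lemma~3.6 of \cite{SteibersPhD2017}), within total variation distance bounded away from $1$ of the uniform distribution on $\Delta$. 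For the quenched side one must argue similarly that the conditional quenched law on such a $\Delta$ is close to uniform on $\Delta \cap \mathcal C$; this follows on an event $K_2(N)$ of probability $1 - N^{-c\log N}$ by combining Lemma~\ref{claim:1} applied at the scale of a \emph{smaller} box partition (comparing quenched to annealed on finer boxes inside $\Delta$) with the annealed estimates above. Given two probability measures on the common (essentially full-dimensional) index set $\Delta$ that are each within distance $1-\kappa$ of the uniform law, their overlap is at least $\kappa' > 0$ for a universal $\kappa'$; this yields a coupling onto the diagonal with probability $\ge \kappa'$ conditionally on the common box.

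Finally I would patch the pieces together: on $K(N,c) \coloneqq K_1(N) \cap K_2(N)$ (still of the required probability by a union bound, after relabelling $c$), define $\Theta_{\omega,N}$ by first running the box-coupling, and, when both walks occupy the same box $\Delta$, running the within-box diagonal coupling; on the complementary events couple arbitrarily (e.g.\ independently). Then $\Theta_{\omega,N}(\Lambda) \ge \frac{3}{4}\cdot \kappa' \cdot (1 - o(1)) \ge c$ for $N$ large, and for the finitely many remaining small $N$ one absorbs them by shrinking $c$ (noting $\Theta_{\omega,N}(\Lambda) > 0$ always, e.g.\ because both walks can sit at a common cluster point with positive probability). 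I expect the main obstacle to be the second stage: controlling the conditional \emph{quenched} law inside a single box and showing it is close to uniform on the cluster points there uniformly in $N$ and in the box — this is where the non-ellipticity of the walk bites, and one must use the connectivity/intersection properties of the oriented percolation cluster (Section~\ref{sec:inters-clust-points}) to ensure $\Delta \cap \mathcal C$ is typically richly connected, so that the walk genuinely spreads over it. The authors may instead route this through a direct strengthening of the $M$-box coupling of Lemma~\ref{claim:1} via the finer-scale estimates already in hand; either way, the diffusive comparison and percolation connectivity are the essential ingredients.
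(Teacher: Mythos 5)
Your proposal correctly identifies the first stage (box-level coupling from Lemma~\ref{claim:1}), but the second stage you sketch — matching the two \emph{conditional} laws on a common $M$-box by arguing that both are close to the uniform distribution — is not what the paper does, and it has a genuine gap. The conditional quenched law $P^{(0,0)}_\omega(\,\cdot\mid X_N\in\Delta)$ is supported on $\Delta\cap\mathcal C$ and can be very far from uniform on $\Delta$ (for instance concentrated on a thin or lopsided subset of cluster points), so ``within TV distance $1-\kappa$ of the uniform law on $\Delta$'' fails for the quenched side. Moreover, the finer-scale application of Lemma~\ref{claim:1} you propose cannot repair this: conditioning on $\{X_N\in\Delta\}$ requires dividing by $\bP^{(0,0)}(X_N\in\Delta)\asymp M^d N^{-d/2}$, which inflates the fine-scale error $C/(M')^c$ by a factor of order $N^{d/2}$ and destroys the estimate. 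You correctly flag this as the main obstacle, but the route via conditional uniformity does not resolve it.

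The paper side-steps the issue with a time-shift trick. Instead of coupling the two laws of $X_N$ on boxes and then trying to disaggregate within a box, it applies the box coupling $\widetilde\Theta_{\omega,N-M,M}$ at time $N-M$ and then lets both walks evolve $M$ further steps conditionally independently given their boxes, defining
$\Theta_{\omega,N}(x,y) = \sum_{\Delta,\Delta'}\widetilde\Theta_{\omega,N-M,M}(\Delta,\Delta')\,\bP^{(0,0)}(X_N=x\mid X_{N-M}\in\Delta)\,P^{(0,0)}_\omega(X_N=y\mid X_{N-M}\in\Delta')$.
The point is that one does not need conditional uniformity, only crude one-step lower bounds: the annealed walk is uniformly elliptic, so $\bP^{(0,0)}(X_N=x\mid X_{N-M}\in\Delta)\ge\eta^M$ whenever $x$ is reachable from $\Delta$; and although the quenched walk is not elliptic, whenever $P^{(0,0)}_\omega(X_N=x\mid X_{N-M}\in\Delta)>0$ it is $\ge(3^{-d})^M$. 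The non-ellipticity is handled by the observation that any box $\Delta$ with $\widetilde\Theta_{\omega,N-M,M}(\Delta,\Delta)>0$ must carry quenched mass at time $N-M$, and hence $\Delta$ lies on the backbone and there is at least one $x\in\bZ^d$ reachable from $\Delta$ under $P_\omega$ in $M$ steps. Summing over such $x$ and over diagonal boxes then gives $\Theta_{\omega,N}(\Lambda)\ge(1-\varepsilon)\eta^M(3^{-d})^M$, uniformly in $N$. So your intuition that one must ``descend from box to point'' is right, but the mechanism is a crude deterministic step-probability bound over a short time window, not a diffusive/LCLT comparison; none of the annealed local CLT, the derivative estimates, nor the cluster-intersection results of Section~\ref{sec:inters-clust-points} are used in this lemma.
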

\begin{proof}
  For $\varepsilon>0$ and $M\in \bN$ denote by
  $K(N)=K(N,M,\varepsilon)$ the set of environments
  $\omega \in \Omega$ satisfying
  \begin{align}
    \label{eq:MBoxTotalVariationDistance}
    \sum_{\Delta \in \Pi_M}\abs{P^{(0,0)}_\omega(X_N \in \Delta) -
      \bP^{(0,0)}(X_N \in \Delta)} < \varepsilon,
  \end{align}
  where $\Pi_{M}$ is a partition of $\bZ^d$ into $d$-dimensional boxes
  of side length $M$. By Lemma~\ref{claim:1}, for every
  $\varepsilon \in (0,1)$ there exists a $M\in \bN$ such that
  $\bP(K(N))\ge 1-N^{-c\log N}$. On the event $K(N)$, the inequality
  \eqref{eq:MBoxTotalVariationDistance} tells us that twice the total
  variation distance between $\bP^{(0,0)}(X_N \in \cdot)$ and
  $P^{(0,0)}_\omega(X_N\in \cdot)$ on $\Pi_M$ is less than
  $\varepsilon$ and therefore there exists a coupling
  $\widetilde{\Theta}_{\omega,N,M}$ of both measures on
  $\Pi_M\times\Pi_M$ such that
  $\widetilde{\Theta}_{\omega,N,M}(\Lambda_{\Pi_M})>1-\varepsilon$,
  where $\Lambda_{\Pi_M}=\{(\Delta,\Delta): \Delta\in \Pi_M\}$.

  Using the coupling $\widetilde{\Theta}$ we construct a new coupling
  of $\bP^{(0,0)}(X_N=\cdot)$ and $P_\omega^{(0,0)}(X_N=\cdot)$ on
  $\bZ^d\times\bZ^d$ which puts positive probability on the
  diagonal $\Lambda = \{(x,x): x\in \bZ^d \}$. We define
  $\Theta_{\omega,N}$ on $\bZ^d\times \bZ^d$ by
  \begin{multline}
    \label{eq:23}
    \Theta_{\omega,N}(x,y) \coloneqq
     \sum_{\Delta,\Delta'\in \Pi_M} \widetilde{\Theta}_{\omega,N-M,M}(\Delta,\Delta')\\
    \cdot \bP^{(0,0)}(X_N = x\vert X_{N-M}\in \Delta)
        \cdot P_\omega^{(0,0)}(X_N = y\vert X_{N-M} \in \Delta').
  \end{multline}
  Since $\widetilde{\Theta}_{\omega, N-M,M}$ is a coupling of
  $\bP^{(0,0)}$ and $P_\omega^{(0,0)}$ on $\Pi_M\times \Pi_M$ one can
  easily see that by the formula of total probability
  $\Theta_{\omega,N}$ is indeed a coupling of
  $\bP^{(0,0)}(X_N = \cdot)$ and $P_\omega^{(0,0)}(X_N = \cdot)$.

  For $x\in \bZ^d$, let $\Delta_x$ be the unique cube which contains
  $x$ in the partition $\Pi_M$. Since the side length of each box in
  the partition $\Pi_M$ is $M$ it follows that the annealed random
  walk can reach $x$ from each point in the box $\Delta_x$ in less
  than $M$ steps.

  Next we want to show that the coupling gives us a positive chance
  for the two walks to end up at the same position. In
  \cite{BergerCohenRosenthal2016} this is done by showing that
  $\Theta_{\omega,N}(x,x)$ is bounded away from zero for all
  $x\in \bZ^d$. This is not true in our model because we do not have
  uniform ellipticity for the quenched measure. The idea here is to
  show that for ``typical'' $\omega$ the measure
  $\Theta_{\omega,N}(x,x)$ is bounded away from zero for ``many''
  $x\in \bZ^d$. To this end for given $\omega$ we define the set
  $\Pi^x_\omega \subset \Pi_M$ as the set of boxes $\Delta \in \Pi_M$
  satisfying
  \begin{align}
    \label{eq:20}
    P_\omega^{(0,0)}(X_N =x \vert X_{N-M}\in \Delta) > 0.
  \end{align}
  Note that if $\Pi^x_\omega=\emptyset$ for $x$ and $\omega$ then we
  have $\Theta_{\omega,N}(x,x)=0$. Furthermore, by definition of
  $P_\omega^{(0,0)}(X_N = x \vert X_{N-1}=y)$ we have
  \begin{align}
       \label{eq:22}
       P_\omega^{(0,0)}(X_N = x \vert X_{N-M} \in \Delta) \ge
       \left(\frac{1}{3^d} \right)^{M}
  \end{align}
  for all $\Delta \in \Pi^x_\omega$. Now using \eqref{eq:23},
  \eqref{eq:22} and uniform ellipticity of the annealed measure we
  obtain
  \begin{align*}
    \Theta_{\omega,N}(x,x)
    & = \sum_{\Delta \in \Pi^x_\omega}
      \widetilde{\Theta}_{\omega,N-M,M}(\Delta,\Delta)\\
    & \qquad \qquad
      \cdot \bP^{(0,0)}(X_N =x\vert X_{N-M}\in\Delta) \cdot
      P^{(0,0)}_\omega(X_N =x\vert X_{N-M}\in \Delta)\\
    &\ge \sum_{\Delta \in \Pi^x_\omega}
      \widetilde{\Theta}_{\omega,N-M,M}(\Delta,\Delta)\eta^{M}
         \left(\frac{1}{3^d}\right)^{M},
  \end{align*}
  where $\eta \in (0,1)$ is the ``uniform ellipticity bound'' of the
  annealed random walk. Now it suffices to show
  \begin{align}
    \label{eq:75}
    \sum_{x \in \bZ^d} \sum_{\Delta \in \Pi^x_\omega}
    \widetilde{\Theta}_{\omega,N-M,M}(\Delta,\Delta) \ge
    \sum_{\Delta \in \Pi_M}\widetilde{\Theta}_{\omega,N-M,M}(\Delta,\Delta).
  \end{align}
  This follows immediately if we can show that for all
  $\Delta \in \Pi_M \setminus \cup_{x \in \bZ^d}\Pi^x_\omega$ we have
  \begin{align*}
       \widetilde{\Theta}_{\omega,N-M,M}(\Delta,\Delta) = 0.
  \end{align*}
  For that consider a box
  $\Delta \in \Pi_M \setminus \cup_{x \in \bZ^d}\Pi^x_\omega$, i.e.\
  there is no $x\in \bZ^d$ with $\Delta \in \Pi^x_\omega$ for the
  fixed $\omega$. Thus, we have
  $P^{(0,0)}_\omega(X_N =x \vert X_{N-M}\in \Delta) = 0$ for all
  $x \in \bZ^d$. It follows that
  $P^{(0,0)}_\omega(X_{N-M}\in \Delta)=0$, because there can be no
  infinitely long open path starting from $\Delta$. We obtain
  \begin{align}
    \label{eq:24}
    \begin{split}
      \Theta_{\omega,N}(\Lambda)
      & = \sum_{x \in \bZ^d} \Theta_{\omega,N}(x,x) \ge \sum_{x \in \bZ^d}
      \sum_{\Delta \in \Pi^x_\omega} \widetilde{\Theta}_{\omega,N-M,M}(\Delta,\Delta)
      \eta^{M}\left(\frac{1}{3^d}\right)^{M}\\
      & \ge \sum_{\Delta \in \Pi_M} \widetilde{\Theta}_{\omega,N-M,M}(\Delta,\Delta)\eta^{M}
      \left(\frac{1}{3^d}\right)^{M} \ge (1-\varepsilon)\eta^{M}\left(\frac{1}{3^d}\right)^{M}
       \end{split}
  \end{align}
  for every $\omega \in K(N)$.
\end{proof}

Recall the definitions of $P_N$ and $Q_N$ from \eqref{eq:defn_P_N}
respectively \eqref{eq:defn_Q_N}.
Note that for every $N \in \bN$ the measure $P_N$ is in fact the
measure $\bP$ since for every measurable event $A \in \Omega$ we have
by translation invariance
\begin{align}
  \label{eq:73}
  \begin{split}
    P_N(A)
    & = \bE\Big[\sum_{x \in \bZ^d} \bP^{(0,0)}(X_N=x) \ind{\sigma_{(x,N)} \omega \in A} \Big]=
    \sum_{x \in \bZ^d} \bP^{(0,0)}(X_N=x)\bE[\ind{\sigma_{(x,N)} \omega \in A}]\\
    & =\sum_{x \in \bZ^d} \bP^{(0,0)}(X_N =x)\bP(\sigma_{(-x,-N)}A) =
    \sum_{x \in \bZ^d} \bP^{(0,0)}(X_N =x)\bP(A) = \bP(A).
  \end{split}
\end{align}

\begin{defn}
  Given two environments $\omega,\omega'\in\Omega$ we define their
  distance by
  \begin{align*}
    \dist(\omega,\omega')=\inf \bigl\{ \norm{(x,n)}: \omega'
    = \sigma_{(x,n)}\omega \bigr\},
  \end{align*}
  where the infimum over an empty set is defined to be infinity.
\end{defn}
We denote by $\Psi_N$ the coupling of $P_N$ and $Q_N$ from
Lemma~\ref{lem:coupling_Theta} extended to $\Omega\times \Omega$, that
is,
\begin{align}
  \label{eq:defn_Psi_N}
  \Psi_N(A) = \bE\Bigl[ \sum_{x,y \in \bZ^d} \Theta_{\omega,N}(x,y)
    \ind{(\sigma_{(x,N)}\omega,\sigma_{(y,N)}\omega)\in A} \Bigr].
\end{align}

The following result is an analogue to Lemma~6.6 in
\cite{BergerCohenRosenthal2016}.
\begin{lem}
  \label{lem:upper_bound_D(1)M_and_D(2)M}
  For $M ,N \in \bN$
  let $D^{(1)}_{M,N}:\Omega \to [0,\infty]$ and
  $D^{(2)}_{M,N}:\Omega \to [0,\infty]$ be defined by
  \begin{align*}
    & D^{(i)}_{M,N}(\omega_i) \coloneqq
      \bE_{\Psi_N}[\ind{\dist (\omega_1,\omega_2)>M}
      \vert \, \mathfrak{F}_{\omega_i}](\omega_i), &i=1,2,
  \end{align*}
  where $\mathfrak{F}_{\omega_1}$, $\mathfrak{F}_{\omega_2}$ are the
  $\sigma$-algebras generated by the first, respectively, second
  coordinate in $\Omega \times \Omega$ and $\Psi_N$ is defined in
  \eqref{eq:defn_Psi_N}. For $M\in \bN$, there exists an event $F_M$
  with the following properties:
  \begin{enumerate}[(1)]
  \item $\bP(F_M) \ge 1- M^{-c\log M}$.
  \item For every $\varepsilon>0$ one can choose $M=M(\varepsilon)$
    large enough 
    \begin{align}
      \label{eq:28}
      \max\Bigl \{
      D^{(1)}_{M,N}(\omega),\,
      \frac{dQ_N}{d\bP}(\omega)D^{(2)}_{M,N}(\omega) \Bigr\} \le
      \varepsilon\indset{F_M}(\omega) + \indset{F^\compl_M}(\omega).
    \end{align}
  \end{enumerate}
\end{lem}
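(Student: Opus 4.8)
\textbf{Proof proposal for Lemma~\ref{lem:upper_bound_D(1)M_and_D(2)M}.}

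The plan is to define the event $F_M$ in terms of the $M$-box coupling quality guaranteed by Lemma~\ref{claim:1}, and then to split the analysis according to whether or not the quenched/annealed comparison on boxes of side length $M$ is good. First I would fix, for given $\varepsilon>0$, a scale $M$ large enough that Lemma~\ref{lem:coupling_Theta} provides a coupling $\Theta_{\omega,N}$ (and hence $\Psi_N$) for which the diagonal has probability bounded below; more relevantly, I would use Lemma~\ref{claim:1} applied with the box scale $M$ and with a further auxiliary scale, to obtain an event on which $\sum_{\Delta\in\Pi_M}\lvert P^{(0,0)}_\omega(X_N\in\Delta)-\bP^{(0,0)}(X_N\in\Delta)\rvert$ is small. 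Declaring $F_M$ to be (a suitable shift-stable version of) this event gives property (1) directly from the probability bound $1-\widetilde CN^{-\tilde c\log N}$ in Lemma~\ref{claim:1}, re-indexed so that the exponent is in terms of $M$; one absorbs constants by taking $M$ large. The subtlety is that $F_M$ must be an event depending only on the environment near the origin (so that $\indset{F_M}$ makes sense under both marginals of $\Psi_N$ and under the shift), which is why one works with the box-level comparison rather than anything depending on $N$ in an unbounded way.

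Next I would estimate $D^{(1)}_{M,N}$. By definition this is the $\Psi_N$-conditional probability, given the first coordinate, that the two coupled environments differ by a shift of norm $>M$. Under $\Psi_N$ the two environments are $\sigma_{(x,N)}\omega$ and $\sigma_{(y,N)}\omega$ with $(x,y)$ drawn from $\Theta_{\omega,N}$, so $\dist(\omega_1,\omega_2)>M$ forces $\norm{x-y}>M$; hence $D^{(1)}_{M,N}(\omega_1)$ is controlled by the $\Theta_{\omega,N}$-probability that the two walks land in different $M$-boxes, which on $F_M$ is at most (a constant times) $\varepsilon$ by the construction of $\Theta$ from the box coupling $\widetilde\Theta_{\omega,N-M,M}$ together with \eqref{eq:MBoxTotalVariationDistance}. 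Off $F_M$ one uses the trivial bound $D^{(1)}_{M,N}\le 1$. This yields the first half of \eqref{eq:28}. For the second half, the key point is the reweighting: $\frac{dQ_N}{d\bP}(\omega)D^{(2)}_{M,N}(\omega)$ is, by the disintegration defining $\Psi_N$ and $Q_N$, exactly the density with respect to $\bP$ of the sub-probability measure $\omega\mapsto \bE[\sum_{y} (\text{mass that the second marginal puts at }\sigma_{(y,N)}\omega\text{ with partner far away})]$, and integrating it against a test set reduces, after undoing the shift via translation invariance of $\bP$ (as in \eqref{eq:73}), to an expression symmetric to the $D^{(1)}$ computation but with the roles of the two coordinates exchanged. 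So the same box-coupling estimate applies: on $F_M$ it is $\le C\varepsilon$, off $F_M$ one bounds it by $\frac{dQ_N}{d\bP}$, whose integral is $1$; but since \eqref{eq:28} is a pointwise inequality one simply uses $\frac{dQ_N}{d\bP}(\omega)D^{(2)}_{M,N}(\omega)\le \frac{dQ_N}{d\bP}(\omega)\cdot 1$ and absorbs this into $\indset{F^\compl_M}(\omega)$ — wait, that is not pointwise bounded by $1$, so here one must instead enlarge $F_M$ to also control $\frac{dQ_N}{d\bP}$ on a large box, or replace the bound off $F_M$ by a crude one and accept that \eqref{eq:28} as stated needs $F_M$ chosen so that on $F_M^\compl$ the product is still $\le 1$; cleanly, one takes $F_M$ to be the intersection of the box-comparison event with the concentration event from Proposition~\ref{lem:box_average_estimate} guaranteeing $\frac{dQ_N}{d\bP}\le 1$-ish after localisation, or more simply one notes $D^{(2)}_{M,N}\le 1$ so the product is $\le \frac{dQ_N}{d\bP}$ and one only needs the final bound to hold with $\indset{F_M^\compl}$ after taking expectations — but the statement is pointwise, so the honest route is to let $F_M$ also encode an upper bound on $\frac{dQ_N}{d\bP}$ via Corollary~\ref{cor:finite_moments_prefactor}/Proposition~\ref{lem:box_average_estimate}, rescaling $\varepsilon$ accordingly.

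I expect the main obstacle to be precisely this reweighting bookkeeping: one has to argue that $\frac{dQ_N}{d\bP}\cdot D^{(2)}_{M,N}$ — rather than $D^{(2)}_{M,N}$ alone — is the natural object, because $D^{(2)}_{M,N}$ is a conditional expectation under $\Psi_N$ whose second marginal is $Q_N$, and it is only after multiplying by the density $\frac{dQ_N}{d\bP}$ that one returns to an integral against $\bP$ where translation invariance can be used to re-center and invoke the $N-M$-step box coupling. Concretely, the identity I would establish is $\int h\,\frac{dQ_N}{d\bP}D^{(2)}_{M,N}\,d\bP=\Psi_N\bigl(\dist(\omega_1,\omega_2)>M,\ h(\omega_2)\bigr)$ for bounded $h\ge0$, then bound the right side by unrolling $\Psi_N$ via $\Theta_{\omega,N}$ and $\widetilde\Theta_{\omega,N-M,M}$ exactly as in the proof of Lemma~\ref{lem:coupling_Theta}, producing $\le(C\varepsilon)\int h\,d\bP$ on the good event. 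The rest — choosing $M=M(\varepsilon)$ so that all the $C\varepsilon$'s and the $M^{-c\log M}$ probability bounds line up, and checking $F_M$ is measurable with respect to a fixed finite window so the shifts in \eqref{eq:28} are legitimate — is routine once the identity above is in place. This mirrors Lemma~6.6 of \cite{BergerCohenRosenthal2016}, the only genuinely new ingredient being the non-ellipticity workaround already carried out in Lemma~\ref{lem:coupling_Theta} and the use of Lemma~\ref{claim:1} in place of the i.i.d.\ concentration used there.
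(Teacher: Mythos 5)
Your outline captures the paper's approach: define $F_M$ from the $M$-box comparison of Lemma~\ref{claim:1}, derive an explicit representation for $D^{(1)}_{M,N}$ and for the product $\frac{dQ_N}{d\bP}D^{(2)}_{M,N}$ by unrolling $\Psi_N$ through $\Theta_{\omega,N}$ and translation invariance, and bound both on $F_M$ using the $(N-M)$-to-$N$-step box coupling $\widetilde\Theta_{\omega,N-M,M}$. The integral identity $\int h\,\frac{dQ_N}{d\bP}D^{(2)}_{M,N}\,d\bP=\Psi_N\bigl(\dist(\omega_1,\omega_2)>M,\ h(\omega_2)\bigr)$ you write is exactly the right lever, and the pointwise forms the paper establishes are $D^{(1)}_{M,N}(\omega) = \sum_{x,y}\Theta_{\sigma_{-(x,N)}\omega,N}(x,y)\ind{\norm{x-y}>M}$ and $\frac{dQ_N}{d\bP}(\omega)D^{(2)}_{M,N}(\omega) = \sum_{x,y}\Theta_{\sigma_{-(y,N)}\omega,N}(x,y)\ind{\norm{x-y}>M}$ on $B_N$. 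One adjustment to your intuition about $F_M$: it is not a ``fixed finite window'' event; to obtain a single $N$-independent $F_M$, the paper takes the intersection over all time scales $k>M/2$ of the events in Lemma~\ref{claim:1}, and the super-polynomial decay in $k$ makes the union bound close to $M^{-c\log M}$. That is what licenses the bound $\min_{z\in[-N,N]^d}\widetilde\Theta_{\sigma_{-(z,N)}\omega,N,M}(\Lambda_{\Pi_M})>1-\varepsilon$ for all $N>M/2$ simultaneously.

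The genuine gap in your proposal is the fallback you reach for when handling $\frac{dQ_N}{d\bP}D^{(2)}_{M,N}$ off $F_M$. Invoking Proposition~\ref{lem:box_average_estimate} or Corollary~\ref{cor:finite_moments_prefactor} to control $\frac{dQ_N}{d\bP}$ is \emph{circular}: the proof of Proposition~\ref{lem:box_average_estimate} uses precisely Lemma~\ref{lem:upper_bound_D(1)M_and_D(2)M} (see \eqref{eq:35} and \eqref{eq:Q(H)_as_upper_bound}), and Corollary~\ref{cor:finite_moments_prefactor} in turn rests on Proposition~\ref{lem:box_average_estimate}. Your instinct that $\frac{dQ_N}{d\bP}D^{(2)}_{M,N}$ is only manifestly bounded by $\frac{dQ_N}{d\bP}$ rather than by $1$ is legitimate — the explicit sum is bounded above by $\sum_y P^{(0,0)}_{\sigma_{-(y,N)}\omega}(X_N=y)=\varphi_N(\omega)$, not by $1$ — but the way out cannot pass through later results. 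The paper itself does not close this off-$F_M$ bound by a separate argument; it writes the identity, shows the bound $\le\varepsilon$ on $F_M\cap B_N$ (and $=0$ on $B_N^\compl$), and in all uses of the lemma the integration set is contained in $F_M$ (e.g.\ $H,G\subset F_{M_\varepsilon}$), so the $\indset{F_M^\compl}$ term never contributes. So drop the appeal to downstream estimates; instead, establish the explicit representation (the analogue of \eqref{eq: defn D(2)MN}), prove the $\le\varepsilon$ bound on $F_M\cap B_N$ and the vanishing on $B_N^\compl$, and leave the off-$F_M$ term at the level it is actually needed in the applications.
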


\begin{proof}
  Let
  \begin{align*}
    F_M = \bigcap_{k > M/2}
    \Big\{& \omega \in \Omega : \forall x \in [-k,k]^d\cap \bZ^d, \\[-2ex]
          & \qquad \qquad \sum_{\Delta \in \Pi_M} \abs{\bP^{(x,0)}(X_k \in \Delta) -
            P^{(x,0)}_\omega(X_k \in \Delta)} \le \frac{C_2}{M^{c_1}}
            + \frac{C_2}{k^{c_1}} \Big\}
  \end{align*}
  where $\Pi_M$ is a partition of $\bZ^d$ into boxes of side length
  $M$ and $C_2,c_1$ are the (renamed) constants from
  Lemma~\ref{claim:1}. Thus, $\bP(F_M) \ge 1-M^{-c\log M}$. Fix
  $\varepsilon>0$. Then, by the definition of $F_M$ and the coupling
  $\widetilde{\Theta}_{\omega,k,M}$ constructed in the proof of
  Lemma~\ref{lem:coupling_Theta}, for every $\omega \in F_M$, every
  $k > M/2$ and every $x\in[-k,k]^d\cap \bZ^d$ we have
  \begin{align}
    \label{eq:21}
    \widetilde{\Theta}_{\sigma_{(x,k)}\omega,k,M}(\Lambda_{\Pi_M})>1 -
    \frac{2C_2}{M^{c_1}}>1-\varepsilon
  \end{align}
  for large enough $M$, where
  $\Lambda_{\Pi_M}=\{(\Delta,\Delta):\Delta\in \Pi_M\}$. Note that for
  $k \le M/2$ the left hand side of \eqref{eq:21} is $1$ and therefore
  \eqref{eq:28} is trivially true for $N \le M/2$.

  \medskip Let us now verify the estimates \eqref{eq:28} for
  $D^{(1)}_{M,N}$ and $\frac{dQ_N}{d\bP} D^{(2)}_{M,N}$ and $N >M/2$.
  Note that for $\bP$-almost every environment $\omega \in \Omega$ we
  have
  \begin{align}
    \label{eq:defn_D(1)MN}
    D^{(1)}_{M,N} (\omega) = \sum_{x,y \in \bZ^d}
    \Theta_{\sigma_{-(x,N)}\omega,N}(x,y)
    \ind{\norm{x-y}>M}
  \end{align}
  and for $Q_N$-almost every $\omega$ we have
  \begin{align}
    \label{eq: defn D(2)MN}
    D^{(2)}_{M,N}(\omega)= \left(\frac{dQ_N}{d\bP}(\omega)\right)^{-1}
    \sum_{x,y \in \bZ^d}
    \Theta_{\sigma_{-(y,N)}\omega,N}(x,y)\ind{\norm{x-y}>M}.
  \end{align}
  Using \eqref{eq:defn_Psi_N} we have for every measurable event
  $A \subset \Omega$
  \begin{align*}
    E_{\Psi_N} [
    &\ind{(\omega_1,\omega_2)\in A\times \Omega}
      \ind{\dist(\omega_1,\omega_2)>M}]\\
    & = \Psi_N(A\times \Omega\cap\{
      (\omega_1,\omega_2):\dist(\omega_1,\omega_2)>M \})\\
    & =\bE\Bigl[ \sum_{x,y \in \bZ^d} \Theta_{\omega,N}(x,y)
      \ind{(\sigma_{(x,N)}\omega,\sigma_{(y,N)}\omega)\in A\times\Omega}
      \ind{\dist(\sigma_{(x,N)}\omega,\sigma_{(y,N)}\omega)>M} \Bigr]\\
    & =\sum_{x,y \in \bZ^d} \bE\bigl[\Theta_{\omega,N}(x,y)
      \ind{\sigma_{(x,N)}\omega \in A\}}
      \ind{\norm{x-y}>M} \bigr] \\
    & = \sum_{x,y \in \bZ^d} \bE\bigl[\Theta_{\sigma_{-(x,N)}\omega,N}(x,y)
      \ind{\omega \in A\}} \ind{\norm{x-y} >M}\bigr],
  \end{align*}
  where the last equality follows by translation invariance of $\bP$.
  Since $\Psi_N$ is a coupling of $P_N=\bP$ and $Q_N$ the last term
  equals
  \begin{align*}
    E_{\Psi_N}\Bigl[\ind{(\omega,\omega')\in A\times \Omega}
    \sum_{x,y \in \bZ^d} \Theta_{\sigma_{-(x,N)}\omega,N}(x,y)\ind{\norm{x-y}>M} \Bigr],
  \end{align*}
  which implies \eqref{eq:defn_D(1)MN}.

  For $B_N\coloneqq\{ \omega \colon \frac{dQ_N}{d\bP}(\omega) \neq 0 \}$ we have $Q_N(B_N^\compl)= \Psi_N(\Omega\times B_N^\compl)=0$, and we get similarly
  \begin{align*}
    E_{\Psi_N}[
    & \ind{\Omega\times A}\ind{\dist(\omega_1,\omega_2)>M}]\\
    & = E_{\Psi_N}[\ind{\Omega\times A\cap B_N} \ind{\dist(\omega_1,\omega_2)>M}]\\
    & = \Psi_N(\Omega\times (A\cap B_N) \cap \{(\omega_1,\omega_2) :
      \dist(\omega_1,\omega_2)>M \})\\
    & =\bE\Bigl[ \sum_{x,y \in \bZ^d} \Theta_{\omega,N}(x,y)
      \ind{(\sigma_{(x,N)}\omega,\sigma_{(y,N)}\omega)\in \Omega\times A \cap B_N}
      \ind{\dist(\sigma_{(x,N)}\omega,\sigma_{(y,N)}\omega)>M} \Bigr]\\
    & =\bE \Bigl[ \sum_{x,y \in \bZ^d} \Theta_{\omega,N}(x,y)
      \ind{\sigma_{(y,N)}\omega \in A\cap B_N}\ind{\norm{x-y}>M} \Bigr]\\
    & =\bE \Bigl[ \sum_{x,y \in \bZ^d} \Theta_{\sigma_{-(y,N)}\omega,N}(x,y)
      \ind{\omega \in A\cap B_N}\ind{\norm{x-y}>M} \Bigr]\\
    & = E_{Q_N} \Bigl[\Bigl(\frac{dQ_N}{d\bP}\Bigr)^{-1}(\omega)\sum_{x,y \in \bZ^d}
      \Theta_{\sigma_{-(y,N)}\omega,N}(x,y)\ind{\omega \in A\cap B_N} \ind{\norm{x-y}>M} \Bigr]\\
    & = E_{\Psi_N}\Bigl[ \Bigl(\frac{dQ_N}{d\bP} \Bigr)^{-1}(\omega_2) \sum_{x,y \in \bZ^d}
      \Theta_{\sigma_{-(y,N)}\omega_2,N}(x,y)\ind{(\omega_1,\omega_2)\in
      \Omega\times(A\cap B_N)}\ind{\norm{x-y}>M} \Bigr]\\
    & = E_{\Psi_N}\Bigl[ \Bigl(\frac{dQ_N}{d\bP} \Bigr)^{-1}(\omega_2)
      \sum_{x,y \in \bZ^d} \Theta_{\sigma_{-(y,N)}\omega_2,N}(x,y)\ind{(\omega_1,\omega_2)\in
      \Omega\times A}\ind{\norm{x-y}>M} \Bigr],
  \end{align*}
  which shows~\eqref{eq: defn D(2)MN}

  If $\Theta_{\sigma_{-(x,N)}\omega,N}(x,y)>0$ then necessarily
  $x \in [-N,N]^d\cap \bZ^d$ because in $N$ steps the annealed walk
  can only reach points in this box. It follows that for large enough
  $M$, every $\omega \in F_M$ and every $N\ge M$ we have
  \begin{align*}
    \sum_{x,y \in \bZ^d}
    &\Theta_{\sigma_{-(x,N)}\omega,N}(x,y)\ind{\norm{x-y}>M}\\
    & =1- \sum_{x,y \in \bZ^d} \Theta_{\sigma_{-(x,N)}\omega,N}(x,y)\ind{\norm{x-y}\le M}\\
    & \le 1 - \min_{z\in [-N,N]^d\cap \bZ^d} \sum_{x,y \in \bZ^d}
      \Theta_{\sigma_{-(z,N)}\omega,N}(x,y)\ind{\norm{x-y}\le M}\\
    & \le 1- \min_{z\in [-N,N]^d\cap \bZ^d} \sum_{\Delta \in \Pi_M}
      \sum_{x,y \in \Delta} \Theta_{\sigma_{-(z,N)}\omega,N}(x,y)\\
    & =1- \min_{z\in [-N,N]^d\cap \bZ^d} \sum_{\Delta \in \Pi_M}
      \widetilde{\Theta}_{\sigma_{-(z,N)}\omega,N,M}(\Delta,\Delta)\\
    & =1- \min_{z\in [-N,N]^d\cap \bZ^d}
      \widetilde{\Theta}_{\sigma_{-(z,N)}\omega,N,M}(\Lambda_{\Pi_M}) < \varepsilon.
  \end{align*}
  Thus,
  \begin{align*}
    D^{(1)}_{M,N}(\omega) = \sum_{x,y \in \bZ^d
    } \Theta_{\sigma_{-(x,N)}\omega,N}(x,y)
    \ind{\norm{x-y} >M} \le \varepsilon
    \indset{F_M}(\omega) +
    \indset{F_M^\compl}(\omega).
  \end{align*}

  For $\omega \in F_M\cap B_N$ we have shown
  \begin{align*}
    \frac{dQ_N}{d\bP}(\omega)D^{(2)}_{M,N}(\omega) =
    \sum_{x,y \in \bZ^d}
    \Theta_{\sigma_{-(y,N)}\omega,N} \ind{\norm{x-y}>M}
    \le \varepsilon
  \end{align*}
  whereas for $\omega \in F_M\cap B_N^\compl$
  \begin{align*}
  \frac{dQ_N}{d\bP}(\omega)D^{(2)}_{M,N}(\omega) = 0
  \end{align*}
  and thus
  \begin{align*}
  \frac{dQ_N}{d\bP}(\omega)D^{(2)}_{M,N}(\omega) \le \varepsilon
  \indset{F_M}(\omega) +
  \indset{F_M^\compl}(\omega).
  \end{align*}
\end{proof}

\begin{proof}[Proof of Proposition~\ref{lem:box_average_estimate}]
  We follow the ideas of the proof of Lemma~6.5 in
  \cite{BergerCohenRosenthal2016}. To this end, we consider the events
  \begin{align*}
    B^-_\varepsilon
    & =\{ \omega \in \Omega: \frac{1}{\abs{\Delta_0}}\sum_{x \in \Delta_0}
      \frac{dQ_N}{d\bP}(\sigma_{(x,0)} \omega) <1-\varepsilon \}\\
    B^+_\varepsilon
    & =\{ \omega\in \Omega:\frac{1}{\abs{\Delta_0}}
      \sum_{x \in \Delta_0}\frac{dQ_N}{d\bP}(\sigma_{(x,0)}\omega)>1+\varepsilon \}.
  \end{align*}
  First we consider $B^-_\varepsilon$. We decompose this event into
  two events, first of which has probability $M^{-c\log M}$ and the
  second is a $\bP$ null set. We assume without loss of generality
  that $\Delta_0$ is centred at the (spatial) origin, set
  $M_\varepsilon = \frac{\varepsilon}{6d^2}M$, define
  $\Delta^-_0=\{x\in\bZ^d:\norm{x} < M-M_\varepsilon \}$ and
  \begin{align*}
    S^-_\varepsilon = \{ \omega\in B^-_\varepsilon: \sigma_{(x,0)}\omega \in
    F_{M_\varepsilon},\forall x\in \Delta_0 \},
  \end{align*}
  where $F_{M_\varepsilon}$ is the event from
  Lemma~\ref{lem:upper_bound_D(1)M_and_D(2)M}. Due to property (1) of
  $F_{M_\varepsilon}$ from Lemma~\ref{lem:upper_bound_D(1)M_and_D(2)M}
  \begin{align*}
    \bP(S^-_\varepsilon)
    &\ge \bP(B^-_\varepsilon) -
      \abs{\Delta_0}\bP(F_{M_\varepsilon}^\compl) \\
    & \ge \bP(B^-_\varepsilon) - M^d(M_\varepsilon)^{-c\log M_\varepsilon}
      \ge \bP(B^-_\varepsilon) - M^{-\tilde{c}\log M},
  \end{align*}
  where $\tilde{c}$ is a positive constant. Therefore it is enough to
  show that $\bP(S^-_\varepsilon)=0$.

  We claim that there exists an event $K^- \subset S^-_\varepsilon$
  such that
  \begin{align}
    \label{eq:30}
    & \bP(K^-)  \ge \bP(S^-_\varepsilon)\cdot((4d)^d\abs{\Delta_0})^{-1}\\
    \intertext{and}
    \label{eq:31}
    & \text{if $\omega,\omega'\in K^-$, $\omega\neq \omega'$, then
    $\dist(\omega,\omega')>4M$}.
  \end{align}
  For every $(x,n)\in \bZ^d\times\bZ$ let $U_{(x,n)}$ be an
  independent (of everything else defined so far) random variable
  uniformly distributed on $[0,1]$, and define
  \begin{align*}
    K^-\coloneqq  \bigl\{\omega \in S^-_\varepsilon: \forall (x,n)\in
    4\Delta_0 \,\text{ if } \, \sigma_{(x,n)}\omega \in B^-_\varepsilon\,
    \text{ then } \, U_{(x,n)} < U_{(0,0)} \bigr\}.
  \end{align*}
  This means informally, that from each family of environments whose
  distance is smaller than $4dM$ we choose one uniformly. This implies
  that property \eqref{eq:31} for $K$ holds. Property \eqref{eq:30}
  holds because due to translation invariance of $\bP$ we have
  \begin{align*}
    \bP(S^-_\varepsilon)
    \le \bP\Bigl(\bigcup_{x\in 4d\Delta_0}\sigma_{(x,0)} K^-\Bigr)
    \le \sum_{x  \in 4d\Delta_0} \bP\bigl(\sigma_{(x,0)} K^-\bigr)
    = (4d)^d\abs{\Delta_0}\bP(K^-).
  \end{align*}

  Now, let
  \begin{align*}
    G = \bigcup_{x \in \Delta_0} \sigma_{(x,0)} K^- \quad \text{and}
    \quad G^-=\bigcup_{x \in \Delta^-_0}\sigma_{(x,0)} K^-.
  \end{align*}
  By property \eqref{eq:31} of $K^-$ these are in both cases disjoint
  unions and therefore we have
  \begin{align}
    \label{eq:prob H and H-}
    \begin{split}
      \bP(G) & = \sum_{x\in \Delta_0} \bP(\sigma_{(x,0)} K^-) =
      \abs{\Delta_0} \bP(K^-)\quad \text{ and}\\
      \bP(G^-) & = \abs{\Delta^-_0} \bP(K^-) = \abs{\Delta_0}
      \bigl(1- \frac{\varepsilon}{6d^2} \bigr)^{d}\bP(K^-)
      >\bigl(1-\frac{\varepsilon}{6} \bigr) \bP(G).
    \end{split}
  \end{align}

  Going back to the definition of the event $B^-_\varepsilon$ and
  recalling that $K^- \subset S^-_\varepsilon\subset B^-_\varepsilon$ we
  obtain
  \begin{align*}
    Q_N(G)
    & = \int_G \frac{dQ_N}{d\bP}(\omega)\,d\bP(\omega) =
      \sum_{x  \in \Delta_0} \int_{\sigma_{(x,0)} K^-} \frac{dQ_N}{d\bP}(\omega)\,d\bP(\omega)
     = \int_{K^-} \sum_{x\in \Delta_0}
      \frac{dQ_N}{d\bP}(\sigma_{(x,0)} \omega)\,d\bP(\omega)\\
    & \le \int_{K^-} (1-\varepsilon)\abs{\Delta_0}\,d\bP(\omega) = (1-\varepsilon)\abs{\Delta_0}\bP(K^-)
     =(1-\varepsilon)\bP(G)
  \end{align*}
  Combining this with \eqref{eq:prob H and H-}, for small enough
  $\varepsilon>0$ we obtain
  \begin{align}
    \label{eq:placeholder}
    Q_N(G) \le (1-\varepsilon)\bP(G)
    = \frac{1-\varepsilon}{1-\varepsilon/6}\left(1-\frac{\varepsilon}{6}\right) \bP(G)
    < \frac{1-\varepsilon}{1-\varepsilon/6}\bP(G^-)
    <\left( 1- \frac{\varepsilon}{3} \right) \bP(G^-).
  \end{align}
  Let $A^-=\{(\omega,\omega'):\omega \in G^-,\omega'\notin G\}$. Then
  by \eqref{eq:prob H and H-} and \eqref{eq:placeholder}
  \begin{align}
    \label{eq:32}
    \begin{split}
      \Psi_N(A^-)
      & \ge \bP(G^-) - Q_N(G) \ge \bP(G^-)
      - \Bigl(1-\frac{\varepsilon}{3} \Bigr)\bP(G^-)\\
      & \ge \frac{\varepsilon}{3}\bP(G^-) > \frac{\varepsilon}{3}
      \Bigl(1-\frac{\varepsilon}{6} \Bigr)\bP(G)>\frac{\varepsilon}{4}\bP(G).
    \end{split}
  \end{align}
  By construction of $K^-$, for every $(\omega,\omega')\in A^-$ we have
  $\dist(\omega,\omega')>M_\varepsilon$ and, therefore,
  \begin{align}
    \label{eq:P(H) as lower bound}
    \begin{split}
      \int_G D^{(1)}_{M_\varepsilon,N}\,d\bP(\omega)
      & = \int_{G\times \Omega} D^{(1)}_{M_\varepsilon,N}\,d\Psi_N(\omega,\omega')
      \ge \int_{G^-\times \Omega}D^{(1)}_{M_\varepsilon,N}\,d\Psi_N(\omega,\omega')\\
      & = \int_{\Omega \times \Omega} E_{\Psi_N}[\ind{\dist(\omega,\omega')>M_{\varepsilon}}\, \vert
      \, \mathfrak{F}_\omega](\omega) \ind{G^-\times
        \Omega}(\omega,\omega')\,d\Psi_N(\omega,\omega')\\
      & = \int_{\Omega\times \Omega}
      E_{\Psi_N}[\ind{\dist(\omega,\omega')>M_{\varepsilon}}
      \ind{G^-\times \Omega}(\omega,\omega')\, \vert \,
      \mathfrak{F}_\omega](\omega)\,d\Psi_N(\omega,\omega')\\
      & = \int_{\Omega\times \Omega}\ind{\dist(\omega,\omega')>M_{\varepsilon}}
      \ind{G^-\times \Omega}(\omega,\omega')\, d\Psi_N(\omega,\omega')\\
      &\ge \int_{\Omega\times \Omega}\ind{\dist(\omega,\omega')>M_{\varepsilon}}
      \ind{A^-}(\omega,\omega')\, d\Psi_N(\omega,\omega')\\
      & = \int_{\Omega \times \Omega} \indset{A^-}(\omega,\omega') \, d\Psi_N(\omega,\omega')\\
      & = \Psi_N(A^-)>\frac{\varepsilon}{4}\bP(G).
    \end{split}
  \end{align}
  Since $G\subset F_{M_\varepsilon}$ by definition, using
  Lemma~\ref{lem:upper_bound_D(1)M_and_D(2)M} with $M_\varepsilon$ and
  $\varepsilon/5$ instead of $M$ and $\varepsilon$ we obtain
  \begin{align}
    \label{eq:35}
    \int_G D^{(1)}_{M_\varepsilon,N} (\omega) \,d\bP(\omega)
    \le \int_G \frac{\varepsilon}{5}\mathbbm{1}_{F_{M_\varepsilon}}(\omega)
    + \mathbbm{1}_{F^\compl_{M_\varepsilon}}(\omega)\,d \bP(\omega)
    = \int_G \frac{\varepsilon}{5}\,d\bP(\omega)=\frac{\varepsilon}{5}\bP(G).
  \end{align}
  Combining \eqref{eq:P(H) as lower bound} and \eqref{eq:35} we
  conclude that $\bP(G)=0$ and, therefore $\bP(K^-)=0$. By property
  \eqref{eq:30} of $K^-$ this implies that $\bP(S^-_\varepsilon)=0$
  and finally $\bP(B^-_\varepsilon) \le M^{-c\log M}$.

  Next we turn to the event $B^+_\varepsilon$. As before we set
  $M_\varepsilon=\frac{\varepsilon}{6d^2}M$ and assume that $\Delta_0$
  is centred at the origin. Define
  $\Delta^+_0\coloneqq \{x\in \bZ^d :\norm{x} < M + M_\varepsilon \}$
  and let
  \begin{align*}
    S^+_\varepsilon = \bigl\{\omega\in B^+_\varepsilon: \sigma_{(x,0)}\omega
    \in F_{M_\varepsilon},\forall x\in\Delta^+_0 \bigr\},
  \end{align*}
  where $F_{M_\varepsilon}$ is, as before, the event from
  Lemma~\ref{lem:upper_bound_D(1)M_and_D(2)M}. Due to property (1) of
  $F_{M_\varepsilon}$
  \begin{align*}
    \bP(S^+_\varepsilon)
    &\ge \bP(B^+_\varepsilon) -
      \abs{\Delta^+_0}\bP(F^\compl_{M_\varepsilon})
      \ge  \bP(B^+_\varepsilon)
      -(1+\frac{\varepsilon}{6d^2})^{d}M^{d}(M_\varepsilon)^{-c\log M_\varepsilon}\\
    & \ge \bP(B^+_\varepsilon) - M^{-\tilde{c}\log M}
  \end{align*}
  and again it is enough to show that $\bP(S^+_\varepsilon)=0$. As for
  $S^-_\varepsilon$ we claim that there exists an event $K^+\subset
  S^+_\varepsilon$ such that
  \begin{align}
    \label{eq:33}
    & \bP(K^+)\ge \bP(S^+_\varepsilon)\cdot((4d)^{d}\abs{\Delta^+_0})^{-1}\\
    \intertext{and}
    \label{eq:34}
    & \text{if $\omega,\omega'\in K^+$ with $\omega\neq \omega'$, then
    $\dist(\omega,\omega')>4(M+M_\varepsilon)$.}
  \end{align}

  Let
  \begin{align*}
    H = \bigcup_{x \in \Delta_0}\sigma_{(x,0)} K^+ \quad \text{ and}
    \quad  H^+=\bigcup_{x \in \Delta^+_0} \sigma_{(x,0)} K^+.
  \end{align*}
  Both are, by property \eqref{eq:34} of $K^+$ disjoint unions.
  Therefore we have for $\varepsilon>0$ small enough
  \begin{align}
    \label{eq:prob_H_and_H+}
    \begin{split}
      \bP(H) & = \abs{\Delta_0}\bP(K^+)\quad \text{ and}\\
      \bP(H^+) & = \abs{\Delta^+_0}\bP(K^+)=\left(
        1+\frac{\varepsilon}{6d^2} \right)^{d}\abs{\Delta_0}\bP(K^+)
      < \left( 1 +\frac{\varepsilon}{5} \right)\bP(H).
    \end{split}
  \end{align}
  From $K^+ \subset S^+_\varepsilon\subset B^+_\varepsilon$ we obtain
  \begin{align}
    \label{eq:placeholder2}
    \begin{split}
      Q_N(H)
      & = \int_H \frac{dQ_N}{d\bP}(\omega)\,d\bP(\omega)
      = \sum_{x \in \Delta_0}\int_{\sigma_{(x,0)} K^+}
      \frac{dQ_N}{d\bP}(\omega)\,d\bP(\omega)\\
      & = \int_{K^+} \sum_{x \in \Delta_0}
      \frac{dQ_N}{d\bP}(\sigma_{(x,0)} \omega)\, d\bP(\omega)\\
      &> \int_{K^+} \abs{\Delta_0}(1+\varepsilon)\,d \bP(\omega)
      =(1+\varepsilon)\abs{\Delta_0}\bP(K^+) = (1+\varepsilon) \bP(H).
    \end{split}
  \end{align}
  Combination of this with \eqref{eq:prob_H_and_H+}, for small enough
  $\varepsilon>0$ then yields
  \begin{align}
    \label{eq:placeholder3}
      Q_N(H) &> (1+\varepsilon)\bP(H) =
      \frac{1+\varepsilon}{1+\varepsilon/5}\left( 1+ \frac{\varepsilon}{5}
      \right)\bP(H) > \frac{1+\varepsilon}{1+\varepsilon/5}\bP(H^+)
      >\left( 1+ \frac{\varepsilon}{3} \right) \bP(H^+).
  \end{align}
  Let $A^+ \coloneqq \{(\omega,\omega'):\omega \notin H^+,\omega'\in H \}$. Then
  by \eqref{eq:placeholder3}
  \begin{align}
    \begin{split}
      \Psi_N(A^+) &\ge Q_N(H) - \bP(H^+) > Q_N(H) -
      \frac{1}{1+\varepsilon/3}Q_N(H) =
      \frac{\varepsilon/3}{1+\varepsilon/3}Q_N(H)\\
      &\ge \frac{\varepsilon}{4}Q_N(H).
    \end{split}
  \end{align}
  By the construction of $K^+$, for every $(\omega,\omega')\in A^+$ we
  have $\dist(\omega,\omega')>M_\varepsilon$ and, therefore,
  \begin{align}
    \label{eq:Q(H)_as_lower_bound}
    \begin{split}
      \int_H D^{(2)}_{M_\varepsilon,N}(\omega)\,dQ_N(\omega)
      & = \int_{\Omega\times H} D^{(2)}_{M_\varepsilon,N}(\omega')
      \,d\Psi_N(\omega,\omega')\\
      & = \int_{\Omega\times\Omega} D^{(2)}_{M_\varepsilon,N}(\omega')
      \ind{\Omega\times H}(\omega,\omega')\,d\Psi_N(\omega,\omega')\\
      & = \int_{\Omega\times\Omega} E_{\Psi_N}[\ind{\dist(\omega,\omega')>M_{\varepsilon}}\,\vert\,
      \mathfrak{F}_{\omega'}](\omega') \ind{\Omega\times H}(\omega,\omega')\,d\Psi_N(\omega,\omega')\\
      & = \int_{\Omega\times\Omega} E_{\Psi_N}[\ind{\dist(\omega,\omega')>M_{\varepsilon}}\ind{\Omega\times
        H}(\omega,\omega')\,\vert\, \mathfrak{F}_{\omega'}](\omega') \,d\Psi_N(\omega,\omega')\\
      & = \int_{\Omega \times \Omega} \ind{\dist(\omega,\omega')>M_{\varepsilon}}\ind{\Omega\times
        H}(\omega,\omega')\,d\Psi_N(\omega,\omega')\\
      &\ge \int_{\Omega \times \Omega} \ind{\dist(\omega,\omega')>M_{\varepsilon}}
      \indset{A^+}(\omega,\omega')\,d\Psi_N(\omega,\omega')\\
      & = \int_{\Omega \times \Omega} \indset{A^+}(\omega,\omega')\,d\Psi_N(\omega,\omega')\\
      & = \Psi_N(A^+) \ge \frac{\varepsilon}{4}Q_N(H).
    \end{split}
  \end{align}
  Since $H\subset F_{M_\varepsilon}$ by definition, $\bP(H)\le Q_N(H)$
  by \eqref{eq:placeholder2}, and using
  Lemma~\ref{lem:upper_bound_D(1)M_and_D(2)M} with $M_\varepsilon$ and
  $\frac{\varepsilon}{5}$ instead of $M$ and $\varepsilon$ we obtain
  \begin{align}
    \label{eq:Q(H)_as_upper_bound}
    \begin{split}
      \int_H D^{(2)}_{M_\varepsilon,N}\,dQ_N(\omega)
      & \le \int_{H\cap B_N} \Bigl(\frac{dQ_N}{d\bP}\Bigr)^{-1}
      \Bigl[\frac{\varepsilon}{5}\indset{F_{M_\varepsilon}\cap B_N} +
      \indset{(F_{M_\varepsilon}\cap B_N)^\compl} \Bigr]\,dQ_N(\omega)\\
      & = \int_{H\cap B_N} \Bigl(\frac{dQ_N}{d\bP}\Bigr)^{-1}
      \Bigl[\frac{\varepsilon}{5}\indset{F_{M_\varepsilon}\cap B_N} +
        \indset{(F_{M_\varepsilon}\cap B_N)^\compl} \Bigr]\,dQ_N(\omega)\\
      & = \int_{H\cap B_N} \Bigl[\frac{\varepsilon}{5}\indset{F_{M_\varepsilon}\cap B_N} +
        \indset{(F_{M_\varepsilon}\cap B_N)^\compl} \Bigr]\,d\bP(\omega)\\
      & = \int_{H\cap B_N} \frac{\varepsilon}{5}\,d\bP(\omega)\\
      & = \frac{\varepsilon}{5}\bP(H\cap B_N) \le
      \frac{\varepsilon}{5}\bP(H)
      \le \frac{\varepsilon}{5}Q_N(H),
    \end{split}
  \end{align}
  where we recall from Lemma~\ref{lem:upper_bound_D(1)M_and_D(2)M}
  that $B_N=\{\omega: \frac{dQ_N}{d\bP}(\omega)\neq 0 \}$ and note
  that $B_N^\compl$ is a $Q_N$ null set. Combining
  \eqref{eq:Q(H)_as_lower_bound} and \eqref{eq:Q(H)_as_upper_bound},
  we conclude that $Q_N(H)=0$ and, therefore, by
  \eqref{eq:placeholder2} we have $\bP(H)=0$. It follows that
  $\bP(K^+)=0$, which by property \eqref{eq:33} of $K^+$ implies that
  $\bP(S^+_\varepsilon)=0$ and finally that \eqref{eq:29} holds.
\end{proof}

\begin{proof}[Proof of Corollary~\ref{cor:con_prop_Q}]
  To show that Proposition~\ref{lem:box_average_estimate} holds for
  $Q$ as well we define $\Psi$ as the weak limit of
  $\{\frac{1}{n} \sum_{N=0}^{n-1}\Psi_N \}_{n=1}^\infty$ along any
  converging sub-sequence $\{n_k\}_{k\geq 1}$ (tightness of $\Psi_N$
  follows similarly to the discussion below
  Corollary~\ref{cor:finite_moments_prefactor}). Note that $\Psi$ is a
  coupling of $\bP$ and $Q$ on $\Omega\times\Omega$. Furthermore let
  \begin{align*}
    &D^{(i)}_M(\omega_i) :=
      E_\Psi[\indset{\dist(\omega_1,\omega_2)>dM}\,\vert\,
      \mathcal{F}_{\omega_i}](\omega_i), &i=1,2.
  \end{align*}

  Now we want to prove inequality \eqref{eq:28} from
  Lemma~\ref{lem:upper_bound_D(1)M_and_D(2)M} for $D^{(1)}_M$ and
  $D^{(2)}_M$. It is enough to show that along some sub-sequence
  $\{n_\ell\}_{l\ge 1}$ of $\{n_k\}_{k\ge 1}$
  \begin{align}
    \label{eq:85}
    D^{(1)}_M(\omega) = \lim_{\ell\to\infty} \frac{1}{n_\ell}
    \sum_{N=0}^{n_\ell-1} D^{(1)}_{M,N}(\omega) \quad \bP\text{-a.s.}
  \end{align}
  and
  \begin{align}
    \label{eq:86}
    D^{(2)}_M(\omega) =\Big( \frac{dQ}{d\bP}(\omega) \Big)^{-1}
    \lim_{\ell\to\infty} \frac{1}{n_\ell}\sum_{N=0}^{n_\ell-1}
    \frac{dQ_N}{d\bP}(\omega)D^{(2)}_{M,N}(\omega) \quad Q\text{-a.s.}
  \end{align}
  In fact, if the above equalities hold, then for $\bP$-almost every
  $\omega$ we have
  \begin{align*}
    D^{(1)}_M(\omega)
    & = \lim_{\ell\to\infty} \frac{1}{n_\ell} \sum_{N=0}^{n_\ell-1} D^{(1)}_{M,N}(\omega)\\
    & =\lim_{\ell\to\infty} \frac{1}{n_\ell} \Big[
      \sum_{N=0}^{M-1}D^{(1)}_{M,N}(\omega) +
      \sum_{N=M}^{n_\ell-1}D^{(1)}_{M,N}(\omega) \Big]\\
    & \leq \lim_{\ell\to\infty} \frac{1}{n_\ell} \Big[ M + \sum_{N=M}^{n_\ell-1}D^{(1)}_{M,N}(\omega)\Big]\\
    & \leq \lim_{\ell\to\infty} \frac{1}{n_\ell} \Big[ M +
      \sum_{N=M}^{n_\ell-1} (\varepsilon \indset{F_M}(\omega) +
      \indset{F_M^\compl}(\omega))  \Big]\\
    & =\varepsilon \indset{F_M}(\omega) + \indset{F_M^\compl}(\omega).
  \end{align*}
  In addition for $D^{(2)}_M$ we have for $Q$ almost all $\omega$
  \begin{align*}
    \frac{dQ}{d\bP}(\omega) D^{(2)}_M (\omega)
    & = \lim_{\ell\to\infty} \frac{1}{n_\ell} \sum_{N=0}^{n_\ell-1} \frac{dQ_N}{d\bP}(\omega)D^{(2)}_{M,N}(\omega)\\
    & \leq \lim_{\ell\to\infty} \frac{1}{n_\ell} \Big[
      \sum_{N=0}^{M-1}\frac{dQ_N}{d\bP}(\omega) + \sum_{N=M}^{n_\ell-1}
      \frac{dQ_N}{d\bP}(\omega)D^{(2)}_{M,N}(\omega)  \Big]\\
    & \leq \lim_{\ell\to\infty} \frac{1}{n_\ell}
      \Big[\sum_{N=0}^{M-1}\frac{dQ_N}{d\bP}(\omega) +
      \sum_{N=M}^{n_\ell-1}(\varepsilon \indset{F_M}(\omega) +
      \indset{F_M^\compl}(\omega)) \Big]\\
    & \leq \varepsilon \indset{F_M}(\omega) + \indset{F_M^\compl}(\omega).
  \end{align*}

  Let us now prove \eqref{eq:85} and \eqref{eq:86}. Starting with
  \eqref{eq:85} let $A\subset \Omega$ be a measurable event. We have
  \begin{align*}
    \bE&[ D^{(1)}_M(\omega_1)\indset{A}(\omega_1) ] \\
       &=E_\Psi[\ind{\dist(\omega_1,\omega_2)>dM} \indset{A\times\Omega}(\omega_1,\omega_2)]\\
       &=\Psi(\{(\omega_1,\omega_2)\in\Omega\times\Omega:\dist(\omega_1,\omega_2)>dM \}\cap A\times\Omega)\\
       &=\lim_{\ell\to\infty} \frac{1}{n_\ell} \sum_{N=0}^{n_\ell-1}\Psi_N(\{(\omega_1,\omega_2)\in\Omega\times\Omega:\dist(\omega_1,\omega_2)>dM \}\cap A\times\Omega)\\
       &=\lim_{\ell\to\infty} \frac{1}{n_\ell} E_{\Psi_N}[\ind{\dist(\omega_1,\omega_2)>dM} \indset{A\times\Omega}(\omega_1,\omega_2)]\\
       &=\lim_{\ell\to\infty}      \frac{1}{n_\ell}\sum_{N=0}^{n_\ell-1} \bE[D^{(1)}_{M,N}(\omega_1)\indset{A}(\omega_1)]\\
       &=\lim_{\ell\to\infty} \bE\Big[\frac{1}{n_\ell}\sum_{N=0}^{n_\ell-1}D^{(1)}_{M,N}(\omega_1)\indset{A}(\omega_1)\Big]
  \end{align*}
  where we used the definitions of $\Psi$ and of $D^{(1)}_{M,N}$ as
  the conditional expectation. This implies convergence of
  $\frac{1}{n_\ell}\sum_{N=0}^{n_\ell-1}D^{(1)}_{M,N}$ to $D^{(1)}_M$
  in $L^1(\bP)$. Thus, by standard arguments we can choose a
  subsequence that converges $\bP$-almost surely. For $D^{(2)}_M$ we
  get in a similar way
  \begin{align*}
    E_Q&[D^{(2)}_M(\omega)\indset{A}(\omega_2)]\\
       &=E_\Psi[\indset{\dist(\omega_1,\omega_2)>dM}\indset{\Omega\times A}(\omega_1,\omega_2)]\\
       &=\Psi(\{\dist(\omega_1,\omega_2)>dM\}\cap \Omega\times A) \\
       &=\lim_{\ell\to\infty} \frac{1}{n_\ell} \sum_{N=0}^{n_\ell-1}\Psi_{N}(\{\dist(\omega_1,\omega_2)>dM\}\cap \Omega\times A)\\
       &=\lim_{\ell\to\infty} \frac{1}{n_\ell} \sum_{N=0}^{n_\ell-1} E_{\Psi_N}[\indset{\dist(\omega_1,\omega_2)>dM}\indset{\Omega\times A}(\omega_1,\omega_2)]\\
       &=\lim_{\ell\to\infty} \frac{1}{n_\ell} \sum_{N=0}^{n_\ell-1}E_{\Psi_N}[D^{(2)}_{M,N}(\omega_2)\indset{\Omega\times A}(\omega_1,\omega_2)]\\
       &=\lim_{\ell\to\infty}\frac{1}{n_\ell} \sum_{N=0}^{n_\ell-1}E_{Q_N}[D^{(2)}_{M,N}(\omega_2)\indset{ A}(\omega_2)]\\
       &=\lim_{\ell\to\infty}\frac{1}{n_\ell} \sum_{N=0}^{n_\ell-1}E_{Q}\Big[\big( \frac{dQ}{d\bP}(\omega_2) \big)^{-1}\cdot \frac{dQ_N}{d\bP}(\omega_2)\cdot D^{(2)}_{M,N}(\omega_2)\indset{ A}(\omega_2)\Big]\\
       &=\lim_{\ell\to\infty}E_Q\Big[\big( \frac{dQ}{d\bP}(\omega_2) \big)^{-1}\cdot\frac{1}{n_\ell} \sum_{N=0}^{n_\ell-1} \frac{dQ_N}{d\bP}(\omega_2)\cdot D^{(2)}_{M,N}(\omega_2)\indset{ A}(\omega_2)\Big]
  \end{align*}
  $Q$-almost surely. Thus, Lemma~\ref{lem:upper_bound_D(1)M_and_D(2)M}
  holds for $D^{(1)}_M$ and $D^{(2)}_M$ instead of $D^{(1)}_{M,N}$ and
  $D^{(2)}_{M,N}$ respectively.

  Since the only tools we need for the proof of
  Proposition~\ref{lem:box_average_estimate} are Lemma~\ref{claim:1}
  and Lemma~\ref{lem:upper_bound_D(1)M_and_D(2)M}, we can walk through
  the proof of Proposition~\ref{lem:box_average_estimate} and repeat
  the same steps for $\frac{dQ}{d\bP}$ to show
  Corollary~\ref{cor:con_prop_Q}.
\end{proof}

The following proposition is an analogue to Proposition~7.1 from
\cite{BergerCohenRosenthal2016}. Note that the assertion
  expresses a general property of the density of a measure which is
  invariant for the point of view of the particle in the setting of a
  random walk in random environment. It is not model-specific.

\begin{prop}
  \label{prop:2}
  For $\bP$-almost every $\omega$, every $n \in\bN_0$, every $x\in\bZ^d$
  and all $k\le n$
  \begin{align*}
    \varphi(\sigma_{(x,n)}\omega) =  \sum_{y\in\bZ^d}
    P^{(x+y,n-k)}_\omega(X_n=x) \varphi(\sigma_{(x+y,n-k)}\omega).
  \end{align*}
\end{prop}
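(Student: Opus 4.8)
The plan is to first establish the case $k=1$ directly from the fact that $\varphi=dQ/d\bP$ is the density of a measure invariant for the point of view of the particle, and then to get the general case by induction on $k$ using the Markov property of the quenched walk.

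\emph{Step 1: the functional equation $\mathfrak{R}^{\ast}\varphi=\varphi$.} Writing $dQ=\varphi\,d\bP$, invariance of $Q$ (Definition~\ref{def:invariant}) reads $\int \mathfrak{R}f\cdot\varphi\,d\bP=\int f\cdot\varphi\,d\bP$ for every bounded continuous $f\colon\Omega\to\bR$. I would expand the left-hand side via \eqref{eq:transitionKernel}, substitute $\omega\mapsto\sigma_{(-y,-1)}\omega$ in each of the finitely many summands, and use shift-invariance of $\bP$, rewriting it as $\int f\,\psi\,d\bP$ with
\begin{align*}
  \psi(\omega)\coloneqq\sum_{\norm{y}\le 1} g\bigl(y;\sigma_{(-y,-1)}\omega\bigr)\,\varphi\bigl(\sigma_{(-y,-1)}\omega\bigr).
\end{align*}
Here $\psi\in L_1(\bP)$; in fact $\int\psi\,d\bP=\int\varphi\,d\bP=1$ since $\sum_{\norm{y}\le1}g(y;\cdot)\equiv1$. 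As $\Omega$ is a compact metric space, $C(\Omega)$ is measure-determining, so $\int f(\psi-\varphi)\,d\bP=0$ for all bounded continuous $f$ forces $\psi=\varphi$ $\bP$-almost surely.

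\emph{Step 2: the case $k=1$.} Comparing \eqref{eq:19} with \eqref{eq:defn_quenched_law} shows $g(y;\omega)=P_\omega^{(0,0)}(X_1=y)$, hence $g(y;\sigma_{(x-y,n-1)}\omega)=P_\omega^{(x-y,n-1)}(X_n=x)$. The $\bP$-null exceptional set of Step~1 has null shifts, and $\bZ^d\times\bZ$ is countable, so for $\bP$-a.e.\ $\omega$ the identity $\psi=\varphi$ holds at $\sigma_{(x,n)}\omega$ simultaneously for all $(x,n)$. Using $\sigma_{(-y,-1)}\sigma_{(x,n)}\omega=\sigma_{(x-y,n-1)}\omega$ and relabelling $y\mapsto-y$ gives, for such $\omega$,
\begin{align*}
  \varphi(\sigma_{(x,n)}\omega)=\sum_{\norm{y}\le 1} P_\omega^{(x+y,n-1)}(X_n=x)\,\varphi(\sigma_{(x+y,n-1)}\omega)=\sum_{y\in\bZ^d} P_\omega^{(x+y,n-1)}(X_n=x)\,\varphi(\sigma_{(x+y,n-1)}\omega),
\end{align*}
the extension to all $y\in\bZ^d$ being harmless since $P_\omega^{(x+y,n-1)}(X_n=x)=0$ for $\norm{y}>1$. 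The case $k=0$ is trivial.

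\emph{Step 3: induction on $k$.} Assume the claim for some $k$ with $k+1\le n$. In the $k$-identity I expand each factor $\varphi(\sigma_{(x+y,n-k)}\omega)$ by the $k=1$ identity applied at the space-time point $(x+y,n-k)$ (legitimate as $n-k\ge1$); writing $v$ for the position at time $n-k$ and $w$ for the new start at time $n-k-1$ and interchanging the two sums (all sums are finite, since the walk has unit step size) yields
\begin{align*}
  \varphi(\sigma_{(x,n)}\omega)=\sum_{w\in\bZ^d}\varphi(\sigma_{(w,n-k-1)}\omega)\sum_{v\in\bZ^d} P_\omega^{(w,n-k-1)}(X_{n-k}=v)\,P_\omega^{(v,n-k)}(X_n=x).
\end{align*}
By the Markov property (Chapman--Kolmogorov) of the quenched walk the inner sum is $P_\omega^{(w,n-k-1)}(X_n=x)$, and substituting $w=x+y$ gives the claim for $k+1$. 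The one genuinely substantive point is Step~1 — identifying the $\bP$-adjoint of $\mathfrak{R}$ and using that $\varphi$ is fixed by it; the rest is bookkeeping of a single full-measure set and the Chapman--Kolmogorov equation.
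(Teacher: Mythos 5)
Your proof is correct. The overall strategy is the same as the paper's (start from the $\mathfrak{R}$-invariance of $Q$ and shift-invariance of $\bP$, establish the one-step identity, iterate), but your organization is genuinely cleaner in two respects. First, you isolate the adjoint fixed-point relation $\psi=\varphi$ as a standalone Step~1 and only then translate it into the pointwise one-step formula via the identification $g(y;\omega)=P^{(0,0)}_\omega(X_1=y)$; the paper instead folds this into a single integral-against-test-functions computation. Second, and more substantively, your induction step is done \emph{pointwise}: having fixed a single full-measure set (a countable union of shifted null sets), you plug the $k=1$ identity into itself and invoke the Chapman--Kolmogorov equation for the quenched walk to contract the inner sum. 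The paper instead re-derives the integral identity for $k=2$ by applying $\mathfrak{R}$ a second time inside the test-function integral, and then says ``inductively,'' which is more cumbersome and leaves the handling of the exceptional null set implicit. Your route buys transparency and a cleaner separation of the duality input from the bookkeeping; the paper's route keeps the whole argument inside the duality framework, which is perhaps closer to how such identities are usually derived in the RWRE literature but is harder to read as an actual induction. The one point you should make sure is airtight in a write-up is the passage from ``for all bounded continuous $f$'' to ``$\psi=\varphi$ a.s.'': this requires $\psi\in L^1(\bP)$, which you correctly note (dominated by $\varphi\circ\sigma$ summed over finitely many shifts, each with integral $1$), and the fact that $C(\Omega)$ is measure-determining on the compact metric space $\Omega=\{0,1\}^{\bZ^d\times\bZ}$.
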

\begin{proof}
  Let $n\in\bN$. First we consider the case $k=1$. For every bounded
  measurable function $h:\Omega\to \bR$ we have (recall the notation
  in~\eqref{eq:transitionKernel} and \eqref{eq:19})
  \begin{align*}
    \int_\Omega h(\omega)\varphi(\sigma_{(x,n)}\omega)\,d\bP(\omega)
    & =\int_\Omega h(\sigma_{(-x,-n)}\omega)\varphi(\omega)\,d\bP(\omega)\\
    &=\int_\Omega h(\sigma_{(-x,-n)}\omega)\,dQ(\omega)\\
    &=\int_\Omega \mathfrak{R}h(\sigma_{(-x,-n)}\omega)\,dQ(\omega)\\
    &=\int_\Omega (\mathfrak{R}h(\sigma_{(-x,-n)}\omega))\varphi(\omega)\,d\bP(\omega)\\
    &=\int_\Omega \sum_{\norm{y}\le 1}
      g(\omega,y)h(\sigma_{(-x+y,1-n)}\omega)\varphi(\omega)\,d\bP(\omega)\\
    &=\int_\Omega \sum_{\norm{y}\le 1}
      g(\sigma_{(x-y,n-1)}\omega,y)h(\omega)\varphi(\sigma_{(x-y,n-1)}\omega)\,d\bP(\omega).
  \end{align*}
  Thus
  \begin{align*}
    \varphi(\sigma_{(x,n)}\omega)
    & = \sum_{\norm{y}\le 1} g(\sigma_{(x-y,n-1)}\omega)\varphi(\sigma_{(x-y,n-1)}\omega)\\
    & = \sum_{\norm{y}\le 1}
      P^{(0,0)}_{\sigma_{(x-y,n-1)}\omega}(X_1=y)\varphi(\sigma_{(x-y,n-1)}\omega)\\
    & = \sum_{\norm{y}\le 1} P^{(x-y,n-1)}_\omega(X_1
      =x)\varphi(\sigma_{(x-y,n-1)}\omega)\\
    & = \sum_{y\in\bZ^d} P^{(x+y,n-1)}_\omega(X_1
      =x)\varphi(\sigma_{(x+y,n-1)}\omega).
  \end{align*}
  By applying the operator $\mathfrak{R}$ a second time we see that
  \begin{align*}
    \int_\Omega h(\omega)\varphi(\sigma_{(x,n)}\omega)\,d\bP
    & = \int_\Omega h(\omega) \sum_{\norm{y_1}\le 1}
      P^{(x+y_1,n-1)}_\omega(X_1 =x)\varphi(\sigma_{(x+y_1,n-1)}\omega)\,d\bP(\omega)\\
    & = \int_\Omega h(\sigma_{(-x-y_1,-n+1)}\omega)
      \sum_{\norm{y_1}\le 1} P^{(x+y_1,n-1)}_{\sigma_{(-x-y_1,-n+1)}\omega}(X_1 =x)
      \varphi(\omega)\,d\bP(\omega)\\
    & = \int_\Omega \Bigl[\Bigl(\mathfrak{R}(h(\sigma_{(-x-y_1,-n+1)}\omega)
      \sum_{\norm{y_1}\le 1}
      P^{(x+y_1,n-1)}_{\sigma_{(-x-y_1,-n+1)}\omega} (X_1 =x))\Bigr)\Bigr]
      \varphi(\omega)\,d\bP(\omega)\\
    & = \int_\Omega \sum_{\norm{y_2}\le 1} g(\omega,y_2) h(\sigma_{(-x-y_1+y_2,-n+2)}\omega) \\
    &\hspace{2cm} \sum_{\norm{y_1}\le 1}
      P^{(x+y_1,n-1)}_{\sigma_{(-x-y_1+y_2,-n+2)}\omega}(X_1 =x)
      \varphi(\omega)\,d\bP(\omega)\\
    & = \int_\Omega \sum_{\norm{y_2}\le 1}
      g(\sigma_{(x+y_1-y_2,n-2)}\omega,y_2) h(\omega) \\
    &\hspace{2cm} \sum_{\norm{y_1}\le 1}
      P^{(x+y_1,n-1)}_\omega(X_1
      =x)\varphi(\sigma_{(x+y_1-y_2,n-2)}\omega)\,d\bP(\omega)\\
    & = \int_\Omega \sum_{\norm{y_2}\le 1} P^{(x+y_1+y_2,n-2)}_\omega (X_1=x+y_1)\\
    & \hspace{2cm}\sum_{\norm{y_1}\le 1}
      P^{(x+y_1,n-1)}_\omega(X_1=x)h(\omega)
      \varphi(\sigma_{(x+y_1+y_2,n-2)}\omega)\,d\bP(\omega).
  \end{align*}
  Thus,
  \begin{align*}
    \varphi(\sigma_{(x,n)}\omega)
    & = \sum_{\norm{y_1}\le 1} \sum_{\norm{y_2}\le 1}
      P^{(x+y_1+y_2,n-2)}_\omega (X_1=x+y_1)
      P^{(x+y_1,n-1)}_\omega(X_1=x)\varphi(\sigma_{(x+y_1+y_2,n-2)}\omega)\\
    & = \sum_{y\in\bZ^d} P^{x+y,n-2}_\omega(X_2=x)
      \varphi(\sigma_{(x+y,n-2)}\omega).
  \end{align*}
  Inductively we obtain
  \begin{align*}
    \varphi(\sigma_{(x,n)}\omega) = \sum_{y\in\bZ^d}
    P^{(x+y,n-k)}_\omega(X_k=x)\varphi(\sigma_{(x+y,n-k)}\omega)
  \end{align*}
  for all $k\le n$.
\end{proof}

\section{Proof of Proposition~\ref{lem:limit_Z_omega}}
\label{sec:proof-prop-refl}
Let $\Pi$ be a partition of $\bZ^d$ into boxes of side length
$\lfloor n^\delta \rfloor$ with $0<\delta<\frac{1}{6d}$.
Since $\bP^{(0,0)}(X_n=x)=0$ for $\norm{x}>n$ only boxes in
$\Pi_n\coloneqq \{\Delta\in\Pi:\Delta\cap[-n,n]^d\neq \emptyset \}$
have to be considered. We have
\begin{align}
  \label{eq:37}
  \begin{split}
    \abs{Z_{\omega,n}-1}
    & = \Abs{\sum_{x\in\bZ^d}\bP^{(0,0)}(X_n=x)[\varphi(\sigma_{(x,n)}\omega)-1]}\\
    & = \Abs{\sum_{\Delta\in\Pi_n}\sum_{x\in\Delta}
      \bP^{(0,0)}(X_n=x)[\varphi(\sigma_{(x,n)}\omega)-1]}.
  \end{split}
\end{align}
By the annealed CLT from \cite{BirknerCernyDepperschmidtGantert2013}
for any $\varepsilon >0$ there exists a constant $C_\varepsilon>0$
such that
\begin{align*}
  \bP^{(0,0)}(\norm{X_n}\ge C_\varepsilon \sqrt{n})<\varepsilon
\end{align*}
We want to use this fact below and separate the sum in the last line
of \eqref{eq:37} into boxes in
$\widehat{\Pi}_n=\{ \Delta\in\Pi_n : \Delta\cap\{
x\in\bZ^d:\norm{x}\le C_\varepsilon\sqrt{n} \}\neq \emptyset \}$
and in $\Pi_n\setminus\widehat{\Pi}_n$. Using triangle inequality we
obtain
\begin{align}
  \label{eq:normalizing_constant_1}
  \abs{Z_{\omega,n}-1}
  & \le \Abs{\sum_{\Delta\in\Pi_n\setminus\widehat{\Pi}_n}
    \sum_{x\in\Delta}\bP^{(0,0)}(X_n=x) [\varphi(\sigma_{(x,n)}\omega)-1]} \\
  \label{eq:normalizing_constant_2}
  &\; +\Abs{\sum_{\Delta\in\widehat{\Pi}_n}\sum_{x  \in \Delta}
    \Bigl( \frac{1}{\abs{\Delta}}\sum_{y \in \Delta}
    [\bP^{(0,0)}(X_n=y)-\bP^{(0,0)}(X_n=x)]\Bigr)
    [\varphi(\sigma_{(x,n)}\omega)-1]}\\
  \label{eq:normalizing_constant_3}
  &\; +\Abs{\sum_{\Delta\in\widehat{\Pi}_n}\sum_{x  \in \Delta} \frac{1}{\abs{\Delta}}
    \sum_{y \in \Delta}\bP^{(0,0)}(X_n=y)[\varphi(\sigma_{(x,n)}\omega)-1]}.
\end{align}
We start with an upper bound of \eqref{eq:normalizing_constant_1}. By
Corollary~\ref{cor:con_prop_Q} there exists a constant
$C$, such that, due to translation invariance of $\bP$, with $\bP$
probability of a least $1-Cn^{-c\log n}$ for every $\Delta\in\Pi_n$ we
have
$\sum_{y\in\Delta}[\varphi(\sigma_{(y,n)}\omega)+1]\le C\abs{\Delta}$.
Under this event we can bound \eqref{eq:normalizing_constant_1} from
above by
\begin{align*}
  \sum_{\Delta\in\Pi_n\setminus\widehat{\Pi}_n}
  \sum_{x\in\Delta}\bP^{(0,0)}(X_n=x)[\varphi(\sigma_{(x,n)}\omega)+1]
  \le C\sum_{\Delta\in\Pi_n\setminus\widehat{\Pi}_n}
  \max_{x\in\Delta}\bP^{(0,0)}(X_n=x)\abs{\Delta}.
\end{align*}
Using Lemma~\ref{lem:additional_annealed_estimate} with $\delta>0$
replacing $\varepsilon$ there we see that
\eqref{eq:normalizing_constant_1} is bounded from above by
\begin{align*}
  C \sum_{\Delta \in \Pi_n\setminus\widehat{\Pi}_n}
  & \sum_{y\in\Delta}\Bigl[\max_{x\in\Delta}\bP^{(0,0)}(X_n=x)-\bP^{(0,0)}(X_n=y) \Bigr ]
    + C \sum_{\Delta \in \Pi_n\setminus\widehat{\Pi}_n} \sum_{y\in\Delta}\bP^{(0,0)}(X_n=y)\\
  & \le C\varepsilon +C \sum_{\Delta \in \Pi_n} \sum_{y\in\Delta}
    \Bigl[ \max_{x\in\Delta}\bP^{(0,0)}(X_n=x)-\bP^{(0,0)}(X_n=y)
    \Bigr] \\
  & \le C\varepsilon + Cn^{-\frac{1}{2}+3d\delta}.
\end{align*}
Since $\delta<\frac{1}{6d}$ it follows by the Borel-Cantelli lemma
that
\begin{align}
  \label{eq:38}
  \limsup_{n\to \infty} \Abs{\sum_{\Delta\in\Pi_n\setminus\widehat{\Pi}_n}
  \sum_{x\in\Delta}\bP^{(0,0)}(X_n=x) [\varphi(\sigma_{(x,n)}\omega)-1]}
  \le C\varepsilon, \qquad \bP\text{-a.s.}
\end{align}
Next we turn to \eqref{eq:normalizing_constant_2}. First note that by
the annealed derivative estimates from
Lemma~\ref{lem:annealed_derivative_estimates} we have for
$x,y \in \Delta$, $ \Delta\in \widehat\Pi_n$
\begin{align}
  \label{eq:39}
  \abs{\bP^{(0,0)}(X_n=x)-\bP^{(0,0)}(X_n=y)}
  \le C\norm{x-y} n^{-\frac{d+1}{2}}\le C n^{-\frac{d+1}{2}+\delta}.
\end{align}
By triangle inequality, \eqref{eq:39} and again, as above, using
Corollary~\ref{cor:con_prop_Q} for the bound
$\sum_{y\in\Delta}[\varphi(\sigma_{(y,n)}\omega)+1]\le C\abs{\Delta}$
the expression \eqref{eq:normalizing_constant_2} is bounded from above
by
\begin{align*}
  \sum_{\Delta\in\widehat{\Pi}_n}\sum_{x\in\Delta}
  & \frac{1}{\abs{\Delta}}\sum_{y\in\Delta}\abs{\bP^{(0,0)}(X_n=y)-\bP^{(0,0)}(X_n=x)}
    \abs{\varphi(\sigma_{(x,n)}\omega)-1}\\
  & \le Cn^{-\frac{d+1}{2}+\delta}
    \sum_{\Delta\in\widehat{\Pi}_n}
    \sum_{x\in\Delta}\frac{1}{\abs{\Delta}} \sum_{y\in\Delta}
    \left[\varphi(\sigma_{(x,n)}\omega)+1\right]\\
  & \le Cn^{-\frac{d+1}{2} +\delta} \sum_{\Delta\in\widehat{\Pi}_n} \sum_{y\in\Delta} C\\
  & \le \widetilde{C} (C_\varepsilon\sqrt{n})^d
    n^{-\frac{d+1}{2} +\delta}
    \le \widehat{C}_\varepsilon n^{-\frac{1}{2}+\delta}.
\end{align*}
with probability at least $1-Cn^{-c\log n}$. Thus, as $n\to\infty$, by
the Borel-Cantelli lemma the expression
\eqref{eq:normalizing_constant_2} tends to $0$ $\bP$-almost surely.

\smallskip

Finally we consider \eqref{eq:normalizing_constant_3}. By triangle
inequality and $\bP^{(0,0)}(X_n=y)\le Cn^{-d/2}$ for all $y$ we have
\begin{align*}
  \Abs{\sum_{\Delta\in\widehat{\Pi}_n}\sum_{x  \in \Delta}
  & \frac{1}{\abs{\Delta}}\sum_{y \in
    \Delta}\bP^{(0,0)}(X_n=y)[\varphi(\sigma_{(x,n)}\omega)-1]}\\
  & \le \sum_{\Delta\in\widehat{\Pi}_n} \frac{1}{\abs{\Delta}}\sum_{y\in\Delta}
    \bP^{(0,0)}(X_n=y)\Abs{\sum_{x\in\Delta}[\varphi(\sigma_{(x,n)}\omega)-1]}\\
  & \le Cn^{-d/2}\sum_{\Delta \in \widehat{\Pi}_n}
    \Abs{\sum_{x\in\Delta} [\varphi(\sigma_{(x,n)}\omega)-1]}\\
  & = Cn^{-d(1/2-\delta)}\sum_{\Delta\in\widehat{\Pi}_n}
    \frac{1}{\abs{\Delta}} \Abs{\sum_{x\in\Delta} [\varphi(\sigma_{(x,n)}\omega)-1]}.
\end{align*}
Using Corollary~\ref{cor:con_prop_Q} we obtain
\begin{align*}
  \bP\Bigl(Cn^{d(1/2-\delta)}\sum_{\Delta\in\widehat{\Pi}_n}
  & \frac{1}{\abs{\Delta}}\Abs{\sum_{x\in\Delta}
    [\varphi(\sigma_{(x,n)}\omega)-1] }>\varepsilon\Bigr)\\
  &\le \bP\Bigl( \exists \Delta\in\widehat{\Pi}_n:
    \sum_{\Delta\in\widehat{\Pi}_n}\frac{1}{\abs{\Delta}}\Abs{\sum_{x\in\Delta}
    [\varphi(\sigma_{(x,n)}\omega)-1] }>\frac{\varepsilon}{C C_\varepsilon^d}\Bigr)\\
  &\le n^{-d(1/2-\delta)}\bP\Bigl(\frac{1}{\abs{\Delta_0}}\abs{\sum_{x\in\Delta_0}
    [\varphi(\sigma_{(x,n)}\omega)-1] }>\frac{\varepsilon}{C C_\varepsilon^d}\Bigr)\\
  &\le n^{-d(1/2-\delta)} n^{-c\delta^2\log n} \le \tilde{C}n^{-\tilde{c}\log n},
\end{align*}
where $\Delta_0\in\widehat{\Pi}_n$ is an arbitrarily fixed box. Thus,
for $\varepsilon >0$ as $n\to\infty$ the $\limsup$ of
\eqref{eq:normalizing_constant_3} is bounded from above by $\varepsilon$
$\bP$-almost surely.
Combining all three bounds of
\eqref{eq:normalizing_constant_1}--\eqref{eq:normalizing_constant_3},
we see that there is a constant $\widehat C$ so that for all
$\varepsilon>0$
\begin{align*}
  \limsup_{n\to\infty} \abs{Z_{\omega,n}-1}\le \widehat C \varepsilon,
  \quad \text{$\bP$-almost surely},
\end{align*}
which concludes the proof. \hfill \qed

\section{Proof of Proposition~\ref{lem:main}}
\label{sec:proof-prop-main}

The following result is an essential tool to prove
Proposition~\ref{lem:main} and will be proven in
Section~\ref{subsec:mixing_quenched}.

\begin{lem}
  \label{lem:HIG_lemma}
  Let $0<\theta<1/2$ and $b >0$. Define the set
  \begin{align}
    \label{eq:87}
    D(n)
    \coloneqq \bigcap_{\substack{x,y\in\bZ^d\, :\\
    \norm{x},\norm{y}\le n^b,\\
    \norm{x-y}\le n^{\theta}}} \hspace{-1.5em}
    \left\{
    \norm{P^{(x,0)}_\omega(X_n\in \cdot)- P^{(y,0)}_\omega(X_n \in \cdot)}_{\mathrm{TV}}
    \le \mathrm{e}^{-c \frac{\log n}{\log \log n}} \right\}.
  \end{align}
  Then there are constants $C,c>0$ so that  $\bP(D(n))\ge 1-Cn^{-c\log
    n}$.
\end{lem}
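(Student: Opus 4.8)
The plan is to build, for every admissible pair $(x,y)$, a coupling of the walks $(X_m)_{m\le n}$ and $(Y_m)_{m\le n}$ with marginal laws $P^{(x,0)}_\omega$ and $P^{(y,0)}_\omega$ under which, on a single environment event $\mathcal G(n)$ with $\bP(\mathcal G(n))\ge 1-Cn^{-c\log n}$ \emph{not} depending on $(x,y)$, the two walks coincide from some time $\le n$ on with probability at least $1-\mathrm e^{-c\log n/\log\log n}$; this forces $\mathcal G(n)\subseteq D(n)$. The coupling runs in stages $1,\dots,K$ of deterministic lengths $\ell_1<\dots<\ell_K$ with $\sum_i\ell_i\le n$. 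If the walks have not yet merged at the start of stage $i$, they sit at positions $x_i,y_i$ with $\norm{x_i-y_i}\le D_{i-1}$ (with $D_0:=n^\theta$); the stage attempts to merge them, and on failure the diffusive bound for the quenched walk leaves them at distance $D_i:=C\sqrt{\ell_i}\log^3\ell_i$ except on an event of probability $\le\ell_i^{-c\log\ell_i}$ that we absorb into $\mathcal G(n)$. Choosing $\ell_1=\lceil n^{\theta+1/2}\rceil$ (legitimate since $\theta<\tfrac12$) and $\ell_{i+1}=\lceil C_1\ell_i\log^6\ell_i\rceil$ for a large fixed $C_1$ keeps $D_{i-1}\le \sqrt{\ell_i}/C_2$ at every stage, while $\log\ell_i$ then grows by roughly $7\log\log n$ per stage and $\sum_i\ell_i\le n$ forces $\log\ell_K\le\log n$; hence only $K$ of order $(\tfrac12-\theta)\log n/\log\log n$ stages fit, and this is precisely where the rate $\mathrm e^{-c\log n/\log\log n}$ originates.

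Within a single stage of length $\ell$, with the walks at distance $\delta\le\sqrt\ell/C_2$, set $h:=\lceil(\log n)^2\rceil$, fix a large constant $M$, and proceed in three substeps (paralleling the proof of Lemma~\ref{lem:coupling_Theta}, but for two quenched walks). First, run both walks $\ell-T_0-h$ steps and couple them through the annealed walks on the level of $M$-boxes: by Lemma~\ref{claim:1} the quenched and annealed laws of $X$ (resp.\ $Y$) differ by at most $C/M^{c_1}+C/\ell^{c_1}$ in total variation over $M$-boxes on an appropriate shift of $K(\ell,M,c_1,C)$, while the annealed derivative estimate \eqref{eq:1}, telescoped over $\le\delta$ unit steps, gives $\norm{\bP^{(x_i,0)}(X_\ell\in\cdot)-\bP^{(y_i,0)}(X_\ell\in\cdot)}_{\mathrm{TV}}\le C\delta\ell^{-1/2}$, so by the triangle inequality the two quenched walks occupy a common $M$-box with probability bounded below by a universal constant. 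Second, run $h$ further steps; by Lemma~\ref{lem:quenched-rw-cluster} each walk is on $\cC$ by then except with probability $C\mathrm e^{-ch}\le n^{-c\log n}$. Third, with both walks on $\cC$ inside one box of bounded side length, say at $u$ and $v$, invoke the oriented-percolation intersection result of Section~\ref{sec:inters-clust-points}: with probability bounded below there is a space–time point $(w,\cdot+T')$, $T'\le T_0$, reachable by an open path from both $(u,\cdot)$ and $(v,\cdot)$; since a walk on $\cC$ moves at each step to a uniformly chosen backbone neighbour among at most $3^d$ candidates, steering $X$ (resp.\ $Y$) along such a path to $(w,\cdot+T')$ has probability $\ge 3^{-dT_0}$, so both walks reach $w$ at the same time with probability $\ge 3^{-2dT_0}$, after which we couple them to move identically. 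Multiplying, the stage merges the walks with probability at least a universal $c_3>0$ on $\mathcal G(n)$.

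Putting this together, after $K$ stages the walks fail to merge with probability at most $(1-c_3)^K\le \mathrm e^{-c\log n/\log\log n}$, and on the remaining $n-\sum_i\ell_i$ steps already-merged walks are kept merged, which yields the stated total-variation bound. The event $\mathcal G(n)$ is the intersection, over $i\le K$ and over all space–time shifts $(z,s)$ with $\norm z\le 2n^{\max(b,1)}$ and $0\le s\le n$, of: (i) $\sigma_{(z,s)}\omega\in K(\ell_i,M,c_1,C)$; (ii) the quenched diffusive bound over $\ell_i$ steps from $(z,s)$; and (iii) a ``good-box'' property guaranteeing that the third substep succeeds with probability bounded below. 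Each of (i)–(ii) has complementary probability at most $\ell_i^{-c\log\ell_i}\le\ell_1^{-c\log\ell_1}=n^{-c(\theta+\frac12)^2\log n}$, and since $K\cdot n^{O(b)}$ such terms are being summed and $\log n$ beats any fixed power of $n$, we still get $\bP(\mathcal G(n))\ge1-Cn^{-c\log n}$.

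The main obstacle is the third substep, i.e.\ turning the intersection result — a statement about two \emph{fixed} nearby backbone points under $\bP$ — into the assertion that for $\bP$-typical $\omega$ the two \emph{quenched} walks, conditioned on sharing an $M$-box, merge with probability bounded below. Because the walk is not uniformly elliptic, merging must pass through the percolation geometry, and the $M$-box the walks land in is random; one must show that for a typical $\omega$ a positive fraction of the reachable $M$-boxes are ``good'' (clusters emanating from them intersect within a fixed time) and that the walk's law charges these boxes with bounded-below mass. This combines the spread-out-ness of the annealed walk (Lemma~\ref{lem:annealed_derivative_estimates} and Theorem~\ref{lem:annealed_local_CLT}) with mixing/concentration for supercritical oriented percolation, and is the technical heart of Section~\ref{subsec:mixing_quenched}; it is item (iii) in $\mathcal G(n)$. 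A secondary, but essential, point is that the chaining through the annealed walk in the first substep must be performed at the level of $M$-boxes rather than single sites, since the site-level quenched–annealed total variation is not small.
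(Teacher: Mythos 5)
Your proposal correctly identifies the overall architecture — an iterative coupling with $O(\log n / \log\log n)$ stages of polylog‑growing lengths, per‑stage box‑level coupling via Lemma~\ref{claim:1} plus the annealed derivative estimates, and the rate $\mathrm e^{-c\log n/\log\log n}$ arising from the number of stages — which is indeed how the paper proceeds in Section~\ref{subsec:mixing_quenched}. But there is a genuine gap in your third substep, precisely at the point you yourself flag as ``the technical heart''.

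The problem is the insertion of the intermediate block of $h=\lceil(\log n)^2\rceil$ free steps designed to get both walks onto $\cC$ via Lemma~\ref{lem:quenched-rw-cluster}. For the environment event $\mathcal G(n)$ to close under the union bound over all $n^{O(1)}$ shifts, you are forced to take $h\gtrsim(\log n)^2$ (a constant $h$ gives only $\bP(A_h^{\compl})\le Ce^{-ch}$, which does not beat $n^{-c\log n}$ after the union bound). But over these $h$ steps the two walks, though initially in a common $M$‑box, disperse to a box of side length of order $\sqrt{h}\log^3 h$ — a quantity growing in $n$, not bounded. Your subsequent appeal to the intersection result then needs $T_0$ at least of this order, so the steering cost $3^{-2dT_0}$ is $n^{-\Theta(1)}$ rather than a constant. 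With per‑stage merge probability vanishing polynomially and only $O(\log n/\log\log n)$ stages, $(1-p^*)^K$ does not go to $0$, and the coupling bound collapses. In short: the cluster‑hitting phase and the merging phase are incompatible as you have arranged them — making the first robust under the union bound ruins the second.

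The paper avoids this tension by never separating ``hit the cluster'' from ``merge''. Definition~\ref{def:social boxes} declares a box $\Delta\in\Pi_M$ \emph{social} at time $N$ if every pair $x,y\in\Delta$ admits a common target $z$ with $P^{(x,N)}_\omega(X_{N+\lceil CM\rceil}=z)>0$ and $P^{(y,N)}_\omega(X_{N+\lceil CM\rceil}=z)>0$. Because each quenched step has probability at least $3^{-d}$ whether the walk is on or off $\cC$, landing in a social box yields a merge probability $\ge(3^{-d})^{2\lceil CM\rceil}$ — a genuine constant, with $M$ fixed — without ever requiring both walks to be on the cluster beforehand; the off‑cluster case is subsumed because the SRW already reaches any nearby site, while the on‑cluster case uses Lemma~\ref{cor:opc_connectedness}. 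High density of social boxes (Lemma~\ref{lem:high_density_of_social_boxes}, Corollary~\ref{cor:quenched_density_of_social_boxes}) plays the role of your item~(iii). To repair your argument you should therefore drop the $h$‑step hit‑the‑cluster block and merge directly from the $M$‑box over $\lceil CM\rceil$ steps, handling off‑cluster starts by the $3^{-d}$ ellipticity of the \emph{quenched} transition rather than by waiting for the walk to land on $\cC$.

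A minor further remark: your first‑stage length $\ell_1=\lceil n^{\theta+1/2}\rceil$ is considerably larger than the paper's $s_0\asymp n^{2\theta}\log^{8d}n^\theta$; this is harmless for the budget $\sum_i\ell_i\le n$ since $\theta+1/2<1$, but it is worth noting that the paper needs only slightly more than the square of the current separation per stage (an extra $\log^{m}$ factor with $m>6d$ suffices), which is what makes the iteration count come out to $\Theta(\log n/\log\log n)$ with the sharper constants they track.
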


Note that the restriction $\norm{x},\norm{y}\le n^b$ in the definition
of $D(n)$ in \eqref{eq:87} is necessary because with probability $1$
we have an environment where there exist (somewhere far out in space)
two neighbouring points $x, y\in\bZ^d$ so that the sites $(x,0)$ and
$(y,0)$ are both connected to infinity but the respective clusters do
not intersect for the first $n$ time steps.

\begin{rem}
  \label{rem:repl77}
  The above lemma is the analogue of Lemma~7.7 from
  \cite{BergerCohenRosenthal2016} in our setting.
  Note that the bound stated in Lemma~7.7 from
  \cite{BergerCohenRosenthal2016} is too optimistic to hold in
  general.
  However, its assertion can be weakened and one obtains a bound which
  is still strong enough to prove Lemma~7.5 in
  \cite{BergerCohenRosenthal2016} by going a similar route as in the
  proof of Lemma~\ref{lem:HIG_lemma} here.
\end{rem}

\begin{proof}[Proof of Proposition~\ref{lem:main}, \eqref{eq:ann_to_ann-quenched}]
  For this part we make use of the fact that, due to the annealed
  derivative estimates from
  Lemma~\ref{lem:annealed_derivative_estimates} for $\abs{x-y}\leq k$,
  $\abs{\bP^{(0,0)}(X_n=x)-\bP^{(0,0)}(X_{n-k}=y)} \leq
  Ck/(n-k)^{(d+1)/2}\approx n^{-(d+1)/2+\varepsilon}$, since
  $k=\lceil n^\varepsilon \rceil \ll n$. Furthermore we use the fact
  that by definition as a density of the invariant measure of the
  environment with respect to the point of view of the particle, the
  prefactor can be ``transported'' along the quenched transition
  probabilities; see Proposition~\ref{prop:2}. Finally we use the
  concentration property of Corollary~\ref{cor:con_prop_Q}; see
  equation \eqref{eq:29a}.

  We have to show
  \begin{multline}
    \label{eq:84}
    \lim_{n\to\infty}\sum_{x\in\bZ^d}
    \Abs{\frac{1}{Z_{\omega,n}}\bP^{(0,0)}(X_n=x)\varphi(\sigma_{(x,n)}\omega)\\
      -\frac{1}{Z_{\omega,n-k}}\sum_{y\in\bZ^{d}}\bP^{(0,0)}(X_{n-k}=y)
      \varphi(\sigma_{(y,n-k)}\omega)P_{\sigma_{(y,n-k)}\omega}^{(0,0)}(X_k=x-y)}=0.
  \end{multline}
  Note that the by the triangle inequality the sum on the left hand
  side is bounded from above by
  \pagebreak[1]
  \begin{align*}
    \sum_{x\in\bZ^d}
    \Abs{\frac{1}{Z_{\omega,n}} & - \frac{1}{Z_{\omega,n-k}}}
     \bP^{(0,0)}(X_n=x)\varphi(\sigma_{(x,n)}\omega)\\
    &   + \frac{1}{Z_{\omega,n-k}}\sum_{x\in\bZ^{d}}\Abs{\bP^{(0,0)}(X_{n}=x)
      \varphi(\sigma_{(x,n)}\omega) \\
    & \qquad \qquad \qquad \qquad  - \sum_{y\in\bZ^{d}}\bP^{(0,0)}(X_{n-k}=y)
    \varphi(\sigma_{(y,n-k)}\omega)P_{\sigma_{(y,n-k)}\omega}^{(0,0)}(X_k=x-y)}.
  \end{align*}
  By definition of $Z_{\omega,n}$, recall from
  Definition~\ref{defn:auxiliary_measures}, the first sum in the above
  display equals to
  \begin{align*}
    \Abs{\frac{1}{Z_{\omega,n}} -
    \frac{1}{Z_{\omega,n-k}}}Z_{\omega,n},
  \end{align*}
  which by Proposition~\ref{lem:limit_Z_omega} almost surely goes to
  $0$ as $n$ and $n-k$ both tend to $\infty$. Thus, taking also into
  account the trivial deterministic bound on the speed of the random
  walk, for \eqref{eq:84} it suffices to show
  \begin{multline}
    \label{eq:43}
    \lim_{n\to\infty}\sum_{x\in\bZ^d\cap[-n,n]^d}
    \Abs{\bP^{(0,0)}(X_n=x)\varphi(\sigma_{(x,n)}\omega) \\
      -\sum_{y\in\bZ^{d}\cap[-n,n]^d}\bP^{(0,0)}(X_{n-k}=y)
      \varphi(\sigma_{(y,n-k)}\omega)P_{\sigma_{(y,n-k)}\omega}^{(0,0)}(X_k=x-y)}=0.
  \end{multline}
  Denoting by $B_n=\{x\in\bZ^d:\norm{x} \le \sqrt{n}\log^3n \}$ and
  using the triangle inequality an upper bound of the sum in
  \eqref{eq:43} is given by
  \begin{align}
    \label{eq:L1_1st}
      & \sum_{x\in B_n} \Abs{\sum_{y\in \bZ^d\cap[-n,n]^d}
        \bigl[ \bP^{(0,0)}(X_n=x)-\bP^{(0,0)}(X_{n-k}=y) \bigr]\\
    \notag
      & \hspace{4cm} \times\varphi(\sigma_{(y,n-k)}\omega)
        P^{(0,0)}_{\sigma_{(y,n-k)}\omega}(X_k=x-y)}\\
    \label{eq:L1_2nd}
      &\hspace{0.5cm}+\sum_{x\in B_n} \bP^{(0,0)}(X_n=x)\\
    \notag
      &\hspace{2cm}\times\Abs{\varphi(\sigma_{(x,n)}\omega)
        - \sum_{y\in \bZ^d\cap[-n,n]^d}\varphi(\sigma_{(y,n-k)}\omega)
        P^{(0,0)}_{\sigma_{(y,n-k)}\omega}(X_k=x-y)} \\
    \label{eq:L1_3rd}
      &\hspace{0.5cm} + \sum_{x\in\bZ^d\cap[-n,n]^d\setminus B_n}
        \Abs{\bP^{(0,0)}(X_n=x)\varphi(\sigma_{(x,n)}\omega)\\
    \notag
      &\hspace{4cm}
        - \sum_{y\in\bZ^{d}\cap[-n,n]^d}\bP^{(0,0)}(X_{n-k}=y)
        \varphi(\sigma_{(y,n-k)}\omega)P_{\sigma_{(y,n-k)}\omega}^{(0,0)}(X_k=x-y)}.
  \end{align}
  By the annealed derivative estimates \eqref{eq:L1_1st} is bounded
  from above by
  \begin{align*}
    \sum_{x\in B_n}
    & \Abs{\sum_{\substack{y\in\bZ^d\\ \norm{x-y}\le k}}
    \bigl[ \bP^{(0,0)}(X_n=x)-\bP^{(0,0)}(X_{n-k}=y) \bigr]\\
    & \hspace{1.5cm}\times  \varphi(\sigma_{(y,n-k)}\omega)
      P^{(0,0)}_{\sigma_{(y,n-k)}\omega}(X_k=x-y)}\\
    & \le \frac{2Ck}{(n-k)^{(d+1)/2}} \sum_{x\in B_n} \sum_{\substack{y\in\bZ^d\\ \norm{x-y}\le k}}
    \varphi(\sigma_{(y,n-k)}\omega)P^{(0,0)}_{\sigma_{(y,n-k)}\omega}(X_k=x-y)\\
    &\le \frac{2Ck (\sqrt{n}\log^3n +k)^d }{(n-k)^{(d+1)/2}}
      \frac{1}{(\sqrt{n}\log^3n+k)^d} \sum_{\substack{y\in\bZ^d\\
    \dist(y,B_n)\le k}} \varphi(\sigma_{(y,n-k)}\omega).
  \end{align*}
  Now using Corollary~\ref{cor:con_prop_Q} and the fact that
  $k=\lceil n^\varepsilon \rceil <n^{1/4}$ for $\bP$-almost every
  $\omega$ the last term tends to zero as $n$ tend to infinity.

  \medskip

  Next we deal with \eqref{eq:L1_2nd}. Recall that by
  Proposition~\ref{prop:2} we have
  \begin{align*}
    \varphi(\sigma_{(x,n)}\omega)=
    \sum_{y\in\bZ^d\cap[-n,n]^d}\varphi(\sigma_{(x,n-k)}\omega)P^{(y,n-k)}_\omega(X_k=x)
  \end{align*}
  for every $x\in\bZ^d$ such that
  $x+[-k,k]^d\cap\bZ^d \subset [-n,n]^d\cap\bZ^d$. This holds for
  every $x\in B_n$ and therefore the expression \eqref{eq:L1_2nd}
  equals $0$.

  Finally, for \eqref{eq:L1_3rd}, using Lemma~3.6 from
  \cite{SteibersPhD2017}, we have
  $\bP^{(0,0)}(X_n\notin B_n)\le Cn^{-c\log n}$. Recall that
  $k=\lceil n^\varepsilon\rceil$ and note that if
  $P^{(y,n-k)}_\omega(X_k=x)>0$ then $\norm{x-y}\le k$. Thus, for
  $x\in[-n,n]^d\cap\bZ^d\setminus B_n$ and large enough $n$
  \begin{align*}
    \norm{y} \ge \norm{x} - \norm{x-y} \ge \sqrt{n}\log^3n -
    k \ge \frac{1}{2}\sqrt{n}\log^3n.
  \end{align*}
  This implies, again due to Lemma~3.6 from \cite{SteibersPhD2017}
  that $\bP^{(0,0)}(X_{n-k}=y)\le Cn^{-c\log n}$. Therefore, the
  expression \eqref{eq:L1_3rd} is bounded from above by
  \begin{align*}
    \sum_{x\in\bZ^d\cap[-n,n]^d\setminus B_n}
    & \bP^{(0,0)}(X_n=x)\varphi(\sigma_{(x,n)}\omega)\\
    & \; \; + \sum_{x\in\bZ^d\cap[-n,n]^d\setminus B_n}\sum_{y\in\bZ^{d}\cap[-n,n]^d}
      \bP^{(0,0)}(X_{n-k}=y)\varphi(\sigma_{(y,n-k)}\omega)P^{(y,n-k)}_\omega(X_k=x)\\
    & \le  Cn^{-c\log n}\sum_{x\in\bZ^d\cap[-n,n]^d\setminus B_n}\varphi(\sigma_{(x,n)}\omega)\\
    & \; \; +Cn^{-c\log n}\sum_{x\in\bZ^d\cap[-n,n]^d\setminus B_n}\sum_{y\in\bZ^{d}\cap[-n,n]^d}
      \varphi(\sigma_{(y,n-k)}\omega)P^{(y,n-k)}_\omega(X_k=x)\\
    & \le Cn^{-c\log n}\sum_{x\in\bZ^d\cap[-n,n]^d} \varphi(\sigma_{(x,n)}\omega)
      + Cn^{-c\log n}\sum_{y\in\bZ^d\cap[-n,n]^d} \varphi(\sigma_{(y,n-k)}\omega).
  \end{align*}
  By Corollary~\ref{cor:con_prop_Q} we have
  \begin{align*}
    & \bP \Bigl(\sum_{x\in\bZ^d\cap[-n,n]^d}
      \varphi(\sigma_{(x,n)}\omega)\le 2n^d\Bigr)>1-n^{-c\log n},\\
    \intertext{as well as}
    & \bP\Bigl(\sum_{y\in\bZ^d\cap[-n,n]^d}
      \varphi(\sigma_{(y,n-k)}\omega)\le 2n^d\Bigr)>1-Cn^{-c\log n}.
  \end{align*}
  Thus, the probability of the event that \eqref{eq:L1_3rd} is bounded
  above by $4C n^{-c\log n}n^d$ converges to $1$ super-algebraically
  fast. Hence
  the expression \eqref{eq:L1_3rd} converges to $0$ $\bP$-almost
  surely.
\end{proof}

\begin{proof}[Proof of Proposition~\ref{lem:main}, \eqref{eq:ann-quenched_to_box-quenched}]
  First note that, it is enough to show that
  \begin{align*}
  \norm{\nu^{\annpre}_\omega - \nu^{\boxpre}_\omega}_{1,n-k}
    \xrightarrow{n\to\infty} 0,
  \end{align*}
  since the last $k$ steps are according to the quenched law for both
  hybrid measures. Then, as the measure
  $\nu^{\mathrm{box-que}\times\pre}$ suggests, we make use of the
  comparison between the quenched and the annealed laws on the level
  of boxes we derived from Lemma~\ref{claim:1}. We also use the
  concentration properties of $\varphi$ from
  Corollary~\ref{cor:con_prop_Q}.

  Let $k \in \{0,\dots,n\}$ be fixed. Note that we have
  \begin{multline*}
    \norm{(\nu^{\annpre}\ast \nu^{\que})_{\omega,k}
      -(\nu^{\boxpre}\ast\nu^{\que})_{\omega,k}}_{1,n}
    \le \norm{\nu^{\annpre}_\omega - \nu^{\boxpre}_\omega}_{1,n-k}\\
    = \sum_{x\in\bZ^d} \varphi(\sigma_{(x,n-k)}\omega)
    \Abs{\frac{\bP^{(0,0)}(X_{n-k}=x)}{Z_{\omega,n-k}} -
      \frac{P^{(0,0)}_\omega(X_{n-k}\in\Delta_x)}{\sum_{y\in\Delta_x}
        \varphi(\sigma_{(y,n-k)}\omega)}}.
  \end{multline*}
  By Proposition~\ref{lem:limit_Z_omega} it is enough to show that
  $\bP$-almost surely
  \begin{align}
    \label{eq:44}
    \lim_{n\to\infty} \sum_{x\in\bZ^d} \varphi(\sigma_{(x,n-k)}\omega)
    \Abs{\bP^{(0,0)}(X_{n-k}=x)
    - \frac{P^{(0,0)}_\omega(X_{n-k}\in\Delta_x)}{\sum_{y\in\Delta_x}
    \varphi(\sigma_{(y,n-k)}\omega)}} =0.
  \end{align}
  Let $A_n=\{x\in\bZ^d :\norm{x}\le C_\varepsilon\sqrt{n} \}$, with
  $C_\varepsilon$ chosen so that
  $\bP^{(0,0)}(\norm{X_{n-k}}>\frac{C_\varepsilon}{2}\sqrt{n-k})<
  \varepsilon$ for $n$ large enough. Using the triangle inequality the
  sum in \eqref{eq:44} is bounded by
  \begin{align}
    \label{eq:L2_1st}
    \sum_{x\in\bZ^d\cap[-n,n]^d\setminus A_n}
    & \varphi(\sigma_{(x,n-k)}\omega)
      \Abs{\bP^{(0,0)}(X_{n-k}=x)
      - \frac{P^{(0,0)}_\omega(X_{n-k}\in\Delta_x)}{\sum_{y\in\Delta_x}
      \varphi(\sigma_{(y,n-k)}\omega)}} \\
    \label{eq:L2 2nd}
    & + \sum_{x\in A_n} \varphi(\sigma_{(x,n-k)}\omega)
      \Abs{\bP^{(0,0)}(X_{n-k}=x)-\frac{\bP^{(0,0)}(X_{n-k}\in\Delta_x)}{\abs{\Delta_x}}}\\
    \label{eq:L2_3rd}
    & + \sum_{x\in A_n} \varphi(\sigma_{(x,n-k)}\omega)
      \Abs{\frac{1}{\abs{\Delta_x}}\bP^{(0,0)}(X_{n-k}\in\Delta_x)
      - \frac{\bP^{(0,0)}(X_{n-k}\in\Delta_x)}{\sum_{y\in\Delta_x}\varphi(\sigma_{(y,n-k)}\omega)} }\\
    \label{eq:L2_4th}
    & + \sum_{x\in A_n} \varphi(\sigma_{(x,n-k)}\omega)
      \Abs{\frac{\bP^{(0,0)}(X_{n-k}\in\Delta_x)}{\sum_{y\in\Delta_x}\varphi(\sigma_{(y,n-k)}\omega)}
      - \frac{P^{(0,0)}_\omega(X_{n-k}\in\Delta_x)}{\sum_{y\in\Delta_x}
      \varphi(\sigma_{(y,n-k)}\omega)}}.
  \end{align}

  Now we deal with the four terms separately. Expression
  \eqref{eq:L2_1st} is bounded from above by
  \begin{align*}
    \sum_{x\in\bZ^d\cap[-n,n]^d\setminus A_n} \bP^{(0,0)}(X_{n-k}=x)\varphi(\sigma_{(x,n-k)}\omega)
    +P^{(0,0)}_\omega(\norm{X_{n-k}}>C_\varepsilon\sqrt{n}).
  \end{align*}
  The term
  $\sum_{x\in\bZ^d\cap[-n,n]^d\setminus A_n}\bP^{(0,0)}(X_{n-k}=x)
  \varphi(\sigma_{(x,n-k)}\omega)$ goes to zero as $n$ goes to
  infinity by the same arguments used to bound
  \eqref{eq:normalizing_constant_1} in the proof of
  Proposition~\ref{lem:limit_Z_omega}. For the second term we can
  argue as in the proof of Claim~2.15 from
  \cite{BergerCohenRosenthal2016}, to obtain that for a set of
  environments, with $\bP$ probability $>1-\sqrt{\varepsilon}$, for
  large enough $n$
  \begin{align*}
    P^{(0,0)}_\omega(\norm{X_{n-k}}>C_\varepsilon\sqrt{n}) \le
    P^{(0,0)}_\omega\Bigl( \norm{X_{n}} >
    \frac{C_\varepsilon}{2}\sqrt{n}\Bigr) \le \sqrt{\varepsilon}.
  \end{align*}
  Since $\varepsilon>0$ was arbitrary, this proves that
  \eqref{eq:L2_1st} goes to zero as $n$ goes to infinity.

  \medskip

  Next we turn to \eqref{eq:L2 2nd}. The annealed derivative estimates
  yield that it is bounded from above by
  \begin{align*}
    \sum_{x\in A_n} \varphi(\sigma_{(x,n-k)}\omega)
    & \frac{1}{\abs{\Delta_x}}
      \sum_{y\in\Delta_x} \abs{\bP^{(0,0)}(X_{n-k}=x)-\bP^{(0,0)}(X_{n-k}=y)}\\
    & \le C\sum_{x\in A_n}\varphi(\sigma_{(x,n-k)}\omega)
      \frac{1}{\abs{\Delta_x}}\sum_{y\in\Delta_x} \frac{1}{(n-k)^{(d+1)/2}}\norm{x-y}\\
    & \le Cdn^\delta \frac{1}{(n-k)^{(d+1)/2}}\sum_{x\in A_n} \varphi(\sigma_{(x,n-k)}\omega)\\
    & =
      \frac{Cn^{\delta+d/2}}{(n-k)^{(d+1)/2}}\Bigl(\frac{1}{n^{d/2}}
      \sum_{x\in A_n} \varphi(\sigma_{(x,n-k)}\omega)
      \Bigr)\xrightarrow{n\to\infty} 0, \quad \bP\text{-a.s.},
  \end{align*}
  where for the limit we use Proposition~\ref{lem:box_average_estimate}, the
  fact that $k=\lceil n^\varepsilon \rceil$ and
  $\delta<\varepsilon<\frac{1}{4}$.

  \medskip

  Next we deal with \eqref{eq:L2_3rd}. Writing
  $\widehat{\Pi}_n=\{\Delta\in\Pi:\Delta\cap A_n\neq \emptyset\}$,
  using annealed derivative estimates and
  Corollary~\ref{cor:con_prop_Q} we see that
  \eqref{eq:L2_3rd} is bound by
  \begin{align*}
    \sum_{x\in A_n}
    & \varphi(\sigma_{(x,n-k)}\omega)
      \frac{1}{\abs{\Delta_x}} \bP^{(0,0)}(X_{n-k}\in\Delta_x)
      \Abs{1 - \frac{1}{\frac{1}{\abs{\Delta_x}}
      \sum_{y\in\Delta_x}\varphi(\sigma_{(y,n-k)}\omega)}}\\
    & \le \frac{C}{(n-k)^{d/2}}\sum_{x\in A_n}\varphi(\sigma_{(x,n-k)}\omega)
      \Abs{1 - \frac{1}{\frac{1}{\abs{\Delta_x}}
      \sum_{y\in\Delta_x}\varphi(\sigma_{(y,n-k)}\omega)}}\\
    & \le C\Bigl( \frac{n-k}{n} \Bigr)^{-d/2}\frac{1}{n^{d/2}}
      \sum_{\Delta \in \widehat{\Pi}_n}\sum_{x\in\Delta} \varphi(\sigma_{(x,n-k)}\omega)
      \Abs{1 - \frac{1}{\frac{1}{\abs{\Delta_x}}
      \sum_{y\in\Delta_x}\varphi(\sigma_{(y,n-k)}\omega)}}\\
    & = C\Bigl(1-\frac{k}{n}\Bigr)^{-d/2}
      \frac{1}{n^{d/2}} \sum_{\Delta\in\widehat{\Pi}_n}
      \sum_{x\in\Delta} \frac{\varphi(\sigma_{(x,n-k)}\omega) \abs{\Delta_x}}{\sum_{y\in\Delta_x}
      \varphi(\sigma_{(y,n-k)}\omega)}
      \Abs{\frac{1}{\abs{\Delta_x}}\sum_{y\in\Delta_x} \varphi(\sigma_{(x,n-k)}\omega) -1}\\
    & = C\Bigl(1-\frac{k}{n}\Bigr)^{-d/2} \frac{1}{n^{(d/2)(1-2\delta)}}
      \sum_{\Delta\in\widehat{\Pi}_n} \Abs{\frac{1}{\abs{\Delta}}
      \sum_{x\in\Delta}\varphi(\sigma_{(x,n-k)}\omega)-1}.
  \end{align*}
  Using the same argument that was used for
  \eqref{eq:normalizing_constant_3}, we get that by the Borel-Cantelli
  lemma the last term goes to zero $\bP$-a.s.

  Finally, we estimate \eqref{eq:L2_4th}. It is bounded from above by
  \begin{multline*}
    \sum_{x\in A_n}
    \frac{\varphi(\sigma_{(x,n-k)}\omega)}{\sum_{y\in\Delta_x}
      \varphi(\sigma_{(y,n-k)}\omega)}\abs{\bP^{(0,0)}(X_{n-k}\in\Delta_x)
      -P^{(0,0)}_\omega(X_{n-k}\in\Delta_x)}\\
    =\sum_{\Delta\in\widehat{\Pi}_n}
    \abs{\bP^{(0,0)}(X_{n-k}\in\Delta)-P^{(0,0)}_\omega(X_{n-k}\in\Delta)}.
  \end{multline*}
  For the last term we can use Theorem~\ref{thm:Steibers_Thm3.24}
  which implies that it is bounded by $Cn^{-\frac{1}{3}\delta}$ for
  $\bP$-almost every $\omega$ and large enough $n$. Therefore $\bP$
  almost surely it converges to zero as $n$ tends to infinity.
\end{proof}

\begin{proof}[Proof of Proposition~\ref{lem:main}, \eqref{eq:box-quenched_to_quenched}]
  Note that the first measure chooses, at time $n-k$, a box according
  to the quenched law and a point in that box weighted by the
  prefactor, whereas the second measure chooses a box and a point in
  that box according to the quenched law at time $n-k$. These points
  are then the starting points for the quenched random walks for the
  remaining $k$ steps. We use the fact that, given enough time (much
  more than the square of the starting distance), the total variation
  distance for two quenched random walks starting from any pair of
  sites in a box with side length $\lceil n^\ell \rceil$ is, given
  enough time, i.e. much more than the square of the side length of
  the box, is small with high probability, see
  Lemma~\ref{lem:HIG_lemma}.\\
  The proof follows along the same lines as in
  \cite{BergerCohenRosenthal2016}. We will highlight the point in the
  proof where we deviate. We have
  \begin{align}
    \nonumber
    & \norm{(\nu^{\boxpre}\ast\nu^{\que})_{\omega,k} - (\nu^{\que}\ast \nu^{\que})_{\omega,k}}_{1,n}\\
    \nonumber
    & = \sum_{x\in\bZ^d}\big\lvert (\nu^{\boxpre}\ast\nu^{\que})_{\omega,k}(x,n)
      -(\nu^{\que}\ast \nu^{\que})_{\omega,k}(x,n)\big\rvert\\
    \nonumber
    & = \sum_{x\in\bZ^d}\Abs{\sum_{y\in\bZ^d}P^{(0,0)}_\omega(X_{n-k}\in \Delta_y)
      \frac{\varphi(\sigma_{(y,n-k)} \omega)}{\sum_{z\in\Delta_y}\varphi(\sigma_{(z,n-k)}\omega)}
      P^{(0,0)}_{\sigma_{(y,n-k)}\omega}(X_k=x-y)\\
    \nonumber
    & \hspace{3cm} - \sum_{y\in\bZ^d}P^{(0,0)}_\omega(X_{n-k}=y)
      P^{(0,0)}_{\sigma_{(y,n-k)}\omega}(X_k=x-y)}\\
    \nonumber
    & = \sum_{x\in\bZ^d}\Abs{\sum_{\Delta\in\Pi} \sum_{y\in\Delta}
      P^{(y,n-k)}_\omega(X_k=x)P^{(0,0)}_\omega(X_{n-k}\in\Delta)\\
    \nonumber
    & \hspace{3cm} \cdot \Bigl(\frac{\varphi(\sigma_{(y,n-k)} \omega)}{\sum_{z\in\Delta}
      \varphi(\sigma_{(z,n-k)}\omega)} -
      P^{(0,0)}_\omega(X_{n-k}=y\,\vert\,X_{n-k}\in\Delta)\Bigr)}\\
    \nonumber
    & \le \sum_{x\in\bZ^d} \sum_{\Delta\in\Pi}\Abs{\sum_{y\in\Delta}
      P^{(y,n-k)}_\omega(X_k=x)P^{(0,0)}_\omega(X_{n-k}\in\Delta)\\
    \label{eq:box-quenched_to_quenched_1}
    &\hspace{3cm} \cdot \Bigl(\frac{\varphi(\sigma_{(y,n-k)} \omega)}{\sum_{z\in\Delta}
      \varphi(\sigma_{(z,n-k)}\omega)} - P^{(0,0)}_\omega(X_{n-k}=y\,\vert\,X_{n-k}\in\Delta)\Bigr)}.
  \end{align}
  Since for every $\Delta\in\Pi$ and $x\in\bZ^d$ we have
  \begin{align*}
    \sum_{y\in\Delta}\frac{1}{\abs{\Delta}}\sum_{v\in\Delta}
    P^{(v,n-k)}_\omega(X_k=x) \Bigl[
    \frac{\varphi(\sigma_{(y,n-k)} \omega)}{\sum_{z\in\Delta} \varphi(\sigma_{(z,n-k)}\omega)} -
    P^{(0,0)}_\omega(X_{n-k}=y\,\vert\,X_{n-k}\in\Delta) \Bigr]=0
  \end{align*}
  it follows that \eqref{eq:box-quenched_to_quenched_1} equals
  \begin{align}
    \nonumber
    & \sum_{x\in\bZ^d} \sum_{\Delta\in\Pi}P^{(0,0)}_\omega(X_{n-k}\in\Delta)
      \Abs{\sum_{y\in\Delta}\Bigl[P^{(y,n-k)}_\omega(X_k=x)
      - \Bigl( \frac{1}{\abs{\Delta}}\sum_{w\in\Delta}P^{(w,n-k)}_\omega(X_k=x) \Bigr)\Bigr]\\
    \nonumber
    & \hspace{3cm} \Bigl(\frac{\varphi(\sigma_{(y,n-k)} \omega)}{\sum_{z\in\Delta}
      \varphi(\sigma_{(z,n-k)}\omega)} - P^{(0,0)}_\omega(X_{n-k}=y\,\vert\,X_{n-k}\in\Delta)\Bigr)}\\
    \nonumber
    & = \sum_{x\in\bZ^d} \sum_{\Delta\in\Pi}P^{(0,0)}_\omega(X_{n-k}\in\Delta)
      \Abs{\frac{1}{\abs{\Delta}}\sum_{y\in\Delta}\sum_{w\in\Delta}\Bigl[P^{(y,n-k)}_\omega(X_k=x)
      - P^{(w,n-k)}_\omega(X_k=x) \Bigr]\\
    \nonumber
    & \hspace{3cm} \Bigl(\frac{\varphi(\sigma_{(y,n-k)}\omega)}{\sum_{z\in\Delta}
      \varphi(\sigma_{(z,n-k)}\omega)} -
      P^{(0,0)}_\omega(X_{n-k}=y\,\vert\,X_{n-k}\in\Delta)\Bigr)} \\
    \nonumber
    & \le\sum_{\Delta\in\Pi}\sum_{x\in\bZ^d} P^{(0,0)}_\omega(X_{n-k}\in\Delta)\sum_{y\in\Delta}
      \frac{1}{\abs{\Delta}}\sum_{w\in\Delta}\Abs{P^{(y,n-k)}_\omega(X_k=x) - P^{(w,n-k)}_\omega(X_k=x)}\\
    \label{eq:box-quenched_to_quenched_2}
    &\hspace{3cm} \Abs{\frac{\varphi(\sigma_{(y,n-k)} \omega)}{\sum_{z\in\Delta}
      \varphi(\sigma_{(z,n-k)}\omega)} - P^{(0,0)}_\omega(X_{n-k}=y\,\vert\,X_{n-k}\in\Delta)}
  \end{align}
  Until this point the steps are basically the same as in
  \cite{BergerCohenRosenthal2016}. Here we deviate from their proof.
  Note that $P^{(0,0)}_\omega(X_{n-k}\in\Delta)=0$ if
  $\Delta \cap [-n + k,n-k]^d =\emptyset$. For
  $\Delta \cap [-n + k,n-k]^d \ne \emptyset$ we have $y, w \in \Delta$
  implies that $\norm{y}, \norm{w} \le n = k^{1/\varepsilon}$ and
  $\norm{y-w} \le n^\delta =k^{\delta/\varepsilon}$.

  Using Lemma~\ref{lem:HIG_lemma} we see that
  \eqref{eq:box-quenched_to_quenched_2} is bounded from above by
  \begin{align*}
    & \sum_{\Delta\in\Pi}P^{(0,0)}_\omega(X_{n-k}\in\Delta)\sum_{y\in\Delta}
      \Abs{\frac{\varphi(\sigma_{(y,n-k)}\omega)}{\sum_{z\in\Delta}
      \varphi(\sigma_{(z,n-k)}\omega)}-P^{(0,0)}_\omega(X_{n-k}=y\,\vert\,X_{n-k}\in\Delta) }\\
    & \hspace{3cm} \frac{1}{\abs{\Delta}}\sum_{w\in\Delta}\sum_{x\in\bZ^d}
      \Abs{P^{(y,n-k)}_\omega(X_k=x)-P^{(w,n-k)}_\omega(X_k=x)} \\
    & \le \mathrm{e}^{-c\frac{\log k}{\log\log k}} \sum_{\Delta\in\Pi}P^{(0,0)}_\omega(X_{n-k}\in\Delta)
      \sum_{y\in\Delta}\Abs{\frac{\varphi(\sigma_{(y,n-k)}\omega)}{\sum_{z\in\Delta}
      \varphi(\sigma_{(z,n-k)}\omega)}-P^{(0,0)}_\omega(X_{n-k}=y\,\vert\,X_{n-k}\in\Delta) }\\
    & \le 2 \mathrm{e}^{-c\frac{\log k}{\log\log k}} \sum_{\Delta\in\Pi}P^{(0,0)}_\omega(X_{n-k}\in\Delta)
      = 2 \mathrm{e}^{-c\frac{\log k}{\log\log k}} \le C\mathrm{e}^{-\tilde{c}\frac{\log n}{\log\log n}}
  \end{align*}
  since $k=\lceil n^{\varepsilon}\rceil$. The right hand side goes to
  0 for $n\to \infty$.
\end{proof}

\section{Proof of Proposition~\ref{prop_main_thm_5.1}}
\label{sec:proof-prop-refpr-1}

The starting point is a result from \cite{SteibersPhD2017}. Define
\begin{align}
  \label{eq:69}
  \cP(N) \coloneqq
  \Bigl(\Bigl[-\frac{1}{24}\sqrt{N}\log^3 N,\frac{1}{24}\sqrt{N}\log^3 N \Bigr]^d
  \times \Bigl[0,\frac{1}{3}N\Bigr]\Bigr) \cap (\bZ^d \times \bZ).
\end{align}
For $\theta \in (0,1)$ and $(x,m) \in \cP(N)$ let $G'((x,m),N)$ denote
the event that for every box $\Delta \subset \bZ^d$ of side length
$N^{\theta/2}$ we have
\begin{align}
  \label{eq:49}
  \big\lvert P_{\omega}^{(x,m)} (X_{m+N} \in \Delta ) - \bP^{(x,m)}(X_{m+N}
  \in \Delta) \big\rvert \le CN^{-d(1-\theta)/2- \frac{1}{6}\theta}.
\end{align}
Furthermore set
\begin{align}
  \label{eq:48}
  G'(N)\coloneqq \bigcap_{(x,m) \in \cP(N)} \bigl(G'((x,m),N\bigr) \cup \{\xi_m(x)=0\}).
\end{align}
\begin{thm}[Theorem~3.24 in \cite{SteibersPhD2017}]
  \label{thm:Steibers_Thm3.24}
  Let $d\ge 3$. There exist positive constants $c$ and $C$, such that
  for all $(x,m) \in \cP(N)$ we have
  \begin{align}
    \bP^{(x,m)}\bigl(G'((x,m),N) \bigr) \ge 1 - C N^{-c\log N}
  \end{align}
  and
  \begin{align}
    \label{eq: lower bound for G4(N)}
    \bP\bigl(G'(N) \bigr) \ge 1- CN^{-c\log N}.
  \end{align}
\end{thm}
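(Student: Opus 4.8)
The plan is to prove Theorem~\ref{thm:Steibers_Thm3.24} by the \emph{environment exposure} martingale technique of \cite{BolthausenSznitman2002}, adapted to the dynamic oriented-percolation environment. Fix $(x,m)\in\cP(N)$ and a box $\Delta$ of side length $N^{\theta/2}$. The map $\omega\mapsto P^{(x,m)}_\omega(X_{m+N}\in\Delta)$ is bounded and measurable, and by \eqref{eq:15} its $\bP^{(x,m)}$-mean is exactly $\bP^{(x,m)}(X_{m+N}\in\Delta)$; so \eqref{eq:49} is a concentration statement for this function. I would expose $\omega$ slab by slab in the time direction — in blocks of a fixed length $T$, or in geometrically growing blocks — and form the Doob martingale $(M_j)$ of $P^{(x,m)}_\omega(X_{m+N}\in\Delta)$ with respect to this filtration, so that $M_j-M_{j-1}$ quantifies the effect of re-randomizing the $j$-th slab on the quenched probability.

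The heart of the argument is the increment bound. Coupling two realizations of the walk that agree on the environment outside slab $j$, the change in their laws at time $m+N$ is controlled by how much mass the walk puts on slab $j$ together with how strongly the cluster geometry there influences future transitions. The crucial input for $d\ge3$ is that two independent directed walks on $\bZ^d\times\bZ$ meet only finitely often with uniformly bounded expected number of intersections; this forces $\sum_j(M_j-M_{j-1})^2$ to be small after accounting for the box volume $|\Delta|\asymp N^{d\theta/2}$ and the diffusive spreading of $P^{(x,m)}_\omega(X_{m+N}\in\cdot)$. Plugging this into a martingale concentration inequality — Azuma--Hoeffding, or Freedman's inequality to exploit the variance bound — gives $\bP^{(x,m)}\bigl(|P^{(x,m)}_\omega(X_{m+N}\in\Delta)-\bP^{(x,m)}(X_{m+N}\in\Delta)|>N^{-d(1-\theta)/2-\theta/6}\bigr)\le CN^{-c\log N}$. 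The (stretched but super-polynomial) bound $N^{-c\log N}$ rather than a mere stretched exponential is natural here: one only needs to control deviations down to a polynomially small scale, and since the number of exposure slabs is polynomial in $N$, a polynomially small deviation of the martingale amounts to logarithmically many standard deviations, which Gaussian-type tails convert into $N^{-c\log N}$.

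Two technical points carry the real work. First, the environment is not i.i.d.: the backbone $\cC$ has infinite-range dependence, so I would invoke the renewal/renormalization structure of supercritical oriented percolation (in the spirit of \cite{GrimmettHiermerDPRW2000} and the quantitative intersection estimate proved in the appendix, Section~\ref{sec:inters-clust-points}) to couple the cluster restricted to disjoint time slabs with independent copies up to error $N^{-c\log N}$; then the exposure of slabs behaves, for the concentration estimate, like exposure of independent blocks. Second, the walk is not uniformly elliptic and off $\cC$ it is an ordinary simple random walk; by Lemma~\ref{lem:quenched-rw-cluster} the time to hit $\cC$ has exponential tails, so on an event of $\bP$-probability $1-N^{-c\log N}$ the walk is on $\cC$ after $O(\log^2 N)$ steps, after which the CLT-scale analysis of \cite{BirknerCernyDepperschmidtGantert2013} applies. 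Together with the annealed diffusive bound $\bP^{(0,0)}(\norm{X_N}>\sqrt N\log^3 N)\le N^{-c\log N}$ (Lemma~3.6 in \cite{SteibersPhD2017}), this also explains the shape of $\cP(N)$ in \eqref{eq:69}: outside it either the walk has left the diffusive range or too little time remains.

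Finally, to pass from the pointwise estimate to \eqref{eq: lower bound for G4(N)} I would take a union bound over the $(x,m)\in\cP(N)$ — only polynomially many — and over the boxes $\Delta$ of side length $N^{\theta/2}$ reachable within the diffusive range (again polynomially many), using on each $\{\xi_m(x)=0\}$ that the contribution vanishes. Since each bad event has probability at most $CN^{-c\log N}$ and $N^{-c\log N}$ beats every polynomial, the union bound preserves the estimate after adjusting $c$. The main obstacle, as flagged, is making the martingale-increment bound rigorous without ellipticity or independence: this requires the quantitative decoupling of the cluster over disjoint slabs combined with the $d\ge3$ finite-intersection property, and the bookkeeping that yields precisely the exponent $d(1-\theta)/2+\theta/6$ in \eqref{eq:49} is the delicate part.
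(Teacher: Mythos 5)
The paper does not prove Theorem~\ref{thm:Steibers_Thm3.24}; it quotes it verbatim from \cite[Thm.~3.24]{SteibersPhD2017}, and the only information the paper gives about the proof is in the introduction: it was obtained ``using an environment exposure technique from \cite{BolthausenSznitman2002}, which was also used by \cite{BergerCohenRosenthal2016}, and the proof exploited the fact that in dimension at least $3$, two independent random walks will almost surely meet only finitely often.'' Your sketch reproduces exactly that strategy — Doob martingale from slab-by-slab exposure of $\omega$, increment control via the $d\ge3$ finite-intersection bound, Azuma/Freedman to get $N^{-c\log N}$, plus decoupling of the cluster over disjoint time slabs and the exponential hitting-time tail of Lemma~\ref{lem:quenched-rw-cluster} to handle non-i.i.d.\ dependence and lack of ellipticity — so you have correctly identified the approach.

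That said, the proposal is a program rather than a proof: the crucial step is the quantitative martingale-increment estimate that produces the specific exponent $d(1-\theta)/2+\theta/6$ in \eqref{eq:49}, and you explicitly defer it (``the bookkeeping \dots is the delicate part''). Since the referenced thesis is the only place where that computation is carried out, and it is not reproduced in the present paper, there is nothing in the source against which to check your increment bound; but as a reconstruction of the method it is on target, and your union-bound passage from the pointwise statement to \eqref{eq: lower bound for G4(N)} over the polynomially many points of $\cP(N)$ and relevant boxes (using $\{\xi_m(x)=0\}$ to discard dead starting points) is the standard and correct final step.
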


The following notion of \emph{good sites} and \emph{good boxes} will
be needed in the proof of Proposition~\ref{prop_main_thm_5.1}.
On such boxes the annealed and quenched laws are ``close'' to each
other. Recall the process $\xi=(\xi_n)_{n\in \bZ}$ from \eqref{eq:45}
and the definition of $n_k$ from the beginning of
Section~\ref{sect:box_level_comparisons}. Recall also that $\Pi_k$ is
a partition of $\bZ^d$ into the boxes of side length
$\lfloor n_k^\theta \rfloor$.
\begin{defn}
  \label{def:good boxes}
  For a given realisation $\omega \in \Omega$, we say that
  $(x,m) \in \bZ^d\times\bZ$ is $(k-1,\theta,\varepsilon)$-\emph{good}
  if either $\xi_m(x;\omega)=0$ or $\xi_m(x;\omega)=1$ and the
  following two conditions are satisfied
  \begin{align}
    \label{eq:46}
    \sup_{\Delta' \in \Pi_{k}} \big | P_{\omega}^{(x,m)} (X_{m+n_k} \in \Delta') -
    \bP^{(x,m)}(X_{m+n_k} \in \Delta')\big|
    & \le n_k^{\theta d -\frac{d}{2} - \varepsilon},\\
    \label{eq:47}
    P_{\omega}^{(x,m)} \Bigl(\max_{s \le n_k} \norm{X_{m+s}-x} >\sqrt{n_k} \log^3 n_k\Bigr)
    & \le Cn_k^{-c\log n_k}.
  \end{align}
  Otherwise the site is said to be
  $(k-1,\theta,\varepsilon)$-\emph{bad}. We say that for
  $\Delta\in \Pi_{k-1}$ and $m \in \bZ$ the box $\Delta \times \{m\}$
  is $(k-1,\theta,\varepsilon)$-\emph{good} if each
  $(x,m) \in \Delta \times \{m\}$ is
  $(k-1,\theta,\varepsilon)$-\emph{good}. Otherwise we say that
  $\Delta \times \{m\}$ is $(k-1,\theta,\varepsilon)$-\emph{bad}.
\end{defn}

The following lemma is a direct consequence of
Theorem~\ref{thm:Steibers_Thm3.24}.
\begin{lem}
  \label{lem:lowerBoundGoodPoint}
  For all $\Delta \in \Pi_{k-1}$ there are positive constants $C$
  and $c$ so that
  \begin{align}
    \label{eq:18}
    \bP\left( \Delta \text{ is } (k-1,\theta,\varepsilon)\text{-good} \right)
    \ge 1 - Cn_k^{-c\log n_k}.
  \end{align}
\end{lem}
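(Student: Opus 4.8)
The plan is to derive Lemma~\ref{lem:lowerBoundGoodPoint} directly from Theorem~\ref{thm:Steibers_Thm3.24}, keeping careful track of two things: the change from the ``intrinsic'' box scale $N^{\theta/2}$ appearing in \eqref{eq:49} to the scale $n_k^{\theta}$ appearing in \eqref{eq:46}, and the passage from a single-site estimate to a uniform estimate over all sites in the spatial box $\Delta\in\Pi_{k-1}$ and the time level $m$. First I would set $N:=n_k$, so that boxes of side length $N^{\theta/2}$ in Theorem~\ref{thm:Steibers_Thm3.24} become boxes of side length $n_k^{\theta/2}$. Condition \eqref{eq:46} instead asks for boxes of side length $\lfloor n_k^{\theta}\rfloor$; but any box of side length $n_k^{\theta}$ is a union of at most $C n_k^{\theta d/2}$ boxes of side length $n_k^{\theta/2}$, so by the triangle inequality the left-hand side of \eqref{eq:46} is at most $C n_k^{\theta d/2}\cdot C n_k^{-d(1-\theta)/2-\theta/6} = C n_k^{\theta d - d/2 - \theta/6}$, which is $\le n_k^{\theta d - d/2 - \varepsilon}$ provided $\varepsilon<\theta/6$ (and $n_k$ is large, absorbing the constant). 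So \eqref{eq:46} holds on the event $G'((x,m),n_k)$ for every fixed $(x,m)$ with $\xi_m(x)=1$.

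Next I would handle the displacement bound \eqref{eq:47}. This is essentially a statement about the quenched walk staying in a diffusive box, and it follows from the same annealed large-deviation estimate that underlies the definition of $\cP(N)$ (i.e.\ Lemma~3.6 in \cite{SteibersPhD2017}) together with the comparison of quenched and annealed laws on $N^{\theta/2}$-boxes: on $G'((x,m),n_k)$ the quenched box-probabilities differ from the annealed ones by at most $n_k^{-c\log n_k}$, summed over the (polynomially many) boxes intersecting the complement of $[-\sqrt{n_k}\log^3 n_k,\sqrt{n_k}\log^3 n_k]^d$, which is still $n_k^{-c'\log n_k}$ since the annealed walk already satisfies $\bP^{(x,m)}(\max_{s\le n_k}\norm{X_{m+s}-x}>\sqrt{n_k}\log^3 n_k)\le C n_k^{-c\log n_k}$. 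Thus, possibly after shrinking $c$, both \eqref{eq:46} and \eqref{eq:47} hold simultaneously on $G'((x,m),n_k)$, so $(x,m)$ is $(k-1,\theta,\varepsilon)$-good on that event.

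The remaining step is the union bound turning the pointwise statement into a statement about the whole box $\Delta\times\{m\}$. By translation invariance of $\bP$ we may assume $\Delta\times\{m\}$ sits near the origin; it contains $\lfloor n_{k-1}^{\theta}\rfloor^d$ spatial sites, and since $n_{k-1}=\lfloor N^{1/2^{k-1}}\rfloor=\lfloor n_k^{2}\rfloor$ up to integer parts, this is at most $C n_k^{2\theta d}$, a polynomial in $n_k$. By Theorem~\ref{thm:Steibers_Thm3.24}, $\bP^{(x,m)}(G'((x,m),n_k)^{\compl})\le C n_k^{-c\log n_k}$ for each such site, so
\begin{align*}
  \bP\bigl(\Delta\times\{m\}\text{ is }(k-1,\theta,\varepsilon)\text{-bad}\bigr)
  &\le \sum_{x\in\Delta}\bP\bigl((x,m)\text{ is }(k-1,\theta,\varepsilon)\text{-bad}\bigr)\\
  &\le C n_k^{2\theta d}\cdot C n_k^{-c\log n_k}
  \le C n_k^{-c'\log n_k}
\end{align*}
for a slightly smaller constant $c'>0$, since the super-polynomial factor $n_k^{-c\log n_k}$ dominates the polynomial $n_k^{2\theta d}$ for large $n_k$ (and for the finitely many small $n_k$ one enlarges $C$). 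This is exactly \eqref{eq:18}. I expect no genuine obstacle here; the only points requiring care are (i) choosing $\varepsilon$ small enough relative to $\theta$ so the exponent in \eqref{eq:46} is absorbed after the box-subdivision, and (ii) checking that one is entitled to apply Theorem~\ref{thm:Steibers_Thm3.24} at scale $N=n_k$ with the relevant sites $(x,m)$ lying in $\cP(n_k)$ — which holds because the boxes $\Delta\in\Pi_{k-1}$ and the time levels $m$ relevant to the iteration in Proposition~\ref{prop_main_thm_5.1} are by construction well inside $\cP(n_k)$, and outside that region the defining clause $\xi_m(x)=0$ in \eqref{eq:48} makes the site good for free.
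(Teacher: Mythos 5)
Your handling of \eqref{eq:46} by subdividing $\Pi_k$-boxes into $n_k^{\theta/2}$-boxes (with the constraint $\varepsilon<\theta/6$) and the final union bound over the polynomially many sites of $\Delta$ are both sound. The gap is in the argument for \eqref{eq:47}. You assert that ``on $G'((x,m),n_k)$ the quenched box-probabilities differ from the annealed ones by at most $n_k^{-c\log n_k}$''; this misreads Theorem~\ref{thm:Steibers_Thm3.24}. On the event $G'((x,m),n_k)$ the per-box discrepancy is bounded by $Cn_k^{-d(1-\theta)/2-\theta/6}$, which is only \emph{polynomially} small in $n_k$; the super-polynomial quantity $n_k^{-c\log n_k}$ bounds the probability that the event $G'$ \emph{fails}, not the size of the discrepancy inside the event. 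Summing a polynomial per-box error over the polynomially many boxes within reach of the walk produces a quantity of order $n_k^{d/2-\theta/6}$, which is not even small, so this route cannot give \eqref{eq:47}. There is also a second obstruction: the comparison inside $G'$ controls the distribution of $X_{m+n_k}$ only at the single terminal time, while \eqref{eq:47} is a statement about the maximum displacement over all intermediate times $s\le n_k$, which an endpoint comparison does not see.

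The correct (and more elementary) route for \eqref{eq:47} does not use Theorem~\ref{thm:Steibers_Thm3.24} at all: apply Markov's inequality to the $\omega$-measurable random variable $Y_\omega := P_\omega^{(x,m)}\bigl(\max_{s\le n_k}\norm{X_{m+s}-x}>\sqrt{n_k}\log^3 n_k\bigr)$. Its annealed mean is $\bE[Y_\omega]=\bP^{(x,m)}\bigl(\max_{s\le n_k}\norm{X_{m+s}-x}>\sqrt{n_k}\log^3 n_k\bigr)\le C_1 n_k^{-c_1\log n_k}$ by the annealed large-deviation estimate (Lemma~3.6 of \cite{SteibersPhD2017}), and then $\bP\bigl(Y_\omega>Cn_k^{-c\log n_k}\bigr)\le (C_1/C)\,n_k^{-(c_1-c)\log n_k}$ for any $c<c_1$. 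This is exactly the super-polynomial bound on the failure probability of \eqref{eq:47} that you need. With this substituted in, the rest of your argument --- intersecting with the event from Theorem~\ref{thm:Steibers_Thm3.24} for \eqref{eq:46} and union-bounding over the sites of $\Delta$ --- goes through.
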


The assertion of Proposition~\ref{prop_main_thm_5.1} is the analogue
of the inequality (5.1) in \cite{BergerCohenRosenthal2016}. The
strategy of the proof there is as follows. First, using the triangle
inequality and the Markov property an upper bound of $\lambda_k$ is
obtained which is given by a sum of four terms (5.2) -- (5.5) in
\cite{BergerCohenRosenthal2016}. Second, for each of these four terms
an upper bound is shown. Three of these upper bounds, the ones for
(5.2), (5.4) and (5.5), are not difficult and can be proven in the
same way as in \cite{BergerCohenRosenthal2016}. For (5.3) Berger et.\
al use a notion of ``good'' boxes and the fact that they are
independent at a large but finite distance. The definition of those
good boxes translates to our Definition~\ref{def:good boxes}, where it
is clear that the dependence on $\xi$ prevents us from directly using
any argument hinging on independence at a finite distance. We
circumvent this problem by defining a new type of boxes for which we
are able to work with independence, see the ideas below
Proposition~\ref{prop_key}. Using those boxes as an approximation for
the good boxes we prove a lower bound on the probability of hitting a
good box in Proposition~\ref{prop_key}.

\smallskip

\noindent
\begin{proof}[Proof of the analogue of an upper bound of (5.2) in
  \cite{BergerCohenRosenthal2016}] Consider
  \begin{multline}
    \label{eq:firstInequality}
    \sum_{\Delta \in \Pi_{k}} \sum_{\Delta' \in \Pi_{k-1}}\Big\vert
    \sum_{u \in \Delta'}
    P^{(u,N_{k-1})}_{\omega}(X_{N_k} \in \Delta)\\
    \times \bigl[P^{(0,0)}_\omega(X_{N_{k-1}}=u ) -
    \bP^{(0,0)}(X_{N_{k-1}} \in \Delta')P^{(0,0)}_\omega(X_{N_{k-1}}=u
    \vert X_{N_{k-1}} \in \Delta')\bigr] \Big\vert.
  \end{multline}
  To get an upper bound for \eqref{eq:firstInequality} the arguments
  in \cite{BergerCohenRosenthal2016} do not require any specific
  properties of the model and apply to our model as well. The steps
  are as follows: by the triangle inequality followed by elementary
  computations \eqref{eq:firstInequality} is bounded from above by
  \begin{align}
    \label{eq:17}
    \begin{split}
      & \sum_{\Delta \in \Pi_{k}} \sum_{\Delta' \in \Pi_{k-1}} \sum_{u
        \in \Delta'}
      P^{(u,N_{k-1})}_{\omega}(X_{N_k} \in \Delta)\\
      & \qquad \times \big\vert P^{(0,0)}_\omega(X_{N_{k-1}}=u ) -
      \bP^{(0,0)}(X_{N_{k-1}} \in
      \Delta')P^{(0,0)}_\omega(X_{N_{k-1}}=u
      \vert X_{N_{k-1}} \in \Delta') \big\vert\\
      & = \sum_{\Delta' \in \Pi_{k-1}} \sum_{u \in \Delta'} \vert
      P^{(0,0)}_\omega(X_{N_{k-1}}=u ) 
      - \bP^{(0,0)}(X_{N_{k-1}} \in
      \Delta')P^{(0,0)}_\omega(X_{N_{k-1}}=u \vert X_{N_{k-1}} \in
      \Delta') \vert\\
      & = \sum_{\Delta' \in \Pi_{k-1}} \sum_{u \in \Delta'}
      P^{(0,0)}_\omega(X_{N_{k-1}}=u\vert X_{N_{k-1}} \in \Delta') 
      \vert P^{(0,0)}_\omega(X_{N_{k-1}} \in \Delta') -
      \bP^{(0,0)}(X_{N_{k-1}} \in \Delta') \vert\\
      & = \sum_{\Delta' \in \Pi_{k-1}} \vert
      P^{(0,0)}_\omega(X_{N_{k-1}} \in \Delta') -
      \bP^{(0,0)}(X_{N_{k-1}} \in \Delta') \vert = \lambda_{k-1}.
    \end{split}
  \end{align}
\end{proof}

\begin{proof}[Proof of the analogue of an upper bound of (5.3) in
  \cite{BergerCohenRosenthal2016}] Consider
  \begin{multline}
    \label{eq:secondInequality}
    \sum_{\Delta \in \Pi_{k}} \sum_{\Delta' \in \Pi_{k-1}}\Big\vert
    \sum_{u \in \Delta'} \bP^{(0,0)}(X_{N_{k-1}} \in
    \Delta')P^{(0,0)}_\omega(X_{N_{k-1}}
    =u\vert X_{N_{k-1}} \in \Delta')\\
    \times [P^{(u,N_{k-1})}_\omega(X_{N_k} \in \Delta) -
    \bP^{(u,N_{k-1})}(X_{N_k}\in \Delta)]\Big\vert.
  \end{multline}
  First, by the triangle inequality \eqref{eq:secondInequality} is
  bounded from above by
  \begin{multline}
    \label{eq:50}
    \sum_{\Delta' \in \Pi_{k-1}} \sum_{u \in \Delta'}
    \bP^{(0,0)}(X_{N_{k-1}} \in \Delta')P^{(0,0)}_\omega(X_{N_{k-1}}
    = u\vert X_{N_{k-1}} \in \Delta')\\
    \sum_{\Delta \in \Pi_{k}} \big\vert P^{(u,N_{k-1})}_\omega(X_{N_k}
    \in \Delta) - \bP^{(u,N_{k-1})}(X_{N_k}\in \Delta) \big\vert.
  \end{multline}
  Next we define $\Pi^1_{k-1}$ as the set of boxes
  $\Delta' \in \Pi_{k-1}$ with the property
  \begin{align*}
    \Delta' \cap \{ x\in \bZ^d : \norm{x} \le \sqrt{N_{k-1}} \log^3
    N_{k-1} \}\neq \emptyset.
  \end{align*}
  By Lemma~3.6 in \cite{SteibersPhD2017} it follows
  \begin{align}
    \label{eq:51}
    \sum_{\Delta' \notin \Pi^1_{k-1}}\bP^{(0,0)}(X_{N_{k-1}}\in \Delta')
    \le CN_{k-1}^{-c\log N_{k-1}}
  \end{align}
  and consequently \eqref{eq:secondInequality} is bounded from above
  by
  \begin{multline}
    \label{eq:52}
    C N_{k-1}^{-c\log N_{k-1}} + \sum_{\Delta' \in \Pi^1_{k-1}}
    \sum_{u \in \Delta'} \bP^{(0,0)}(X_{N_{k-1}}\in \Delta')
    P_{\omega}^{(0,0)}(X_{N_{k-1}}=u \vert X_{N_{k-1}}\in \Delta')\\
    \sum_{\Delta \in \Pi_{k}} \big\vert
    P_\omega^{(u,N_{k-1})}(X_{N_k}\in \Delta) -
    \bP^{(u,N_{k-1})}(X_{N_k}\in \Delta) \big\vert.
  \end{multline}

  Recall Definition~\ref{def:good boxes}.
  We will write ``good'' for $(k-1, \theta,\varepsilon)$-good to
  simplify the notation.
  By Lemma~\ref{lem:lowerBoundGoodPoint} we have
  $\bP\bigl(\Delta \text{ is } (k-1,\theta,\varepsilon)
  \text{-good}\bigr)\ge 1- Cn_k^{-c\log n_k}$. For $u \in \bZ^d$
  define by $\Pi^{(1,u)}_{k}$ the set of boxes $\Delta \in \Pi_{k}$
  satisfying (note that $\bE^{(u,0)}[X_{n_k}]=u$)
  \begin{align}
    \label{eq:defn_Pi(1,u)}
    \Delta \cap \bigl\{ x\in \bZ^d : \big\lVert x - u  \big\rVert
    \le \sqrt{n_k} \log^3 n_k \bigr\} \neq \emptyset.
  \end{align}
  If a box $\Delta' \in \Pi^1_{k-1}$ is
  $(k-1,\theta,\varepsilon)$-good, then for $u\in \Delta'$
  \begin{eqnarray}
    \begin{split}
      &\sum_{\Delta \in \Pi_{k}}
      \abs{P_\omega^{(u,N_{k-1})}(X_{N_k}\in
        \Delta) - \bP^{(u,N_{k-1})}(X_{N_k}\in \Delta)}\\
      & =\sum_{\Delta \in \Pi_{k}^{(1,u)}}
      \abs{P_\omega^{(u,N_{k-1})}(X_{N_k}\in \Delta) -
        \bP^{(u,N_{k-1})}(X_{N_k}\in \Delta)}\\
      &\hspace{0.5cm} + \sum_{\Delta \in
        \Pi_{k}\setminus\Pi^{(1,u)}_{k}}
      \abs{P_\omega^{(u,N_{k-1})}(X_{N_k} \in \Delta) - \bP^{(u,N_{k-1})}(X_{N_k}\in \Delta)}\\
      & \le \sum_{\Delta \in \Pi_{k}^{(1,u)}}
      \abs{P_\omega^{(u,N_{k-1})}(X_{N_k}\in \Delta) -
        \bP^{(u,N_{k-1})}(X_{N_k}\in \Delta)} + Cn_k^{-c\log n_k}\\
      & \le \abs{\Pi^{(1,u)}_k}Cn_k^{\theta d
        -\frac{d}{2}-\varepsilon} +
      Cn_k^{-c\log n_k}\\
      & \le C n_k^{\frac{d}{2}-\theta d + \theta d -
        \frac{d}{2}-\varepsilon}(\log n_k)^{3d} + Cn_k^{-c\log n_k}\\
      & \le C(n_k^{-\varepsilon}(\log n_k)^{3d}+n_k^{-c\log n_k}) \le
      C n_k^{-\varepsilon/2},
    \end{split}
  \end{eqnarray}
  where we used in the first inequality that by Lemma~3.6 from
  \cite{SteibersPhD2017}
  \begin{align*}
    \bP^{(0,0)}\bigl(\norm{X_n} > \sqrt{n} \log^3 n\bigr) \le Cn^{-c\log n}
  \end{align*}
  and that
  $\abs{\Pi^{(1,u)}_{k}} \le Cn_k^{d/2 - \theta d}(\log n_k)^{3d}$.

  It follows that \eqref{eq:secondInequality} is bounded from above by
  \begin{eqnarray}
    \begin{split}
      & C N_{k-1}^{-c\log N_{k-1}}+ \sum_{\substack{\Delta' \in
          \Pi^1_{k-1}\\ \text{is good}}} \sum_{u \in
        \Delta'}\bP^{(0,0)}(X_{N_{k-1}}\in
      \Delta')P_\omega^{(0,0)}(X_{N_{k-1}}=u\vert
      X_{N_{k-1}}\in\Delta')Cn_k^{-\varepsilon/2}\\
      &\hspace{1cm} +\sum_{\substack{\Delta' \in \Pi^1_{k-1}\\
          \text{is bad}}} \sum_{u \in
        \Delta'}\bP^{(0,0)}(X_{N_{k-1}}\in
      \Delta')P_\omega^{(0,0)}(X_{N_{k-1}}
      = u\vert X_{N_{k-1}}\in\Delta')\\
      &\hspace{3cm}\times\sum_{\Delta \in \Pi_{k}}
      \abs{P_\omega^{(u,N_{k-1})}(X_{N_k}\in \Delta)-
        \bP^{(u,N_{k-1})}(X_{N_k}\in \Delta)}\\
      & \quad \le C N_{k-1}^{-c\log N_{k-1}} + Cn_k^{-\varepsilon/2} +
      C\sum_{\substack{\Delta' \in \Pi^1_{k-1}\\\text{is
            bad}}}\bP^{(0,0)}(X_{N_{k-1}}\in\Delta').
    \end{split}
  \end{eqnarray}
  Now we want to find an estimate for the probability of hitting a bad
  box. For some $\beta>0$, to be chosen later, we consider the
  following event
  \begin{align}
    \label{eq:defn_G(N,nk)}
    G_{N,n_{k-1}} \coloneqq \Bigl\{ \sum_{\Delta \in \Pi_{k-1}}
    \ind{\Delta \text{ is}(k-1,\theta,\varepsilon)\text{-good}}
    \bP^{(0,0)}\bigl( X_{N_{k-1}} \in \Delta  \bigr) \ge 1-
    C'n_k^{-\beta} \Bigr\}
  \end{align}
  and define
  \begin{align}
    \label{eq:67}
    G_N \coloneqq \bigcap_{k=1}^{r(N)} G_{N,n_k}.
  \end{align}
  We want to mimic the proof in \cite{BergerCohenRosenthal2016} and
  for that we need to define a new type of boxes to approximate the
  density of bad boxes. The problem with following the proof in
  \cite{BergerCohenRosenthal2016} arises from the fact that our
  environment is, due to the dependence on infinitely long open paths,
  not i.i.d. To overcome that problem the idea is to exchange the
  environment $\xi$ with a process that only has finite range
  dependencies. We will use this idea to show in
  Proposition~\ref{prop_key} below that
  \begin{align}
    \label{eq:sec_inequ_lower_bound_G(N)}
    \bP(G_N) \ge 1- CN^{-c\log(N)}.
  \end{align}

  Note that $n_{k-1} =n_k^2$. Thus, on $G_N$ the expression
  \eqref{eq:secondInequality} is bounded from above by
  \begin{align}
    \label{eq:54}
    \begin{split}
      C N_{k-1}^{-c\log N_{k-1}} + Cn_k^{-\varepsilon/2} & +
      C\sum_{\substack{\Delta' \in \Pi^1_{k-1}\\ \text{is bad}}}
      \bP^{(0,0)}(X_{N_{k-1}}\in\Delta')\\
      & \le C N_{k-1}^{-c\log N_{k-1}} + Cn_k^{-\varepsilon/2} +
      C'n_{k-1}^{-\beta} \le C'' n_{k-1}^{-\varepsilon/4}.
    \end{split}
  \end{align}
  As can be seen in the proof of Proposition~\ref{prop_key} we can
  choose $\beta \ge \varepsilon/4$ to obtain the last inequality in
  \eqref{eq:54}.
\end{proof}

\begin{proof}[Proof of the analogue of an upper bound of (5.4) in
    \cite{BergerCohenRosenthal2016}] Consider
  \begin{multline}
    \label{eq:thirdInequality}
    \sum_{\Delta \in \Pi_{k}} \sum_{\Delta' \in \Pi_{k-1}}\Big\vert
    \sum_{u \in \Delta'} \bP^{(u,N_{k-1})}(X_{N_k} \in \Delta)\\
    \times [\bP^{(0,0)}(X_{N_{k-1}} \in
    \Delta')P^{(0,0)}_\omega(X_{N_{k-1}}=u\vert X_{N_{k-1}} \in
    \Delta') - \bP^{(0,0)}(X_{N_{k-1}}=u)].
  \end{multline}

  For any two probability measures $\mu$ and $\tilde \mu$ on $\bZ^d$
  we have
  \begin{align*}
    \sum_{u \in \Delta'} f(u) \mu(u) - \sum_{u \in \Delta'} f(u)
    \tilde{\mu}(u) \le \max_{u \in \Delta'} f(u) - \min_{u \in \Delta'}f(u).
  \end{align*}
  Thus, the expression \eqref{eq:thirdInequality} can be bounded from
  above by
  \begin{align}
    \label{eq:55}
    \begin{split}
      & \sum_{\Delta \in \Pi_{k}} \sum_{\Delta' \in \Pi_{k-1}}
      \bP^{(0,0)}(X_{N_{k-1}} \in \Delta') \big\vert \max_{u \in
        \Delta'} \bP^{(u,N_{k-1})}(X_{N_k}\in \Delta) - \min_{u \in
        \Delta'} \bP^{(u,N_{k-1})}(X_{N_k}\in
      \Delta) \big\vert\\
      & \quad \le  \sum_{\Delta' \in \Pi_{k-1}} \bP^{(0,0)}(X_{N_{k-1}} \in \Delta')\\
      & \qquad \times \sum_{\Delta \in \Pi^{(1,u)}_{k}} \big\vert
      \max_{u \in \Delta'} \bP^{(u,N_{k-1})}(X_{N_k}\in \Delta) -
      \min_{u \in \Delta'} \bP^{(u,N_{k-1})}(X_{N_k}\in \Delta)
      \big\vert + Cn_k^{-c\log n_k},
    \end{split}
  \end{align}
  where $\Pi^{(1,u)}_{k}$ is the set defined above.

  Using
  $\bP^{(u,N_{k-1})}(X_{N_k}\in \Delta) = \sum_{v \in \Delta}
  \bP^{(u,N_{k-1})}(X_{N_k} =v)$ we have
  \begin{align}
    \begin{split}
      \max_{u \in \Delta'} \bP^{(u,N_{k-1})}(X_{N_k}\in \Delta) &
      -\min_{u \in \Delta'} \bP^{(u,N_{k-1})}(X_{N_k}\in \Delta) \\
      & \le \sum_{v \in \Delta} \max_{u \in \Delta'}
      \bP^{(u,N_{k-1})}(X_{N_k} = v) - \min_{u \in \Delta'}
      \bP^{(u,N_{k-1})}(X_{N_k} = v)\\
      & \le \sum_{v \in \Delta} \operatorname{diam}(\Delta') \frac{C}{n_k^{(d+1)/2}}\\
      & \le (n_k^{\theta})^d n_{k-1}^{\theta} \frac{C}{n_k^{(d+1)/2}},
    \end{split}
  \end{align}
  where the second to last inequality follows by the annealed
  derivative estimates from Lemma~3.9 in \cite{SteibersPhD2017}.
  Altogether the expression \eqref{eq:thirdInequality} is bounded from
  above by
  \begin{align}
    \label{eq:88}
    \begin{split}
      & \sum_{\Delta' \in \Pi_{k-1}} \bP^{(0,0)}(X_{N_{k-1}} \in
      \Delta') \sum_{\Delta \in \Pi^{1,u}_{(k)}} (n_k^{\theta})^d
      n_{k-1}^{\theta} \frac{C}{n_k^{(d+1)/2}} + Cn_k^{-c\log n_k}\\
      & \quad \le C \sum_{\Delta' \in \Pi_{k-1}}
      \bP^{(0,0)}(X_{N_{k-1}} \in \Delta') \Bigl(
      \frac{dn_{k-1}^{\theta} \sqrt{n_k}(\log n_k)^3}{n_k^{\theta}}
      \Bigr)^d (n_k^{\theta})^d
      n_{k-1}^{\theta} \frac{C}{n_k^{(d+1)/2}} + Cn_k^{-c\log n_k}\\
      & \quad \le C (\log n_k)^{3d} \frac{n_{k-1}^{\theta}}{n_k^{1/2}}
      + Cn_k^{-c\log n_k} \le C \frac{(\log
        n_k)^{3d}}{n_k^{1/2-2\theta}} + Cn_k^{-c\log n_k}.
    \end{split}
  \end{align}
\end{proof}

\begin{proof}[Proof of the analogue of an upper bound of (5.5) in
  \cite{BergerCohenRosenthal2016}] Consider
  \begin{align}
    \label{eq:fourthInequality}
    \sum_{\Delta \in \Pi_{k}} \sum_{\Delta' \in \Pi_{k-1}} \Big\vert
    \sum_{u \in \Delta'} \bP^{(0,0)}(X_{N_{k-1}} =u)
    \bP^{(u,N_{k-1})}(X_{N_k} \in \Delta)
    - \bP^{(0,0)}(X_{N_k} \in \Delta , X_{N_{k-1}}\in \Delta') \Big\vert.
  \end{align}
  Recall the regeneration times introduced in
  \cite{BirknerCernyDepperschmidtGantert2013}. There they are defined
  for a random walk on the backbone of the oriented percolation
  cluster, whereas we allow the random walk to start outside the
  cluster. In Remark 2.3 Birkner~et.~al note that the local
  construction, which they use to obtain the regeneration times, can
  be extended to starting points outside the cluster. Let
  $B_{m,\tilde{m}}$ be the event that the first regeneration time
  greater than $m$ will happen before $m+\tilde{m}^{\beta}$, for some
  small constant $\beta>0$ to be tuned appropriately later. By
  Lemma~2.5 from \cite{BirknerCernyDepperschmidtGantert2013} the
  distribution of the regeneration increments has exponential tail
  bounds, and thus $\bP(B_{m,\tilde{m}})\le C\mathrm{e}^{-cm^\beta}$.
  First, note that by the theorem of total probability and the
  triangle inequality \eqref{eq:fourthInequality} is bounded from
  above by
  \begin{align}
    \label{eq:56}
    \begin{split}
      \sum_{\Delta' \in \Pi_{k-1}} & \sum_{u \in \Delta'}
      \bP^{(0,0)}(X_{N_{k-1}} =u) \sum_{\Delta \in \Pi_{k}}
      \abs{\bP^{(u,N_{k-1})}(X_{N_k}\in \Delta) - \bP^{(0,0)}(X_{N_k}
        \in \Delta\vert X_{N_{k-1}} = u)}\\
      & \le \sum_{\Delta' \in \Pi_{k-1}} \sum_{u \in \Delta'}
      \bP^{(0,0)}(X_{N_{k-1}} =u) \\
      & \qquad \quad \times \sum_{\Delta \in \Pi_{k}} \bigl(\big\vert
      \bP^{(u,N_{k-1})}(X_{N_k}\in \Delta) -\bP^{(0,0)}(X_{N_k} \in
      \Delta, B_{N_{k-1},n_k}\vert X_{N_{k-1}} = u) \big\vert\\
      & \qquad \qquad \qquad \qquad \qquad + \bP^{(0,0)}(X_{N_k} \in
      \Delta, B_{N_{k-1},n_k}^\compl\vert X_{N_{k-1}} = u)\bigr)
    \end{split}
  \end{align}
  First note that
  \begin{multline*}
    \sum_{\Delta' \in \Pi_{k-1}} \sum_{u \in \Delta'}
    \bP^{(0,0)}(X_{N_{k-1}} =u)\sum_{\Delta \in
      \Pi_{k}}\bP^{(0,0)}(X_{N_k} \in \Delta,
    B_{N_{k-1},n_k}^\compl\vert X_{N_{k-1}} = u)\\
    =\bP(B_{N_{k-1},n_k}^\compl)\le C \mathrm{e}^{-cn_k^\beta}.
  \end{multline*}
  The remaining part of the right hand side of \eqref{eq:56} is
  bounded from above by
  \begin{multline*}
    \sum_{\Delta' \in \Pi_{k-1}} \sum_{u \in \Delta'}
    \bP^{(0,0)}(X_{N_{k-1}} =u) \sum_{\Delta \in \Pi_{k}}
    \bigl(\bP^{(u,N_{k-1})}(X_{N_k}\in \Delta, B_{0,n_k}^\compl) \\
    +\big\vert\bP^{(u,N_{k-1})}(X_{N_k}\in \Delta, B_{0,n_k})
    -\bP^{(0,0)}(X_{N_k} \in \Delta, B_{N_{k-1},n_k}\vert X_{N_{k-1}}
    = u) \big\vert \bigr).
  \end{multline*}
  Using the same arguments as above we obtain
  \begin{align*}
    \sum_{\Delta' \in \Pi_{k-1}} \sum_{u \in \Delta'} \bP^{(0,0)}(X_{N_{k-1}} =u)
    \sum_{\Delta \in \Pi_{k}}
    \bP^{(u,N_{k-1})}(X_{N_k}\in \Delta, B_{0,n_k}^\compl) =
    \bP(B_{N_{k-1},n_k}^\compl)\le C \mathrm{e}^{-cn_k^\beta}
  \end{align*}
  and thus it remains to find a suitable upper bound for
  \begin{multline*}
    \sum_{\Delta' \in \Pi_{k-1}} \sum_{u \in \Delta'} \bP^{(0,0)}(X_{N_{k-1}} =u)\\
    \sum_{\Delta \in \Pi_{k}}\big\vert\bP^{(u,N_{k-1})}(X_{N_k}\in
    \Delta, B_{0,n_k}) -\bP^{(0,0)}(X_{N_k} \in \Delta,
    B_{N_{k-1},n_k}\vert X_{N_{k-1}} = u) \big\vert
  \end{multline*}

  Let $\tilde{\tau}_{N_{k-1}}$ denote the first regeneration time
  greater than $N_{k-1}$. By splitting the probabilities above into
  the sum over the possible times at which the regeneration can occur
  and the possible sites at which the random walk can be at the time
  of the regeneration we see that the term in the above display equals
  to
  \begin{multline}
    \label{eq:58}
    \begin{split}
      \sum_{\Delta' \in \Pi_{k-1}} & \sum_{u \in \Delta'}
      \bP^{(0,0)}(X_{N_{k-1}} =u) \\
      & \quad \times \sum_{\Delta \in \Pi_{k}} \Big\vert
      \sum_{\substack{t \in [N_{k-1},N_{k-1}+n_k^\beta]\\ v \in
          \bZ^d:\norm{u-v}\le n_k^\beta }} \bP^{(v,t)}(X_{N_k}\in
      \Delta)\bP^{(u,N_{k-1})}(\tilde{\tau}_{N_{k-1}}=t,X_{\tilde{\tau}_{N_{k-1}}}=v)
      \\
      & \qquad \qquad - \sum_{\substack{t \in
          [N_{k-1},N_{k-1}+n_k^\beta]\\v \in \bZ^d:\norm{u-v}\le
          n_k^{\beta}}} \bP^{(v,t)}(X_{N_k} \in
      \Delta)\bP^{(0,0)}(\tilde{\tau}_{N_{k-1}}=t,X_{\tilde{\tau}_{N_{k-1}}}=v\vert
      X_{N_{k-1}}=u)\Big\vert.
    \end{split}
  \end{multline}
  The modulus in the last two lines of the above display is bounded
  from above by
  \begin{align}
    \label{eq:59}
    \begin{split}
      & \Big\vert\max_{\substack{t \in [N_{k-1},N_{k-1}+n_k^\beta]\\
          v\in \bZ^d:\norm{u-v}\le n_k^\beta}}\bP^{(v,t)}(X_{N_k}\in
      \Delta) \sum_{\substack{t \in [N_{k-1},N_{k-1}+n_k^\beta]\\v \in
          \bZ^d:\norm{u-v}\le n_k^\beta
        }}\bP^{(u,N_{k-1})}(\tilde{\tau}_{N_{k-1}}=t,X_{\tilde{\tau}_{N_{k-1}}}=v)\\
      &\hspace{0.5cm}- \min_{\substack{t \in [N_{k-1},N_{k-1}+n_k^\beta]\\
          v\in \bZ^d:\norm{u-v}\le n_k^\beta}}\bP^{(v,t)}(X_{N_k}\in
      \Delta) \sum_{\substack{t \in [N_{k-1},N_{k-1}+n_k^\beta]\\v \in
          \bZ^d:\norm{u-v}\le n_k^\beta
        }}\bP^{(0,0)}(\tilde{\tau}_{N_{k-1}}=t,X_{\tilde{\tau}_{N_{k-1}}}=v\vert
      X_{N_{k-1}}=u)\Big\vert\\
      & \le\Big\vert \max_{\substack{t \in [N_{k-1},N_{k-1}+n_k^\beta]\\
          v\in \bZ^d:\norm{u-v}\le n_k^\beta}}\bP^{(v,t)}(X_{N_k}\in
      \Delta) - \min_{\substack{t \in [N_{k-1},N_{k-1}+n_k^\beta]\\
          v\in \bZ^d:\norm{u-v}\le n_k^\beta}}\bP^{(v,t)}(X_{N_k}\in
      \Delta)\Big\vert\\
      &\hspace{1cm} + \max_{\substack{t \in
          [N_{k-1},N_{k-1}+n_k^\beta]\\ v\in \bZ^d:\norm{u-v}\le
          n_k^\beta}}\bP^{(v,t)}(X_{N_k}\in
      \Delta)\bP^{(u,N_{k-1})}(\tilde{\tau}_{N_{k-1}}
      >N_{k-1}+n_k^\beta) \\
      &\hspace{1cm}+ \min_{\substack{t \in [N_{k-1},N_{k-1}+n_k^\beta]\\
          v\in \bZ^d:\norm{u-v}\le n_k^\beta}}\bP^{(v,t)}(X_{N_k}\in
      \Delta)\bP^{(0,0)}(\tilde{\tau}_{N_{k-1}}
      >N_{k-1}+n_k^\beta\vert X_{N_{k-1}}=u)
    \end{split}
  \end{align}
  Plugging that into the sums in \eqref{eq:58} we obtain that an upper
  bound of \eqref{eq:fourthInequality} is given by
  \begin{align}
    \label{eq:60}
    \begin{split}
      & \sum_{\Delta' \in \Pi_{k-1}} \sum_{u \in \Delta'}
      \bP^{(0,0)}(X_{N_{k-1}} =u) \\
      &\hspace{2.5cm}\sum_{\Delta \in \Pi_{k}} \Big\vert
      \max_{\substack{t \in [N_{k-1},N_{k-1}+n_k^\beta]\\ v\in
          \bZ^d:\norm{u-v}\le n_k^\beta}}\bP^{(v,t)}(X_{N_k}\in
      \Delta) - \min_{\substack{t \in [N_{k-1},N_{k-1}+n_k^\beta]\\
          v\in \bZ^d:\norm{u-v}\le n_k^\beta}}\bP^{(v,t)}(X_{N_k}\in
      \Delta)\Big\vert\\
      &+ \sum_{\Delta' \in \Pi_{k-1}} \sum_{u \in \Delta'}
      \bP^{(0,0)}(X_{N_{k-1}} =u)\\
      &\hspace{2.5cm}\sum_{\Delta \in \Pi_{k}}\max_{\substack{t \in
          [N_{k-1},N_{k-1}+n_k^\beta]\\ v\in \bZ^d:\norm{u-v}\le
          n_k^\beta}}\bP^{(v,t)}(X_{N_k}\in
      \Delta)\bP^{(u,N_{k-1})}(\tilde{\tau}_{N_{k-1}}
      >N_{k-1}+n_k^\beta)\\
      &+ \sum_{\Delta' \in \Pi_{k-1}} \sum_{u \in \Delta'}\sum_{\Delta
        \in \Pi_{k}}\min_{\substack{t \in
          [N_{k-1},N_{k-1}+n_k^\beta]\\ v\in \bZ^d:\norm{u-v}\le
          n_k^\beta}}\bP^{(v,t)}(X_{N_k}\in
      \Delta)\bP^{(0,0)}(\tilde{\tau}_{N_{k-1}} >N_{k-1}+n_k^\beta,
      X_{N_{k-1}}=u)\\
      &\hspace{4.5cm}+ C\mathrm{e}^{-cn_k^\beta}.
    \end{split}
  \end{align}

  Now define $\Pi_{k}^{1,u,\beta}$ as the set boxes $\Delta \in \Pi_k$
  for which
  \begin{align}
    \label{eq:61}
    \Delta \cap \Big(\bigcup_{v\colon \norm{v-u} \le n_k^{\beta}} \{x\in \bZ^d:\norm{x - v}
    \le \sqrt{n_k}\log^3 n_k \}\Big) \neq \emptyset.
  \end{align}
  Using Lemma~3.6 from \cite{SteibersPhD2017} we obtain
  \begin{align*}
    \sum_{\Delta \notin \Pi_{k}^{1,u,\beta}}\bP^{(v,0)}(X_{N_k-t} \in
    \Delta) \le \bP^{(v,0)}\bigl(\abs{X_{N_k-t}-v}>\sqrt{N_k-t}\log^3 N_k-t\bigr)
    \le Cn_k^{-c\log n_k}
  \end{align*}
  for all $v\in \bZ^d$ with $\norm{v-u}\le n_k^\beta$ and all
  $t\in [N_{k-1},N_{k-1}+n_k^\beta]$. Using this it follows
  \begin{align}
    \label{eq:62}
    \begin{split}
      & \sum_{\Delta' \in \Pi_{k-1}} \sum_{u \in \Delta'}
      \bP^{(0,0)}(X_{N_{k-1}} =u)\\
      &\hspace{2cm}\sum_{\Delta \in \Pi_{k}}\max_{\substack{t \in
          [N_{k-1},N_{k-1}+n_k^\beta]\\ v\in \bZ^d:\abs{u-v}\le
          n_k^\beta}}\bP^{(v,t)}(X_{N_k}\in
      \Delta)\bP^{(u,N_{k-1})}(\tilde{\tau}_{N_{k-1}}
      >N_{k-1}+n_k^\beta)\\
      & \le
      \abs{\Pi^{1,u,\beta}_k}\bP^{(0,0)}(\tilde{\tau}_{N_{k-1}}>N_{k-1}+n_k^\beta)
      + Cn_k^{-c\log n_k} \\
      & \le n_k^{\beta d} n_k^{d/2(1-2\theta)}(\log
      n_k)^{3d}C\mathrm{e}^{-cn_k^\beta} + Cn_k^{-c\log n_k}\le
      Cn_k^{-c\log n_k},
    \end{split}
  \end{align}
  where we have used the fact that, by the definition of
  $\Pi_k^{(1,u)}$ in \eqref{eq:defn_Pi(1,u)},
  $\vert \Pi_k^{1,u,\beta} \vert \le n_k^{\beta d}\vert \Pi_k^{(1,u)}
  \vert$. Similarly
  \begin{align}
    \label{eq:63}
    \begin{split}
      & \sum_{\Delta' \in \Pi_{k-1}} \sum_{u \in \Delta'}\sum_{\Delta
        \in \Pi_{k}}\min_{\substack{t \in [N_{k-1},N_{k-1}+n_k^\beta]\\
          v\in \bZ^d:\abs{u-v}\le n_k^\beta}}\bP^{(v,t)}(X_{N_k}\in
      \Delta)\bP^{(0,0)}(\tilde{\tau}_{N_{k-1}} >N_{k-1}+n_k^\beta,
      X_{N_{k-1}}=u)\\
      & \qquad \le \abs{\Pi^{1,u,\beta}_k}
      \bP^{(0,0)}(\tilde{\tau}_{N_{k-1}}>N_{k-1}+n_k^\beta) +
      Cn_k^{-c\log n_k}\\
      & \qquad \le n_k^{\beta d}n_k^{d/2(1-2\theta)}(\log
      n_k)^{3d}C\mathrm{e}^{-cn_k^\beta} + Cn_k^{-c\log n_k}\le
      Cn_k^{-c\log n_k}.
    \end{split}
  \end{align}
  Altogether it follows that \eqref{eq:fourthInequality} is bounded
  from above by
  \begin{multline}
    \label{eq:64}
    \sum_{\Delta' \in \Pi_{k-1}}
    \sum_{u \in \Delta'} \bP^{(0,0)}(X_{N_{k-1}} =u) \\
    \times \sum_{\Delta \in \Pi^{1,u,\beta}_{(k)}} \Big\vert
    \max_{\substack{t \in [N_{k-1},N_{k-1}+n_k^\beta]\\ v\in
        \bZ^d:\norm{u-v}\le n_k^\beta}}\bP^{(v,t)}(X_{N_k}\in \Delta)
    - \min_{\substack{t \in [N_{k-1},N_{k-1}+n_k^\beta]\\ v\in
        \bZ^d:\norm{u-v}\le n_k^\beta}}\bP^{(v,t)}(X_{N_k}\in
    \Delta)\Big\vert\\
    + Cn_k^{-c\log n_k} + C\mathrm{e}^{-cn_k^\beta}
  \end{multline}
  Using the annealed derivative estimates from
  Lemma~\ref{lem:annealed_derivative_estimates} we obtain
  \begin{align}
        \notag
      &\Big\vert \max_{\substack{t \in [N_{k-1},N_{k-1}+n_k^\beta]\\
    v\in \bZ^d:\norm{u-v}\le n_k^\beta}}\bP^{(v,t)}(X_{N_k}\in
    \Delta) - \min_{\substack{t \in [N_{k-1},N_{k-1}+n_k^\beta]\\ v\in
    \bZ^d:\norm{u-v}\le n_k^\beta}}\bP^{(v,t)}(X_{N_k}\in
    \Delta)\Big\vert\\
    \notag
      & \quad \le \abs{\Delta} \Big\vert \max_{\substack{t \in
        [N_{k-1},N_{k-1}+n_k^\beta]\\ v\in \bZ^d:\norm{u-v}\le
    n_k^\beta\\x\in \Delta}}\bP^{(v,t)}(X_{N_k}=x)-
    \min_{\substack{t \in [N_{k-1},N_{k-1}+n_k^\beta]\\
    v\in \bZ^d:\norm{u-v}\le n_k^\beta\\y\in
    \Delta}}\bP^{(v,t)}(X_{N_k}=y)\Big\vert\\
    \notag
      & \quad \le \abs{\Delta} C(4n_k^\beta +
        n_k^{\theta})n^{-\frac{d+1}{2}}\\
    \label{eq:65}
      & \quad \le n_k^{d\theta}C(4n_k^\beta +
        n_k^{\theta})n_k^{-\frac{d+1}{2}}.
  \end{align}
  Now if we choose $\beta =\theta$ and $\theta$ small enough, we get
  that the above expression is smaller than $Cn_k^{-\frac{2d+1}{4}}$.
  Putting everything together we get the upper bound
  \begin{align*}
    C\mathrm{e}^{-cn_k^\theta}
    & + Cn_k^{-c\log n_k}+ \sum_{\Delta' \in
      \Pi_{k-1}} \sum_{u \in \Delta'} \bP^{(0,0)}(X_{N_{k-1}} =u)
      \sum_{\Delta \in \Pi_{k}^{1,u}} n_k^{-\frac{d}{2}- \frac{1}{4}}\\
    & \le C\mathrm{e}^{-cn_k^\theta} + Cn_k^{-c\log n_k} + \sum_{\Delta' \in
      \Pi_{k-1}} \bP^{(0,0)}(X_{N_{k-1}} \in \Delta')\abs{\Pi_{k}^{1,u}}
      n_k^{-\frac{d}{2}- \frac{1}{4}}\\
    & \le C\mathrm{e}^{-cn_k^\theta} + Cn_k^{-c\log n_k} + Cn_k^{(1/2 -
      \theta)d}(\log n_k)^{3d} n_k^{-\frac{d}{2}- \frac{1}{4}} \\
    & = C\mathrm{e}^{-cn_k^\theta} + Cn_k^{-c\log n_k} + C(\log n_k)^{3d}2n_k^{\theta-1/4}
  \end{align*}
  Thus, recalling equation \eqref{eq:fourthInequality}, we obtain
  \begin{multline}
    \label{eq:89}
    \sum_{\Delta \in \Pi_{k}} \sum_{\Delta' \in \Pi_{k-1}} \Big\vert
    \sum_{u \in \Delta'} \bP^{(0,0)}(X_{N_{k-1}} =u)
    \bP^{(u,N_{k-1})}(X_{N_k} \in \Delta)
    - \bP^{(0,0)}(X_{N_k} \in \Delta , X_{N_{k-1}}\in \Delta') \Big\vert\\
    \le Cn_k^{-c}
  \end{multline}
  for some constants $C,c>0$.
\end{proof}

\begin{proof}[Proof of Proposition~\ref{prop_main_thm_5.1}]
  To prove Proposition~\ref{prop_main_thm_5.1} we need to show
  inequality \eqref{eq:prop_main_thm_5.1} which we recall here
  \begin{align*}
    \lambda_k \le \lambda_{k-1} +Cn_k^{-\alpha},\;
    \quad \forall \: 1\le k \le r(N).
  \end{align*}
  for some positive constants $\alpha$ and $C$ on the event $G(N)$
  from \eqref{eq:67}.

  Fix $\omega\in G(N)$. Recall the definition
  \begin{align*}
    \lambda_k = \sum_{\Delta \in \Pi_{k}}
    \big|P_{\omega}^{(0,0)}(X_{N_k} \in \Delta)- \bP^{(0,0)} (X_{N_k} \in \Delta)\big|
  \end{align*}
  from equation \eqref{eq:42}. Furthermore, we recall
  \eqref{eq:firstInequality}, \eqref{eq:secondInequality},
  \eqref{eq:thirdInequality} and \eqref{eq:fourthInequality} for which
  we just estimated upper bounds.
  \begin{multline*}
    \eqref{eq:firstInequality} = \sum_{\Delta \in \Pi_{k}} \sum_{\Delta' \in \Pi_{k-1}}\Big\vert
    \sum_{u \in \Delta'}
    P^{(u,N_{k-1})}_{\omega}(X_{N_k} \in \Delta)\\
    \times \bigl[P^{(0,0)}_\omega(X_{N_{k-1}}=u ) - \bP^{(0,0)}(X_{N_{k-1}}
    \in \Delta')P^{(0,0)}_\omega(X_{N_{k-1}}=u \vert X_{N_{k-1}} \in
    \Delta')\bigr] \Big\vert,
  \end{multline*}
  \begin{multline*}
    \eqref{eq:secondInequality} = \sum_{\Delta \in \Pi_{k}}
    \sum_{\Delta' \in \Pi_{k-1}}\Big\vert \sum_{u \in \Delta'}
    \bP^{(0,0)}(X_{N_{k-1}} \in \Delta')P^{(0,0)}_\omega(X_{N_{k-1}}
    =u\vert X_{N_{k-1}} \in \Delta')\\
    \times [P^{(u,N_{k-1})}_\omega(X_{N_k} \in \Delta) -
    \bP^{(u,N_{k-1})}(X_{N_k}\in \Delta)]\Big\vert,
  \end{multline*}
  \begin{multline*}
    \eqref{eq:thirdInequality} = \sum_{\Delta \in \Pi_{k}} \sum_{\Delta' \in \Pi_{k-1}}\Big\vert
    \sum_{u \in \Delta'} \bP^{(u,N_{k-1})}(X_{N_k} \in \Delta)\\
    \times [\bP^{(0,0)}(X_{N_{k-1}} \in
    \Delta')P^{(0,0)}_\omega(X_{N_{k-1}}=u\vert X_{N_{k-1}} \in
    \Delta') - \bP^{(0,0)}(X_{N_{k-1}}=u)],
  \end{multline*}
  \begin{multline*}
    \eqref{eq:fourthInequality} = \sum_{\Delta \in \Pi_{k}} \sum_{\Delta' \in \Pi_{k-1}} \Big\vert
    \sum_{u \in \Delta'} \bP^{(0,0)}(X_{N_{k-1}} =u)
    \bP^{(u,N_{k-1})}(X_{N_k} \in \Delta)
    - \bP^{(0,0)}(X_{N_k} \in \Delta , X_{N_{k-1}}\in \Delta') \Big\vert.
  \end{multline*}
  Note that for $\lambda_k$, by the triangle inequality, we obtain
  \begin{align*}
    \lambda_k \leq \eqref{eq:firstInequality} +
    \eqref{eq:secondInequality} + \eqref{eq:thirdInequality} +
    \eqref{eq:fourthInequality}.
  \end{align*}
  Thus, using the proven estimates, \eqref{eq:17}, \eqref{eq:54},
  \eqref{eq:88} and \eqref{eq:89}, for each of the summands
  respectively we gain
  \begin{align*}
    \lambda_k \leq \lambda_{k-1} + C''n_{k-1}^{-\epsilon/4} +  C
    \frac{(\log n_k)^{3d}}{n_k^{1/2-2\theta}}
    + Cn_k^{-c\log n_k} + C n_k^{-c}\le \lambda_{k-1} + \tilde{C}n_k^{-\alpha}
  \end{align*}
  for appropriate choices of $\alpha>0$ and $\tilde{C}>0$. The fact
  that $\bP(G_N) \ge 1 - CN^{-c\log N}$ is proved in
  Proposition~\ref{prop_key}.
\end{proof}

\begin{prop}
  \label{prop_key}
  For the events $G_N$ from \eqref{eq:67} there exists $N_0 \in \bN$
  such that, for all $N\ge N_0$ we have that
  \begin{align}
    \label{eq:70}
    \bP(G_N) \ge 1 - CN^{-c\log N}.
  \end{align}
\end{prop}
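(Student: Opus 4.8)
The plan is to reduce, by a union bound over the at most $r(N)=O(\log\log N)$ scales appearing in \eqref{eq:67}, to the statement that for each fixed $k$ the event in \eqref{eq:defn_G(N,nk)} has $\bP$-probability at least $1-CN^{-c\log N}$ with constants not depending on $k$; this in turn should follow from a concentration estimate for the $\bP^{(0,0)}$-weighted density of bad boxes, once the infinitely-dependent family of bad-box indicators is replaced by a finite-range-dependent surrogate. By shift-invariance of $\bP$ we may take all boxes at time $0$, and for $\Delta\in\Pi_{k-1}$ we write $w_\Delta\coloneqq\bP^{(0,0)}(X_{N_{k-1}}\in\Delta)$, so that the complement of the event in \eqref{eq:defn_G(N,nk)} is $\{V_k>C'n_k^{-\beta}\}$ with $V_k\coloneqq\sum_{\Delta\in\Pi_{k-1}}\ind{\Delta\times\{0\}\text{ is }(k-1,\theta,\varepsilon)\text{-bad}}\,w_\Delta$. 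Lemma~\ref{lem:lowerBoundGoodPoint} gives $\bE[V_k]\le Cn_k^{-c\log n_k}$, but Markov's inequality alone only yields $\bP(V_k>C'n_k^{-\beta})\le Cn_k^{-c\log n_k+\beta}$, which is not small enough once $n_k$ is of constant order (i.e.\ for $k$ near $r(N)$).

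\textbf{The finite-range surrogate.} For $L\in\bN$ set $\xi^{(L)}_m(x)\coloneqq\ind{(x,m)\text{ is joined by a directed open path to level }m+L}$; then $\xi^{(L)}_m(x)\ge\xi_m(x)$, the quantity $\xi^{(L)}_m(x)$ is a function of $\omega$ on the finite box $\{(y,n):\norm{y-x}\le L,\ m\le n\le m+L\}$, and $\bP(\xi^{(L)}_m(x)=1,\,\xi_m(x)=0)\le C\mathrm{e}^{-cL}$ by the standard exponential decay of the finite-open-cluster event in the supercritical phase $p>p_c$. I would then consider the walk on the $\xi^{(L)}$-cluster \emph{stopped at the first exit of the ball} $B_x\coloneqq\{y:\norm{y-x}\le\sqrt{n_k}\log^3 n_k\}$, and call $\Delta\times\{0\}$ \emph{$(L)$-good} if for every $(x,0)\in\Delta\times\{0\}$ this stopped walk satisfies the analogues of \eqref{eq:46}, \eqref{eq:47} with \emph{slightly strengthened} thresholds. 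The stopping is what makes $(L)$-goodness a local function: it is measurable with respect to $\omega$ on a box of spatial radius $\sqrt{n_k}\log^3 n_k+L$ (and temporal extent $n_k+L$), so with $L=L_k\coloneqq\lceil(\log N)^2\rceil$ the field $\big(\ind{\Delta\times\{0\}\text{ is }(L)\text{-good}}\big)_{\Delta\in\Pi_{k-1}}$ is $R_k$-dependent with $R_k\coloneqq C\sqrt{n_k}\log^3 n_k$ — of order $\sqrt{n_k}$, not $n_k$, which is decisive for the arithmetic below. Two facts link the surrogate to the original notion: (a) on the event $W_\Delta=\{\xi^{(L)}=\xi\text{ throughout the relevant window}\}$, which by a union bound satisfies $\bP(W_\Delta^\compl)\le N^{O(1)}\mathrm{e}^{-cL_k}\le CN^{-c\log N}$, the stopped $\xi^{(L)}$-walk agrees with the stopped $\xi$-walk, and the stopped and unstopped $\xi$-walks differ in total variation by at most the exit probability, which is exactly the quantity controlled in \eqref{eq:47}; choosing the strengthening of the thresholds to absorb these $O(n_k^{-c\log n_k})$ discrepancies gives $\ind{\Delta\text{ bad}}\le\ind{\Delta\text{ is }(L)\text{-bad}}+\ind{W_\Delta^\compl}$; (b) the analogue of Lemma~\ref{lem:lowerBoundGoodPoint} persists, $\bP(\Delta\text{ is }(L)\text{-bad})\le Cn_k^{-c\log n_k}$, because Theorem~\ref{thm:Steibers_Thm3.24} actually provides a quenched–annealed box comparison with a strictly smaller exponent than the one in Definition~\ref{def:good boxes} (this is where $\varepsilon<\theta/3$ enters), leaving room for the stopping and the strengthened thresholds.

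\textbf{Concentration.} Split $V_k\le\widetilde V_k+\sum_\Delta\ind{W_\Delta^\compl}w_\Delta$ with $\widetilde V_k\coloneqq\sum_\Delta\ind{\Delta\text{ is }(L)\text{-bad}}w_\Delta$; the error term has expectation $\le CN^{-c\log N}$ and is dispatched by Markov. For $\widetilde V_k$ I would colour $\Pi_{k-1}$ with $\kappa_k=\lceil R_k/n_{k-1}^\theta\rceil^d\asymp n_k^{(1/2-2\theta)d}(\log n_k)^{3d}$ colours (using $n_{k-1}=n_k^2$ and $\theta<1/4$) so that within each colour class the $(L)$-bad indicators are independent, write $\widetilde V_k=\sum_j\widetilde V_k^{(j)}$, and treat each class separately. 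Inside class $j$, $\widetilde V_k^{(j)}$ is a sum of independent $[0,w_{\max}]$-valued variables with $\sum_\Delta w_\Delta\le1$ and total mean $\le Cn_k^{-c\log n_k}$, where $w_{\max}\le Cn_{k-1}^{\theta d}N_{k-1}^{-d/2}\le Cn_k^{2\theta d}N^{-d/2}$ by the heat-kernel bound $\bP^{(0,0)}(X_n=x)\le Cn^{-d/2}$ and $N_{k-1}\ge N_0\ge N/2$. A Chernoff bound for weighted sums of independent Bernoulli variables then gives, with $\tau_j\coloneqq C'n_k^{-\beta}/\kappa_k$ (which exceeds the mean by a super-polynomial factor in $n_k$),
\begin{align*}
  \bP\big(\widetilde V_k^{(j)}\ge\tau_j\big)\ \le\ \exp\!\Big(-\tfrac{\tau_j}{w_{\max}}\Big)\ \le\ \exp\!\Big(-c\,\frac{N^{d/2}}{n_k^{\beta+d/2}\,(\log n_k)^{3d}}\Big)\ \le\ \exp\!\big(-N^{c_0}\big)
\end{align*}
for some $c_0>0$, where the middle step uses $\kappa_k\,n_{k-1}^{\theta d}\asymp n_k^{d/2}(\log n_k)^{3d}$ and the last uses $n_k\le n_1\le\sqrt N$ together with $\beta<d/2$. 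Summing over the $\kappa_k\le N^{O(1)}$ colour classes and then over $1\le k\le r(N)$, and adding the Markov estimate for the error term, yields $\bP(G_N^\compl)\le CN^{-c\log N}$.

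\textbf{Main obstacle.} The genuinely delicate part is items (a)–(b): one must set up the stopped, truncated walk and the adjusted notion of good box so that on $W_\Delta$ the two notions match in the directions needed — removing the stopping reintroduces exactly the long-range dependence on $\cC$ that one is trying to avoid, and it is only the deterministic confinement already built into condition \eqref{eq:47} that allows this discrepancy to be controlled; and one must verify that Theorem~\ref{thm:Steibers_Thm3.24} really furnishes the slack to accommodate both the stopping and the strengthened thresholds. The colouring, the Chernoff estimate and the two union bounds are then routine, the only arithmetic constraints being $\theta<1/4$, $\varepsilon<\theta/3$, and $\varepsilon/4\le\beta<d/2$ (the lower bound on $\beta$ being the one required in the proof of the analogue of (5.3) in \cite{BergerCohenRosenthal2016}).
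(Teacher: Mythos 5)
Your blueprint — replace $\xi$ by a finite-range surrogate, colour the partition so that the surrogate bad-box indicators are independent within each colour class, then concentrate — is the same as the paper's, but you implement it in ways that are genuinely different and, in principle, improvements. Stopping the walk at the first exit of the ball of radius $\sqrt{n_k}\log^3 n_k$ is not in the paper; the paper's $\tilde\xi^k$-walk is unstopped, its dependence range is $\asymp n_k$, and the argument must be split into $k\ge4$ (Hoeffding, annulus by annulus $\Pi^{(r)}_{k-1}$) and $k\le3$ (crude union bound). Your weighted Chernoff bound with $w_{\max}\asymp n_{k-1}^{\theta d}N^{-d/2}$ treats all $1\le k\le r(N)$ uniformly and makes the annuli unnecessary. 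Dispatching the $\xi/\xi^{(L)}$-discrepancy by Markov with $L_k=\lceil(\log N)^2\rceil$ is also cleaner than the paper's second Hoeffding on the count of $\Delta$ with $A_{k,\Delta}^\compl$, whose independence claim for the $A_{k,\Delta}$ (which are not local events) is delicate; but the price is that your truncation length must grow with $N$.

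That price is where there is a real error. You correctly note that $(L)$-goodness of $\Delta\times\{0\}$ is measurable with respect to $\omega$ on a box of spatial radius $\sqrt{n_k}\log^3 n_k+L_k$, but then declare the dependence range to be $R_k=C\sqrt{n_k}\log^3 n_k$, dropping $L_k$. This is false at the smallest scales: for $k$ near $r(N)$, $n_k\asymp M$ is a constant while $L_k=(\log N)^2\to\infty$, so the true range is $R_k'\asymp(\log N)^2\gg\sqrt{n_k}\log^3 n_k$; a colouring with your stated $\kappa_k\asymp n_k^{(1/2-2\theta)d}(\log n_k)^{3d}$ classes does \emph{not} make the indicators independent, and the intermediate inequality $\tau_j/w_{\max}\ge cN^{d/2}/(n_k^{\beta+d/2}(\log n_k)^{3d})$ is incorrect. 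The conclusion is salvageable: with the correct $\kappa_k'\asymp(R_k'/n_{k-1}^\theta)^d$ one still has $\tau_j/w_{\max}=C'n_k^{-\beta}N^{d/2}/(R_k')^d\ge N^{c_0}$, because $R_k'\lesssim N^{1/4}(\log N)^3\ll\sqrt N$ and $n_k\le\sqrt N$. But the Chernoff bound $\bP(\widetilde V_k^{(j)}\ge\tau_j)\le\exp(-c\,\tau_j/w_{\max})$ also requires $\tau_j\ge\mathrm{const}\cdot\mu_j$ per class, which with the equal split $\tau_j=C'n_k^{-\beta}/\kappa_k'$ needs $W_j:=\sum_{\Delta\in\mathrm{class}\,j}w_\Delta\lesssim 1/\kappa_k'$ (this does hold, via the annealed local CLT, precisely because $R_k'\ll\sqrt N$) together with $\tilde p_{k-1}\lesssim n_k^{-\beta}$, i.e.\ $M$ large enough that $M^{-c\log M}\ll M^{-\beta}$. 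These verifications are substantive at the constant scales and should be made explicit; as written, the colouring step and the displayed Chernoff estimate do not establish \eqref{eq:70}.
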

Let $\beta >0$ and put $f(n_k)=\log^2 n_k$. First we need another
notion of \emph{good} sites. Given a realization $\omega$ we define
for all $(x,\ell) \in \bZ^d \times \bZ$ the set $C_{m}(x,\ell)$ as the
set of sites at time $\ell + m \in \bZ$ which can be reached from
$(x,\ell)$ via an open path w.r.t.\ $\omega$. We start by defining for
$k = 1,2,\dots$ a field
$\tilde{\xi}^k\coloneqq (\tilde{\xi}^k_t(x))_{t\in\bZ^d}$ as follows
\begin{enumerate}[(i)]
\item $\tilde{\xi}^k_t(x)=\xi_t(x)$ for all
  $(x,t)\in\bZ^d\times\{n_k+f(n_k),n_k+f(n_k)+1,\dots\}$
\item For all
  $(x,t)\in \bZ^d\times \{\dots,n_k+f(n_k)-2,n_k+f(n_k)-1\}$ we set
  $\tilde{\xi}^k_t(x)=1$ if $C_{n_k+f(n_k)-t}(x,t)\neq \emptyset$.
  Otherwise we set $\tilde{\xi}^k_t(x)=0$.
\end{enumerate}
Note that $\xi\le \tilde{\xi}^k$ since for $(x,t)$ with
$t<n_k+f(n_k)$ we set $\tilde\xi_t(x)=1$ if $(x,t)$ has an open path
of length at least $n_k+f(n_k)-t$ instead of requiring an infinite
open path. For $\xi_t(x) \ne \tilde{\xi}_t^k(x)$ we necessarily must
have $t < n_k+f(n_k)$ and there must exist an open path started at
$(x,t)$ whose length is at least $n_k+f(n_k)-t$ but the contact
process started at $(x,t)$ has to eventually die out, i.e.\ there is
no infinite open path starting in $(x,t)$.

The following lemma gives us an upper bound on that probability. The
result is well known in the oriented percolation and contact process
world. For a proof see for instance in Lemma~A.1.\ in
\cite{BirknerCernyDepperschmidtGantert2013}.
\begin{lem}
  \label{lem:lower_bound_on_survival_prob}
  For $p>p_c$ there exist $C,c>0$ such that for all
  $(x,t)\in\bZ^d\times\bZ$
  \begin{align*}
    \bP\Big((x,t)\to^\omega \bZ^d\times\{t+n\}\text{ and }
    (x,t)\nrightarrow^\omega \bZ^d\times\{\infty\}\Big) \leq
    C \mathrm{e}^{-c n},\quad n\in\bN.
  \end{align*}
\end{lem}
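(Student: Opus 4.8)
The plan is to reduce the statement to a classical property of supercritical oriented percolation and then to invoke the Bezuidenhout--Grimmett block (renormalisation) construction. First I would use translation invariance of $\bP$ to take $(x,t)=(0,0)$, and rephrase everything in terms of the extinction level $\tau\coloneqq\sup\{k\ge0:(0,0)\xrightarrow{\omega}\bZ^d\times\{k\}\}$, with $\tau=\infty$ precisely when $(0,0)\xrightarrow{\omega}\infty$. Since $\{(0,0)\xrightarrow{\omega}\bZ^d\times\{k\}\}$ is decreasing in $k$, the event in the statement is exactly $\{n\le\tau<\infty\}$, so the goal becomes $\bP(n\le\tau<\infty)\le C\mathrm{e}^{-cn}$.

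The main input is the block construction for supercritical (discrete-time) oriented percolation, available in this setting from \cite{GrimmettHiermerDPRW2000} and \cite{BirknerCernyDepperschmidtGantert2013}. For a small $\varepsilon>0$ it yields integers $a,T\ge1$, a bounded $m_0\in\bN$, and for each renormalised site $\hat v=(\hat x,\hat k)$ a ``good block'' event $H_{\hat v}$ depending on $\omega$ in a bounded space-time box near $(a\hat x,T\hat k)$, such that: (a) $(\mathbf{1}_{H_{\hat v}})_{\hat v}$ stochastically dominates a finite-range ($1$-dependent) oriented site percolation of density $\ge1-\varepsilon$; (b) ``initialising a good block'' is an increasing event which from the single site $(0,0)$ has positive probability, so by attractiveness and the Markov property of the cluster process there is $\delta_0>0$ with $\bP(\text{a good block is initialised during }[j m_0,(j{+}1)m_0]\mid\mathcal F_{j m_0})\ge\delta_0$ on $\{\tau\ge j m_0\}$; moreover, once a good block $\hat v_0$ is initialised, the $\omega$-cluster of $(0,0)$ contains the $a$-fattening of every block in the renormalised cluster of $\hat v_0$, so an infinite renormalised cluster forces $\tau=\infty$; and (c), conversely, from the initialisation time on, the $\omega$-cluster is confined to the $a$-fattening of that renormalised cluster, so that if the renormalised cluster dies at level $K$ then $\tau\le TK+O(1)$ up to an independent term with an exponential tail.

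With this in hand I would combine three estimates. A Peierls/contour bound on the renormalised lattice: for $\varepsilon$ small, a $1$-dependent oriented site percolation of density $1-\varepsilon$ has exponentially decaying finite clusters (a cluster reaching renormalised level $k$ is blocked by a set of $\ge ck$ closed sites, there are at most $C^k$ candidates, and each is all-closed with probability $\le(C'\varepsilon)^{ck}$ by $1$-dependence and a union bound), so the probability that the renormalised cluster is alive at level $k$ but dead afterwards is $\le C\mathrm{e}^{-ck}$. A geometric restart bound: iterating (b) along the grid $0,m_0,2m_0,\dots$ gives $\bP(\text{no good block initialised by time }j m_0,\ \tau\ge j m_0)\le(1-\delta_0)^{j}$. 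Then on $\{n\le\tau<\infty\}$ either no good block was initialised by time $n$ -- and since $\tau\ge n$ this has probability $\le(1-\delta_0)^{n/m_0}$ -- or the first one is initialised at some $S\le n$, its renormalised cluster must be finite (otherwise $\tau=\infty$ by (b)), hence by (c) alive until renormalised level at least of order $(n-S)/T$ and then dead, an event of probability $\le C\mathrm{e}^{-c(n-S)/T}$ by the Peierls bound plus the exponential correction from (c); summing over $S\in\{0,\dots,n\}$ and over the boundedly many possible initial block locations keeps the bound exponentially small. Adding the two contributions and relabelling constants yields $\bP(n\le\tau<\infty)\le C\mathrm{e}^{-cn}$.

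I expect the delicate point to be property (c): turning ``the $\omega$-cluster is still alive at the late time $n$'' into ``the renormalised good-block cluster is still alive at a correspondingly late level'', i.e.\ confining the surviving cluster inside the renormalised cluster and bounding the extra time needed to die once the feeding of good blocks stops. This is exactly where the refined form of the block construction (re-feeding of blocks and restriction estimates) is needed, and it is the reason the statement is quoted from the oriented-percolation literature rather than argued from scratch; by contrast the Peierls estimate, the restart estimate and the final combination are routine.
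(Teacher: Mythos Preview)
The paper does not give a proof of this lemma at all: it simply states that ``the result is well known in the oriented percolation and contact process world'' and refers the reader to Lemma~A.1 in \cite{BirknerCernyDepperschmidtGantert2013}. Your proposal, by contrast, sketches the standard block-construction argument (translation invariance, rephrasing via the extinction time $\tau$, Bezuidenhout--Grimmett renormalisation, a Peierls estimate on the coarse-grained lattice, and a geometric restart to handle the initialisation). This is the canonical route to the result and your outline is correct in spirit; you also correctly flag property~(c)---confining the microscopic cluster inside the renormalised cluster and controlling the residual survival time after the good-block cluster dies---as the technically delicate step, which is precisely why the paper opts to cite rather than reprove. In short: you supply substantially more than the paper does, and what you supply is the right approach; nothing needs correcting beyond noting that the paper itself defers the whole argument to the literature.
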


As a direct consequence we get the following corollary.
\begin{cor}
  \label{cor:coupling_xi_and_tilde_xi}
  For $x\in\bZ^d$ define
  \begin{align*}
    D_{n_k}(x)\coloneqq \bigl(x+
    [-n_{k-1}^\theta-n_k,n_{k-1}^\theta+n_k]^d\times[0,n_k]\bigr) \cap (\bZ^d\times\bZ).
  \end{align*}
  For $p>p_c$ there exist constants $C,c>0$ such that
  \begin{align}
    \label{eq:68}
    \bP\Big(\tilde{\xi}^k_t(y)=\xi_t(y)\text{ for all }(y,t)\in
    D_{n_k}(x) \Big) \ge 1 - C \mathrm{e}^{-c \log^2 n_k}.
  \end{align}
\end{cor}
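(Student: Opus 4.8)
The plan is to derive the corollary from Lemma~\ref{lem:lower_bound_on_survival_prob} by a union bound over the space-time points of $D_{n_k}(x)$. First I would record the characterization of discrepancies already observed above: since $\xi_t(y)\le\tilde\xi^k_t(y)$ for every $(y,t)$, and $\tilde\xi^k_t(y)$ agrees with $\xi_t(y)$ for $t\ge n_k+f(n_k)$, a discrepancy $\tilde\xi^k_t(y)\neq\xi_t(y)$ can occur only when $t<n_k+f(n_k)$ and the site $(y,t)$ satisfies $(y,t)\xrightarrow{\omega}\bZ^d\times\{n_k+f(n_k)\}$ while $(y,t)\nrightarrow^{\omega}\infty$. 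Every point $(y,t)\in D_{n_k}(x)$ has $t\in[0,n_k]$, hence $n_k+f(n_k)-t\ge f(n_k)=\log^2 n_k$, so Lemma~\ref{lem:lower_bound_on_survival_prob}, applied with $n=n_k+f(n_k)-t$, bounds the probability of a discrepancy at a fixed such $(y,t)$ by $C\mathrm{e}^{-c(n_k+f(n_k)-t)}\le C\mathrm{e}^{-c\log^2 n_k}$.

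Next I would count the points of $D_{n_k}(x)$: there are $(2n_{k-1}^\theta+2n_k+1)^d(n_k+1)$ of them, which, using $n_{k-1}=n_k^2$ and that $\theta$ is small so $n_{k-1}^\theta=n_k^{2\theta}\le n_k$, is bounded by a polynomial in $n_k$, say $Cn_k^{d+1}$. A union bound then gives
\[
  \bP\Bigl(\exists\,(y,t)\in D_{n_k}(x):\ \tilde\xi^k_t(y)\neq\xi_t(y)\Bigr)\le Cn_k^{d+1}\mathrm{e}^{-c\log^2 n_k}.
\]

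Finally I would absorb the polynomial factor into the exponential: writing the right-hand side as $C\mathrm{e}^{(d+1)\log n_k-c\log^2 n_k}$, for all $n_k$ large enough that $(d+1)\log n_k\le\tfrac c2\log^2 n_k$ it is at most $C\mathrm{e}^{-\frac c2\log^2 n_k}$, and after enlarging $C$ to cover the finitely many small values of $n_k$ one obtains \eqref{eq:68} with new constants $C,c>0$. I do not anticipate a genuine obstacle; the only points needing a little care are that every point of $D_{n_k}(x)$ indeed lies in the time range $\{t<n_k+f(n_k)\}$ in which $\tilde\xi^k$ has been altered (so the discrepancy characterization applies uniformly), and that the stretched-exponential bound $\mathrm{e}^{-c\log^2 n_k}$ comfortably dominates the polynomial volume of $D_{n_k}(x)$.
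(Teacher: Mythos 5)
Your argument is correct and follows essentially the same route as the paper's proof: both identify the discrepancy event at a fixed site with the event of Lemma~\ref{lem:lower_bound_on_survival_prob} (an open path of length $\ge f(n_k)=\log^2 n_k$ that nonetheless dies out), bound $|D_{n_k}(x)|$ by a polynomial in $n_k$ using $n_{k-1}^\theta\le n_k$, and conclude by a union bound. You simply make explicit the counting, the union bound, and the absorption of the polynomial prefactor into the $\mathrm{e}^{-c\log^2 n_k}$ tail, all of which the paper leaves implicit.
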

\begin{proof}
  Note that $\theta>0$ is a small constant and can be chosen such that
  we have $n_{k-1}^\theta=n_k^{2\theta}\le n_k$ and thus
  $\abs{D_{n_k}(x)}\le 2^dn_k^{d+1}$. By definition of $\tilde\xi^k$
  $\tilde{\xi}^k_t(y)\ne\xi_t(y)$ implies that there is at least one
  open but finite paths whose length is larger that $f(n_k)$. Using
  Lemma~\ref{lem:lower_bound_on_survival_prob} the assertion
  \eqref{eq:68} follows by the choice of $f(n_k)=\log^2 n_k$. (Here
  one can see that other choices of $f(n_k)$ are possible as well.)
\end{proof}

Let $(\tilde{X})$ be a random walk in the environment $\tilde{\xi}^k$
with transition probabilities given by
\begin{align}
  P_{\omega,\tilde{\xi}^k}(\tilde{X}_{n+1} = x \, \vert \, \tilde{X}_n = y) =
  \begin{cases}
    \abs{U(x,n) \cap \tilde{\mathcal{C}}^k}^{-1} &\text{if } (x,n)
    \in
    \tilde{\mathcal{C}}^k \text{ and } (y,n+1) \in U(x,n)\cap
    \tilde{\mathcal{C}}^k,\\
    \abs{U(x,n)}^{-1} &\text{if } (x,n) \notin \tilde{\mathcal{C}}^k
    \text{ and } (y,n+1) \in U(x,n), \\
    0 &\text{otherwise,}
  \end{cases}
\end{align}
where
$\tilde{\mathcal{C}}^k\coloneqq \{(x,n)\in\bZ^d\times \bZ:
\tilde{\xi}^k_n(x)=1\}$.

Given a realisation $\omega$, we say that $(x,m)$ is
$(k-1,\theta,\varepsilon,\tilde{\xi}^k)$-good if it satisfies the
conditions from Definition~\ref{def:good boxes} with $\xi$ replaced by
$\tilde{\xi}^k$ and $X$ replaced by $\tilde X$ in the quenched
probabilities.

\begin{lem}
  For \label{lem:lowerBoundApproxGoodPoint} all
  $(x,t)\in \bZ^d\times \bZ$ we have that
  \begin{align}
    \bP((x,t) \text{ is }
    (k-1,\theta,\varepsilon,\tilde\xi^k) \text{-good})\ge 1 -
    Cn_k^{-c\log n_k}.
  \end{align}
\end{lem}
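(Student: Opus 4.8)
The plan is to reduce the statement to its counterpart for the true environment, namely Lemma~\ref{lem:lowerBoundGoodPoint} (equivalently Theorem~\ref{thm:Steibers_Thm3.24}, with $n_k$ in the role of $N$; note that a box being good implies each of its sites is good, so the single-site bound follows immediately), by coupling $\tilde\xi^k$ with $\xi$ on the finite portion of space-time that the walk $\tilde X$ actually inspects. Concretely, since $\tilde X$ started at $(x,t)$ satisfies the deterministic bound $\norm{\tilde X_{t+s}-x}\le s$, the quenched law of $(\tilde X_t,\dots,\tilde X_{t+n_k})$ in the environment $\tilde\xi^k$, together with the starting condition $\tilde\xi^k_t(x)\in\{0,1\}$ appearing in Definition~\ref{def:good boxes}, depends on $\tilde\xi^k$ only through its restriction to the box $B(x,t)\coloneqq\{(y,s):\norm{y-x}\le n_k,\ t\le s\le t+n_k\}$.

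First I would record the finite-range dependence of $\tilde\xi^k$ on $\omega$: by rules~(i)--(ii) in the construction of $\tilde\xi^k$, the family of values $(\tilde\xi^k_s(y))_{(y,s)\in B(x,t)}$ is a measurable function of $\omega$ restricted to the bounded enlargement of $B(x,t)$ obtained by raising the time coordinate to at most $n_k+f(n_k)$ and the spatial coordinates by at most $n_k+f(n_k)$; this enlargement is contained in a space-time translate of the box $D_{n_k}(x)$ of Corollary~\ref{cor:coupling_xi_and_tilde_xi}. Then, by exactly the argument of that corollary --- i.e.\ Lemma~\ref{lem:lower_bound_on_survival_prob} applied at each of the finitely many relevant sites, together with $f(n_k)=\log^2 n_k$ --- there is an event $E(x,t)$ with $\bP\bigl(E(x,t)^\compl\bigr)\le C\mathrm{e}^{-c\log^2 n_k}=Cn_k^{-c\log n_k}$ on which $\tilde\xi^k_s(y)=\xi_s(y)$ for all $(y,s)\in B(x,t)$ (for those $(x,t)$ with $t\ge n_k+f(n_k)$ this is automatic from rule~(i), so that $E(x,t)=\Omega$).

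On $E(x,t)$ the transition probabilities of $\tilde X$ agree with those of the walk $X$ in the environment $\cC$ for each of the first $n_k$ steps, and also $\tilde\xi^k_t(x)=\xi_t(x)$; hence, since conditions \eqref{eq:46} and \eqref{eq:47} only involve the $\tilde\xi^k$-quenched law of $\tilde X$ (which on $E(x,t)$ coincides with the $\xi$-quenched law of $X$) and the \emph{unchanged} annealed law $\bP^{(x,t)}$, the site $(x,t)$ is $(k-1,\theta,\varepsilon,\tilde\xi^k)$-good on $E(x,t)$ if and only if it is $(k-1,\theta,\varepsilon)$-good. By Lemma~\ref{lem:lowerBoundGoodPoint} (or directly Theorem~\ref{thm:Steibers_Thm3.24}) the latter has probability at least $1-Cn_k^{-c\log n_k}$, and a union bound with $E(x,t)^\compl$ finishes the proof.

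The step I expect to be the main obstacle is the second paragraph: making the finite-range dependence on $\omega$ of the values of $\tilde\xi^k$ on $B(x,t)$ precise and checking that Corollary~\ref{cor:coupling_xi_and_tilde_xi} applies to a suitable translate of its box \emph{uniformly in} $t$ --- in particular handling the sites $(x,t)$ whose relevant time window $[t,t+n_k]$ straddles the truncation level $n_k+f(n_k)$, where one splits the window into its part above $n_k+f(n_k)$ (on which $\tilde\xi^k=\xi$ by rule~(i)) and its part below it, and applies the survival estimate of Lemma~\ref{lem:lower_bound_on_survival_prob} to the latter.
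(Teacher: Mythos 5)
Your proposal is correct and follows essentially the same route as the paper: reduce to the $\xi$-version via Lemma~\ref{lem:lowerBoundGoodPoint}, then invoke the coupling of $\tilde\xi^k$ with $\xi$ on the finite space-time window inspected by the walk, using $f(n_k)=\log^2 n_k$ and Lemma~\ref{lem:lower_bound_on_survival_prob} (packaged in Corollary~\ref{cor:coupling_xi_and_tilde_xi}) to bound the decoupling probability by $C\mathrm{e}^{-c\log^2 n_k}=Cn_k^{-c\log n_k}$. The paper's own proof is two sentences and does not spell out the finite-range measurability of $\tilde\xi^k$, nor does it comment on the cases $t\ge n_k+f(n_k)$ (where the agreement is automatic by rule~(i)) or the window straddling the truncation level; your explicit handling of these points, in particular the observation that the coupling is trivial above the cutoff and the splitting of a straddling window, fills in detail the paper glosses over. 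One caveat: the box $D_{n_k}(x)$ in Corollary~\ref{cor:coupling_xi_and_tilde_xi} is written with time range $[0,n_k]$, so for general $t$ one does indeed need the appropriate time-translate of that box, as you note; this is consistent with how the good/bad property is used at time $N_{k-1}$ in the proof of Proposition~\ref{prop_key}.
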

\begin{proof}
  Due to Lemma~\ref{lem:lowerBoundGoodPoint} it suffices to show that
  with probability at least $1-Cn_k^{-c\log n_k}$ we have
  $\tilde{\xi}^k_t(y)=\xi_t(y)$ for all $(y,t)\in D_{n_k}(x)$. This
  exactly the assertion of
  Corollary~\ref{cor:coupling_xi_and_tilde_xi}. On that event $(x,t)$
  is $(k-1,\theta,\varepsilon)$-good iff $(x,t)$ is
  $(k-1,\theta,\varepsilon,\tilde{\xi}^k)$-good.
\end{proof}

\begin{proof}[Proof of Proposition~\ref{prop_key}]
  Recall the definition of $G_{N,n_{k-1}}$ from
  \eqref{eq:defn_G(N,nk)}. To estimate the probability of hitting a
  bad box we can now mimic the proof in
  \cite{BergerCohenRosenthal2016} since we get a lower bound by
  estimating the probability for the
  $(k-1,\theta,\varepsilon,\tilde{\xi}^k)$-good boxes. By construction
  those boxes are independent of each other at distance $>5n_k$.
  Define
  \begin{align}
    \Pi^{(0)}_{k-1} = \{\Delta' \in \Pi_{k-1}^1 \colon
    \text{dist}(\Delta',\underline{0})\le \floor{\sqrt{N_{k-1}}} \}
  \end{align}
  and for $r\ge 1$ let
  \begin{align}
    \Pi_{k-1}^{(r)} = \{ \Delta'\in \Pi_{k-1}^1 \colon
    \floor{2^{r-1}\sqrt{N_{k-1}}} <\text{dist}(\Delta',\underline{0})\le
    \floor{2^r\sqrt{N_{k-1}}}  \}.
  \end{align}
  $(\Pi_{k-1}^{(r)})_{r\ge 0}$ is a partition of $\Pi_{k-1}^1$ into
  disjoint subsets according to the distance of the boxes from the
  origin which allows us to estimate the hitting probabilities of the
  bad boxes. Using the annealed local CLT
  (Theorem~\ref{lem:annealed_local_CLT}), we have
  \begin{multline}
    \sum_{\substack{\Delta' \in \Pi_{k-1}^1 \\ \text{is bad}}}
    \bar{\bP}^{(0,0)}(X_{N_{k-1}} \in \Delta')\\
    \le \sum_{r=0}^{\lceil \log_2 (\log N_{k-1})^3 \rceil}
    \abs{\Pi_{k-1}^{(r)} \cap \{
    (k-1,\theta,\varepsilon,\tilde\xi^k)\text{-bad boxes}\}}
    CN_{k-1}^{-d/2}\mathrm{e}^{-cr^2}
  \end{multline}
  holds for some constants $C,c >0$ and $\bar{\bP}$ is the measure for
  the changed environments $\tilde{\xi}^k$.

  In order to estimate the number of bad boxes in each
  $\Pi^{(r)}_{k-1}$ we define the event
  $\widetilde{G}_N = \widetilde{G}_N(C)$ by
  \begin{align}
    \widetilde{G}_N \coloneqq  \bigcap_{k=1}^{r(N)} \bigcap_{r=0}^{\lceil \log_2
    (\log N_{k-1})^3 \rceil} \left\{ \abs{\Pi^{(r)}_{k-1}\cap
    \{(k-1,\theta,\varepsilon,\tilde\xi^k)\text{-bad boxes}\}}
    \le C\abs{\Pi^{(r)}_{k-1}}n_{k-1}^{-\beta} \right\},
  \end{align}
  where $\beta>0$ is a constant to be tuned later. Let
  $\tilde{p}_{k-1}$ be the probability for a box
  $\Delta' \in \Pi_{k-1}$ to be
  $(k-1,\theta,\varepsilon,\tilde\xi^k)$-bad. Note that
  $\tilde{p}_{k} \in \mathcal{O}(n_k^{-c\log n_k})$ and on the event
  $\widetilde{G}_N$
  \begin{multline}
    \sum_{\substack{\Delta' \in \Pi_{k-1}^1 \\ \text{is bad}}}
    \bar{\bP}^{(0,0)}(X_{N_{k-1}} \in \Delta') \le
    \sum_{r=0}^{\lceil \log_2 (\log N_{k-1})^3 \rceil} C
    \abs{\Pi_{k-1}^{(r)}}n_{k-1}^{-\beta} N_{k-1}^{-d/2}\mathrm{e}^{-cr^2}\\
    \le \sum_{r=0}^{\lceil \log_2 (\log N_{k-1})^3
    \rceil}C2^{dr}(\sqrt{N_{k-1}}/n_{k-1}^\theta)^d N_{k-1}^{-d/2}
    \mathrm{e}^{-cr^2}n_{k-1}^{-\beta} \le C n_{k-1}^{-(\beta+\theta d)}.
  \end{multline}
  Now it suffices to show that
  $\bP(\widetilde{G}_N(C))\ge 1 -CN^{-c\log(N)}$ for some constant
  $C>0$. To do so, fix $k\ge 1$ and note that boxes
  $\Delta' \in \Pi_{k-1}$ at distance $5n_k$ are, by construction of
  $\tilde{\xi}^k$, good or bad independently of each other. To see
  this note that $2(n_{k-1}^\theta+n_k+f(n_k))<5n_k$ and recall that
  $\tilde\xi^k_t(y)=1$ if there exists an open path connecting $(y,t)$
  to $\bZ^d\times\{n_k+f(n_k)\}$ and $\tilde\xi^k_t(y)=0$ otherwise.
  Let $(\Pi^{r,j}_{k-1})_j$ be a partition of $\Pi^{(r)}_{k-1}$ into
  at most $(5n_k)^d$ subsets of boxes so that the distance between
  each pair of boxes in $\Pi^{r,j}_{k-1}$ is bigger than $5n_k$, for
  every $j$, and the number of boxes in $\Pi^{r,j}_{k-1}$ is between
  $\vert\Pi^{(r)}_{k-1}\vert/(2(5n_k)^d)$ and
  $2\vert\Pi^{(r)}_{k-1}\vert/(5n_k)^d$.

  If the number of $(k-1,\theta,\varepsilon,\tilde\xi^k)$-bad boxes in
  $\Pi^{(r)}_{k-1}$ is bigger than
  $C\vert\Pi^{(r)}_{k-1}\vert n_{k-1}^{-\beta}$, then there exists at
  least one $j$ so that the number of bad boxes in $\Pi^{r,j}_{k-1}$
  is larger than $C\vert\Pi^{r,j}_{k-1}\vert n_{k-1}^{-\beta}$. Since
  the boxes in $\Pi^{r,j}_{k-1}$ are good or bad independently of each
  other, their number is bounded and they are bad with probability
  $\tilde{p}_{k-1}$, it follows by Hoeffding's inequality that
  \begin{align}
    \label{eq:Bound on density of bad boxes k geq 4}
    \begin{split}
      \bar{\bP} (\vert\Pi^{(r)}_{k-1} \cap
      & \{(k-1,\theta,\varepsilon,\tilde\xi^k)\text{-bad boxes}
      \}\vert>C\vert\Pi^{(r)}_{k-1}\vert n_{k-1}^{-\beta})\\
      & \le (5n_k)^d\bar{\bP}(\vert\Pi^{r,1}_{k-1}\cap
      \{(k-1,\theta,\varepsilon,\tilde\xi^k)\text{-bad boxes} \}\vert
      \ge \lceil C\vert\Pi^{(r)}_{k-1}\vert n_{k-1}^{-\beta}/(5n_k)^d
      \rceil)\\
      & \le (5n_k)^d \exp(-(Cn_{k-1}^{-\beta}-\tilde{p}_{k-1})^2 \vert
      \Pi^{(r)}_{k-1}\vert /(5n_k)^d)\\
      & \le \tilde{C} (5n_k)^d \exp(-Cn_{k-1}^{-2\beta}\vert
      \Pi^{(r)}_{k-1}\vert /(5n_k)^d)\\
      & \le \tilde{C} (5n_k)^d
      \exp(-C2^{rd}N^{\frac{-2\beta}{2^{k-1}}+\frac{d}{2} -\frac{d
          \theta}{2^{k-1}} -\frac{d}{2^k}})\\
      & = \tilde{C}(5n_k)^d \exp(-C2^{rd} N^{\frac{d}{2}
        -(\frac{4\beta + 2d\theta +d}{2^k})}),
    \end{split}
  \end{align}
  where the right hand side decays stretched exponentially in $N$ for
  $k\ge 4$ if $\beta$ is small enough, e.g.\ $\beta =1$ (which is still
  sufficient for the proof of \eqref{eq:secondInequality}). For
  $1\le k \le 3$ notice that
  \begin{align}
    \label{eq:Bound on density of bad boxes k leq 3}
    \begin{split}
      \bar{\bP}&(\vert \Pi^{(r)}_{k-1}\cap
      \{(k-1,\theta,\varepsilon,\tilde\xi^k)\text{-bad boxes} \} \vert
      > C\vert \Pi^{(r)}_{k-1} \vert n_{k-1}^{-\beta})\\
      & \le \bar{\bP}(\{(k-1,\theta,\varepsilon,\tilde\xi^k)\text{-bad
        boxes} \}\neq \emptyset)\\
      & \le \vert \Pi^{(r)}_{k-1} \vert \tilde{p}_{k-1} \le
      (\sqrt{N}\log^3(N))^d\tilde{p}_{k-1}\le(\sqrt{N}\log^3(N))^d N^{-c\log(N)}
      \le C N^{-c\log(N)}.
    \end{split}
  \end{align}
  Using the estimates above together with the definition of
  $\widetilde{G}_N$ shows that
  \begin{align}
    \begin{split}
      \bar{\bP}(\widetilde{G}_N^\compl)
      & = \bar{\bP}\biggl( \bigcup_{k=1}^{r(N)}
        \bigcup_{r=0}^{\lceil \log_2 (\log N_{k-1})^3 \rceil} \left\{
          \vert \Pi_{k-1}^{(r)} \cap \{(k-1,\theta,\varepsilon,\tilde\xi^k)\text{-bad boxes} \}
          \vert > C \vert \Pi_{k-1}^{(r)}\vert n_{k-1}^{-\beta}\right\} \biggr)\\
      & \le \sum_{k=1}^{r(N)} \sum_{r=0}^{\lceil \log_2 (\log N_{k-1})^3
        \rceil} \bar{\bP} \left( \vert \Pi_{k-1}^{(r)} \cap \{
        (k-1,\theta,\varepsilon,\tilde\xi^k)\text{-bad boxes} \} \vert
        > C \vert \Pi_{k-1}^{(r)}\vert n_{k-1}^{-\beta} \right)\\
      & \le r(N)\lceil \log_2 (\log N_{k-1})^3 \rceil CN^{-c\log(N)} \le
      C\log\log(N)\cdot\log(N)^{5/6}N^{-c\log(N)}\\
      & \le N^{-\tilde{c}\log(N)}.
    \end{split}
  \end{align}
  Next we show that the number of $(k-1,\theta,\varepsilon)$-bad boxes
  in $\xi$ is on the same order as the number of
  $(k-1,\theta,\varepsilon,\tilde\xi^k)$-bad boxes in $\tilde\xi^k$
  with high probability. First we define, in a slight abuse of
  notation, the sets
  \begin{align*}
    D_{n_k}(\Delta)
    & \coloneqq \{ (x,t) \in \bZ^d\times\bZ : \dist(x,\Delta)\le n_k, t\in[0,n_k] \},\\
    A_{k,\Delta}
    & \coloneqq \{ \omega\in\Omega : \xi_t(x) = \tilde{\xi}^k_t(x)
      \text{ for all } (x,t)\in D_{n_k}(\Delta)  \}
  \end{align*}
  for all $\Delta \in \Pi^{(r)}_{k-1}$. Note that $D_{n_k}(\Delta)$ is
  the same box as $D_{n_k}(x)$ if $x$ is the center of $\Delta$. Using
  the above defined partitions $(\Pi^{r,j}_{k-1})_j$ we see that for
  every choice of $\Delta,\Delta'\in\Pi^{r,j}_{k-1}$ the events
  $A_{k,\Delta}$ and $A_{k,\Delta'}$ are independent, since
  $\dist(\Delta,\Delta')>5n_k$. Since $\xi\le \tilde\xi^k$ the number
  of $(k-1,\theta,\varepsilon)$-good boxes in $\xi$ is less or equal
  to the number of $(k-1,\theta,\varepsilon,\tilde\xi^k)$-bad boxes in
  $\tilde\xi^k$.

  To shorten the notation we say for a box $\Delta\in \Pi^{(r)}_{k-1}$
  that it is good in $\xi$ if it is $(k-1,\theta,\varepsilon)$-good
  and good in $\tilde\xi^k$ if it is
  $(k-1,\theta,\varepsilon,\tilde\xi^k)$-good. A box can only be bad
  in $\xi$ and good in $\tilde\xi^k$ for $\omega \in A_{k,\Delta}^\compl$.
  Using Corollary~\ref{cor:coupling_xi_and_tilde_xi} we get
  $\bP(A_{k,\Delta}^\compl)\le Cn_k^{-c\log n_k}$, and thus, again by
  Hoeffding's inequality,
  \begin{align}
    \begin{split}
      \bP&\Big(\abs{\Pi^{(r)}_{k-1} \cap\{\text{bad in } \xi \}} -
      \abs{\Pi^{(r)}_{k-1} \cap\{\text{bad in } \tilde\xi^k\}} \ge
      C\abs{\Pi^{(r)}_{k-1}}n_{k-1}^{-\beta}\Big)\\
      & \le \bP\Big(\exists j \text{ s.t. }\abs{\Pi^{r,j}_{k-1}
        \cap\{\text{bad in } \xi \}} - \abs{\Pi^{r,j}_{k-1}
        \cap\{\text{bad in } \tilde\xi^k\}} \ge
      C\abs{\Pi^{(r)}_{k-1}}n_{k-1}^{-\beta}\frac{1}{(5n_k)^d}\Big)\\
      & \le (5n_k)^d \bP\Big(\abs{\Pi^{r,j}_{k-1} \cap\{\text{bad in }
        \xi \}} - \abs{\Pi^{r,j}_{k-1} \cap\{\text{bad in }
        \tilde\xi^k\}} \ge
      C\abs{\Pi^{(r)}_{k-1}}n_{k-1}^{-\beta}\frac{1}{(5n_k)^d}\Big)\\
      & \le (5n_k)^d \bP\Big(\sum_{\Delta \in\Pi^{r,j}_{k-1}}
      \mathbbm{1}_{A_{k,\Delta}^\compl} \ge
      C\abs{\Pi^{(r)}_{k-1}}n_{k-1}^{-\beta}\frac{1}{(5n_k)^d} \Big)\\
      & \le \tilde{C}(5n_k)^d \exp\Big(
      -C2^{rd}N^{\frac{d}{2}-(\frac{4\beta+2d\theta +d}{2^k})} \Big).
    \end{split}
  \end{align}
  Again the right hand side decays stretched exponentially in $N$ for
  $k\ge 4$ for $\beta>0$ small enough. For $k\le 3$ we can repeat the
  ideas of \eqref{eq:Bound on density of bad boxes k leq 3}. The
  reason we can prove an upper bound in the same way as in
  \eqref{eq:Bound on density of bad boxes k geq 4} and \eqref{eq:Bound
    on density of bad boxes k leq 3} is that the probability for a box
  to be bad in $\tilde\xi^k$ is of the same order as
  $\bP(A_{k,\Delta}^\compl)$, namely $n_k^{-c\log n_k}$. Define
  \begin{align}
    A_N \coloneqq \bigcap_{k=1}^{r(N)}
    \bigcap_{r=0}^{\lceil\log_2(\log
    N_{k-1})^3\rceil}\Big\{\abs{\Pi^{(r)}_{k-1} \cap\{\text{bad in }
    \xi \}} - \abs{\Pi^{(r)}_{k-1} \cap\{\text{bad in } \tilde\xi^k\}}
    \ge C\abs{\Pi^{(r)}_{k-1}}n_{k-1}^{-\beta} \Big\}
  \end{align}
  then by the same arguments as above we also get
  \begin{align}
    \bP(A^\compl_N) \le N^{-c\log N}.
  \end{align}
  Since $\widetilde{G}_N\cap A_N \subset G_N$ the claim follows.
\end{proof}

\section{Mixing properties of the quenched law: proof of
  Lemma~\ref{lem:HIG_lemma}}
\label{subsec:mixing_quenched}
\begin{defn}
  \label{def:social boxes}
  Let $\Pi_M$ be a partition of $\bZ^d$ into boxes of side lengths
  $M$, let $C>0$ and let $\omega$ be a realisation of the environment.
  We call a box $\Delta \in \Pi_M$ \emph{social with respect to
    $\omega$} 
  at time $N \in \bN$, if for any pair of points $x,y\in\Delta$ there
  exists $z \in \bZ^d$ such that
  \begin{align*}
    P^{(x,N)}_\omega(X_{N+\lceil CM\rceil } =z)>0,
    \quad \text{and} \quad
    P^{(y,N)}_\omega(X_{N+\lceil CM \rceil}=z)>0.
  \end{align*}
\end{defn}
Note that if $P^{(x,N)}_\omega(X_{N + \lceil CM \rceil} =z)>0$, then by
construction
$P^{(x,N)}_\omega(X_{N+\lceil CM \rceil} =z) \ge (3^{-d})^{CM}$.

\medskip

The next result shows that the density of social boxes is suitably
high.

\begin{lem}
  \label{lem:high_density_of_social_boxes}
  For every $\varepsilon>0$ there exists $M_0\in \bN$ and constants
  $c,C>0$ such that for all $M\ge M_0$ there exists a set of
  environments $S_M$ satisfying
  \begin{align*}
    \sum_{\substack{\Delta \in \Pi_M\\ \Delta \text{ is not
    social}}}\bP^{(x,0)}(X_n \in \Delta) < \varepsilon \quad \text{for all $\omega \in S_M$}
  \end{align*}
  and $\bP(S_M)\ge 1- C\mathrm{e}^{-c\log n}$. (Recall that the
  property of $\Delta$ being
  social depends on $\omega$.)
\end{lem}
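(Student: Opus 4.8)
The plan is to combine a first–moment/concentration argument over boxes with a structural statement that a \emph{single} box is social with probability close to $1$.

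\textbf{Reduction to a ball and to ``few non-social boxes''.} First I would fix $C_\varepsilon$ with $\bP^{(x,0)}\bigl(\norm{X_n-x}>\tfrac12 C_\varepsilon\sqrt n\bigr)<\varepsilon/2$ (Lemma~3.6 of \cite{SteibersPhD2017}). Then, for $n$ large enough that $M\ll\sqrt n$ (so that the $M$-boundary layer is negligible), the contribution to $\sum_{\Delta\text{ not social}}\bP^{(x,0)}(X_n\in\Delta)$ from boxes $\Delta$ not meeting $B:=\{\norm{z-x}\le C_\varepsilon\sqrt n\}$ is at most $\varepsilon/2$, deterministically in $\omega$. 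For boxes meeting $B$ I would use the uniform bound $\bP^{(x,0)}(X_n=y)\le Cn^{-d/2}$ (available from \cite{SteibersPhD2017}, cf.\ also Theorem~\ref{lem:annealed_local_CLT}), so each such box carries annealed mass $\le CM^d n^{-d/2}$, and there are $\le C_\varepsilon^d n^{d/2}/M^d$ of them. Hence it suffices to produce, on a set $S_M$ of $\bP$-probability $\ge 1-C\mathrm e^{-c\log n}$, the bound $\#\{\Delta\subset B:\Delta\text{ not social}\}\le c_\varepsilon'\,\delta(M)\,n^{d/2}/M^d$ with $\delta(M)\to0$ as $M\to\infty$; choosing $M\ge M_0(\varepsilon)$ then makes the $B$-contribution $<\varepsilon/2$.

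\textbf{A single box is typically social.} The heart of the matter is $\bP(\Delta\text{ not social at time }n)\le\delta(M)$ with $\delta(M)\to0$. I would exhibit, with high probability, a ``hub'' cluster point $w^\dagger$ at time $n+\lceil CM\rceil$ that is reachable with positive quenched probability from \emph{every} $x\in\Delta$; then for any pair $x,y\in\Delta$ one may take $z:=w^\dagger$. For $x$ with $(x,n)\in\mathcal C$, the walk follows the backbone, and its set of reachable sites at time $n+\lceil CM\rceil$ equals the set of surviving descendants of $(x,n)$; the appendix result on intersection of oriented-percolation clusters within a linear time (Section~\ref{sec:inters-clust-points}) — equivalently, that the set of surviving descendants of a site at time $n$ coincides, after $\Theta(CM)$ steps and inside a central sub-box $B^\ast$ of side $\Theta(CM)$, with $\mathcal C$ itself with probability $\ge1-\mathrm e^{-cM}$ — shows, for $C$ large and the $\le M^d$ cluster points in $\Delta\times\{n\}$ simultaneously, that all these descendant sets contain $\mathcal C\cap B^\ast$ at time $n+\lceil CM\rceil$; then pick $w^\dagger\in\mathcal C\cap B^\ast$, nonempty with overwhelming probability. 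For $x$ with $(x,n)\notin\mathcal C$, the walk performs simple random walk off the cluster and, by Lemma~\ref{lem:quenched-rw-cluster} and Markov's inequality, hits $\mathcal C$ within $t=\Theta(\log n)$ steps with positive quenched probability at some $(v,n+s)$, $\norm{v-x}\le t$; from there it follows the backbone and, by the same coupling applied to the $\le\mathrm{poly}(M)$ admissible values of $(v,n+s)$, reaches $w^\dagger$ with positive probability. Collecting the error terms (emptiness of $\mathcal C\cap B^\ast$; failure of the cluster coupling for some admissible site; failure of the hitting estimate for some $x$ in the relevant region) yields $\bP(\Delta\text{ not social})\le\mathrm{poly}(M)\,\mathrm e^{-cM}+(\text{super-polynomially small in }n)\le\delta(M)$.

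\textbf{Concentration over boxes.} Finally I would upgrade the single-box bound to ``few non-social boxes in $B$''. The obstruction is that $\{\Delta\text{ not social}\}$ is not local, since membership in $\mathcal C$ depends on the environment arbitrarily far in the future; I would dispose of this as in Corollary~\ref{cor:coupling_xi_and_tilde_xi}, replacing $\xi$ by the truncation $\tilde\xi$ requiring only open paths of length $\log^2 n$. Then ``social w.r.t.\ $\tilde\xi$'' is determined by the environment in a space–time window of radius $O(CM+\log^2 n)$ around $\Delta$ and agrees with ``social w.r.t.\ $\xi$'' off an event of probability $\le C\mathrm e^{-c\log^2 n}$ per box. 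Partitioning the $\lesssim n^{d/2}/M^d$ boxes meeting $B$ into $O\bigl((CM+\log^2 n)^d\bigr)$ sub-families of pairwise distance $>C'(M+\log^2 n)$, the socialness indicators within each sub-family are i.i.d.\ Bernoulli of parameter $\le 2\delta(M)$, and a Chernoff bound controls each sub-family count with error stretched-exponentially small in $n$; summing over the $\mathrm{polylog}(n)$ sub-families and adding the truncation error over $\lesssim n^{d/2}$ boxes gives $\bP(S_M^\compl)\le C\mathrm e^{-c\log n}$, as required. The main obstacle is the single-box statement of the second step: converting the qualitative ``clusters meet'' phenomenon into a quantitative statement, uniform over \emph{all} pairs in $\Delta$, about strict positivity of the quenched transition probabilities in a non-elliptic environment; the handling of non-cluster starting points and the bookkeeping of all the error terms are where care is needed.
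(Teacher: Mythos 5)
Your proposal follows the same overall route as the paper: (i) bound the one-box non-social probability $p_M \lesssim \mathrm{e}^{-cM}$ using the cluster-intersection result (Lemma~\ref{cor:opc_connectedness}); (ii) upgrade this to a density bound by splitting the relevant boxes into pairwise-distant sub-families and applying a Hoeffding/Chernoff bound; (iii) combine with annealed displacement estimates. In step (iii) your bookkeeping (ball truncation plus the pointwise bound $\bP^{(x,0)}(X_n=y)\lesssim n^{-d/2}$) is interchangeable with the paper's (annular shells $\Pi_M^{(r)}$ plus the annealed local CLT). The place where you are genuinely more careful than the paper is step (ii): you observe that the event ``$\Delta$ is social'' is \emph{not} local, since $P^{(x,N)}_\omega$ depends on $\xi$ and hence on the environment arbitrarily far in the future, so the socialness indicators at distinct boxes are not exactly independent. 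The paper asserts independence ``by construction'' for boxes at distance $>3CM$, which as written elides this point; your truncation to $\tilde\xi$ as in Corollary~\ref{cor:coupling_xi_and_tilde_xi} is precisely the needed repair and mirrors what the paper does elsewhere for good boxes in Proposition~\ref{prop_key}. Two small corrections to your step (i): for $(x,n)\notin\mathcal C$ the walk must hit $\mathcal C$ within $O(M)$ (not $\Theta(\log n)$) steps, since only $\lceil CM\rceil$ steps are available, and the corresponding failure probability $\mathrm{e}^{-cM}$ then feeds into $\delta(M)$ rather than being an $n$-dependent term; and the ``hub'' $w^\dagger$ is a valid but stronger-than-needed device, since Definition~\ref{def:social boxes} only asks for a common target for each pair $(x,y)$ separately, so a union bound over the $\lesssim M^{2d}$ pairs using Lemma~\ref{cor:opc_connectedness} already suffices.
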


\begin{cor}
  \label{cor:quenched_density_of_social_boxes}
  For every $\varepsilon>0$ there exists $M_0 \in \bN$ so that for all
  $M > M_0$ there are environments $\bar{S}_M$ such that
  \begin{align*}
    \sum_{\substack{\Delta \in \Pi_M\\ \Delta \text{ is not social}}}
    P^{(x,0)}_\omega(X_n \in \Delta) < 2\varepsilon
  \end{align*}
  for all $\omega \in \bar{S}_M$ and $\bP(\bar{S}_M)\ge 1 - Cn^{-c\log n}$.
\end{cor}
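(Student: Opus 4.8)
The plan is to deduce Corollary~\ref{cor:quenched_density_of_social_boxes} from Lemma~\ref{lem:high_density_of_social_boxes} by the same annealed-to-quenched comparison device that was used throughout Section~\ref{sect:box_level_comparisons}, namely Lemma~\ref{claim:1}. Fix $\varepsilon>0$. First I would apply Lemma~\ref{lem:high_density_of_social_boxes} to obtain $M_0$ and a set $S_M$ (for $M\ge M_0$) with $\bP(S_M)\ge 1-C\mathrm{e}^{-c\log n}$ on which the \emph{annealed} mass of non-social boxes is less than $\varepsilon$. Separately, I would invoke Lemma~\ref{claim:1} with the box-side-length parameter chosen so that $C/M^{c}+C/n^{c}<\varepsilon$ for $n$ large (and $M$ fixed large, e.g.\ enlarging $M_0$ if necessary); this produces an event $K(n)$ with $\bP(K(n))\ge 1-\widetilde Cn^{-\tilde c\log n}$ on which
\begin{align*}
  \sum_{\Delta\in\Pi_M}\bigl|P^{(x,0)}_\omega(X_n\in\Delta)-\bP^{(x,0)}(X_n\in\Delta)\bigr|<\varepsilon
\end{align*}
for every $x$ with $\norm{x}\le n$ (in particular for $x=0$, which is the case we need). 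Then on $\bar S_M\coloneqq S_M\cap K(n)$ one simply writes, for each non-social $\Delta$, $P^{(0,0)}_\omega(X_n\in\Delta)\le\bP^{(0,0)}(X_n\in\Delta)+|P^{(0,0)}_\omega(X_n\in\Delta)-\bP^{(0,0)}(X_n\in\Delta)|$, sums over the non-social boxes, and bounds the first sum by $\varepsilon$ using membership in $S_M$ and the second sum by $\varepsilon$ using membership in $K(n)$; the total is $<2\varepsilon$, which is exactly the claimed inequality.

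For the probability bound, $\bP(\bar S_M^\compl)\le\bP(S_M^\compl)+\bP(K(n)^\compl)\le C\mathrm{e}^{-c\log n}+\widetilde Cn^{-\tilde c\log n}$. Since $\mathrm{e}^{-c\log n}=n^{-c}$ and $n^{-\tilde c\log n}$ is eventually the smaller of the two, the sum is bounded by $Cn^{-c'\log n}$ for a suitable constant $c'>0$ once $n$ is large enough; after possibly enlarging $C$ and shrinking $c$ this gives $\bP(\bar S_M)\ge 1-Cn^{-c\log n}$, as stated. (Strictly, Lemma~\ref{lem:high_density_of_social_boxes} only gives a stretched-exponential-in-$\log n$ bound $C\mathrm{e}^{-c\log n}$; this already dominates the polynomial-in-$\log n$-exponent decay $n^{-\tilde c\log n}$ of $K(n)^\compl$, so the corollary's bound $1-Cn^{-c\log n}$ — being weaker — is certainly satisfied.)

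The only mild subtlety is matching the box-size parameter $M$ between the two inputs: Lemma~\ref{lem:high_density_of_social_boxes} fixes $M\ge M_0^{(1)}(\varepsilon)$ for its conclusion, whereas Lemma~\ref{claim:1} requires choosing its internal parameter (there also called $M$) large enough that $C/M^c<\varepsilon/2$, say, and then needs $n$ large enough that $C/n^c<\varepsilon/2$. I would therefore set $M_0\coloneqq\max\{M_0^{(1)}(\varepsilon),\,M_0^{(2)}(\varepsilon)\}$, use the \emph{same} partition $\Pi_M$ in both places, and let the conclusion hold for $n$ larger than some $n_0(M,\varepsilon)$; for the finitely many small $n$ the statement is trivial since the left-hand sum is at most $1$ and one absorbs this into the constant $C$ (or simply notes the asserted bound $n^{-c\log n}$ exceeds $1$ for small $n$, making the probability statement vacuous there). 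I do not anticipate a genuine obstacle here — the corollary is a routine triangle-inequality combination of the preceding lemma with Lemma~\ref{claim:1}; the one thing to be careful about is not to claim the comparison inequality uniformly in $x$ but only for the single relevant starting point $x=0$ (equivalently $\norm{x}\le n$), which is all that Definition~\ref{def:social boxes} and the corollary's statement require.
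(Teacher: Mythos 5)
Your approach is exactly the paper's: the paper's entire proof of Corollary~\ref{cor:quenched_density_of_social_boxes} reads ``Combine Lemma~\ref{lem:high_density_of_social_boxes} and Lemma~\ref{claim:1},'' and your triangle-inequality combination with $\bar S_M = S_M\cap K(n)$ is the natural way to spell this out, including the correct observation that only the single starting point $x=0$ is needed. However, there is a small but real slip in your handling of the probability bound. You write that the corollary's bound $1-Cn^{-c\log n}$ is ``weaker'' than what the stated $\bP(S_M)\ge 1-C\mathrm{e}^{-c\log n}=1-Cn^{-c}$ would give; in fact it is the other way around: $n^{-c\log n}$ decays super-polynomially, so $1-Cn^{-c\log n}$ is the \emph{stronger} claim, and $\bP(\bar S_M^\compl)\le Cn^{-c}+\widetilde C n^{-\tilde c\log n}$ only yields $1-Cn^{-c}$, which does \emph{not} give the corollary as stated. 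The resolution is that the displayed bound $C\mathrm{e}^{-c\log n}$ in the statement of Lemma~\ref{lem:high_density_of_social_boxes} is evidently a typo for $Cn^{-c\log n}$ --- the last line of its proof explicitly reaches $\bP^{(0,0)}(S_M^c)\le Cn^{-c\log n}$ --- and it is this stronger bound one must import; once you do, the union bound over $S_M^\compl$ and $K(n)^\compl$ indeed gives $\widetilde Cn^{-\tilde c\log n}$ and the corollary's probability statement follows.
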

\begin{proof}
  Combine Lemma~\ref{lem:high_density_of_social_boxes} and
  Lemma~\ref{claim:1}.
\end{proof}

\begin{proof}[Proof of Lemma~\ref{lem:high_density_of_social_boxes}]
  The proof idea is similar to the one we have used to prove the high
  density of good boxes; see the proof of Proposition~\ref{prop_key}.
  We set
  \begin{align*}
    p_M \coloneqq \bP(\Delta \text{ is not social}).
  \end{align*}
  As a direct consequence of Lemma~\ref{cor:opc_connectedness} for
  every $\Delta\in\Pi_M$ we have that $p_M \le C\mathrm{e}^{-cM}$ for
  some positive constants $C,c$. We define
  \begin{align}
  \label{eq:76}
  S_M \coloneqq  \bigcap_{r=0}^{\log_2 \log^3 n} \left\{
  \abs{\Pi^{(r)}_M\cap\{\text{not social
                boxes}\}}<C\abs{\Pi^{(r)}_M}p_M \right\},
  \end{align}
  where
  \begin{align*}
  \Pi^{(0)}_M & = \{\Delta \in \Pi_M:\dist(\Delta,0)\le \sqrt{n} \}, \\
  \Pi^{(r)}_M & = \{ \Delta \in \Pi_M : 2^{r-1}\sqrt{n}<\dist(\Delta,0)\le
  2^r\sqrt{n} \} \quad \text{for $r\ge 1$}.
  \end{align*}
  By Lemma~3.6 from \cite{SteibersPhD2017} we have
  $\bP^{(0,0)}(\norm{X_n}\ge \sqrt{n}\log^3 n)\le Cn^{-c\log n}$ and
  so for $\omega\in S_M$ (note that being social depends on $\omega$)
  \begin{align*}
  \sum_{\substack{\Delta \in \Pi_M\\ \Delta \text{ is not social}}} \bP^{(0,0)}(X_n\in \Delta)
  & \le Cn^{-c\log n}+ \sum_{r=0}^{\log_2\log^3n}\sum_{\substack{\Delta \in \Pi^{(r)}_M\\ \Delta \text{ is not social}}}
  \bP^{(0,0)}(X_n\in \Delta)\\
  & \le \sum_{r=0}^{\log_2\log^3 n} C\abs{\Pi^{(r)}_M}p_M
  \frac{1}{n^{d/2}}\exp\left(- \frac{1}{2n}(2^{r-1}\sqrt{n})^2 \right)\\
  & \le C\sum_{r=0}^{\log_2\log^3 n}\left( \frac{2^r\sqrt{n}}{M} \right)^d \frac{1}{n^{d/2}} \exp(-cr^2)p_M\\
  & \le C p_M\sum_{r=0}^{\log_2\log^3 n}\frac{1}{M^d} \exp(-cr^2 +rd\log 2)\\
  & \le C' p_M
  \end{align*}
  where we used the annealed local CLT in the second inequality. It
  remains to show that $\bP^{(0,0)}(S_M)\ge 1 - C\mathrm{e}^{-c\log n}$. We
  have
  \begin{align*}
  \bP^{(0,0)}(S_M^c)
  & = \bP^{(0,0)}\bigl(\exists r \le \log_2\log^3 n :
  \abs{\Pi^{(r)}_M\cap \{ \text{not social boxes}}>C\abs{\Pi^{(r)}_M}p_M\bigr)\\
  & \le \sum_{r=0}^{\log_2\log^3 n} \bP^{(0,0)}\bigl( \abs{\Pi^{(r)}_M\cap
        \{ \text{not social boxes}}>C\abs{\Pi^{(r)}_M}p_M \bigr).
  \end{align*}
  Next, let $(\Pi^{r,j}_M)_{j \in J}$ be a further partition of
  $\Pi^{(r)}_M$
  so that for each $j \in J$ the distance between any pair of distinct
  boxes in $\Pi^{r,j}_M$ is bigger than $3CM$ and
  \begin{align*}
  \frac{\abs{\Pi^{(r)}_M}}{2(3CM)^d} \le \abs{\Pi^{r,j}_M} \le
  \frac{2\abs{\Pi^{(r)}_M}}{(3CM)^d}.
  \end{align*}
  Note that the index set $J=J(M,r)$ is finite (in fact we have
  $\abs{J} \le 2(3CM)^d$) and that by construction the boxes in
  $\Pi^{r,j}_M$ are social or not social independently of each other.
  If
  $\abs{\Pi^{r}_M\cap \{ \text{not social
                boxes}\}}>C\abs{\Pi^{(r)}_M}p_M$ then there exists a $j$ such
  that
  $\abs{\Pi^{r,j}_M\cap \{\text{not social
                boxes}\}}>C\abs{\Pi^{(r)}_M}p_M/(3CM)^d$. Using Hoeffding's
  inequality for $r\ge 1$ we obtain
  \begin{align*}
  \bP^{(0,0)}
  & \bigl( \abs{\Pi^{(r)}_M\cap \{ \text{not social boxes}}>C\abs{\Pi^{(r)}_M}p_M \bigr)\\
  & \le \sum_{j\in J} \bP^{(0,0)}\Bigl(\abs{\Pi^{r,j}_M\cap\{\text{not
                social boxes}}>\frac{C\abs{\Pi^{(r)}_M}p_M}{(3CM)^d}\Bigr)\\
  & = \sum_{j\in J} \bP^{(0,0)}\Bigl(\abs{\Pi^{r,j}_M\cap\{\text{not social boxes}}
  - \abs{\Pi^{r,j}_M}p_M>\Bigl(\frac{C\abs{\Pi^{(r)}_M}}{(3CM)^d}
  - \abs{\Pi^{r,j}_M}\Bigr)p_M\Bigr)\\
  & \le \sum_{j \in J} \exp\Bigl( -2p_M^2 \Bigl(
  C\frac{\abs{\Pi^{(r)}_M}}{(3CM)^d}-\abs{\Pi^{r,j}_M} \Bigr)^2
  \Bigr)\\
  & \le \sum_{j \in J}\exp\Bigl( -2p_M^2 (C-2)\frac{\abs{\Pi^{(r)}_M}^2}{(3CM)^{2d}} \Bigr)\\
  & \le 2(3CM)^d\exp\Bigl( -Cp_M^2 \frac{(2^{r-1}\sqrt{n})^{2d}}{(3CM)^{2d}} \Bigr).
  \end{align*}
  Similarly for $r=0$ we have
  \begin{align*}
  \bP^{(0,0)}\bigl( \abs{\Pi^{(0)}_M\cap \{ \text{not social boxes}}
  > C\abs{\Pi^{(0)}_M}p_M \bigr)
  \le 2(3CM)^d\exp\Bigl( -Cp_M^2 \frac{\sqrt{n}^{2d}}{(3CM)^{2d}} \Bigr).
  \end{align*}
  Using the above estimates we obtain
  \begin{align*}
  \bP^{(0,0)}(S_M^c)
  & \le 2(3CM)^d\exp\Bigl( -Cp_M^2\frac{\sqrt{n}^{2d}}{(3CM)^{2d}} \Bigr)
  + \sum_{r=1}^{\log_2\log^3n} 2(3CM)^d\exp\Bigl( -Cp_M^2
  \frac{(2^{r-1}\sqrt{n})^{2d}}{(3CM)^{2d}} \Bigr)\\
  & \le \log_2\log^3(n) \cdot \exp\Bigl( -Cp_M^2\frac{\sqrt{n}^{2d}}{(3CM)^{2d}} \Bigr)\\
  & \le C n^{-c\log n}.
  \end{align*}
\end{proof}

\begin{proof}[Proof of Lemma~\ref{lem:HIG_lemma}]
  The proof relies on a construction of a suitable coupling of
  $P^{(x,0)}_\omega(X_n \in \cdot)$ and
  $P^{(y,0)}_\omega(X_n \in \cdot)$. First we show that there is a
  coupling on the level of boxes with side length $M$, where $M$ is a
  constant. Let $\Pi_M$ be a partition of $\bZ^d$ in boxes of side
  length $M$ and fix $x$ and $y$. Set
  \begin{align*}
    F_{n^\theta}
    & \coloneqq \bigcap_{k\ge n^\theta} \Bigl\{ \omega : 
      \forall z\in [-k,k]^d\cap \bZ^d, \\[-2ex]
    & \qquad \qquad \qquad  \sum_{\Delta \in \Pi_M}
      \abs{\bP^{(z,0)}(X_{k} \in \Delta) - P^{(z,0)}_\omega(X_{k} \in
      \Delta)}\le \frac{C_1}{k^{c_2}}+ \frac{C_1}{M^{c_2}} \Bigr\}.
  \end{align*}
  and
  \begin{align*}
  F(x,y) &\coloneqq \bigcap_{\substack{(\tilde{x},m)\in\bZ^d\times\bN_0\\
                \norm{\tilde{x}-x}\le n\\ m\le n}} \sigma_{(\tilde{x},m)}F_{n^\theta} \cap \bigcap_{\substack{(\tilde{y},m)\in\bZ^d\times\bN_0\\
                \norm{\tilde{y}-y}\le n\\ m\le n}} \sigma_{(\tilde{y},m)}F_{n^\theta}
  \end{align*}
  By Lemma~\ref{claim:1} we have
  $\bP(F_{n^\theta})\ge 1-n^{-c\log n}$ and thus $\bP(F(x,y))\ge 1- Cn^{-c\log n}$. In the following we assume
  that the indices of the random walks are integers, otherwise we take
  the integer part. Now choosing $M$ and $n$ large enough for
  $\norm{x-y}\le n^\theta$
  on the event $F(x,y)$ we obtain
  \begin{align*}
  \sum_{\Delta \in \Pi_M}
  & \abs{P^{(x,0)}_\omega(X_{n^{2\theta}\log^{8d}n^\theta} \in
        \Delta)-P^{(y,0)}_\omega(X_{n^{2\theta}\log^{8d}n^\theta} \in \Delta)} \\
  & \le \sum_{\Delta \in \Pi_M} \abs{P^{(x,0)}_\omega(X_{n^{2\theta}\log^{8d}n^\theta} \in \Delta)
        - \bP^{(x,0)}(X_{n^{2\theta}\log^{8d}n^\theta} \in \Delta)} \\
  & \quad + \sum_{\Delta \in \Pi_M} \abs{P^{(y,0)}_\omega(X_{n^{2\theta}\log^{8d}n^\theta} \in \Delta)
        - \bP^{(y,0)}(X_{n^{2\theta}\log^{8d}n^\theta} \in \Delta)} \\
  & \quad + \sum_{\Delta \in \Pi_M} \abs{\bP^{(x,0)}(X_{n^{2\theta}\log^{8d}n^\theta} \in \Delta)-
        \bP^{(y,0)}(X_{n^{2\theta}\log^{8d}n^\theta} \in \Delta)} \\
  & \le \frac{1}{8} + \frac{1}{8} +\sum_{\Delta \in \Pi_M}
  \abs{\bP^{(x,0)}(X_{n^{2\theta}\log^{8d}n^\theta} \in \Delta)
        - \bP^{(y,0)}(X_{n^{2\theta}\log^{8d}n^\theta} \in \Delta)} \\
  & \le \frac{1}{4}+ \sum_{\Delta \in \Pi^{x,y}_M(n^{2\theta}\log^{8d}n^\theta)}
  \abs{\bP^{(x,0)}(X_{n^{2\theta}\log^{8d}n^\theta} \in \Delta)
        - \bP^{(y,0)}(X_{n^{2\theta}\log^{8d}n^\theta} \in \Delta)} + Cn^{-c\log n}\\
  & \le \frac{1}{4}+ Cn^{-c\log n}+ \abs{\Pi^{x,y}_M(n^{2\theta}\log^{8d}n^\theta)}dn^\theta
  C(n^{2\theta}\log^{8d}n^\theta)^{-\frac{d+1}{2}}\\
  & \le\frac{1}{4}+Cn^{-c\log n}+2 \Bigl(n^{\theta}\log^{4d}(n^\theta)\log^3(n^{2\theta}
  \log^{8d}n^\theta) \Bigr)^ddn^\theta C(n^{2\theta}\log^{8d}n^\theta)^{-\frac{d+1}{2}}\\
  & =\frac{1}{4}+Cn^{-c\log n} + C\Bigl( \log(n^{2\theta}\log^{8d}n^\theta) \Bigr)^{3d}\log^{-4d}(n^\theta)\\
  & < \frac{1}{2},
  \end{align*}
  for $n$ large enough, where
  \begin{align*}
  \Pi^{x,y}_M(m)\coloneqq \Bigl\{\Delta \in \Pi_M :
  \Delta \cap
  \{z\in\bZ^d:\min(\norm{x-z},\norm{y-z})\le
  \sqrt{m}\log^3m \} \neq \emptyset \Bigr\}
  \end{align*}
  and we used Lemma~3.6 from \cite{SteibersPhD2017} and the annealed
  derivative estimates (Lemma~3.9 from \cite{SteibersPhD2017}). The
  number of steps we chose might seem a bit strange at first. The
  choice becomes more clear by looking at the last inequality above.
  There we see that, with the methods we use, we need a bit more steps
  than the square of the current distance. One can calculate that any
  additional factor $\log^m(n^\theta)$ with $m>6d$ is enough to get
  the estimate. So there exists a coupling
  $\Xi^{x,y}_{\omega,n^{2\theta}\log^{8d}n^\theta}$ of
  $P^{(x,0)}_\omega(X_{n^{2\theta}\log^{8d}n^\theta}\in\cdot)$ and
  $P^{(y,0)}_\omega(X_{n^{2\theta}\log^{8d}n^\theta}\in\cdot)$ on
  $\Pi_M\times \Pi_M$ such
  that for $\omega \in F(x,y)$
  \begin{align*}
  \Xi^{x,y}_{\omega,n^{2\theta}\log^{8d}n^\theta}(\{(\Delta,\Delta): \Delta \in \Pi_M \})>\frac{1}{2}.
  \end{align*}
  Recall
  $\bar S_M$ from
  Corollary~\ref{cor:quenched_density_of_social_boxes}.
  We have for
  \begin{align*}
  \omega \in H(x,y):=F(x,y)\cap \bigcap_{\substack{(\tilde{x},m)\bZ^d\times\bN_0\\\norm{\tilde{x}-x}\le n\\m\le n}}\sigma_{(\tilde{x},m)}\bar{S}_M \cap \bigcap_{\substack{(\tilde{y},m)\bZ^d\times\bN_0\\\norm{\tilde{y}-y}\le n\\m\le n}}\sigma_{(\tilde{y},m)}\bar{S}_M
  \end{align*}
  that
  \begin{align*}
  \sum_{\substack{\Delta \in \Pi_M\\\Delta \text{ is social}}}
  \Xi^{x,y}_{\omega,n^{2\theta}\log^{8d}n^\theta}(\Delta,\Delta)
  > \frac{1}{2}-\varepsilon(M)>\frac{1}{4}.
  \end{align*}
  By Corollary~\ref{cor:quenched_density_of_social_boxes} we obtain $\bP(H(x,y))\ge 1-Cn^{-c\log n}$.
  Thus, by the definition of social boxes (Definition~\ref{def:social
    boxes}), we can construct a coupling
  $\widetilde{\Xi}^{x,y}_{\omega,n^\theta}$
  of
  $P^{(x,0)}_\omega(X_{n^{2\theta}\log^{8d}n^{\theta}+CM} \in \cdot)$
  and
  $P^{(y,0)}_\omega(X_{n^{2\theta}\log^{8d}n^{\theta}+CM} \in \cdot)$
  satisfying $\widetilde{\Xi}^{x,y}_{\omega,n^\theta}(\{(z,z):z\in
  \bZ^d \})>\frac{1}{4}(\frac{1}{3^d})^{2CM}$. If this coupling is
  successful, we let the random walks go along the same path until
  time $n$. In case it isn't, we try to couple from their current
  position. Note that $\omega \in H(x,y)$ ensures, that we can repeat
  the coupling attempt at the new positions.

  For the rest of the proof let
  $n_k \coloneqq n^\theta\log^{k(4d+3)}n$, $k\in \bN_0$ and
  $s_k \coloneqq n_k^2\log^{8d}n_k+CM$. The $n_k$ will represent the
  distance between the walkers at the start of an attempt at coupling and $s_k$
  will be the number of steps necessary for the attempt. Furthermore let $S_k := \sum_{i=0}^{k} s_i$.

  By Lemma~3.6 from \cite{SteibersPhD2017}, we know that with
  probability of at least $1-Cn^{-c\log n}$ the distance between the
  random walks will only be
  \begin{align*}
  \bigl(n^{2\theta}\log^{8d}n^\theta\bigr)^{1/2}\log^3\bigl(n^{2\theta}\log^{8d}n^\theta \bigr)
  \le n^\theta \log^{4d}(n^\theta)\log^3(n)\le n^\theta \log^{4d+3} n=n_1,
  \end{align*}
  as long as $8d \le (1-2\theta)\frac{\log n}{\log \log n^\theta}$.
  This condition is not a restriction, since we will let
  $n\to \infty$.

  Let us now iterate the coupling procedure. If the coupling in step
  $k-1$ is not successful, i.e.\ if the walks are not at the same
  point, we try to couple again starting from the current positions.
  This leads to an iterative coupling $\widehat{\Xi}$ of the following
  form:
  $\widehat{\Xi}^{x,y}_{\omega,0} = \widetilde{\Xi}^{x,y}_{\omega,n_0}
  = \widetilde{\Xi}^{x,y}_{\omega,n^\theta}$ and for $k \ge 1$
  \begin{align*}
    \widehat{\Xi}^{x,y}_{\omega,k} (z_1,z_2) = \sum_{a,b \in \bZ^d}
     \widehat{\Xi}^{x,y}_{\omega,k-1}(a,b) \cdot \Big[
    & \ind{a=b}\ind{z_1=z_2}P^{(a,S_{k-1})}_\omega(X_{S_k} =z_1)\\
    & \; + \ind{0<\norm{a-b}\le n_{k}} \widetilde{\Xi}^{a,b}_{\omega,n_k}(z_1,z_2)\\
    & \; + \ind{\norm{a-b}>n_{k}}
      P^{(a,S_{k-1})}_\omega(X_{S_k}=z_1)P^{(b,S_{k-1})}_\omega(X_{S_k}=z_2)\Big],
  \end{align*}
  where $\widetilde{\Xi}^{a,b}_{\omega,n_k}$ is a coupling of
  $P^{(a,S_{k-1})}_\omega(X_{S_k} \in \cdot)$ and
  $P^{(b,S_{k-1})}_\omega(X_{S_k} \in \cdot)$. The idea is
  that the random walks will stay together once they are at the same
  site. We try to couple them via $\widetilde{\Xi}^{a,b}_{\omega,n_k}$
  if their distance is not too large and we let them evolve
  independently otherwise.

  Since at distance $n_k$ for the next coupling we walk $s_k$ steps
  and with high probability have at most a distance of
  $s_k^{1/2}\log^3 s_k$, the above coupling will work as long as
  $k\le \frac{(1-2\theta)\log n}{(8d+6)\log\log n}- \frac{8d}{8d+6}$
  holds, which we show below. We obtain
  \begin{align*}
  s_k^{1/2} \log^3 s_k
  = \Bigl(n_k^2\log^{8d}n_k+CM\Bigr)^{1/2}\log^3\Bigl(n_k^2\log^{8d}n_k+CM\Bigr).
  \end{align*}
  Now for $k\le \frac{(1-2\theta)\log n}{(4d+3)\log\log n}$ and $n$ large enough
  \begin{align*}
  n_k^2\log^{8d}n_k +CM \le  n_k^2 \log^{8d}n
  \end{align*}
  and
  \begin{align*}
  \log^{4d} n_k = \log^{4d}\Bigl(n^\theta\log^{k(4d+3)}(n) \Bigr) \le \log^{4d} n.
  \end{align*}
  Thus, we have
  \begin{align*}
  s_k^{1/2}\log^3 s_k \le n_k\log^{4d}(n)\log^3\Bigl( n_k^2\log^{8d}n \Bigr).
  \end{align*}
  Furthermore, if
  $k \le \frac{(1-2\theta)\log n}{(8d+6)\log\log n}- \frac{8d}{8d+6}$
  then
  \begin{align*}
  2\log n_k + 8d\log\log n
  & = 2\log\bigl( n^\theta\log^{k(4d+3)} n \bigr) + 8d\log\log n \\
  & = 2\theta\log n + k(8d+6)\log\log n + 8d\log\log n \le \log n
  \end{align*}
  It follows that
  \begin{align*}
  s_k^{1/2}\log^3 s_k \le 2n_k\log^{4d} n \log^3 n
  = 2n^\theta\log^{(k+1)(4d+3)}(n) = n_{k+1}.
  \end{align*}
  So after we try the $k$-th coupling we are, with high probability,
  at distance $n_{k+1}$. The probability for each try to be successful
  is bounded from below by $\frac{1}{4}(\frac{1}{3^d})^{2CM}$ and we
  have $\frac{(1-2\theta)\log n}{(8d+6)\log\log n}-1$ attempts. So the
  time we need for those attempts is
  \begin{align}
  \label{eq:time for coupling}
  \begin{split}
  \sum_{k=0}^{\frac{(1-2\theta)\log n}{(8d+6)\log\log n}-1} s_k
  & = \sum_{k=0}^{\frac{(1-2\theta)\log n}{(8d+6)\log\log n}-1} n_k^2\log^{8d}n_k +CM \\
  & \le \sum_{k=0}^{\frac{(1-2\theta)\log n}{(8d+6)\log\log n}-1}
  n^{2\theta}\log^{k(8d+6)}(n)\log^{8d}(n) +CM\\
  & = \frac{(1-2\theta)\log n}{(8d+6)\log\log n}CM+
  n^{2\theta}\log^{8d}(n)\sum_{k=0}^{\frac{(1-2\theta)\log
                n}{(8d+6)\log\log n}-1}\left(\log^{(8d+6)}(n)\right)^k.
  \end{split}
  \end{align}
  Note that
  \begin{align*}
  (\log n )^{\frac{(1-2\theta)\log n}{(8d+6)\log\log n} (8d+6)}
  =\exp\bigl((1-2\theta)\log n \bigr) = n^{1-2\theta}
  \end{align*}
  and therefore the right hand side of \eqref{eq:time for coupling} is
  bounded from above by
  \begin{multline*}
  \frac{(1-2\theta)\log n}{(8d+6)\log\log n}CM+ n^{2\theta}\log^{8d}(n)
  \frac{n^{1-2\theta}-1}{\log^{(8d+6)}(n)-1}
  \le \frac{(1-2\theta)\log n}{(8d+6)\log\log n}CM+ \frac{n}{\log^{5}(n)}\\
  = n \left( \frac{(1-2\theta)\log n}{n(8d+6)\log\log n}CM
  +\frac{1}{\log^5 n}\right) < n,
  \end{multline*}
  for $n$ large enough. And the probability for the above coupling to
  fail is smaller than
  \begin{align*}
  (1-p^*)^{\frac{(1-2\theta)\log n}{(8d+6)\log\log n}-1} \le \mathrm{e}^{-c\frac{\log n}{\log \log n}}
  \end{align*}
  where $p^* = \frac{1}{4}(\frac{1}{3^d})^{2CM}$ and $c>0$ is
  a constant. So for a fixed pair of points $x,y$ with $\norm{x-y}\le n^\theta$ we have
  \begin{align*}
  \norm{P^{(x,0)}_\omega(X_n\in \cdot) - P^{(y,0)}_\omega(X_n\in \cdot)}_{TV} \le \mathrm{e}^{-c\frac{\log n}{\log\log n}}
  \end{align*}
  with probability at least $1-n^{-c\log n}$. Thus we get for every $b>0$
  \begin{align*}
  \bP(D(n))
  & = \bP\Bigg( \bigcap_{\substack{x,y\in\bZ^d\, :\\
                \norm{x},\norm{y}\le n^b,\\
                \norm{x-y}\le n^{\theta}}} \hspace{-1.5em}
  \left\{
  \norm{P^{(x,0)}_\omega(X_n\in \cdot)- P^{(y,0)}_\omega(X_n \in \cdot)}_{\mathrm{TV}}
  \le \mathrm{e}^{-c \frac{\log n}{\log \log n}} \right\} \Bigg)\\
  &\ge 1 - \sum_{\substack{x,y\in\bZ^d\, :\\
                \norm{x},\norm{y}\le n^b,\\
                \norm{x-y}\le n^{\theta}}} \bP\Big(
  \left\{\norm{P^{(x,0)}_\omega(X_n\in \cdot)- P^{(y,0)}_\omega(X_n
        \in \cdot)}_{\mathrm{TV}}> \mathrm{e}^{-c \frac{\log n}{\log \log n}}
  \right\} \Big)\\
  &\ge 1 - n^{d(b+\theta)}n^{-c\log n} \geq 1 - Cn^{-c'\log n}.
  \end{align*}
  Note that $b>0$ can be chosen arbitrarily large, but the constants
  $C$ and $c'$ will have to adjusted accordingly.
\end{proof}

\section{Uniqueness of the prefactor}
\label{sect:prefactor_unique}

With some minor adaptations of the ideas from
\cite[Section~7.1]{BergerCohenRosenthal2016} we can obtain the
following result.
\begin{lem}
  \label{lem:uniqueness_prefactor}
  Provided existence, the prefactor $\varphi$ in \eqref{eq:rd-density}
  is unique.
\end{lem}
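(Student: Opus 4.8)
The plan is to derive everything directly from \eqref{eq:rd-density} together with the annealed local CLT, Theorem~\ref{lem:annealed_local_CLT}, and the stationarity of $\bP$, without appealing to invariance under the point of view of the particle. Suppose $\varphi_1$ and $\varphi_2$ both satisfy \eqref{eq:rd-density} and put $\psi\coloneqq\varphi_1-\varphi_2$. Subtracting the two instances of \eqref{eq:rd-density} and using the triangle inequality together with $\bP^{(0,0)}(X_n=x)\ge 0$, one obtains that for $\bP$-almost every $\omega$
\begin{align}
  \label{eq:uniq-star}
  \lim_{n\to\infty}\sum_{x\in\bZ^d}\bP^{(0,0)}(X_n=x)\,\bigl\lvert\psi(\sigma_{(x,n)}\omega)\bigr\rvert=0.
\end{align}

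Since the prefactors need not be integrable, the next step is a truncation. For $K\in\bN$ set $f_K\coloneqq\lvert\psi\rvert\wedge K$, a bounded measurable function with $f_K\le\lvert\psi\rvert$, so \eqref{eq:uniq-star} forces $\sum_x\bP^{(0,0)}(X_n=x)f_K(\sigma_{(x,n)}\omega)\to 0$ $\bP$-a.s. Writing $p_n(x)\coloneqq(2\pi n)^{-d/2}(\det\Sigma)^{-1/2}\exp(-\tfrac{1}{2n}x^T\Sigma^{-1}x)$ for the Gaussian density, boundedness of $f_K$ and Theorem~\ref{lem:annealed_local_CLT} give
\begin{align}
  \Bigl\lvert\sum_x\bigl(p_n(x)-\bP^{(0,0)}(X_n=x)\bigr)f_K(\sigma_{(x,n)}\omega)\Bigr\rvert\le K\sum_x\bigl\lvert p_n(x)-\bP^{(0,0)}(X_n=x)\bigr\rvert\xrightarrow{n\to\infty}0,
\end{align}
hence $\sum_x p_n(x)f_K(\sigma_{(x,n)}\omega)\to 0$ $\bP$-a.s. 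On the diffusive cube $B_n\coloneqq\{x\in\bZ^d:\norm{x}\le\varepsilon\sqrt n\}$ one has $p_n(x)\ge c_\varepsilon n^{-d/2}$ with $c_\varepsilon>0$ an explicit constant, and $\abs{B_n}\ge\varepsilon^d n^{d/2}$ for $n$ large, so it follows that the box average
\begin{align}
  A_n^K(\omega)\coloneqq\frac{1}{\abs{B_n}}\sum_{x\in B_n}f_K(\sigma_{(x,n)}\omega)
\end{align}
tends to $0$ for $\bP$-almost every $\omega$.

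It remains to integrate. By translation invariance of $\bP$ we have $\bE[A_n^K]=\bE[f_K]$ for every $n$, while $0\le A_n^K\le K$, so dominated convergence yields $\bE[f_K]=\lim_{n\to\infty}\bE[A_n^K]=0$, whence $f_K=0$ $\bP$-a.s. for every $K\in\bN$. Letting $K\to\infty$ gives $\psi=0$, i.e.\ $\varphi_1=\varphi_2$ $\bP$-a.s.

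The one point requiring care is the decoupling in the second paragraph: because $\psi$ is not known to lie in any $L^p(\bP)$, one cannot estimate the local CLT error $\sum_x\lvert p_n(x)-\bP^{(0,0)}(X_n=x)\rvert$ directly against $\lvert\psi\rvert$, which is why the truncation to the bounded function $f_K$ is performed \emph{before} invoking Theorem~\ref{lem:annealed_local_CLT}; everything else is routine, the Gaussian lower bound $p_n(x)\ge c_\varepsilon n^{-d/2}$ on $B_n$ being immediate from the explicit form of $p_n$. In particular, no ergodic theorem and no a priori moment control on $\varphi_1,\varphi_2$ enters the argument.
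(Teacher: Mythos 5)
Your argument is correct and it is a genuinely different route from the paper's. Both proofs begin identically: from two prefactors satisfying \eqref{eq:rd-density}, the triangle inequality yields $\bP$-a.s.\ $\sum_x\bP^{(0,0)}(X_n=x)\,\lvert\psi(\sigma_{(x,n)}\omega)\rvert\to 0$, which is exactly the paper's \eqref{eq:uniqueness1}. After that the two arguments diverge. The paper sets $Y_n(\omega)\coloneqq\bE^{(0,0)}[\lvert h(\sigma_{(X_n,n)}\omega)\rvert]$, notes $\bE[Y_n]=\bE[\lvert h\rvert]$ by translation invariance and argues by contradiction: if $h\ne 0$ then $\bE[Y_n]\ge c\,\bP(A)>0$, and a tightness remark is invoked to conclude that the a.s.\ limit of $Y_n$ cannot be zero. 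This last step is terse; tightness of the distributions of $\lvert h(\sigma_{(X_n,n)}\omega)\rvert$ does not by itself preclude $Y_n\to 0$ a.s.\ while $\bE[Y_n]$ stays bounded away from zero (what is needed there is uniform integrability of $\{Y_n\}$, not just tightness), and the argument implicitly uses $h\in L^1(\bP)$, which is available because $\varphi=dQ/d\bP$ but is not invoked from \eqref{eq:rd-density} alone. Your proof sidesteps both of these delicate points in one stroke: the truncation $f_K=\lvert\psi\rvert\wedge K$ makes every quantity you integrate bounded, so dominated convergence closes the argument cleanly, and the fact that $\bE[A_n^K]=\bE[f_K]$ is a one-line consequence of stationarity of $\bP$. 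The extra ingredient you pay for this is the annealed local CLT, Theorem~\ref{lem:annealed_local_CLT}: you use it to replace the annealed weights $\bP^{(0,0)}(X_n=x)$ by the Gaussian $p_n(x)$ so that the lower bound $p_n(x)\ge c_\varepsilon n^{-d/2}$ on the diffusive cube converts the weighted sum into a genuine box average. The paper's proof does not use the local CLT. On balance your approach is both more robust (no integrability hypothesis on $\varphi_1,\varphi_2$, no gap in the limiting step) and no less self-contained, since Theorem~\ref{lem:annealed_local_CLT} is proved earlier in the paper; the price is that your argument is tied to the concrete form of the annealed law, whereas the paper's contradiction scheme, if completed, is structurally the more generic one.
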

\begin{proof}
  Assume that there are functions $f$ and $g$ which both satisfy
  \eqref{eq:rd-density} and denote $h=f-g$. We will check that
  $\bE[\abs{h}]=0$ and hence that $h \equiv 0$ holds $\bP$-a.s.

  By the triangle inequality we have
  \begin{align}
    \begin{split}
      \sum_{x \in \bZ^d} \bP^{(0,0)}(X_n = x)\abs{h(\sigma_{(x,n)}\omega)}
      & \le \sum_{x\in \bZ^d} \abs{P_\omega^{(0,0)} (X_n=x) -
        \bP^{(0,0)}(X_n=x) f(\sigma_{(x,n)}\omega)} \\
      & \quad + \sum_{x\in \bZ^d} \abs{P_\omega^{(0,0)} (X_n=x) -
        \bP^{(0,0)}(X_n=x) g(\sigma_{(x,n)}\omega)}
    \end{split}
  \end{align}
  which by \eqref{eq:rd-density} implies
  \begin{align}
    \label{eq:uniqueness1}
    \lim_{n \to \infty} \sum_{x \in \bZ^d} \bP^{(0,0)}(X_n = x)\abs{h(\sigma_{(x,n)}\omega)} = 0
  \end{align}
  for $\bP$ almost every $\omega$. That means
  $\lim_{n\to\infty} \bE^{(0,0)}[\abs{h(\sigma_{(X_n,n)}\omega)}] =0$
  $\bP$-a.s. Assume that $h\neq 0$, then there exists a measurable
  subset $A \subset \Omega$ and a constant $c>0$ such that $\bP(A) >0$
  and $\abs{h}>c$ on $A$. Thus, for every $n \in \bN$, an elementary
  computation shows
  \begin{align}
    \label{eq:uniqueness2}
    \begin{split}
      \bE \Big[\bE^{(0,0)}[\abs{h(\sigma_{(X_n,n)}\omega)}]\Big]
      & \ge \bE\Big[\bE^{(0,0)}[\abs{h(\sigma_{(X_n,n)}\omega)}
      \ind{\sigma_{(X_n,n)}\omega\in A}]\Big]\\
      & \ge c\bE\Big[\bE^{(0,0)}[\ind{\sigma_{(X_n,n)} \omega \in A}]\Big]\\
      & =c \bP(A) > 0.
    \end{split}
  \end{align}
  Since
  \begin{align}
    \begin{split}
      \bE\Big[\bE^{(0,0)}[\abs{h(\sigma_{(X_n,n)}\omega)}]\Big]
      & = \bE\Big[\sum_{y \in \bZ^d}\bP^{(0,0)}(X_n = y) \abs{h(\sigma_{(y,n)}\omega)}\Big]\\
      & =\sum_{y \in \bZ^d} \bP^{(0,0)}(X_n =y) \bE[\abs{h(\sigma_{(y,n)}\omega)}]\\
      & =\sum_{y \in \bZ^d} \bP^{(0,0)}(X_n =y) \bE[\abs{h(\omega)}] = \bE[\abs{h(\omega)}],
    \end{split}
  \end{align}
  the sequence $\{ \abs{h(\sigma_{(X_n,n)}\omega)} \}_{n \in \bN}$ is
  tight. Thus, \eqref{eq:uniqueness2} implies that for $\bP$ almost
  all $\omega$ we have
  $\lim_{n\to\infty} \bE^{(0,0)}[\abs{h(\sigma_{(X_n,n)}\omega)}]>0$
  which is a contradiction to \eqref{eq:uniqueness1}.
\end{proof}

\appendix

\section{Intersection of clusters of points connected to infinity}
\label{sec:inters-clust-points}

The following lemma is a quantification of Theorem~2 from
\cite{GrimmettHiermerDPRW2000} which was pointed out there without a
proof. We give a proof using a key result from
\cite{GaretMarchandLDPCPRE2013}.

\begin{lem}
  \label{cor:opc_connectedness}
  Let $d \ge 2$, $p >p_c$. Then there are positive constants $M$ and
  $C$ and $c$ such that for all $x,y\in \bZ^d$ with
  $\norm{x-y}\le M$
  \begin{align}
    \label{eq:79}
    \bP\bigl(B(x,y;M,C) |  (x,0) \rightarrow \infty,
    (y,0) \rightarrow \infty\bigr) \ge 1-\exp(-cM),
  \end{align}
  where $B(x,y;M,C)$ is the set of all $\omega \in \Omega$ for which
  there is $z \in \bZ^d$ satisfying
  \begin{align*}
    (x,0) \xrightarrow{\omega} (z,CM), \quad
    (y,0) \xrightarrow{\omega} (z,CM) \quad
    \text{and} \quad (z,CM) \xrightarrow{\omega} \infty.
  \end{align*}
\end{lem}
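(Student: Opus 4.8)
The plan is to reduce the statement to a one-sided survival estimate combined with a ``shape-type'' result for supercritical oriented percolation conditioned on survival. First I would fix $p>p_c$. The key input is a result from \cite{GaretMarchandLDPCPRE2013} (a large-deviation principle for the conditioned cluster / time constant), which in particular gives that, conditionally on $(x,0)\to\infty$, the cluster $C_t(x,0)$ at time $t$ fills out a deterministic asymptotic shape $t\mathcal{D}$ (for some convex body $\mathcal{D}\ni 0$ with nonempty interior) with error probability exponentially small in $t$; more precisely, there are constants so that for all $t$ large, conditionally on survival from $(x,0)$, with probability at least $1-\exp(-ct)$ every lattice point in $(t\mathcal{D}^{\circ})\cap\bZ^d$ that is itself connected to infinity lies in $C_t(x,0)$, and moreover the set of such ``infinite'' points in $t\mathcal{D}^{\circ}$ has density bounded below by the survival probability $\rho>0$.

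With this in hand the argument is as follows. Choose $C$ so that, since $\norm{x-y}\le M$, both translated shapes $x+CM\mathcal{D}$ and $y+CM\mathcal{D}$ contain a common sub-box $B'$ of side length of order $M$ centred near $\tfrac12(x+y)$ at time $CM$; this is possible because $0$ lies in the interior of $\mathcal{D}$, so $CM\mathcal{D}$ already contains a ball of radius $\gtrsim CM\gg M$ around its centre. On the event (of conditional probability $\ge 1-\exp(-cM)$ by the shape result applied to both $x$ and $y$, using $t=CM$) that both clusters capture all infinite points of their respective shapes at time $CM$, it suffices to produce one point $z$ in $B'$ with $(z,CM)\to\infty$. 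The expected number of such $z$ is at least $\rho\,|B'|\ge \rho\,(cM)^d$, which tends to infinity; a second-moment / finite-energy argument, or simply the density lower bound coming directly from the LDP (which is itself a high-probability statement, not merely an expectation), gives that with probability at least $1-\exp(-cM)$ there is at least one such $z$. For that $z$ we then have $(x,0)\to(z,CM)$, $(y,0)\to(z,CM)$ and $(z,CM)\to\infty$, which is exactly $B(x,y;M,C)$. Taking a union bound over the (at most two) exceptional events yields \eqref{eq:79} after adjusting constants.

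A couple of technical points need care. First, the shape theorem in \cite{GaretMarchandLDPCPRE2013} is naturally stated for the cluster conditioned on survival; one must phrase everything relative to the event $\{(x,0)\to\infty\}\cap\{(y,0)\to\infty\}$ and, since these two survival events are positively correlated and each has probability bounded below, dividing by $\bP((x,0)\to\infty,(y,0)\to\infty)\ge \bP((x,0)\to\infty)^2>0$ only changes constants. Second, one must make sure the ``good'' event for the $x$-cluster and the ``good'' event for the $y$-cluster, as well as the existence of an infinite point in $B'$, can be taken simultaneously: the first two are increasing-type events controlled by the LDP and the third is controlled by the density lower bound inside $B'\subset CM\mathcal{D}^{\circ}$, so all three fail with total probability at most $\exp(-cM)$. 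Third, the constant $M_0$ (the threshold for ``$M$ large enough'') enters because the shape approximation and the density bound are only effective for $t=CM$ large.

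The main obstacle is packaging the input from \cite{GaretMarchandLDPCPRE2013} in precisely the quantitative form needed here: we need not just that the conditioned cluster eventually has the right shape, but the explicit exponential-in-$M$ control on the probability that, at the \emph{specific} time $CM$, the cluster already contains a positive-density set of the infinite points inside a sub-box of size $\sim M$ near its centre. One should either cite the relevant non-asymptotic estimate directly from \cite{GaretMarchandLDPCPRE2013} (their large-deviation bounds are of this exponential form) or reprove the needed local statement from the block/renormalisation arguments underlying it; I expect the former to suffice, with the remaining work being bookkeeping of constants $M$, $C$, $c$.
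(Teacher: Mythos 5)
Your proposal follows essentially the same strategy as the paper: invoke the quantitative large-deviation / coupling result from Garet--Marchand (their Theorem 1, Formula (3)) to get that, with conditional probability at least $1-\exp(-cM)$, the cluster started from $(x,0)$ (resp.\ $(y,0)$) agrees with the full space-time cluster $\eta^{\bZ^d}$ inside a linear-size cone at time $t=CM$, and then argue that inside a common sub-box $J$ (your $B'$) there exists, with high probability, a point that is both occupied by $\eta^{\bZ^d}$ at time $CM$ and connected to $\infty$. The FKG step to remove the conditioning on $\{(x,0)\to\infty,(y,0)\to\infty\}$ (the second event being increasing) is also the same.

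The one place where your sketch is materially under-specified is the second ingredient, namely the high-probability existence of a surviving point $z\in J$. You write that a second-moment / finite-energy argument would do, \emph{or} that ``the density lower bound [comes] directly from the LDP (which is itself a high-probability statement).'' The latter is not quite accurate: the Garet--Marchand bound you cite controls the \emph{coupling} discrepancy, not the density of points that are simultaneously in $\eta^{\bZ^d}_{CM}$ and connected to infinity. The paper instead combines two standard facts that close this gap cleanly: (i) $\eta^{\bZ^d}_{CM}$ dominates the upper invariant measure, which itself dominates a product measure with density $\rho>0$ (Liggett--Steif), so that with probability $\ge 1-\exp(-c M^d)$ the set $\{z\in J:\eta^{\bZ^d}_{CM}(z)=1\}$ has $\Omega(M^d)$ elements; and (ii) extinction of the contact process started from a set $A$ is exponentially unlikely in $\#A$ (Liggett, Thm.~2.30(b)), which then gives the existence of a surviving $z\in J$ with probability $\ge 1-\exp(-cM^d)$. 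Your second-moment route can be made to work but would need more care because of the positive correlation between the events $\{(z,CM)\to\infty\}$ for nearby $z$; the domination-plus-extinction-bound argument of the paper sidesteps that. Also note a small phrasing issue in your shape statement: ``every infinite point in $t\mathcal D^\circ$ lies in $C_t(x,0)$'' does not follow directly from the coupling bound (being connected to $\infty$ is a forward-in-time event, while $\eta^{\bZ^d}_t(z)=1$ is backward-in-time); what you actually need, and what the paper uses, is the coupling $\eta^{\{x\}}_t=\eta^{\bZ^d}_t$ in the cone together with the separate existence of a $z$ that is both backward-occupied and forward-surviving.
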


\begin{proof}
  For $A \subset \bZ^d$ we put
  $\eta^A_t(x) = \indset{(y,0) \to (x,t) \text{ for some } y \in A}$
  (this is the discrete time contact process starting from all sites
  in $A$ infected at time $0$). Write
  \begin{align}
    B(x,t) \coloneqq \big\{ \exists \, z \, : \, \norm{x-z} \le c_1 t \text{ and } \eta^{\{x\}}_t(z) \neq \eta^{\bZ^d}_t(z) \big\}
  \end{align}
  for the ``bad'' event that coupling in a ball around $x$ has not
  occurred at time $t$. We obtain from \cite[Thm.~1,
  Formula~(3)]{GaretMarchandLDPCPRE2013} that
  \begin{align}
    \label{eq:bound from GaretMarchand}
    \bP\big( B(x,t) \cap \{ (x,0) \to \infty \} \big) \le C \mathrm{e}^{-c t}
  \end{align}
  for certain constants $c_1, C , c \in (0,\infty)$ (which depend on
  $d$ and on $p>p_c$). Literally, the result in
  \cite{GaretMarchandLDPCPRE2013} is proved for the continuous time
  version of the contact process, but we believe that the same holds
  in discrete time.

  Now consider $x, y \in \bZ^d$ with $\norm{x-y} \le M$. Pick $C_2$ so
  large that
  \[
    J \coloneqq \{ z : \norm{z-x} \le C_2 M \text{ and } \norm{z-y} \le C_2 M
    \}
  \]
  satisfies $\# J \ge M^d.$ Applying \eqref{eq:bound from
    GaretMarchand} with $t = C_2 M$ for $x$ and for $y$ gives
  \begin{align}
    & \bP\Big( \big( B(x,C_2 M) \cup B(y,C_2 M) \big)
      \cap \{ (x,0) \to \infty, (y,0) \to \infty \} \big)  \notag \\
    & \quad \le \bP\big( B(x,C_2 M) \cap \{ (x,0) \to \infty \} \big)
      + \bP\big( B(y,C_2 M) \cap \{ (y,0) \to \infty \} \big) \le 2 C \mathrm{e}^{-c CM_2}
  \end{align}
  hence
  \begin{align}
    \bP\Big( \eta^{\{x\}}_{C_2 M}(z) = \eta^{\bZ^d}_{C_2 M}(z) = \eta^{\{y\}}_{C_2 M}(z) \text{ for all } z \in J
    \,\, \Big| \, (x,0) \to \infty, (y,0) \to \infty \Big) \ge 1 - C' \mathrm{e}^{-c C_2 M}.
  \end{align}
  Furthermore
  \begin{align}
    & \bP\Big( \exists \, z \in J \, : \, \eta^{\bZ^d}_{C_2 M}(z) = 1 \text{ and } (z,C_2M) \to \infty
      \,\, \Big| \, (x,0) \to \infty, (y,0) \to \infty \Big) \notag \\
    & \quad \ge
      \bP\Big( \exists \, z \in J \, : \, \eta^{\bZ^d}_{C_2 M}(z) = 1
      \text{ and } (z,C_2M) \to \infty \Big) \ge 1 - C\mathrm{e}^{-c M^d}
  \end{align}
  where we used the FKG inequality in the first inequality. For the
  second inequality we use the fact that extinction starting from $A$
  is exponentially unlikely in $\#A$ (see Theorem~2.30 (b) in
  \cite{LiggettSIS1999}) and the fact that $\eta^{\bZ^d}_{C_2 M}$
  dominates the upper invariant measure which itself dominates a
  product measure on $\{0,1\}^{\bZ^d}$ with some density $\rho>0$ (see
  Corollary~4.1 in \cite{LiggettSteifSD2006}).

  Combining, we find
  \begin{align}
    & \bP\Big( \exists \, z \in \bZ^d \, : \, (x,0) \to (z,C_2M), (y,0)
      \to (z,C_2M), (z,C_2M) \to \infty
      \,\, \Big| \, (x,0) \to \infty, (y,0) \to \infty \Big) \notag \\
    & \ge 1 - C' \mathrm{e}^{-c C_2 M} - C\mathrm{e}^{-c M^d}.
  \end{align}
\end{proof}

\section{Quenched random walk finds the cluster fast}
\label{sec:quenched-random-walk}
Since we allow the quenched random walk to start outside the cluster
we need some kind of control on the time it needs to hit the cluster.
The following lemma will yield exactly that.
\begin{lem}
  Let \label{lem:quenched-rw-cluster} $d\geq 1$ and define the set
  $A_n=A_n(C',c')\coloneqq\{ \omega\in\Omega:
  P^{(0,0)}_\omega(\xi_i(X_i)=0,i=1,\dots,n)\leq C'\mathrm{e}^{-c'n}
  \}$. There exist constants $C,c>0$, so that for every $p>p_c(d)$ and
  small enough $C'$ and $c'$ we have
  \begin{align*}
    \bP(A^\compl_n) \leq C\mathrm{e}^{-cn} \quad \text{for all $n=1,2,\dots$}.
  \end{align*}
\end{lem}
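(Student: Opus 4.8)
The plan is to prove the statement by a coarse-graining / block argument together with the fact that the density of the oriented percolation cluster at a fixed time is bounded below by a positive constant. First I would fix a large integer $L$ (to be chosen at the end) and consider the behaviour of the walk over time windows of length $L$. The key deterministic observation is that if at time $jL$ the walk is at a site $x$ with $(x,jL)\notin\mathcal C$, then since the walk off the cluster moves as a simple random walk jumping uniformly in the unit ball, with probability at least $(3^d)^{-L}$ it follows any prescribed path of length $L$; in particular it can reach, at time $(j+1)L$, any site $y$ in the cube of side $2L+1$ around $x$. So, on the event that at least one such $y$ has $(y,(j+1)L)\in\mathcal C$, the walk enters the cluster during that block with probability at least $(3^d)^{-L}$, and once it is on $\mathcal C$ it stays on $\mathcal C$ forever.

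Next I would quantify the probability of the ``good block'' event. For a fixed site $x$, consider the set $J(x)=\{y:\norm{y-x}\le L\}$; then $\#J(x)\ge (2L+1)^d\ge L^d$. The probability (under $\bP$) that \emph{none} of the sites $(y,m)$ with $y\in J(x)$ lies on $\mathcal C$ equals the probability that, starting the (time-reversed) stationary contact process $\xi_m$ from its invariant law, all sites in $J(x)$ are vacant; since $\xi_m(\cdot)$ stochastically dominates a Bernoulli product measure with some density $\rho>0$ (Liggett--Steif, as already used in Lemma~\ref{cor:opc_connectedness}), this is at most $(1-\rho)^{L^d}\le \mathrm{e}^{-c_0 L^d}$. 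Hence, outside an event of $\bP$-probability $\le \mathrm{e}^{-c_0 L^d}$ — uniformly in $x$ and $m$ — every block offers a fresh chance of size $(3^d)^{-L}=:q>0$ for the walk to hit the cluster, \emph{provided} the walk has not yet hit it. This is the step that needs a small amount of care because the ``good block'' events at different blocks depend on overlapping regions of $\omega$ and on where the walk happens to be; I would handle this by first taking a union bound over the polynomially-many starting sites $x$ reachable in $n$ steps (the walk can only reach $[-n,n]^d$), declaring the environment ``good up to time $n$'' if for \emph{every} such $x$ and every block index $j\le n/L$ the set $J(x)$ contains a cluster site at the appropriate time, and noting that this good event has $\bP$-probability at least $1-(2n+1)^d\,(n/L)\,\mathrm{e}^{-c_0 L^d}$.

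On the good environment the quenched estimate is then purely combinatorial: letting $T$ be the first time the walk is on $\mathcal C$, the events of failing to enter the cluster in successive disjoint blocks are, under $P^{(0,0)}_\omega$, each of conditional probability at most $1-q$ (by the uniform lower bound on the single-block success probability and the Markov property of the walk), so $P^{(0,0)}_\omega\bigl(\xi_i(X_i)=0,\ i=1,\dots,n\bigr)\le P^{(0,0)}_\omega(T>n)\le (1-q)^{\lfloor n/L\rfloor}\le C'\mathrm{e}^{-c'n}$ with $c'=-\tfrac1L\log(1-q)>0$ and $C'$ absorbing the first block. Finally I would choose $L$ large enough that $(2n+1)^d (n/L)\mathrm{e}^{-c_0 L^d}\le C\mathrm{e}^{-cn}$ fails to be the right comparison — actually the cleanest route is to take $L=L(n)$ growing slowly, e.g. $L=\lceil (\log n)^{1/d}\rceil$ or even a large constant will already give an exponential-in-$n$ bound once one notes $\mathrm{e}^{-c_0L^d}$ can be made smaller than any fixed negative power by taking $L$ a large constant, so that $(2n+1)^d(n/L)\mathrm{e}^{-c_0L^d}\to0$ polynomially and, after replacing the trivial union bound by one over blocks only (using translation invariance of $\bP$ and that only the block a walk actually visits matters), one obtains the claimed $\bP(A_n^\compl)\le C\mathrm{e}^{-cn}$. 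The main obstacle I anticipate is precisely this last bookkeeping point: making the ``good environment'' event have exponentially small complement rather than merely polynomially small, which requires being a little clever about not union-bounding over all $(2n+1)^d$ possible positions but rather over the at most $n/L$ blocks, exploiting that in each block the walk sits at a single (random) site and that the single-site bad probability $\mathrm{e}^{-c_0L^d}$ is summable against the $\mathrm{e}^{c'n}$ we are targeting once $L$ is chosen appropriately — alternatively one accepts a constant $L$ and derives the bound $\bP(A_n^\compl)\le Cn^{d+1}\mathrm{e}^{-c_0L^d}$, then notes that by enlarging $L$ (hence shrinking $c'$) this is $\le C\mathrm{e}^{-cn}$ only after incorporating a genuinely exponential input, which is supplied by restarting the argument on the event that the walk is still off the cluster and using independence of the percolation configuration in disjoint time slabs of thickness $L$. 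I would write it using the latter slab-independence formulation, as it is the one that cleanly yields the exponential tail.
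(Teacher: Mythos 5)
Your approach is a direct block/renormalization argument in the quenched setting, attempting to show that for a high-probability set of environments the walk finds the cluster in each length-$L$ time window with probability bounded below by a positive constant. The paper's proof is quite different and considerably shorter: it first invokes the \emph{annealed} exponential bound
\begin{align*}
\bP^{(0,0)}\big(\xi_0(X_0)=\dots=\xi_n(X_n)=0\big)\le \tilde C \mathrm{e}^{-\tilde c n},
\end{align*}
which follows from Lemma~2.11 of \cite{BirknerCernyDepperschmidtRWDRE2016} because off the cluster the walk moves as a simple random walk independent of the environment; it then transfers this to the quenched statement by a one-line Markov/Chebyshev inequality: on $A_n^\compl$ the quenched probability exceeds $C'\mathrm{e}^{-c'n}$, so
\begin{align*}
\tilde C\mathrm{e}^{-\tilde c n}\ge \int_{A_n^\compl} P^{(0,0)}_\omega\big(\xi_i(X_i)=0,\, i=1,\dots,n\big)\,d\bP(\omega) > \bP(A_n^\compl)\,C'\mathrm{e}^{-c'n},
\end{align*}
giving $\bP(A_n^\compl)\le C\mathrm{e}^{-(\tilde c - c')n}$ for any $c'<\tilde c$.

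The gap you yourself flag in your proposal is genuine and, as written, not closed. The naive union bound over the $(2n+1)^d$ positions the walk can visit produces only a polynomially-in-$n$ amplified $\mathrm{e}^{-c_0 L^d}$, which is not exponentially small in $n$ for any fixed $L$; if instead you let $L$ grow with $n$, the per-block success probability $q=(3^d)^{-L}$ degrades too fast. More seriously, the ``slab independence'' you invoke at the end does not hold for the relevant object: $\xi_m(x)=\indset{\mathcal C}\bigl((x,m)\bigr)$ depends on whether an infinite open path starts at $(x,m)$ and hence on the entire future of the Bernoulli field $\omega$. So the events ``block $j$ offers a cluster site near the walk'' are not independent across disjoint time windows even though $\omega$ itself is i.i.d.\ in space-time. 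This is precisely the infinite-range dependence of $\xi$ that the paper's annealed-first argument sidesteps (and that elsewhere in the paper is handled by replacing $\xi$ with a finite-range approximation $\tilde\xi^k$; see Section~\ref{sec:proof-prop-refpr-1}). In effect your block construction is a partial reproof of the annealed input rather than of the quenched claim itself; once that annealed bound is available, the Markov inequality above is by far the cleanest way to conclude.
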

\begin{proof}
  Note that by our definition of the quenched law, see
  equation~\eqref{eq:defn_quenched_law}, the quenched random walk
  performs a simple random walk until it hits the cluster
  $\mathcal{C}$. Thus, on the event that the random walk doesn't hit
  the cluster, we can switch the random walk with a simple random walk
  $(Y_n)_n$ that is independent of the environment. Using Lemma 2.11
  from \cite{BirknerCernyDepperschmidtRWDRE2016} it follows
  \begin{align*}
    \bP^{(0,0)}\big(\xi_{0}(X_0)
    & =\dots=\xi_n(X_n)=0\big)\\
    & = \sum_{x_1,\dots,x_n} \bP^{(0,0)}\big((X_1,\dots,X_n)=(x_1,\dots,x_n),
      \xi_0(0)=\dots=\xi_n(x_n)=0 \big)\\
    & = \sum_{x_1,\dots,x_n} \bP^{(0,0)}\big((Y_1,\dots,Y_n)=(x_1,\dots,x_n),
      \xi_0(0)=\dots=\xi_n(x_n)=0 \big)\\
    & = \sum_{x_1,\dots,x_n} \bP^{(0,0)}\big((Y_1,\dots,Y_n)=(x_1,\dots,x_n)\big)
      \bP\big(\xi_0(0)=\dots=\xi_n(x_n)=0 \big)\\
    & \leq \tilde{C} \mathrm{e}^{-\tilde{c}n},
  \end{align*}
  where $\tilde C$ and $\tilde c$ are certain constants depending only
  on $p$ and $d$.

  Using the definition of the annealed law we get
  \begin{align*}
    \bP^{(0,0)}
    & \big(\xi_{0}(X_0)=\dots=\xi_n(X_n)=0\big)\\
    & =\int_{A_n} P^{(0,0)}_\omega(\xi_i(X_i)=0,i=1,\dots,n) \,d \bP(\omega)
      + \int_{A^\compl_n} P^{(0,0)}_\omega(\xi_i(X_i)=0,i=1,\dots,n) \, d\bP(\omega)\\
    & \geq \int_{A^\compl_n} P^{(0,0)}_\omega(\xi_i(X_i)=0,i=1,\dots,n) \, d\bP(\omega)\\
    & > \bP(A^\compl_n)C'\mathrm{e}^{-c'n}
  \end{align*}
  and since
  \begin{align*}
    \int_{A_n^\compl} P^{(0,0)}_\omega(\xi_i(X_i)=0,i=1\dots,n) \,d
    \bP(\omega) \leq \tilde{C}\mathrm{e}^{-\tilde{c}n}
  \end{align*}
  we obtain that $\bP(A_n^\compl)\leq C\mathrm{e}^{-cn}$ with $c=\tilde c -
  c'>0$ by choosing  $c'<\tilde{c}$.
\end{proof}

\subsection*{Acknowledgements}
\label{Acknowledgements}

We would like to thank Noam Berger for many useful discussions and for
sharing various versions of \cite{BergerCohenRosenthal2016}. We also
thank Nina Gantert for useful discussions and for her constant interest in
this work.

MB and TS were supported by the DFG SPP Priority Programme 1590
``Probabilistic Structures in Evolution'' through grant BI 1058/3-2.

\bibliographystyle{halpha} \bibliography{references}

\end{document}